\crefname{theorem}{Theorem}{Theorems}
\crefname{conjecture}{Conjecture}{Conjectures}
\crefname{corollary}{Corollary}{Corollaries}
\crefname{lemma}{Lemma}{Lemmas}
\crefname{proposition}{Proposition}{Propositions}
\crefname{desideratum}{Desideratum}{Desideratums}
\crefname{remark}{Remark}{Remarks}
\crefname{question}{Question}{Questions}
\crefname{definition}{Definition}{Definitions}
\crefname{notation}{Notation}{Notations}
\crefname{example}{Example}{Examples}
\crefname{situation}{Situation}{Situations}
\crefname{assumption}{Assumption}{Assumptions}
\crefname{passage}{Paragraph}{Paragraphs}
\providecommand{\phantomsection}{} 
\newenvironment{acknowledgements}{%
  \@startsection{paragraph}{5}%
  {0pt}{.5\linespacing\@plus.7\linespacing}{-.5em}%
  {\normalfont\normalsize\bfseries}{Acknowledgements}%
  \phantomsection%
  }{}
\setlist[enumerate]{nosep,label=(\roman*),font=\normalfont,leftmargin=2.8em}
\newcounter{theoremcounter}
\numberwithin{theoremcounter}{section}
\let\c@theoremcounter\c@subsection
\numberwithin{theoremcounter}{subsection}
\numberwithin{equation}{subsection}
\theoremstyle{plain}
\newtheorem{theorem}[theoremcounter]{Theorem}
\newtheorem*{theorem*}{Theorem}
\newtheorem{corollary}[theoremcounter]{Corollary}
\newtheorem{lemma}[theoremcounter]{Lemma}
\newtheorem{proposition}[theoremcounter]{Proposition}
\theoremstyle{remark}
\newtheorem{remark}[theoremcounter]{Remark}
\newtheorem{question}[theoremcounter]{Question}
\theoremstyle{definition}
\newtheorem{definition}[theoremcounter]{Definition}
\newtheorem{situation}[theoremcounter]{Situation}
\theoremstyle{definition}
\newtheorem{passage*}[theoremcounter]{}
\newenvironment{passage}[1][]
  {\begin{passage*}%
  \if\relax\detokenize{#1}\relax\else
    \textbf{#1.}%
  \fi}
  {\end{passage*}}
\newcounter{char}
\ifnum\value{char}<27
\edef\csname\Alph{char}bb\endcsname{\noexpand\mathbb{\Alph{char}}}
\edef\csname\Alph{char}ca\endcsname{\noexpand\mathcal{\Alph{char}}}
\edef\csname\Alph{char}sc\endcsname{\noexpand\mathscr{\Alph{char}}}
\edef\csname\Alph{char}fr\endcsname{\noexpand\mathfrak{\Alph{char}}}
\edef\csname\alph{char}fr\endcsname{\noexpand\mathfrak{\alph{char}}}
\edef\csname\Alph{char}bf\endcsname{\noexpand\mathbf{\Alph{char}}}
\edef\csname\alph{char}bf\endcsname{\noexpand\mathbf{\alph{char}}}
\edef\csname\Alph{char}rm\endcsname{\noexpand\mathrm{\Alph{char}}}
\edef\csname\alph{char}rm\endcsname{\noexpand\mathrm{\alph{char}}}
  \ifnum\value{char}=1\else
\edef\csname\Alph{char}\Alph{char}\endcsname{\noexpand\mathbb{\Alph{char}}}
\let\oldAA\AA
\renewcommand*{\AA}{\ifmmode\mathbb{A}\else\oldAA\fi}
\DeclareFontFamily{U}{wncy}{}
\DeclareFontShape{U}{wncy}{m}{n}{<->wncyr10}{}
\DeclareSymbolFont{mcy}{U}{wncy}{m}{n}
\DeclareMathSymbol{\Sha}{\mathord}{mcy}{"58}
\DeclareMathOperator{\Alb}{Alb}
\DeclareMathOperator{\AlterGrp}{A}
\DeclareMathOperator{\Aut}{Aut}
\DeclareMathOperator{\Bir}{Bir}
\DeclareMathOperator{\Bl}{Bl}
\DeclareMathOperator{\Center}{Z}
\DeclareMathOperator{\codim}{codim}
\DeclareMathOperator{\coker}{coker}
\DeclareMathOperator{\const}{const}
\DeclareMathOperator{\cor}{cor}
\DeclareMathOperator{\diag}{diag}
\DeclareMathOperator{\disc}{disc}
\DeclareMathOperator{\Div}{Div}
\DeclareMathOperator{\End}{End}
\DeclareMathOperator{\eval}{ev}
\DeclareMathOperator{\Eq}{Eq}
\DeclareMathOperator{\Ext}{Ext}
\DeclareMathOperator{\FM}{FM}
\DeclareMathOperator{\GL}{GL}
\DeclareMathOperator{\Hilb}{Hilb}
\DeclareMathOperator{\Ho}{H}
\DeclareMathOperator{\Hom}{Hom}
\DeclareMathOperator{\im}{im}
\DeclareMathOperator{\Ind}{Ind}
\DeclareMathOperator{\inflation}{inf}
\DeclareMathOperator{\Isom}{Isom}
\DeclareMathOperator{\kronecker}{\updelta}
\DeclareMathOperator{\Kummer}{Kum}
\DeclareMathOperator{\Map}{Map}
\DeclareMathOperator{\Mat}{Mat}
\DeclareMathOperator{\Mov}{Mov}
\DeclareMathOperator{\NS}{NS}
\DeclareMathOperator{\PGL}{PGL}
\DeclareMathOperator{\Pic}{Pic}
\DeclareMathOperator{\rank}{rk}
\DeclareMathOperator{\res}{res}
\DeclareMathOperator{\Res}{Res}
\DeclareMathOperator{\sgn}{sgn}
\DeclareMathOperator{\SL}{SL}
\DeclareMathOperator{\Sp}{Sp}
\DeclareMathOperator{\Stab}{Stab}
\DeclareMathOperator{\Sym}{Sym}
\newcommand*{\SymGrp}{\mathrm{S}}
\newcommand*{\isoarr}{\xrightarrow{\smash{\raisebox{-0.5ex}{\ensuremath{\scriptstyle\sim}}}}}
\newcommand*{\isodasharrow}{\overset{\smash{\raisebox{-0.5ex}{\ensuremath{\scriptstyle\sim}}}}{\dasharrow}}
\newcommand*{\notdivide}{\nmid}
\DeclarePairedDelimiter{\abs}{\lvert}{\rvert}
\newcommand*{\GrpIndex}[2]{\left({#1}:{#2}\right)}
\newcommand*{\ab}{\mathrm{ab}}
\newcommand*{\inv}{{-1}}
\newcommand*{\restr}[2]{\left.#1\right|_{#2}}
\newcommand*{\canosheaf}{\omega}
\newcommand*{\Diagonal}{\mathrm{\Delta}}
\newcommand*{\id}{\operatorname{id}}
\newcommand*{\pr}{\operatorname{pr}}
\newcommand*{\ROU}{\mu}
\newcommand*{\Category}[1]{\operatorname{\mathbf{#1}}}
\DeclareMathOperator{\CatCoh}{\Category{Coh}}
\DeclareMathOperator{\Db}{\Category{D}^{\mathrm{b}}}
\newcommand*{\kbar}{\overline{k}}
\newcommand*{\QQbar}{\overline{\QQ}}
\newcommand*{\DbG}[1]{\operatorname{\Category{D}}^{\mathrm{b}}_{#1}}
\newcommand*{\dualisog}{\widehat}
\newcommand*{\dualpol}[1]{#1^{\delta}}
\newcommand*{\forget}{\mathrm{for}}
\newcommand*{\HeckeCong}[1]{\mathrm{\Gamma}_{0}{(#1)}}
\DeclareMathOperator{\HilbChow}{HC}
\newcommand*{\SigmaMukai}{\mathrm{\Sigma}}
\newcommand*{\otimesZ}{\otimes_{\ZZ}}
\newcommand*{\shapiro}{\operatorname{sh}}
\newcommand*{\Sn}{\SymGrp_{n}}
\newcommand*{\StdRep}{\mathrm{\Gamma}}
\newcommand*{\Ulattice}{\operatorname{U}}
\newcommand*{\eg}{e.g.\@\xspace}
\newcommand*{\ie}{i.e.\@\xspace}
\newcommand*{\cf}{cf.\@\xspace}
\newcommand*{\Cf}{See\xspace}
\newcommand*{\loccit}{loc.~cit\xperiod}
\let\oldiff\iff
\renewcommand*{\iff}{\ifmmode\oldiff\else{if and only if}\xspace\fi}
\newcommand*{\resp}{respectively\xspace}
\newcommand*{\rom}[1]{\textrm{(\romannumeral #1)}}
\newcounter{itemnumcounter}[theoremcounter] 
\newcommand*{\itemnum}{\refstepcounter{itemnumcounter}\textnormal{(\roman{itemnumcounter})}\xspace}
\title[Derived equivalences of generalized Kummer varieties]{Derived equivalences of generalized Kummer varieties}
\author{Pablo Magni}
\address{}
\email{pmagni@math.uni-bielefeld.de}
\begin{document}

\begin{abstract}
  In this article we study derived (auto)equivalences of generalized Kummer varieties~$\Kummer^n(A)$.
  We provide an answer to a question raised by Namikawa by showing that the generalized Kummer varieties $\Kummer^n(A)$ and $\Kummer^n(A^\vee)$ are derived equivalent as long as $n$ is even and the abelian surface $A$ admits a polarization whose exponent is coprime to $n+1$.
  Furthermore we obtain exact sequences involving groups of autoequivalences in the style of Orlov's short exact sequence for autoequivalences of abelian varieties.
\end{abstract}

\maketitle


\section*{Introduction}

\counterwithout{theoremcounter}{section}
\preto\section{\counterwithin{theoremcounter}{section}}

In this article we are concerned with derived equivalences of certain hyperkähler varieties, the generalized Kummer varieties.\footnotemark\
\footnotetext{We work over an algebraically closed field $\Bbbk$ of characteristic zero in the introduction.
By (derived) equivalences and the notation ``$\Eq(\Db(-),\Db(-))$'' we mean in this text exact $\Bbbk$-linear equivalences of bounded derived categories of coherent sheaves on smooth projective varieties over $\Bbbk$, \ie Fourier--Mukai transforms, \cf\cite{Orl:97,Bal:09}.
}
By Bondal--Orlov~\cite{BO:01}, if two varieties $X$ and $Y$ with ample or anti-ample canonical sheaf $\canosheaf_X$ are derived equivalent, then they are already isomorphic, so it is natural to investigate the contrasting case of a trivial canonical sheaf $\canosheaf_X\simeq\Oca_X$.
By the Beauville–-Bogomolov decomposition theorem~\cite{Bog:74, Bea:83} there are three building blocks for these varieties: 1) abelian varieties, 2) strict Calabi--Yau varieties, and 3) hyperkähler varieties.

The case of derived equivalences of abelian varieties is well-understood by work of Mukai, Polishchuk, and Orlov \cite{Muk:81, Pol:96, Orl:02}.
In particular an abelian variety $A$ and its dual $A^\vee$ are derived equivalent, while usually not isomorphic, \cf\cite[Thm.~2.2]{Muk:81}.
They explain \mbox{(auto-)}equivalences between two abelian varieties $A$ and $B$ in terms of `symplectic isomorphisms', which are certain isomorphism $A\times A^{\vee}\to B\times B^{\vee}$, \cf\cref{def:symplectic-isomorphism}.
We denote the set of symplectic isomorphisms by $\Sp(A,B)$, or $\Sp(A)$ when $A=B$.
The following theorem summarizes the situation.

\begin{theorem*}[Orlov]
   We have a short exact sequence of groups
   \[
    0\to\ZZ\times A\times A^{\vee}\to \Aut(\Db(A))\to \Sp(A)\to0,
  \]
  and a surjective map
  \[
    \Eq(\Db(A),\Db(B))\twoheadrightarrow\Sp(A,B).
  \]
\end{theorem*}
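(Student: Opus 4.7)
The plan is to realize every equivalence as a Fourier--Mukai transform via Orlov's representability theorem and then extract the symplectic isomorphism from how conjugation acts on a natural subgroup of translation-twist autoequivalences. The group $A\times A^\vee$ embeds into $\Aut(\Db(A))$ via $(a,L)\mapsto[\mathcal{F}\mapsto T_a^*\mathcal{F}\otimes L]$, where $T_a$ denotes translation by $a$. The crucial step is to show that for any equivalence $\Phi:\Db(A)\to\Db(B)$, conjugation by $\Phi$ sends this embedded subgroup isomorphically onto the analogous one for $B$, yielding a group isomorphism $f_\Phi:A\times A^\vee\to B\times B^\vee$. This uses an intrinsic characterization of the translation-twist subgroup, for instance as the connected component of the identity in a suitable family of autoequivalences, or via the action on point-like and line-bundle-like objects. \emph{Symplecticity} of $f_\Phi$ then follows from the preservation of the Mukai/Euler pairing under any derived equivalence.

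To identify the kernel of $\Aut(\Db(A))\to\Sp(A)$, I would examine autoequivalences $\Phi$ that commute (up to natural isomorphism) with all translations and all line-bundle twists. On the level of Fourier--Mukai kernels on $A\times A$, commutativity with translations forces the kernel to be supported on a translate of the diagonal, and commutativity with twists forces it to be a shifted line bundle supported there. This produces exactly the three commuting subgroups $\ZZ$ (shifts), $A$ (translations), and $A^\vee$ (twists), accounting for the whole kernel and yielding the short exact sequence.

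The main obstacle is \emph{surjectivity}: given a symplectic isomorphism $f:A\times A^\vee\to B\times B^\vee$, construct an equivalence $\Phi$ realizing it. The strategy is to factor $f$ into a product of standard generators of the symplectic group --- isogenies $A\to B$, line-bundle twists, and Mukai's ``Fourier'' swap $A\times A^\vee\isoarr A^\vee\times A$ --- each of which admits an explicit Fourier--Mukai kernel (the structure sheaf of a graph, a line bundle on the diagonal, and the Poincaré bundle on $A\times A^\vee$, respectively). Composing these kernels produces the desired equivalence. The delicate points are establishing a Bruhat-style generation statement for the symplectic group over the integers and verifying that the composition of the chosen kernels genuinely induces $f$; once this is done, surjectivity of both maps follows at once, since the argument works equally well between $A$ and an arbitrary $B$.
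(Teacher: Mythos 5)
First, note that the paper does not actually prove this statement: it is Orlov's theorem, quoted with the reference \cite[Thm.~4.14, Prop.~4.11, Prop.~4.12]{Orl:02} (see \cref{thm:orlov}), so your proposal is being measured against the published arguments of Orlov and Polishchuk rather than against a proof in the text. Your treatment of the first two assertions is a fair sketch of the standard route: the map to $\Sp(A,B)$ is obtained from conjugation on the translation--twist subgroup (this is \cite[Cor.~9.58]{Huy:FM}, which the paper itself uses later), and the identification of the kernel with $\ZZ\times A\times A^{\vee}$ via Fourier--Mukai kernels supported on the diagonal is essentially Orlov's argument.

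The genuine gap is in surjectivity, which is where the content of the theorem lies. Your plan --- factor $f$ into elementary generators and realize each generator by an explicit kernel --- is closer to Polishchuk's strategy than to Orlov's; Orlov instead shows that any $f$ whose entry $f_2\colon A^{\vee}\to B$ is an isogeny is induced by a simple semi-homogeneous vector bundle on $A\times B$ of slope determined by $f$ (this construction is recalled in detail in \cref{sec:equivariant-semi-homogeneous}), and then reduces the general case to this one. Your ``Bruhat-style generation statement for the symplectic group over the integers'' is precisely the step you cannot take for granted: $\Sp(A)$ is an arithmetic group depending on $\End(A)$, and it is not generated by the obvious elementary elements in general, nor is it clear that every element of the non-principal homogeneous space $\Sp(A,B)$ factors through your chosen generators. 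Indeed, the paper's \cref{rmk:orlov_surjectivity_fix} points out that the surjectivity claim has a known gap even in Orlov's own argument (in the reduction to the isogeny case), repaired by \cite{LT:17} only for simple abelian varieties; the present paper sidesteps the issue because the invariant symplectic isomorphisms it needs have entries that are isogenies or zero. So the surjectivity half of your proposal rests on an unproved key lemma, and the assertion in that generality is genuinely delicate.
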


In contrast, not much is known in the case of strict Calabi--Yau varieties.
By \cite{Bri:02} two strict Calabi--Yau threefolds which are birationally equivalent are also derived equivalent.
This establishes a special case of a conjecture of Bondal--Orlov~\cite{BO:95}, which predicts that K-equivalent varieties should be derived equivalent.

The hyperkähler case remains tractable yet interesting.
Much has been done in dimension~2, \ie about K3 surfaces, see \cite[Ch.~16]{Huy:K3} for an overview.
In higher dimensions, there are four known types of hyperkähler varieties up to deformations, namely Hilbert schemes of points $\Hilb^n(S)$ where $S$ is a K3 surface, generalized Kummer varieties $\Kummer^n(A)$ where $A$ is an abelian surface, and O'Grady's exceptional examples of dimension 6 and of dimension 10, \cf\cite{Bea:83,OGr:99,OGr:03}.
Ploog considers Hilbert schemes of points in \cite{Plo:07} and shows that a derived equivalence of smooth projective surfaces $S$ and $S'$ induces a derived equivalence $\Db(\Hilb^n(S))\simeq\Db(\Hilb^n(S'))$.
In particular, since an abelian surface $A$ and its dual abelian surface $A^{\vee}$ are derived equivalent, we have $\Db(\Hilb^n(A))\simeq\Db(\Hilb^n(A^{\vee}))$.
In \cite{Nam:02, Nam:02b} Namikawa raised the question whether the generalized Kummer varieties $\Kummer^n(A)$ and $\Kummer^n(A^{\vee})$ are derived equivalent.
In the case of Kummer surfaces, \ie $n=1$, they are in fact isomorphic, \cf\cite{HLOY:03}.
In this article we concern ourself with this question and reach the following answer:

\begin{theorem}[\cref{thm:main-theorem-generalized-kummer-derived-equivalence,thm:main-theorem-generalized-kummer-derived-equivalence-part2}]\label{thm:main_kum_two_derived_eq}
  Let $m\in\NN$ be even and assume there exists a polarization $\lambda\colon A\to A^\vee$ whose exponent~$\erm(\lambda)$ satisfies $\gcd(m+1,\erm(\lambda))=1$.
  Then there exist a derived equivalence
  \[
    \Db(\Kummer^m(A))\simeq\Db(\Kummer^m(A^{\vee})).
  \]
\end{theorem}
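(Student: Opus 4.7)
The strategy is to reduce the statement to an equivariant problem on an auxiliary abelian variety, and then to construct the equivariant equivalence by adapting Orlov's theorem to that setting. Write $\sigma_A \colon A^{m+1} \to A$ for the summation map and $N(A) = \kernel(\sigma_A) \subset A^{m+1}$ for its kernel, an abelian subvariety on which the symmetric group $\SymGrp_{m+1}$ acts by coordinate permutation. Since $\Kummer^m(A)$ is a crepant resolution of the quotient $N(A)/\SymGrp_{m+1}$, a Bridgeland--King--Reid / Haiman-type derived McKay correspondence should yield an equivalence
\[
  \Db(\Kummer^m(A)) \simeq \DbG{\SymGrp_{m+1}}(N(A)),
\]
and the analogous statement for $A^\vee$. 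Granting this reduction, the theorem reduces to producing an $\SymGrp_{m+1}$-equivariant equivalence between $\DbG{\SymGrp_{m+1}}(N(A))$ and $\DbG{\SymGrp_{m+1}}(N(A^\vee))$.

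I would build this equivariant equivalence in the spirit of Orlov's theorem recalled above, namely by exhibiting an $\SymGrp_{m+1}$-equivariant symplectic isomorphism
\[
  N(A) \times N(A)^{\vee} \isoarr N(A^\vee) \times N(A^\vee)^{\vee}
\]
and lifting it to an $\SymGrp_{m+1}$-equivariant Fourier--Mukai kernel. The starting point is that the $(m+1)$-fold external power of the Poincaré equivalence $\Db(A) \simeq \Db(A^\vee)$ is $\SymGrp_{m+1}$-equivariant on $A^{m+1}$ (this underlies Ploog's argument for Hilbert schemes). However, this ambient equivalence does not restrict to the kernels of the summation maps: the naïve candidate $\lambda^{m+1}$ sends $N(A)$ to $N(A^\vee)$ only as an isogeny. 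The remedy is to combine $\lambda$, a formal inverse $\lambda^{-1}$ (meaningful modulo the relevant torsion by the coprimality hypothesis), and the biduality $A \simeq (A^\vee)^\vee$ into a matrix defining a symplectic isomorphism of the dual pairs $(N(A), N(A)^\vee)$ and $(N(A^\vee), N(A^\vee)^\vee)$.

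Here the two arithmetic hypotheses enter decisively. The coprimality $\gcd(m+1, \erm(\lambda)) = 1$ lets one invert $\erm(\lambda)$ modulo $m+1$, turning the formal fractional entries of the matrix into honest integral maps compatible with the restriction from $A^{m+1}$ to $N(A)$ and from $(A^\vee)^{m+1}$ to $N(A^\vee)$. The assumption that $m$ is even, equivalently that $m+1$ is odd, controls a sign ambiguity coming from the standard representation of $\SymGrp_{m+1}$ on $N(A) \simeq A^m$ together with the inversion automorphism $[-1] \in \Aut(A)$; only in the odd case does a coherent sign choice exist that makes the matrix genuinely $\SymGrp_{m+1}$-equivariant.

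The main obstacle, I expect, will be the lifting step: showing that the chosen equivariant symplectic isomorphism arises from an actual $\SymGrp_{m+1}$-equivariant Fourier--Mukai kernel. In Orlov's non-equivariant theorem, the lift is achieved by a Mukai-type line bundle on the graph of the symplectic isomorphism and is essentially unique up to twist; equivariantly one must additionally trivialise a class in group cohomology measuring the failure of this line bundle to be $\SymGrp_{m+1}$-linearisable. A careful choice of the $\lambda^{-1}$-data, using once more that $\erm(\lambda)$ is invertible modulo $m+1$ and that $m+1$ is odd, should make this obstruction vanish. Combining the resulting equivariant Fourier--Mukai transform on $N(A)$ with the McKay equivalences on both sides then yields the desired derived equivalence $\Db(\Kummer^m(A)) \simeq \Db(\Kummer^m(A^\vee))$.
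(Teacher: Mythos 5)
Your skeleton---the McKay reduction to $\DbG{\SymGrp_{m+1}}(A\otimesZ\StdRep_{m+1})$, an invariant symplectic isomorphism built from $\lambda$ and Bézout coefficients for $\gcd(m+1,\erm(\lambda))=1$, and an Orlov-type lifting---is indeed the paper's approach. But there are two genuine gaps.

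First, you misplace the role of the parity hypothesis. Writing $n=m+1$, the $\Sn$-invariant symplectic isomorphism exists for \emph{every} $n$ with $\gcd(n,\erm(\lambda))=1$, regardless of parity: there is no ``sign ambiguity'' in the standard representation to resolve, only the solvability of the Bézout equation $a_1a_4\,\erm(\lambda)-a_2a_3\,n=1$. Where $n$ odd actually enters is in two distinct cohomological lifting steps that your outline compresses into one. Step one, which you omit entirely: passing from an invariant symplectic isomorphism to an invariant Fourier--Mukai equivalence is a non-abelian group cohomology problem for Orlov's short exact sequence, and it succeeds because $\Ho^1(\Sn,\ZZ\times(A\otimesZ\StdRep_n)\times(A\otimesZ\StdRep_n)^{\vee})$ vanishes; the key computation is $\Ho^1(\Sn,A\otimesZ\StdRep_n)\simeq A[2][n]$, which is zero precisely when $n$ is odd and gives a genuine obstruction valued in $A[2]$ when $n$ is even. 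Step two: passing from an invariant kernel to a $\Diagonal\Sn$-equivariant one meets the Schur multiplier $\Ho^2(\Sn,\Bbbk^{\times})\simeq\ZZ/2\ZZ$ for $n\geq4$, and parity enters there a second time, in a completely different way.

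Second, for $n\geq5$ the assertion that ``a careful choice of the $\lambda^{-1}$-data should make this obstruction vanish'' is not an argument; this is where most of the work lives. The paper has to go inside Orlov's and Mukai's constructions: it realizes the kernel as a simple summand $\Eca_0$ of the semi-homogeneous bundle $[n]_{*}\Lca^{\otimes n}$ for an explicitly $\Sn$-equivariant line bundle $\Lca$ (a restricted box power), proves by an $\End$-dimension count that the Jordan--Hölder filtration splits, shows via further vanishing of $\Ho^1$ and $\Ho^2$ with coefficients in $\StdRep_n^{\vee}$-twists that some summand $\Eca_0\otimes\Pca_{\alpha_0}$ is $\Sn$-invariant, and finally descends the equivariant structure from $\Eca_0^{\oplus r}$ to $\Eca_0$ using that $r=n^{2n-4}$ is odd and every odd-dimensional projective representation of $\Sn$ is linear. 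None of these ideas is present in your outline, so the lifting step remains unproved as stated.
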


When $\End(A)=\ZZ$, the assumption reduces to $\gcd(m+1,\deg(\lambda))=1$, where $\lambda\colon A\to A^\vee$ is the polarization of minimal degree.
In \cref{rmk:main-thm-sharp} we explain that this is sharp in a certain sense.
%
Since generalized Kummer varieties associated to dual abelian varieties as in the theorem are in general not birational, the theorem provides examples of derived equivalent non-birational hyperkähler varieties.

We also study autoequivalences of generalized Kummer varieties in the style of Orlov's and Ploog's short exact sequences for abelian varieties \cite{Orl:02}, and Kummer surfaces \cite{Plo:07}, \resp.

We recall the definition of a generalized Kummer variety $\Kummer^{n-1}(A)$, where $A$ is an abelian surface.
Beauville \cite{Bea:83} defines them as the fiber of the albanese map $\Sigma\circ\mathrm{HC}$, which is an isotrivial fibration, where $\mathrm{HC}\colon\Hilb^{n}(A)\to\Sym^{n}(A)$ is the Hilbert--Chow morphism and $\Sigma\colon\Sym^{n}(A)\to A$ denotes the summation map,
and checks that they are hyperkähler varieties.
The following alternative description is more suitable for our investigations.
Consider the kernel $A\otimes\StdRep_{n}$ of the summation map $\Sigma\colon A^{n}\to A$, see~\cref{sec:standard_rep} for an explanation of the notation.
The symmetric group $\Sn$ acts by coordinate permutations on $A^n$ and trivially on $A$, so we get a diagram of fiber sequences
\[\begin{tikzcd}
  A\otimes\StdRep_{n} \arrow[r, hook] \arrow[d] & A^n \arrow[r, two heads, "\Sigma"] \arrow[d] & A \arrow[d] \\
  (A\otimes\StdRep_{n})/\Sn \arrow[r, hook] & \Sym^{n}(A) \arrow[r, two heads, "\Sigma"] & A.
\end{tikzcd}\]
Now $\Sym^{n}(A)$ is singular, but it admits a crepant resolution in form of the Hilbert--Chow morphism $\Hilb^n(A)\to\Sym^{n}(A)$.
Haiman \cite{Hai:01} provides an identification $\Hilb^n(A)\simeq\Hilb^{\Sn}(A^n)$ of the usual Hilbert scheme of points with  Nakamura's equivariant Hilbert scheme of clusters, \cf~\cite{IN:96,Rei:97}.
Similarly $(A\otimes\StdRep_{n})/\Sn$ is singular, but it admits a crepant resolution of singularities 
\[
  \Kummer^{n-1}(A)\coloneqq\Hilb^{\Sn}(A\otimes\StdRep_n)\to(A\otimes\StdRep_{n})/\Sn,
\]
which is once again the generalized Kummer variety associated to $A$.

To study the derived category of $\Kummer^{n-1}(A)$, the derived McKay correspondence \cite{BKR:01} comes into play, which in this case says that we have equivalences of derived categories
\[
  \Db(\Hilb^n(A))\simeq\Db(\Hilb^{\Sn}(A^n))\simeq\DbG{\Sn}(A^n)
\]
and
\[
  \Db(\Kummer^{n-1}(A))\simeq\Db(\Hilb^{\Sn}(A\otimes\StdRep_n))\simeq\DbG{\Sn}(A\otimes\StdRep_n)
\]
\resp, \cf\cite[Cor.~1.3]{BKR:01}; here $\DbG{\Sn}(-)$ denotes the derived category of $\Sn$-equivariant coherent sheaves.
See \cite[§1]{Plo:07} for an overview of the equivariant setup.
At this point the reader might wonder why the derived equivalence of $A$ and $A^{\vee}$ (induced by the Poincaré bundle) does not immediately yield a derived equivalence of generalized Kummer varieties $\Kummer^{n-1}(A)$ and $\Kummer^{n-1}(A^{\vee})$.
Indeed, while $(A\otimes\StdRep_n)^{\vee}$ is isomorphic as an abelian variety to $A^{\vee}\otimes\StdRep_n$, the $\Sn$-actions are different.
Instead, we have $(A\otimes\StdRep_n)^{\vee}\simeq A^{\vee}\otimes\StdRep_{n}^{\vee}$, and $\StdRep_{n}\not\simeq\StdRep_{n}^{\vee}$ are non-isomorphic $\ZZ[\Sn]$-modules for $n\geq3$.
\Cf\cite{Nam:02b} and \cite[§4.4]{Plo:Thesis} for related discussions.

We are lead to consider $\Sn$-invariant equivalences as a first step towards equivalences of $\Sn$-equivariant derived categories.
Recall that a derived equivalence $\FM_\Pca$ is called $\Sn$-invariant if its kernel~$\Pca$ satisfies $\sigma^{*}\Pca\simeq\Pca$ for every $\sigma\in\Sn$.

\begin{theorem}\label{thm:intro-main-thm-ses}
  Assume $n$ is odd, then we have a short exact sequence
  \[
    0\to \ZZ\times A[n]\to \Aut(\Db(A\otimes\StdRep_n))^{\Sn}\to \Sp(A\otimes\StdRep_n)^{\Sn}\to 0.
  \]
  If $n\neq2,4$ is even, we have an exact sequence of pointed sets
  \[
    0\to\ZZ\times A[n]\to \Aut(\Db(A\otimesZ\StdRep_n))^{\Sn} \to\Sp(A\otimes\StdRep_n)^{\Sn}\xrightarrow{\delta} A[2].
  \]
\end{theorem}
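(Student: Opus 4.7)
The plan is to derive both exact sequences from Orlov's short exact sequence applied to the abelian variety $B \coloneqq A \otimes \StdRep_n$, namely
\[
  0 \to \ZZ \times B \times B^\vee \to \Aut(\Db(B)) \to \Sp(B) \to 0,
\]
by passing to $\Sn$-invariants, where $\Sn$ acts on $B$ by coordinate permutations (restricted from $A^n$).

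\textbf{Step 1 (set up the six-term sequence).} I would first check that Orlov's sequence is $\Sn$-equivariant: the three generating types of kernel elements (shifts, translations by points of $B$, tensoring with line bundles from $B^\vee$) transform covariantly under the $\Sn$-action on $B$. Since the kernel $\ZZ \times B \times B^\vee$ is abelian and normal in the middle group, the standard six-term exact sequence of non-abelian $\Sn$-cohomology applies and yields
\[
  0 \to (\ZZ \times B \times B^\vee)^{\Sn} \to \Aut(\Db(B))^{\Sn} \to \Sp(B)^{\Sn} \xrightarrow{\delta} \Ho^{1}(\Sn, \ZZ \times B \times B^\vee).
\]

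\textbf{Step 2 (invariants of the kernel).} Clearly $\ZZ^{\Sn} = \ZZ$. From the defining sequence $0 \to B \to A^n \xrightarrow{\Sigma} A \to 0$, the $\Sn$-invariants of $A^n$ form the diagonal copy of $A$, and $\Sigma$ restricted to this diagonal is multiplication by $n$, so $B^{\Sn} = A[n]$; the cokernel $A/nA$ vanishes by divisibility of $A(\kbar)$. Dualizing yields $0 \to A^\vee \to (A^\vee)^n \to B^\vee \to 0$ with $A^\vee$ sitting diagonally on the left, and a short transposition calculation on $(A^\vee)^n / \Delta A^\vee$ shows $(B^\vee)^{\Sn} = 0$. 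Altogether $(\ZZ \times B \times B^\vee)^{\Sn} = \ZZ \times A[n]$, matching the leftmost term of both claimed sequences.

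\textbf{Step 3 (the obstruction group).} It remains to compute, or at least control, $\Ho^{1}(\Sn, \ZZ \times B \times B^\vee) = \Ho^{1}(\Sn, B) \times \Ho^{1}(\Sn, B^\vee)$, using $\Ho^{1}(\Sn, \ZZ) = \Hom(\Sn, \ZZ) = 0$. By Shapiro and the abelianization $S_{n-1}^{\ab} = \ZZ/2$ (for $n \geq 3$), one has $\Ho^{1}(\Sn, A^n) = \Ho^{1}(S_{n-1}, A) = A[2]$ and also $\Ho^{1}(\Sn, A) = A[2]$. Feeding this into the long exact sequences for $B$ and $B^\vee$, and identifying the induced map $A[2] \to A[2]$ coming from $\Sigma$ as multiplication by $n$, one finds: for $n$ odd this map is an isomorphism, so $\Ho^{1}(\Sn, B)$ is squeezed between $A/nA = 0$ and $0$ and vanishes; a parallel argument for $B^\vee$ gives $\Ho^{1}(\Sn, B^\vee) = 0$, yielding the short exact sequence. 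For $n$ even, the map is zero and the long exact sequences produce a single copy of $A[2]$ sitting in $\Ho^{1}(\Sn, B) \oplus \Ho^{1}(\Sn, B^\vee)$, which then provides the target of $\delta$.

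\textbf{Main obstacle.} The main obstacle is the careful bookkeeping in Step 3: one must pin down which summand of $\Ho^{1}(\Sn, \ZZ \times B \times B^\vee)$ is actually reached by $\delta$, so that its image lies in a single $A[2]$ rather than in a larger product, and verify that the connecting map $\delta$ coincides with the one expected from the non-abelian cohomology formalism. The exclusions $n = 2, 4$ presumably reflect that $S_2$ is abelian and $S_4$ carries the extra Klein-four normal subgroup $V_4$, both of which perturb the Shapiro identification $\Ho^{1}(\Sn, A^n) \cong \Ho^{1}(S_{n-1}, A)$ and introduce additional cohomology classes that would need to be tracked separately.
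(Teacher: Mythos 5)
Your overall strategy coincides with the paper's: apply the six-term non-abelian cohomology sequence to Orlov's short exact sequence for $B=A\otimesZ\StdRep_n$, compute the $\Sn$-invariants of the kernel via the diagonal, and compute $\Ho^1$ of the kernel via Shapiro's lemma and the defining sequences of $\StdRep_n$ and $\StdRep_n^{\vee}$. Steps 1 and 2, and the computation of $\Ho^1(\Sn,B)$ in Step 3 (including the identification of $\Sigma_*$ with multiplication by $n$ via $\cor\circ\res=n$ and Nakaoka's stability in degree $1$), match \cref{prop:orlovs_map_gamma_is_equivariant}, \cref{prop:stdrep_h0}, \cref{prop:stdrep_coho_ses} and \cref{cor:stdrep_coho_h1_ses}.

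The gap is the claim that ``a parallel argument for $B^{\vee}$ gives $\Ho^1(\Sn,B^{\vee})=0$''. Since $B^{\vee}$ is presented as a cokernel, $0\to A^{\vee}\xrightarrow{\Delta}(A^{\vee})^n\to B^{\vee}\to0$, the long exact sequence sandwiches $\Ho^1(\Sn,B^{\vee})$ between $\coker(\Delta^1_*)$ and $\ker(\Delta^2_*)$, and under Shapiro $\Delta_*$ corresponds to \emph{restriction}, not corestriction. The degree-$1$ cokernel does vanish, but controlling $\ker\bigl(\res\colon\Ho^2(\Sn,A^{\vee})\to\Ho^2(\SymGrp_{n-1},A^{\vee})\bigr)$ via Nakaoka requires $2<n/2$, i.e.\ $n\geq5$. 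For $n=3$ — a case your theorem covers, and the one needed for Kummer fourfolds — your sketch proves nothing; the paper fills this in (\cref{prop:h1-dual-stdrep-vanishing}) using the universal coefficient theorem, the vanishing of the Schur multiplier $\Ho_2(\SymGrp_3,\ZZ)=0$, and the isomorphism $(\SymGrp_2)^{\ab}\isoarr(\SymGrp_3)^{\ab}$. (For the divisible coefficients at hand one can shortcut: $\Ho^2(\SymGrp_3,A^{\vee})\simeq\Ext_{\ZZ}^1(\ZZ/2\ZZ,A^{\vee})=A^{\vee}/2A^{\vee}=0$.) Relatedly, your diagnosis of the exclusions is off: the Shapiro identification is not ``perturbed'' for any $n$; rather, $n=4$ fails because $\Ho_2(\SymGrp_4,\ZZ)=\ZZ/2\ZZ$ forces $\Ho^1(\SymGrp_4,B^{\vee})\simeq A^{\vee}[2]\neq0$, and $n=2$ because $(B^{\vee})^{\SymGrp_2}=A^{\vee}[2]\neq0$.
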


For the full statement, including sequences for $n=2,4$, see~\cref{thm:fmaut_invariants_seqs}.
We invite the reader to contrast this with the analogous sequence for Kummer surfaces.
Furthermore, we calculate groups of $\Sn$-invariant symplectic isomorphisms as follows.

%

\begin{theorem}\label{prop:main_invariant_symplectic_isos}
  \begin{enumerate}
  \item 
  For every polarization $\lambda\colon A\to A^\vee$ of exponent $e=\erm(\lambda)$ we have an inclusion
  \begin{equation}\label{eq:thm3inclusion}
     \HeckeCong{n e}\subset\Sp(A\otimes\StdRep_n)^{\Sn},
  \end{equation}
  where $\HeckeCong{n e}$ denotes the Hecke congruence subgroup of level $ne$.
  \item\label{item:thm3ii}
  If $\gcd(n,e)=1$ we have
  \[
    \Sp(A\otimes\StdRep_{n},A^{\vee}\otimes\StdRep_{n})^{\Sn}\neq\emptyset,
  \]
  and the latter becomes a right-torsor under $\Sp(A\otimes\StdRep_{n})^{\Sn}$.
  \item\label{item:thm3iii}
  If $\End(A)=\ZZ$ we can consider a polarization $\lambda_0$ of minimal degree $d=\erm(\lambda_0)^2$.
  Then the inclusion in (\ref{eq:thm3inclusion}) becomes an equality, and the condition $\gcd(n,d)=1$ in \rom{2} becomes necessary in addition to being sufficient.
  \end{enumerate}
\end{theorem}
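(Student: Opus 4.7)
The plan is to describe $\Sp(A\otimes\StdRep_n)^{\Sn}$ and $\Sp(A\otimes\StdRep_n,A^\vee\otimes\StdRep_n)^{\Sn}$ as groups (resp.\ sets) of $2\times 2$ block matrices by computing $\Sn$-invariant Hom-spaces. Since $\StdRep_n$ is absolutely irreducible as a $\QQ[\Sn]$-module, Schur's lemma gives $\End_{\Sn}(\StdRep_n)=\ZZ=\End_{\Sn}(\StdRep_n^\vee)$, and both $\Hom_{\Sn}(\StdRep_n,\StdRep_n^\vee)$ and $\Hom_{\Sn}(\StdRep_n^\vee,\StdRep_n)$ are free of rank one, generated respectively by the natural composition $\phi\colon\StdRep_n\hookrightarrow\ZZ^n\twoheadrightarrow\StdRep_n^\vee$ and its partner $\psi$, which satisfy $\phi\psi=\psi\phi=n\cdot\id$; both are self-dual. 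Tensoring with $A$ (or $A^\vee$) identifies the $\Sn$-invariant parts of the four Hom-spaces appearing in a block-matrix description of the symplectic groups.

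For \rom{1}, I would define a group homomorphism $\HeckeCong{ne}\to\Sp(A\otimes\StdRep_n)^{\Sn}$ by sending $\begin{pmatrix}a&b\\c&d\end{pmatrix}$ with $c=fne$ to the block matrix
\[
\Phi=\begin{pmatrix} a\cdot\id & b\cdot e\lambda^{-1}\otimes\psi\\ f\cdot\lambda\otimes\phi & d\cdot\id\end{pmatrix}
\]
acting on $(A\otimes\StdRep_n)\times(A^\vee\otimes\StdRep_n^\vee)$. A direct computation, using the self-duality of $\lambda$, $\phi$, $\psi$ together with the identities $\lambda\cdot e\lambda^{-1}=e\cdot\id$ and $\phi\psi=n\cdot\id$, shows that the symplectic condition $\alpha^\vee\delta-\gamma^\vee\beta=\id$ (the symmetry constraints $\alpha^\vee\gamma=\gamma^\vee\alpha$ and $\beta^\vee\delta=\delta^\vee\beta$ being automatic) reduces to the single scalar equation $ad-bfne=ad-bc=1$. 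For \rom{3}, the assumption $\End(A)=\ZZ$ forces $\Hom(A,A^\vee)=\ZZ\cdot\lambda_0$ and $\Hom(A^\vee,A)=\ZZ\cdot e\lambda_0^{-1}$, so the block matrix above exhausts \emph{every} $\Sn$-invariant symplectic iso and the inclusion becomes an equality.

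For \rom{2}, an analogous block-matrix analysis with $X=A\otimes\StdRep_n$ and $Y=A^\vee\otimes\StdRep_n$ produces candidates
\[
\Phi=\begin{pmatrix} a\cdot\lambda\otimes\id & b\cdot\id\otimes\psi\\ c\cdot\id\otimes\phi & d\cdot e\lambda^{-1}\otimes\id\end{pmatrix},
\]
for which the symplectic condition reduces to $ade-bcn=1$. By B\'ezout this is solvable in integers precisely when $\gcd(n,e)=1$, establishing non-emptiness; the torsor property is the standard observation that $\Sp(X,Y)$ is a right torsor under $\Sp(X)$ via composition, compatible with $\Sn$-invariance. For the necessity in \rom{3}, under $\End(A)=\ZZ$ every $\Sn$-invariant element of $\Sp(X,Y)$ must have the form above, so $\gcd(n,e)>1$ (equivalently $\gcd(n,d)>1$, since $d=e^2$) forces $\Sp(X,Y)^{\Sn}=\emptyset$.

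The main technical obstacle lies in the careful bookkeeping of the symplectic condition in block form: while Schur's lemma confines each entry to a rank-one lattice, the composition of adjacent entries introduces multiplicative factors $n$ (from $\phi\psi$) and $e$ (from $\lambda\cdot e\lambda^{-1}$), which must combine exactly into the Hecke condition $\HeckeCong{ne}$ in \rom{1} and the B\'ezout condition $\gcd(n,e)=1$ in \rom{2}.
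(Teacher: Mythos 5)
Your proposal is correct and follows essentially the same route as the paper: compute the four $\Sn$-invariant Hom-lattices via Schur's lemma over $\overline{\QQ}$ plus integrality (yielding generators $\id$, $\phi_0$, and its partner with $\phi_0\circ\dualisog{\phi}_0=n$), write invariant symplectic isomorphisms as $2\times2$ block matrices, and reduce the symplectic condition to the scalar determinant identities $ad-bfne=1$ resp.\ $ade-bcn=1$, giving $\HeckeCong{ne}$ and the B\'ezout criterion. The only point you assert rather than argue is that under $\End(A)=\ZZ$ the lattice $\Hom(A^{\vee},A)$ is generated by $e\lambda_0^{-1}=\dualpol{\lambda_0}$ and that $d=e^2$ for the minimal polarization; these are not formal consequences of $\Hom(A^{\vee},A)\simeq\ZZ$ and require the surface-specific computation with polarization types that the paper carries out in its preliminaries on abelian surfaces.
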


We prove the preceding statements via a systematic study of the standard representation $\StdRep_n$ of the symmetric group $\Sn$.
In particular we calculate (in the stable range) the group cohomology $\Ho^{\bullet}(\Sn,\StdRep_n\otimesZ A)$ with arbitrary coefficients in an abelian group $A$ in terms of the group cohomology $\Ho^{\bullet}(\Sn,A)$ of the symmetric group, \cf\cref{prop:stdrep_coho_ses} and~\cref{prop:stdrep_dual_coho_vanishing_and_seq}.
For example, we get $\Ho^1(\SymGrp_n,\StdRep_{n}\otimesZ A)=0$ and $\Ho^1(\SymGrp_n,\StdRep_{n}^{\vee}\otimesZ A)=0$ when $n$ is odd and $A$ is $n$-divisible.
This usually falls into the realm of integral/modular representation theory.
For related considerations see~\cite{KS:99, Shc:04, Hem:09, CHN:10}.
With the information gained in these calculations, we can apply non-abelian group cohomology to Orlov's short exact sequence.
Taking the viewpoint that the non-abelian group cohomology group $\Ho^1(G,\Gamma)$ classifies $G$-equivariant $\Gamma$-torsors, see~\cref{sec:equivariant_torsors}, we can prove the following theorem.

\begin{theorem}[\cref{thm:invariant-equivalence}]\label{thm:main_invariant_derived_eq}
  Assume that $n$ is odd, and let $\lambda\colon A\to A^\vee$ be some polarization of exponent $\erm(\lambda)$.
  If $\gcd(n,\erm(\lambda))=1$, then
  \[
    \Eq(\Db(A\otimes\StdRep_n),\Db(A^{\vee}\otimes\StdRep_n))^{\Sn}\neq\emptyset
  \]
  and it is a right-torsor under $\Aut(\Db(A\otimes\StdRep_n))^{\Sn}$.
  If $\End(A)=\ZZ$, the converse is true when we take $\lambda$ to be the polarization of minimal degree.
\end{theorem}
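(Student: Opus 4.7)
The plan is to lift an $\Sn$-invariant symplectic isomorphism to an $\Sn$-invariant equivalence by a non-abelian group cohomology argument, in parallel with the analysis underlying \cref{thm:intro-main-thm-ses}. Set $X \coloneqq A\otimes\StdRep_n$ and $Y \coloneqq A^\vee\otimes\StdRep_n$, and let $K_X \coloneqq \ZZ\times X\times X^\vee$ denote the kernel in Orlov's short exact sequence for $\Aut(\Db(X))$. First, by part~(ii) of \cref{prop:main_invariant_symplectic_isos}, the hypothesis $\gcd(n,\erm(\lambda))=1$ yields $\Sp(X,Y)^{\Sn}\neq\emptyset$; fix an $\Sn$-invariant symplectic isomorphism $\varphi$ therein.

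Using the surjection $\Eq(\Db(X),\Db(Y))\twoheadrightarrow\Sp(X,Y)$ from Orlov's theorem recalled in the introduction, choose any lift $\FM\in\Eq(\Db(X),\Db(Y))$ of $\varphi$. For each $\sigma\in\Sn$, the autoequivalence $c(\sigma)\coloneqq\FM^{-1}\circ\sigma^{*}\FM\in\Aut(\Db(X))$ maps to $\varphi^{-1}\circ\sigma^{*}\varphi=\id$ in $\Sp(X)$, hence lies in $K_X$; a direct computation verifies the cocycle identity $c(\sigma\tau)=c(\sigma)\cdot\sigma^{*}c(\tau)$. Replacing $\FM$ by $\FM\circ k$ for $k\in K_X$ alters $c$ by a coboundary, so the class $[c]\in\Ho^{1}(\Sn,K_X)$ is precisely the obstruction to finding an $\Sn$-invariant lift of $\varphi$.

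To see that $[c]=0$, decompose
\[
  \Ho^{1}(\Sn,K_X) = \Ho^{1}(\Sn,\ZZ)\oplus\Ho^{1}(\Sn,\StdRep_n\otimesZ A)\oplus\Ho^{1}(\Sn,\StdRep_n^{\vee}\otimesZ A^\vee).
\]
The first summand vanishes since $\Sn^{\ab}\simeq\ZZ/2$ admits no non-trivial homomorphism to $\ZZ$. The remaining two summands vanish for $n$ odd by \cref{prop:stdrep_coho_ses} and \cref{prop:stdrep_dual_coho_vanishing_and_seq}, since $A$ and $A^\vee$ are divisible as abstract abelian groups. Thus an $\Sn$-invariant lift of $\varphi$ exists, so $\Eq(\Db(X),\Db(Y))^{\Sn}\neq\emptyset$; the right torsor structure under $\Aut(\Db(X))^{\Sn}$ then follows formally, since the ratio $\FM^{-1}\circ\FM'$ of two $\Sn$-invariant equivalences is automatically $\Sn$-invariant, and freeness is inherited from the ambient torsor structure of $\Eq(\Db(X),\Db(Y))$ under $\Aut(\Db(X))$.

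For the converse when $\End(A)=\ZZ$ and $\lambda$ is of minimal degree, any element of $\Eq(\Db(X),\Db(Y))^{\Sn}$ projects under Orlov's surjection to an element of $\Sp(X,Y)^{\Sn}$, so non-emptiness of the former forces non-emptiness of the latter; the necessity clause in part~(iii) of \cref{prop:main_invariant_symplectic_isos} then yields $\gcd(n,\erm(\lambda))=1$. The real substance of the argument lies in the cohomological vanishing invoked in the previous paragraph, but since this is handled elsewhere by the systematic study of the standard representation, what remains here is a purely formal exercise in non-abelian cohomology.
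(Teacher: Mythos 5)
Your proof is correct and follows essentially the same route as the paper: both arguments reduce the existence of an $\Sn$-invariant lift of an invariant symplectic isomorphism to the vanishing of $\Ho^1(\Sn,\ZZ\times(A\otimes\StdRep_n)\times(A\otimes\StdRep_n)^\vee)$ for odd $n$, the paper packaging this via classes of equivariant torsors and the six-term non-abelian cohomology sequence while you package it via an explicit cocycle attached to a chosen lift of $\varphi$. One small citation fix: for $n=3$ the vanishing of $\Ho^1(\SymGrp_3,\StdRep_3^{\vee}\otimesZ A^{\vee})$ is not covered by \cref{prop:stdrep_dual_coho_vanishing_and_seq} (which requires $k<n/2-1$) but by \cref{prop:h1-dual-stdrep-vanishing}.
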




To conclude our main theorem from this, we need to go from $\Sn$-invariant equivalences to equivalences of equivariant derived categories.
We accomplish this using the techniques in \cite{Plo:07} which we recall in \cref{sec:ploogs-method}.
In particular, this lets us enhance an invariant equivalence to an equivalence of equivariant derived categories as long as we have control over the obstructions in the Schur multiplier $\Ho^2(\Sn,\Bbbk^{\times})$.
Luckily, for $n=3$ we have $\Ho^2(\SymGrp_{3},\Bbbk^{\times})=0$.
To handle a general odd integer~$n$, we need to delve into the constructions of Orlov~\cite[§4]{Orl:02} and Mukai~\cite{Muk:78} in \cref{sec:equivariant-semi-homogeneous}.

\begin{remark}\label{rmk:main-thm-sharp}
  \cref{thm:main_kum_two_derived_eq} is sharp in the following sense:
  Let $G$ be a finite group acting on two varieties $X$ and $Y$.
  A derived equivalence $\DbG{G}(X)\isoarr\DbG{G}(Y)$ is called \emph{inflated} if its Fourier--Mukai kernel lies in the image of the inflation map $\inflation_{\Diagonal G}^{G\times G}$.
  Assume $n\geq3$ and $\End(A)=\ZZ$, and let $d$ denote the minimal degree of a polarization of $A$.
  Then $\Kummer^{n-1}(A)$ and $\Kummer^{n-1}(A^{\vee})$ cannot be derived equivalent via an inflated equivalence unless $\gcd(n,d)=1$.
  This is because the relevant set of invariant symplectic isomorphisms in \cref{prop:main_invariant_symplectic_isos}.\ref{item:thm3ii} needs to be non-empty, which by \cref{prop:main_invariant_symplectic_isos}.\ref{item:thm3iii} is equivalent to $\gcd(n,d)=1$.
\end{remark}

Furthermore, Frei--Honigs \cite[Cor.~1.2]{FH:22} announced an example of a  generalized Kummer fourfold $\Kummer^2(A)$ over a number field $K$ which cannot be derived equivalent to $\Kummer^2(A^{\vee})$ over the field $K$.
The abelian variety in this example cannot carry a polarization defined over $K$ whose degree is coprime to $3$, since \cite{FH:22} requires that the Galois modules $A[3]$ and $A^{\vee}[3]$ are not isomorphic.
This gives further information towards the following question.

\begin{question}
  Is it necessary for $\Kummer^m(A)$ and $\Kummer^m(A^{\vee})$ to be derived equivalent that there exists an isogeny $\lambda\colon A\to A^{\vee}$ of exponent coprime to $m+1$?
\end{question}

%
%
%

In the last section \cref{sec:non-birat}, we exhibit cases of generalized Kummer fourfolds which are not birationally equivalent, extending a result of \cite{Nam:02}, but which are nonetheless derived equivalent according to our results.

\begin{theorem}[\cref{thm:gen-kummer-non-birat,thm:auto-group-of-gen-kummer}]
  Let $A$ be an abelian surface over $\CC$ satisfying $\End(A)=\ZZ$, and let $\deg(\lambda)=d^2$ be the degree of the polarization~$\lambda\colon A\to A^\vee$ of minimal degree.
  Assume that $d\neq1$ and either
  \begin{enumerate}
    \item
    $d/3$ is a perfect square, in the case that $3$ divides $d$, or
    \item
    the Pell equation $x^2-3d y^2=1$ has some solution with odd~$y$, in the case that $3$ does not divide $d$.
  \end{enumerate}
  Then the generalized Kummer fourfolds $\Kummer^2(A)$ and $\Kummer^2(A^\vee)$ are \emph{not} birationally equivalent.
  Furthermore we have isomorphisms \[\Bir(\Kummer^2(A))\simeq\Aut(\Kummer^2(A))\simeq A[3]\rtimes\Aut_{\mathrm{AV}}(A),\] where $\Bir(\Kummer^2(A))$ is the group of birational autoequivalences.
\end{theorem}

\begin{acknowledgements}
  I want to genuinely thank Lenny Taelman for suggesting the questions that sparked the research in this article and for offering his time in many discussions related to it.
  I also want to thank Lie Fu for discussions about hyperkähler varieties and related topics and his valuable feedback on drafts of this article.
  During the research presented here, the computer algebra system GAP~\cite{GAP4} was very helpful for experimentation with the computations in this work, hence we would like to thank the authors and maintainers of GAP for making the system available.
  
  Research supported by the Netherlands Organization for Scientific Research (NWO) under project number 613.001.752.
  Research funded by the Deutsche Forschungsgemeinschaft (DFG, German Research Foundation) – Project-ID 491392403 – TRR 358.
\end{acknowledgements}


\section{The integral standard representation of $\Sn$}
\label{sec:standard_rep}

In this section we discuss the standard representation of the symmetric group with integral coefficients, as well as its dual representation.
This serves as preliminaries for calculations in later sections.

\begin{definition}
  Define the abelian group $\StdRep_n$ as the kernel of the summation map $\Sigma\colon\ZZ^n\to\ZZ$.
  Now the symmetric group $\Sn$ acts on $\ZZ^n$ by permuting the factors;
  explicitly $\sigma.\erm_i\coloneqq\erm_{\sigma(i)}$, where $\erm_i$ denotes the $i$-th standard basis vector.
  Since the morphism $\Sigma$ is equivariant when we endow $\ZZ$ with the trivial action, this induces an action of $\Sn$ on $\StdRep_n$.
  The $\ZZ[\Sn]$-module $\StdRep_n$ is called the \emph{standard representation} of $\Sn$.
\end{definition}

\begin{passage}
  Note that, there is an isomorphism of abelian groups $\ZZ^{n-1}\isoarr\StdRep_n$ given by
  \[
    (a_1,\dots,a_{n-1})\mapsto(a_1,\dots,a_{n-1},-a_1-\dots -a_{n-1}).
  \]
  So, when $A$ is some abelian group, we get a short exact sequence
  \begin{equation}\label{ses:abvar_stdrep_def}
    0\to A\otimesZ\StdRep_n\to A\otimesZ\ZZ^n\xrightarrow{\Sigma}A\otimesZ\ZZ\to0.
  \end{equation}
  This identifies $A\otimesZ\StdRep_n$ with the kernel of the morphism $\Sigma\colon A^n\to A$, when we identify $A\otimesZ\ZZ^n$ with $A^n$.
  Note that the latter identification is clearly $\Sn$-equivariant, so we get an isomorphism of $\ZZ[\Sn]$-modules.
\end{passage}


%

\begin{definition}\label{par:stdrep_dual_definition}
  Define the abelian group ${\StdRep}^{\vee}_n\coloneqq\coker(\Delta\colon\ZZ\to\ZZ^n)$, where $\Delta$ is the diagonal map.
  Then, similar to above, ${\StdRep}^{\vee}_n$ becomes a $\ZZ[\Sn]$-module.  
\end{definition}

\begin{passage}\label{sec:diag-is-dual-to-sum}
  The notation ${\StdRep}^{\vee}_n$ is justified since $\Delta\colon\ZZ\to\ZZ^n$ is the dual homomorphism to $\Sigma\colon\ZZ^n\to\ZZ$, under the identifications $\Hom_{\ZZ}(\ZZ,\ZZ)\isoarr\ZZ$, $f\mapsto f(1)$, and $\Hom_{\ZZ}(\ZZ^n,\ZZ)\isoarr\ZZ^n$, $f\mapsto (f(\erm_i))_i$.
  So we can identify $\coker(\Delta\colon\ZZ\to\ZZ^n)$ with the dual abelian group $\Hom_{\ZZ}(\StdRep_n,\ZZ)$.
  That is, we have an isomorphism of short exact sequences
  \[\begin{tikzcd}[rotate90south/.style={anchor=south, rotate=270}]
    0 \arrow[r] & \ZZ^{\vee} \arrow[r, "\Sigma^{\vee}"] \arrow[d, "\sim" rotate90south] & (\ZZ^{n})^{\vee} \arrow[r] \arrow[d, "\sim" rotate90south] & \Hom_{\ZZ}(\StdRep_n,\ZZ) \arrow[r] \arrow[d, dotted, "\sim" rotate90south] & 0 \\
    0 \arrow[r] & \ZZ \arrow[r, "\Delta"] & \ZZ^n \arrow[r] & {\StdRep}^{\vee}_{n} \arrow[r] & 0
  \end{tikzcd}\]
  Again, this isomorphism is $\Sn$-equivariant.
  Indeed, when $\Sn$ acts on a abelian group~$A$, then $\Sn$ acts on $\Hom_{\ZZ}(A,\ZZ)$ via $\sigma.f=x\mapsto f(\sigma^{-1}.x)$, where we have endowed $\ZZ$ with the trivial $\Sn$-action.
  Now the equalities
  \[
    \sigma.\erm_i \coloneqq ((\sigma.\erm_i^{\vee})(\erm_j))_j=(\erm_i^{\vee}(\sigma^{-1}.\erm_j))_j=(\erm_i^{\vee}(\erm_{\sigma^{-1}(j)}))_j=(\kronecker_{i,\sigma^{-1}(j)})_j=(\kronecker_{\sigma(i),j})_j=\erm_{\sigma(i)}
  \]
  show that the identifications above are equivariant.
\end{passage}

\begin{definition}
  We call the composition $\phi_0\colon\StdRep_n\hookrightarrow\ZZ^n\twoheadrightarrow\StdRep^{\vee}_n$ the \emph{canonical map}.
\end{definition}

\begin{proposition}\label{par:stdrep_canonical_map}
  The canonical map $\phi_0$ induces a short exact sequence of $\ZZ[\Sn]$-modules
  \begin{equation}\label{ses:stdrep_canonical_map}
    0\to\StdRep_n\to\StdRep^{\vee}_n\to\ZZ/n\ZZ\to0,
  \end{equation}
  where the action of $\Sn$ on $\ZZ/n\ZZ$ is trivial.
\end{proposition}

\begin{proof}
  We have $\Delta(k)\in\StdRep_n$ for some $k\in\ZZ$ if and only if $n k=0$, so the canonical map~$\phi_0$ is injective.
  An element $[(k_1,\dots,k_n)]\in\StdRep^{\vee}_n$ is in the image of $\phi_0$ if and only if $(k_1,\dots,k_n)+\Delta(a)\in\StdRep_n$ for some $a\in\ZZ$.
  This is equivalent to $\sum{k_i}=-n a$, \ie to $n$ divides $\sum{k_i}$.
  So $\phi_0\colon\StdRep_n\hookrightarrow\StdRep^{\vee}_n$ is an injection of index $n$; the quotient is generated by $(k,0,\dots,0)$ for $k=0,\dots,n-1$, and we see that Seq.~\eqref{ses:stdrep_canonical_map} is an exact sequence.
  
  Finally, the action of $\Sn$ on $\ZZ/n\ZZ$ is trivial:
  Consider a transposition $\tau\in\Sn$ and $k\in\ZZ$, then
  \begin{multline*}
    \tau.(k,0,\dots,0)=(0,\dots,k,\dots,0) \\
    \equiv(0,\dots,k,\dots,0)+(k,0,\dots,-k,\dots,0)=(k,0,\dots,0).
    \qedhere
  \end{multline*}
\end{proof}

\begin{proposition}\label{prop:stdrep_to_stdrep_equivariant_maps}
  We have
  \begin{enumerate}
    \item $\Hom_{\ZZ[\Sn]}(\StdRep_n,\StdRep_n)=\ZZ\cdot\id$,
    \item $\Hom_{\ZZ[\Sn]}(\StdRep_{n}^\vee,\StdRep_{n}^\vee)=\ZZ\cdot\id$, and
    \item $\Hom_{\ZZ[\Sn]}(\StdRep_{n},\StdRep_{n}^\vee)=\ZZ\cdot\phi_0$ for $n\geq3$.
  \end{enumerate}
  In particular, $\StdRep_{n}$ and $\StdRep_{n}^\vee$ are not isomorphic as $\ZZ[\Sn]$-modules for $n\geq3$, since $\phi_0$ is not surjective.
\end{proposition}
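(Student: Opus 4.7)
The plan is to combine a rational Schur-lemma calculation with a small integrality check, and then read off the non-isomorphism statement directly from \cref{par:stdrep_canonical_map}.

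First I would pass to rational coefficients. Both $\StdRep_n\otimesZ\QQ$ and $\StdRep_n^\vee\otimesZ\QQ$ are isomorphic to the rational standard representation of $\Sn$, which is irreducible for $n\geq 2$. By Schur's lemma the three corresponding $\QQ[\Sn]$-Hom spaces are each one-dimensional over $\QQ$, spanned by $\id$, $\id$, and $\phi_0$ respectively. Since the integral Hom groups are torsion-free and embed into their rational counterparts, every $\ZZ[\Sn]$-equivariant map in question is automatically a rational scalar multiple of the natural generator.

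Next I would pin down that the scalar is actually an integer by evaluating on one well-chosen element. In (i), applying $q\cdot\id$ to $\erm_1-\erm_2\in\StdRep_n\subset\ZZ^n$ produces $q(\erm_1-\erm_2)$, which lies in $\ZZ^n$ only if $q\in\ZZ$. In (ii), applying $q\cdot\id$ to $[\erm_1]\in\StdRep_n^\vee=\ZZ^n/\Delta(\ZZ)$ requires an integer representative $(q+a,a,\ldots,a)$ with $a\in\QQ$, which forces $a\in\ZZ$ and hence $q\in\ZZ$. In (iii), applying $q\phi_0$ to $\erm_1-\erm_2$ requires an integer representative $(q+a,-q+a,a,\ldots,a)$ for some $a\in\QQ$; here the ``spare'' coordinates available for $n\geq 3$ force $a\in\ZZ$, and then $q\in\ZZ$. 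For $n=2$ the analogous test only yields $q\in\tfrac12\ZZ$, consistent with the known equivariant isomorphism $\StdRep_2\simeq\StdRep_2^\vee$ sending $\erm_1-\erm_2\mapsto[\erm_1]$.

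Finally, for the non-isomorphism assertion with $n\geq 3$, any equivariant $f\colon\StdRep_n\to\StdRep_n^\vee$ is of the form $k\phi_0$ by (iii), hence has image in $\phi_0(\StdRep_n)$. By \cref{par:stdrep_canonical_map} this subgroup has proper cokernel $\ZZ/n\ZZ$, so $f$ cannot be surjective. The only genuinely delicate point of the argument is arranging the integrality test in (iii) so that the hypothesis $n\geq 3$ is actually used; the analogous verifications for (i) and (ii) are essentially immediate once the Schur-lemma reduction is in place.
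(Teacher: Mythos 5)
Your argument is correct and follows essentially the same route as the paper: reduce to a one-dimensional Hom space over a field of characteristic zero via Schur's lemma (the paper passes to $\QQbar$, which also cleanly justifies the one-dimensionality that over $\QQ$ requires absolute irreducibility of the standard representation), then pin down integrality of the scalar. The only cosmetic difference is in step (iii), where the paper reads off $\lambda\in\ZZ$ from the elementary divisor form $\diag(1,\dots,1,n)$ of $\phi_0$, while you evaluate on $\erm_1-\erm_2$ and check for an integral representative; both verifications are equivalent and both correctly isolate where $n\geq3$ is used.
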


\begin{proof}
  \itemnum and \itemnum:
  Extending scalars to $\QQbar$, both $\StdRep_{n}\otimes\QQbar$ and $\StdRep_{n}^{\vee}\otimes\QQbar$ become the usual standard representation of $\Sn$, which is irreducible.
  So by Schur's lemma, every $\Sn$-equivariant morphism $\phi\colon\StdRep_{n}\to\StdRep_{n}$ (\resp $\phi\colon\StdRep_{n}^{\vee}\to\StdRep_{n}^{\vee}$) becomes of the form $\phi_{\QQbar}=\lambda\cdot\id$ for some $\lambda\in\QQbar$.
  But since $\phi$ is defined integralley, we must have $\lambda\in\ZZ$ actually.
  
  \itemnum
  Using Seq.~\eqref{ses:stdrep_canonical_map}, the canonical map $\phi_0$ becomes an isomorphism after extending scalars to $\QQbar$.
  So, as above, every $\Sn$-equivariant morphism $\phi\colon\StdRep_{n}\to\StdRep_{n}^{\vee}$ becomes of the form $\phi_{\QQbar}=\lambda\cdot\phi_{0,\QQbar}$ for some $\lambda\in\QQbar$. 
  By looking at the short exact sequence~Seq.~\eqref{ses:stdrep_canonical_map} and the elementary divisors normal form of $\phi_0$, we see that $\phi_0$ corresponds to $\diag(1,\dots,1,n)$ in suitable bases of $\StdRep_n$ and $\StdRep_{n}^{\vee}$.
  For $n\geq3$ this forces $\lambda\in\ZZ$, as desired.
\end{proof}

\begin{remark}\label{rmk:canonical-map-n-two}
  For $n=2$, the map $\phi_0\in\Hom(\StdRep_2,\StdRep_{2}^\vee)$ is not a generator.
  In fact both $\StdRep_2$ and $\StdRep_{2}^\vee$ are isomorphic to the sign representation.
  Under these identifications the map~$\phi_0$ becomes $2\cdot\id$.
\end{remark}

\begin{passage}[The dual isogeny $\dualisog{\phi}_0$]
  We construct a `dual isogeny' $\dualisog{\phi}_0\colon\StdRep_n^{\vee}\to\StdRep_n$ to the canonical map $\phi_0\colon\StdRep_n\to\StdRep_n^\vee$.
  Consider the diagram
  \[\begin{tikzcd}
    \StdRep_n^{\vee} \arrow[r, "\phi_0^{-1}"] \arrow[rd, dotted] & \StdRep_n\otimesZ\QQ \arrow[r, "\cdot n"] & \StdRep_n\otimesZ\QQ \\
    & \frac{1}{n}\StdRep_n \arrow[u, hook'] \arrow[r, "\cdot n"] & \StdRep_n \arrow[u, hook'].
  \end{tikzcd}\]
  We claim that $\im(\phi_0^{-1})\subset\frac{1}{n}\StdRep_n$.
  Indeed, note that by Seq.~\eqref{ses:stdrep_canonical_map}, we have $n\StdRep_n^{\vee}\subset\im(\phi_0)\subset\StdRep_n^{\vee}$.
  So for every $y\in\StdRep_n^\vee$, there exists $x\in\StdRep_n$ such that $n\cdot y=\phi_0(x)$, \ie $y=\phi_0(\frac{1}{n}x)$.
  Now we define
  \[
    \dualisog{\phi}_0\coloneqq n\cdot\phi_0^{-1}\colon\StdRep_n^{\vee}\to\StdRep_n,
  \]
  and see that
  \[
    \dualisog{\phi}_0\circ\phi_0 = n\cdot\id = \phi_0\circ\dualisog{\phi}_0.
  \]
\end{passage}

\begin{remark}\label{rmk:dualisog-explicit-form}
  The map $\dualisog{\phi}_0\colon\StdRep_{n}^{\vee}\to\StdRep_{n}$ is explicitly given by $[(a_i)_i]\mapsto(n\cdot a_i)_i$ if $\sum_{i}{a_i}=0$, and sends $[(1,0,\dots,0)]$ to $(n-1,-1,\dots,-1)$.
\end{remark}

\begin{proposition}\label{prop:stdrep_dual_to_stdrep_equivariant_maps}
  For $n\geq2$ we have $\Hom_{\ZZ[\Sn]}(\StdRep_n^{\vee},\StdRep_n)=\ZZ\cdot\dualisog{\phi}_0$.
\end{proposition}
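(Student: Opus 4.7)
The plan is to mimic the proof of \cref{prop:stdrep_to_stdrep_equivariant_maps}.\textnormal{(iii)}, using $\dualisog{\phi}_0$ in place of $\phi_0$ and exploiting the fact that it becomes an isomorphism over $\QQ$.

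First I would extend scalars. By \cref{ses:stdrep_canonical_map}, the canonical map $\phi_0$ becomes an isomorphism after $-\otimes\QQ$, and hence so does $\dualisog{\phi}_0 = n\cdot\phi_0^{-1}$. Since $\StdRep_n\otimes\QQbar$ and $\StdRep_n^{\vee}\otimes\QQbar$ are both the usual (irreducible) standard representation of $\Sn$ over $\QQbar$, Schur's lemma applied to $\dualisog{\phi}_{0,\QQbar}^{-1}\circ\phi_{\QQbar}$ yields that any $\Sn$-equivariant morphism $\phi\colon\StdRep_n^{\vee}\to\StdRep_n$ satisfies $\phi_{\QQbar} = \lambda\cdot\dualisog{\phi}_{0,\QQbar}$ for some scalar $\lambda\in\QQbar$. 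Since $\phi$ and $\dualisog{\phi}_0$ are defined over $\ZZ$, we in fact get $\lambda\in\QQ$.

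Next I would pin down integrality of $\lambda$. By \cref{rmk:dualisog-explicit-form}, the map $\dualisog{\phi}_0$ sends the class $[(1,0,\dots,0)]\in\StdRep_n^{\vee}$ to the vector $(n-1,-1,\dots,-1)\in\StdRep_n\subset\ZZ^n$. Thus $\phi([(1,0,\dots,0)]) = \lambda\cdot(n-1,-1,\dots,-1)$ must lie in $\ZZ^n$, and reading off the second coordinate forces $\lambda\in\ZZ$. This shows $\Hom_{\ZZ[\Sn]}(\StdRep_n^{\vee},\StdRep_n)\subseteq\ZZ\cdot\dualisog{\phi}_0$, and the reverse inclusion is immediate since $\dualisog{\phi}_0$ is $\Sn$-equivariant.

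I do not expect any serious obstacle: the proof is parallel to part \textnormal{(iii)} of the preceding proposition, with the only small subtlety being that, in contrast to the $\phi_0$ case where the factor $n$ from the cokernel in \cref{ses:stdrep_canonical_map} was needed to force integrality (and fails for $n=2$, \cf\cref{rmk:canonical-map-n-two}), here the map $\dualisog{\phi}_0$ already has a primitive image (the coordinate $-1$ appears in $\dualisog{\phi}_0([(1,0,\dots,0)])$), so the argument works uniformly for all $n\geq 2$ and is in particular consistent with \cref{rmk:canonical-map-n-two}, where $\dualisog{\phi}_0$ corresponds to $\id$ on the sign representation.
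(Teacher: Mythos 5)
Your proof is correct and follows essentially the same route as the paper: Schur's lemma over $\QQbar$ gives $\phi_{\QQbar}=\lambda\cdot\dualisog{\phi}_{0,\QQbar}$, and then one forces $\lambda\in\ZZ$ from the fact that $\dualisog{\phi}_0$ has a unimodular part (the paper reads this off from the elementary divisors $\diag(n,\dots,n,1)$, you from the coordinate $-1$ in $\dualisog{\phi}_0([(1,0,\dots,0)])$ — the same observation in different clothing). Your closing remark correctly explains why, unlike for $\phi_0$, the statement holds uniformly for $n\geq2$.
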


\begin{proof}
  Similar to the proof of \cref{prop:stdrep_to_stdrep_equivariant_maps}, we see that for every $\phi\in\Hom_{\ZZ[\Sn]}(\StdRep_n^{\vee},\StdRep_n)$ there exists some $\lambda\in\QQbar$ such that $\phi_{\QQbar}=\lambda\cdot\dualisog{\phi}_{0,\QQbar}$.
  Recall that in suitable integral bases, the map $\phi_0$ corresponds to the matrix $\diag(1,\dots,1,n)$.
  So, $\dualisog{\phi}_0$ corresponds to the matrix $\diag(n,\dots,n,1)$, which forces $\lambda\in\ZZ$ as desired.
\end{proof}

\begin{proposition}\label{prop:phi_and_phi_hat_symmetric}
  The canonical map $\phi_0$ and its dual isogeny $\dualisog{\phi}_0$ are both symmetric, \ie we have the identities
  \begin{enumerate}
    \item $\phi_0=\phi_0^{\vee}\circ\eval$, and
    \item $\dualisog{\phi}_0 = \eval^{-1}\circ(\dualisog{\phi}_0)^{\vee}$,
  \end{enumerate}
  where $\eval\colon \StdRep_n\to \StdRep_n^{\vee\vee}$, $x\mapsto(\psi\mapsto\psi(x))$ denotes the canonical ``evaluation'' isomorphism.
\end{proposition}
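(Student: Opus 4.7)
My plan is to rephrase each identity as the symmetry of an associated bilinear $\ZZ$-valued pairing and then check that symmetry by a short direct computation.

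Since $\StdRep_n$ and $\StdRep_n^{\vee}$ are finitely generated free abelian groups, the evaluation maps to their double duals are isomorphisms. I first observe the following general fact: for a homomorphism $\phi\colon M\to M^{\vee}$ between finitely generated free abelian groups, the identity $\phi=\phi^{\vee}\circ\eval$ holds if and only if the bilinear pairing $B_{\phi}(x,y)\coloneqq\phi(x)(y)$ on $M\times M$ is symmetric; similarly, for $\psi\colon M^{\vee}\to M$, the identity $\psi=\eval^{-1}\circ\psi^{\vee}$ holds if and only if the pairing $B_{\psi}(\alpha,\beta)\coloneqq\beta(\psi(\alpha))$ on $M^{\vee}\times M^{\vee}$ is symmetric. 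Unpacking the definitions gives $(\phi^{\vee}\circ\eval)(x)(y)=\phi(y)(x)=B_{\phi}(y,x)$ and $(\eval\circ\psi)(\alpha)(\beta)=\beta(\psi(\alpha))=B_{\psi}(\alpha,\beta)$, whereas $\psi^{\vee}(\alpha)(\beta)=\alpha(\psi(\beta))=B_{\psi}(\beta,\alpha)$, verifying both equivalences.

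For \rom{1}, I would unwind the identification $\StdRep_n^{\vee}\cong\Hom_{\ZZ}(\StdRep_n,\ZZ)$ made in \cref{sec:diag-is-dual-to-sum}: a class $[(y_i)_i]\in\StdRep_n^{\vee}$ corresponds to the functional $x\mapsto\sum_i y_i x_i$ on $\StdRep_n$. Since $\phi_0$ is the composition of the inclusion $\StdRep_n\hookrightarrow\ZZ^n$ with the quotient $\ZZ^n\twoheadrightarrow\ZZ^n/\Delta\ZZ$, one obtains
\[
  B_{\phi_0}(x,y)=\sum_{i}x_i y_i,
\]
which is the restriction to $\StdRep_n$ of the standard inner product on $\ZZ^n$, and is manifestly symmetric.

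For \rom{2}, I would extract from \cref{rmk:dualisog-explicit-form} the closed-form expression $\dualisog{\phi}_0([(a_i)_i])=(n a_j-\sum_i a_i)_j$ (the partial formula stated there extends by $\Delta(\ZZ)$-translation invariance, or alternatively by $\Sn$-equivariance from the explicit value on $[(1,0,\dots,0)]$). A direct computation then gives
\[
  B_{\dualisog{\phi}_0}([a],[b])=\sum_j b_j\bigl(n a_j-\sum_i a_i\bigr)=n\sum_{j}a_j b_j-\Bigl(\sum_i a_i\Bigr)\Bigl(\sum_j b_j\Bigr),
\]
which is symmetric in $[a]$ and $[b]$. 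As a more conceptual alternative, \rom{2} can be deduced formally from \rom{1}: working over $\QQ$, $\dualisog{\phi}_0=n\phi_0^{-1}$, and dualising \rom{1} and inverting yields $\phi_0^{-1}=\eval^{-1}\circ(\phi_0^{-1})^{\vee}$, so multiplying by $n$ gives \rom{2}. I do not anticipate a real obstacle; the only care needed is in tracking the canonical identifications of \cref{sec:diag-is-dual-to-sum}, after which both symmetries reduce to that of the standard inner product on $\ZZ^n$.
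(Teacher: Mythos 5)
Your proposal is correct and follows essentially the same route as the paper: part \rom{1} in both cases comes down to observing that the pairing $\phi_0(x)(y)$ is the restriction to $\StdRep_n$ of the standard inner product on $\ZZ^n$ (the paper checks this on the basis $\tilde{\erm}_i=\erm_i-\erm_n$, you give the closed form $\sum_i x_i y_i$), and your "conceptual alternative" for \rom{2} — dualising \rom{1} over $\QQ$, inverting, and multiplying by $n$ — is exactly the paper's argument. Your additional direct verification of \rom{2} via the explicit formula $[(a_i)_i]\mapsto(n a_j-\sum_i a_i)_j$ is also correct and consistent with \cref{rmk:dualisog-explicit-form}.
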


\begin{proof}
  \itemnum
  For $x,y\in\StdRep_n$ we have $\phi_0^{\vee}(\eval(x))(y)=\eval(x)(\phi_0(y))=\phi_0(y)(x)$.
  We want to compare this with $\phi_0(x)(y)$.
  Take $x=\tilde{\erm}_i$ and $y=\tilde{\erm}_j$, where $\tilde{\erm}_{i}\coloneqq\erm_{i}-\erm_{n}$ is the $i$-th standard basis vactor of $\StdRep_n$.
  Then $\phi_0(\tilde{\erm}_i)=\erm_{i}^{\vee}-\erm_{n}^{\vee}$, so $\phi_0(\tilde{\erm}_i)(\tilde{\erm}_j)=1$ if $i\neq j$ and $\phi_0(\tilde{\erm}_i)(\tilde{\erm}_j)=2$ if $i=j$.
  This is evidently independent of the order of $i$ and $j$.

  \itemnum
  Since $\StdRep_n$ is torsion-free, we can check the claimed equality after extending scalars to $\QQ$.
  By \rom{1} we know that $\phi_0=\phi_0^{\vee}\circ\eval$, so $\phi_{0}^{\inv}=\eval^{\inv}\circ(\phi_{0}^{\vee})^{\inv}=\eval^{\inv}\circ(\phi_{0}^{\inv})^{\vee}$, and hence $n\cdot\phi_{0}^{\inv} = \eval^{\inv}\circ(n\cdot\phi_{0}^{\inv})^{\vee}$, as desired.
\end{proof}



\section{Group cohomology of the standard representation}
\label{sec:group_coho_calculations}

In this section we calculate (in the stable range) the group cohomology $\Ho^{\bullet}(\Sn,\StdRep_n\otimesZ A)$ with arbitrary coefficients in an abelian group $A$ in terms of the group cohomology $\Ho^{\bullet}(\Sn,A)$ of the symmetric group.
While doing so, we recall some general facts about group cohomology of finite groups that are directly relevant to the calculation; for a more detailed reference see~\cite{NSW:CNF} or \cite{Bro:82}, or for a quick overview see \cite[§I.2]{Ser:GC}.

\begin{proposition}\label{prop:stdrep_h0}
  Let $n\geq2$, and let $A$ be an abelian group, then $\Ho^0(\Sn,\StdRep_n\otimes_{\ZZ}A)=A[n]$.
  Here we denote by $A[n]$ the group of $n$-torsion elements of $A$.
\end{proposition}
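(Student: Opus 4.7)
The plan is to apply the left-exact functor of $\Sn$-invariants to the defining short exact sequence
\[
  0\to A\otimesZ\StdRep_n\to A^n\xrightarrow{\Sigma} A\to0
\]
from \cref{ses:abvar_stdrep_def}, and read off the kernel. This reduces the statement to a concrete linear-algebra computation, so the main content is bookkeeping rather than any real obstacle.

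First I would identify the invariants on the middle and right terms. Since $\Sn$ acts on $A^n$ by coordinate permutations, an element $(a_1,\ldots,a_n)$ is fixed exactly when all $a_i$ coincide, so $(A^n)^{\Sn}$ equals the diagonal subgroup $\Delta(A)$ and is canonically isomorphic to $A$. Since $\Sn$ acts trivially on the right-hand copy of $A$, we have $A^{\Sn}=A$. Under these two identifications, the restriction of $\Sigma$ to $(A^n)^{\Sn}$ is the composite $\Sigma\circ\Delta\colon A\to A$, which sends $a$ to $n\cdot a$, i.e.\ it is multiplication by $n$.

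Now applying $(-)^{\Sn}$ to the short exact sequence above and using left-exactness yields an exact sequence
\[
  0\to (A\otimesZ\StdRep_n)^{\Sn}\to A\xrightarrow{\,n\cdot\,} A.
\]
Therefore $\Ho^0(\Sn,A\otimesZ\StdRep_n)=(A\otimesZ\StdRep_n)^{\Sn}=\kernel(n\cdot\id_A)=A[n]$, which is the claim. No step requires $n\geq 2$ beyond the fact that the sequence and the notation $\StdRep_n$ are set up in that range; the argument works verbatim for all such $n$.
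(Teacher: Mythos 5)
Your proof is correct and is essentially the same argument as the paper's: both identify the $\Sn$-fixed points of $A^n$ with the diagonal copy of $A$ and then observe that a diagonal element lies in $\ker(\Sigma)=A\otimesZ\StdRep_n$ precisely when it is $n$-torsion. The packaging via left-exactness of invariants is just a slightly more formal way of saying the same thing.
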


\begin{proof}
  Recall that $\Ho^0(\Sn,\StdRep_n\otimes_{\ZZ}A)=(\StdRep_n\otimes_{\ZZ}A)^{\Sn}$ is the submodule of $\Sn$-fixed-points in $\StdRep_n\otimes_{\ZZ}A$.
  Note that $(a_1,\dots,a_n)\in A^n$ is fixed by $\Sn$ \iff $a_1=\dots=a_n\eqqcolon a$.
  But $(a,\dots,a)\in\StdRep_n\otimes_{\ZZ}A$ \iff $n\cdot a=0$.
\end{proof}

\begin{proposition}
  Let $n\geq3$, and let $A$ be an abelian group, then we have $\Ho^0(\Sn,\StdRep_{n}^{\vee}\otimesZ A)=0$.
  For $n=2$, we have $\Ho^0(\SymGrp_{2},\StdRep_{2}^{\vee}\otimesZ A)=A[2]$.
\end{proposition}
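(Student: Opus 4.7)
The plan is to identify $\StdRep_n^{\vee}\otimesZ A$ explicitly with the quotient $A^n/\Delta(A)$ and then compute the $\Sn$-invariants directly on this model. First I would observe that $\StdRep_n^{\vee}$ is torsion-free (any $m$-torsion class $[(a_i)_i]$ would satisfy $m a_1=\cdots=m a_n$ in $\ZZ$, hence $a_1=\cdots=a_n$, so the class is already trivial), so tensoring the defining sequence $0\to\ZZ\xrightarrow{\Delta}\ZZ^n\to\StdRep_n^{\vee}\to 0$ with $A$ yields a short exact sequence
\[
  0\to A\xrightarrow{\Delta}A^n\to\StdRep_n^{\vee}\otimesZ A\to 0,
\]
with $\Sn$ acting by coordinate permutations on $A^n$ and trivially on $\Delta(A)$.

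Next, a class $[(a_1,\dots,a_n)]$ is $\Sn$-invariant iff for every $\sigma\in\Sn$ the difference $\sigma.(a_i)_i-(a_i)_i$ lies in $\Delta(A)$. Applied to a transposition $\sigma=(i\,j)$ with $i\neq j$, this difference has entry $a_j-a_i$ at position $i$, entry $a_i-a_j$ at position $j$, and $0$ in every other position. When $n\geq3$, the existence of some position $k\notin\{i,j\}$ with vanishing entry forces the constant diagonal value, and in particular $a_j-a_i$, to be zero. Running over all transpositions yields $a_1=\cdots=a_n$, so $(a_i)_i\in\Delta(A)$ and the class is zero; this proves the vanishing for $n\geq3$.

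For $n=2$ the single non-trivial transposition only enforces $2(a_1-a_2)=0$. Since every class $[(a_1,a_2)]$ equals $[(0,a_2-a_1)]$ modulo $\Delta(A)$, and since $(0,c)\in\Delta(A)$ forces $c=0$, the map $c\mapsto[(0,c)]$ gives the desired isomorphism $A[2]\isoarr\Ho^0(\SymGrp_2,\StdRep_2^{\vee}\otimesZ A)$.

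The only genuinely crucial step is the use of a third coordinate to pin down the auxiliary diagonal constant in the $n\geq3$ case; once that observation is made the rest is routine bookkeeping, and I do not expect any serious obstacle. An alternative I would keep in mind is to apply the long exact sequence of group cohomology to the displayed short exact sequence, using Shapiro's lemma for $A^n=\Ind_{\SymGrp_{n-1}}^{\Sn}A$ to rewrite $\Ho^1(\Sn,A^n)\cong\Ho^1(\SymGrp_{n-1},A)$ and noting that restriction identifies both of these with $A[2]$ via the sign character; this would give the same conclusion but is heavier than the direct argument above.
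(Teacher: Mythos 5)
Your proof is correct and follows essentially the same route as the paper: realize $\StdRep_n^{\vee}\otimesZ A$ as $\coker(\Delta\colon A\to A^n)$ and test invariance against transpositions, with the third coordinate forcing the auxiliary diagonal element to vanish when $n\geq3$. Your version is if anything slightly more careful, since you explicitly run over all transpositions to get $a_1=\cdots=a_n$ and you justify the injectivity of $c\mapsto[(0,c)]$ in the $n=2$ case.
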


\begin{proof}
  Recall that $\StdRep_{n}^{\vee}\otimesZ A$ is the cokernel of the diagonal map $\Delta\colon A\to A^n$.
  Consider a fixed-point $[(a_i)_i]\in(\StdRep_{n}^{\vee}\otimesZ A)^{\Sn}$.
  Let $\tau\in\Sn$ be some transposition, say $\tau=(1\,2)$ to keep the notation simple.
  Then being fixed by $\tau$ means that we have $(a_1,a_2,\dots,a_n)=(a_2,a_1,\dots,a_n)+\Delta(a)$ for some $a\in A$.
  For $n\geq3$ this entails $a_3=a_{3}+a$, which implies $a=0$.
  So the representative $(a_i)_i$ itself is already fixed by $\tau$.
  In conclusion, we see that $(a_i)_i=\Delta(a_1)$, which is zero in $\StdRep_{n}^{\vee}\otimesZ A$.
  
  For $n=2$ we can consider $(a_1,0)$ without loss of generality.
  Being fixed by $\tau$ means exactly $(a_1,0)=(0,a_1)+(a,a)$ for some $a\in A$, or equivalently $2 a_{1}=0$ and $a=a_1$.
\end{proof}

\begin{remark}
  The preceding proposition also follows from \cref{prop:stdrep_dual_coho_vanishing_and_seq} below.
\end{remark}

\begin{passage}[Restricted representations]
  Let $G$ be a finite group with some subgroup $H\subset G$, and let $N$ be a $\ZZ[G]$-module.
  Define the restricted $\ZZ[H]$-module as $\Res_{H}^{G}(N)\coloneqq N$ where $H$ acts through the action of $G$ on $N$.
  This construction becomes a functor by sending a homomorphism of $\ZZ[G]$-modules to itself.
\end{passage}

\begin{passage}[Induced representations]
  Let $G$ be a finite group with some subgroup $H\subset G$, and let $M$ be a $\ZZ[H]$-module.
  The (co)induced $\ZZ[G]$-module is defined as
  \begin{align*}
    \Ind_{H}^{G}(M)
    &\coloneqq\Hom_{\ZZ[H]}(\ZZ[G],M) \\
    &\simeq\{\phi\colon G\to M\mid \phi(h g)=h.\phi(g)\quad\text{for}\ g\in G, h\in H\}
  \end{align*}
  with $G$-action afforded by $g.\phi\coloneqq(g'\mapsto\phi(g' g))$.
  Define the canonical projection $ \pi\colon\Ind_{H}^{G}(M) \to M$ by mapping $\phi \mapsto \phi(1)$.
  It is a morphism of $\ZZ[H]$-modules.  
  
  This construction provides a right adjoint to the restriction functor $\Res_{H}^{G}$.
  In fact it is also a left adjoint to the restriction functor.
  For this reason we won't make a distinction in our terminology regarding ``induced'' and ``coinduced''.
  (The former would be strictly speaking defined as $\ZZ[G]\otimes_{\ZZ[H]}M$.)
\end{passage}

The following perspective on induced representations is useful when considering a $\ZZ[H]$-module with trivial action, and is applied in the proof of \cref{prop:permutation_rep_as_induced_rep_identifications} below.

\begin{passage}
  Let $g_1,\dots,g_n$ be a set of right coset representatives for $H\backslash G$.
  Then we have an isomorphism
  \[
    \Ind_H^G(M)\simeq\Map(H\backslash G,M),
  \]
  sending $\phi$ to the map of sets $H g_i\mapsto\phi(g_i)$.
  Its inverse is given by extending such a map of sets via the formula $\phi(hg)=h.\phi(g)$.
  The left $G$-action on $\Map(H\backslash G,M)$ becomes
  \[
    (g.\phi)(H g_i) \coloneqq h.\phi(H g_j),
  \]
  when $g_i g=h g_j$ for $g\in G$ and $h\in H$.
  
  Assume now that $M$ arises as the restriction of a $\ZZ[G]$-module with trivial action.
  Note that then the isomorphism above is independent of the choice of coset representatives~$\{g_i\}$.
  The $G$-action on $\Map(H\backslash G,M)$ above becomes just the usual action on the set of maps between a right and a left $G$-set.
  Concretely, for $\phi\in\Map(H\backslash G,M)$ and $g\in G$ this is given by
  \begin{align*}
    g.\phi
    &= x\mapsto g.\phi(x.g) \\
    &= x\mapsto \phi(x.g).
  \end{align*}
  
\end{passage}

\begin{proposition}[Shapiro's Lemma {\cite[Prop.~1.6.4]{NSW:CNF}}]\label{prop:shapiros_lemma}
  Let $G$ be a finite group with some subgroup $H\subset G$, and let $M$ be a $\ZZ[H]$-module.
  Then, for every $n\geq0$, the canonical projection~$\pi$ induces an isomorphism
  \[
    \shapiro_n\colon\Ho^n(G,\Ind_{H}^{G}(M))\isoarr\Ho^n(H,M).
  \]
\end{proposition}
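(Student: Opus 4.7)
The plan is to realise both cohomology groups as derived functors of $\Hom$ out of the trivial module and relate them via the Frobenius adjunction. Let $P_\bullet\to\ZZ$ be a projective resolution of the trivial $\ZZ[G]$-module; then by definition $\Ho^n(G,\Ind_H^G M)=\Ho^n\bigl(\Hom_{\ZZ[G]}(P_\bullet,\Ind_H^G M)\bigr)$.

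The first step is to apply, degreewise, the tensor--Hom adjunction
\[
  \Hom_{\ZZ[G]}\bigl(N,\Hom_{\ZZ[H]}(\ZZ[G],M)\bigr)\isoarr\Hom_{\ZZ[H]}(\Res_H^G N,M),
\]
which sends $\psi$ to $\pi\circ\psi$, with $\pi\colon\Ind_H^G M\to M$, $\phi\mapsto\phi(1)$, the canonical projection. Naturality in $N$ upgrades this, on the resolution $P_\bullet$, to an isomorphism of cochain complexes
\[
  \Hom_{\ZZ[G]}(P_\bullet,\Ind_H^G M)\isoarr\Hom_{\ZZ[H]}(\Res_H^G P_\bullet,M).
\]

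The second step is the key algebraic input: $\ZZ[G]$ is free as a right $\ZZ[H]$-module, with basis any system of coset representatives for $H\backslash G$. Consequently $\Res_H^G$ preserves projectives, so $\Res_H^G P_\bullet\to\Res_H^G\ZZ=\ZZ$ is still a projective resolution, now of the trivial $\ZZ[H]$-module. Taking $\Ho^n$ of the right-hand complex thus produces $\Ho^n(H,M)$. Composing the two displayed isomorphisms gives the sought isomorphism $\Ho^n(G,\Ind_H^G M)\isoarr\Ho^n(H,M)$.

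The main thing to verify, and the only real subtlety, is that this composite indeed equals the map $\shapiro_n$ induced by $\pi$, rather than differing by some sign or automorphism. This is a matter of chasing the adjunction: the counit of $(\Res_H^G,\Ind_H^G)$ is precisely $\pi$, so under the adjunction isomorphism the map $\Ho^n(H,\pi)$ on the restricted side corresponds to the identity on the induced side. Concretely, $\shapiro_n$ factors as
\[
  \Ho^n(G,\Ind_H^G M)\isoarr\Ho^n(H,\Res_H^G\Ind_H^G M)\xrightarrow{\Ho^n(H,\pi)}\Ho^n(H,M),
\]
and the naturality square for the Frobenius isomorphism with respect to $\pi$ shows that the second arrow becomes the identification built above, completing the proof.
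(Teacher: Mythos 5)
Your proof is correct. The paper offers no proof of its own here (it defers entirely to \cite[Prop.~1.6.4]{NSW:CNF}), and your argument is the standard derived-functor proof of that reference: the degreewise Frobenius adjunction $\Hom_{\ZZ[G]}(P_\bullet,\Ind_H^G M)\simeq\Hom_{\ZZ[H]}(\Res_H^G P_\bullet,M)$ combined with the fact that $\ZZ[G]$ is $\ZZ[H]$-free, so that $\Res_H^G P_\bullet$ remains a projective resolution of $\ZZ$. You also correctly identify and dispose of the one genuine subtlety, namely that the resulting isomorphism agrees with $\shapiro_n=\Ho^n(H,\pi)\circ\res_H^G$ rather than with it up to some automorphism; this matters for the paper, since \cref{prop:res_and_cor_via_shapiro} relies on that precise identification.
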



\begin{passage}
  Let $G$ be a finite group with some subgroup $H\subset G$, and let $N$ be a $\ZZ[G]$-module.
  Define the following two homomorphisms of $\ZZ[G]$-modules:
  \begin{alignat}{2}
    \iota &\colon N \to \Ind_{H}^{G}(\Res_{H}^{G}(N))
    \quad\quad
    && x \mapsto(g\mapsto g.x) \\
    \nu &\colon\Ind_{H}^{G}(\Res_{H}^{G}(N)) \to N
    && \phi \mapsto \sum_{[g]\in G/H}{g.\phi(g^{-1})}
  \end{alignat}
  The significance of these maps is that $\iota$ is the unit of the restriction-coinduction adjunction, and $\nu$ is the counit of the induction-restriction adjunction.
\end{passage}

\begin{proposition}\label{prop:res_and_cor_via_shapiro}
  Let $G$ be a finite group with some subgroup $H\subset G$, and let $N$ be a $\ZZ[G]$-module.
  For $n\geq0$ we have commutative diagrams
  \[\begin{tikzcd}[trim left=(a),trim right=(a)]
    \Ho^n(G,N) \arrow[r, "\iota_{*}"] \arrow[rr, "\res_{H}^{G}", bend right=15] &|[alias=a]| \Ho^n(G,\Ind_{H}^{G}(\Res_{H}^{G}(N))) \arrow[r, "\shapiro"] & \Ho^n(H,\Res_{H}^{G}(N)),
  \end{tikzcd}\]
  and
  \[\begin{tikzcd}[trim left=-158]
    \Ho^n(H,\Res_{H}^{G}(N)) \arrow[r, "\shapiro^{-1}"] \arrow[rr, "\cor_{H}^{G}", bend right=15] & \Ho^n(G,\Ind_{H}^{G}(\Res_{H}^{G}(N))) \arrow[r, "\nu_{*}"] & \Ho^n(G,N).
  \end{tikzcd}\]
\end{proposition}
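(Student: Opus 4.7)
The plan is to verify each identity first at $n=0$ by a direct computation, and then extend to all $n$ via universality of the relevant cohomological $\delta$-functors.

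At degree zero I would unwind the definitions as follows. For the restriction diagram, $\iota_{*}$ sends $x\in N^{G}$ to the function $\phi_{x}\colon g\mapsto g.x$, which is constant (since $x$ is $G$-fixed), and Shapiro's isomorphism in degree zero is induced by $\pi\colon\phi\mapsto\phi(1)$; thus the composition recovers $x\in N^{H}$, giving the tautological inclusion $N^{G}\hookrightarrow N^{H}$, which is exactly the restriction on invariants. For the corestriction diagram, $\shapiro^{-1}$ sends $y\in N^{H}$ to the constant function $\phi_{y}\colon g\mapsto y$---this lift is forced, since $G$-invariance makes $\phi$ constant and $H$-equivariance then places the value in $N^{H}$---and applying $\nu$ yields $\sum_{[g]\in G/H}g.\phi_{y}(g^{-1})=\sum_{[g]\in G/H}g.y$, which is the norm / corestriction map on invariants.

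To bootstrap to arbitrary $n$, I would observe that both sides of each equation define morphisms of cohomological $\delta$-functors in the variable $N$, because $\iota_{*}$, $\nu_{*}$, $\shapiro$, $\res$, and $\cor$ are all natural in $N$ and commute with the connecting homomorphisms attached to short exact sequences of $\ZZ[G]$-modules. For the first identity, the source $\Ho^{*}(G,-)=\Ext^{*}_{\ZZ[G]}(\ZZ,-)$ is a right-derived functor, hence a universal cohomological $\delta$-functor, so any morphism of $\delta$-functors out of it is determined by its value in degree zero, and the identity follows. For the second identity, the source $\Ho^{*}(H,\Res_{H}^{G}(-))$ must itself be universal as a $\delta$-functor on $\ZZ[G]$-modules; this holds because any $G$-module $N$ embeds $G$-equivariantly into $\Ind_{1}^{G}N$ via $x\mapsto(g\mapsto g.x)$, and $\Res_{H}^{G}\Ind_{1}^{G}N$ is $H$-cohomologically trivial (as $\ZZ[G]$ is free over $\ZZ[H]$, so the module is also induced over $H$ from the trivial subgroup, and Shapiro makes it acyclic). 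The main subtlety will be this last coeffaceability check; everything else reduces to formal manipulations with the restriction-(co)induction adjunctions.
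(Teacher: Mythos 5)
Your proposal is correct and follows essentially the same route as the paper's proof, which likewise notes that both composites are morphisms of universal $\delta$-functors and reduces to the straightforward degree-zero check (restriction as the inclusion of invariants, corestriction as the norm map). You additionally spell out the effaceability of $\Ho^{*}(H,\Res_{H}^{G}(-))$ on $\ZZ[G]$-modules via $N\hookrightarrow\Ind_{1}^{G}(N)$, a detail the paper leaves implicit; this is the right justification and completes the argument.
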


\begin{proof}
  See \cite[Prop.~1.6.5]{NSW:CNF} for a concrete proof.
  The highbrow explanation is that $\res_{H}^{G}$ and $\shapiro\circ\iota_{*}$, as well as $\nu_{*}$ and $\cor_{H}^{G}\circ\shapiro$ are morphisms of universal $\delta$-functors.
  So it is enough to check the case $n=0$, which is straightforward once one recalls the definition of corestriction in degree~$0$ as a norm map.
%
%
\end{proof}

\begin{proposition}\label{prop:permutation_rep_as_induced_rep_identifications}
  Let $A$ be an abelian group endowed with the trivial $\SymGrp_{n-1}$-action, and let $A^n$ be the permutation $\Sn$-representation.
  Then we have an isomorphism of $\ZZ[\Sn]$-modules
  \[
    \Ind_{\SymGrp_{n-1}}^{\Sn}(A)\simeq A^n,
  \]
  such that, under this isomorphism,
  \begin{enumerate}
    \item the canonical map $\pi$ corresponds to the the projection $\erm_n^\vee$ onto the $n$-th coordinate,
    \item the morphism $\iota$ corresponds to the diagonal map $\Delta\colon A\to A^n$.
    \item the morphism $\nu$ corresponds to the the sum map $\Sigma\colon A^n\to A$, and
  \end{enumerate}
\end{proposition}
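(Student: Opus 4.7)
The plan is to combine the identification $\Ind_H^G(M)\simeq\Map(H\backslash G,M)$ from the preceding passage with an explicit identification of $\SymGrp_{n-1}\backslash\Sn$ with the standard $\Sn$-set $\{1,\dots,n\}$.

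First I would observe that $\SymGrp_{n-1}\subset\Sn$ is precisely the stabilizer of the letter $n$ under the standard action on $\{1,\dots,n\}$, so the orbit map induces an equivariant bijection of right $\Sn$-sets $\SymGrp_{n-1}\backslash\Sn\isoarr\{1,\dots,n\}$ via $\SymGrp_{n-1}g\mapsto g^{-1}(n)$. Composing with the identification recalled in the passage yields an isomorphism of $\ZZ[\Sn]$-modules $\Ind_{\SymGrp_{n-1}}^{\Sn}(A)\simeq\Map(\{1,\dots,n\},A)\simeq A^n$. A short computation using the explicit formula $(\sigma.\phi)(x)=\phi(x.\sigma)$ verifies that the transported $\Sn$-action on $A^n$ is the standard permutation representation $\sigma.\erm_i=\erm_{\sigma(i)}$.

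Next I would verify the three identifications by tracing definitions. Item \rom{1} is immediate: $\pi(\phi)=\phi(1)$ corresponds to evaluation at the coset of the identity, which under our bijection corresponds to the index $n$, so $\pi$ becomes $\erm_n^{\vee}$. For \rom{2} and \rom{3} I would view $A$ with the (necessarily trivial) $\Sn$-action extending its trivial $\SymGrp_{n-1}$-action, so that $\iota$ and $\nu$ make sense. Then $\iota(x)=(g\mapsto g.x)=(g\mapsto x)$ is the constant map, corresponding to $\Delta(x)\in A^n$. For~\rom{3}, choosing right coset representatives $g_i$ with $g_i^{-1}(n)=i$ (so that $\phi$ corresponds to $(a_i)_i=(\phi(g_i))_i$), one computes
\[
  \nu(\phi)=\sum_{[g]\in\Sn/\SymGrp_{n-1}}g.\phi(g^{-1})=\sum_{i=1}^n g_i^{-1}.\phi(g_i)=\sum_{i=1}^n a_i=\Sigma(a),
\]
using triviality of the $\Sn$-action on $A$ in the third equality.

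The only real obstacle is bookkeeping: one must handle left versus right cosets, and the placement of inverses, carefully enough that the transported action on $A^n$ matches the convention $\sigma.\erm_i=\erm_{\sigma(i)}$ rather than its inverse. Conceptually, the proposition is simply the observation that the standard permutation representation is induced from the trivial representation of the point stabilizer, together with the natural compatibility of the unit, counit, and evaluation-at-$1$ with $\Delta$, $\Sigma$, and $\erm_n^\vee$.
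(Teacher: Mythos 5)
Your proposal is correct and follows essentially the same route as the paper: identify $\Ind_{\SymGrp_{n-1}}^{\Sn}(A)$ with $\Map(\SymGrp_{n-1}\backslash\Sn,A)$, identify the coset space with $\{1,\dots,n\}$ via $\SymGrp_{n-1}\sigma\mapsto\sigma^{-1}(n)$, and then trace $\pi$, $\iota$, $\nu$ through these identifications exactly as you describe (including the passage from left to right coset representatives in the computation of $\nu$). The bookkeeping points you flag are precisely the ones the paper's proof handles, and your treatment of them is accurate.
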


\begin{proof}
  Recall that since the action on $A$ is trivial, we have $\Ind_{\SymGrp_{n-1}}^{\Sn}(A)\simeq\Map(\SymGrp_{n-1}\backslash\Sn,A)$ with $\Sn$-action given by $\sigma.\phi = x\mapsto \phi(x\circ\sigma)$.
  Now $\Sn$ acts transitively on the set $\{1,\dots,n\}$ from the right via $k.\sigma\coloneqq\sigma^\inv(k)$.
  Then $\SymGrp_{n-1}=\Stab(n)$ is the stabilizer of the element $n$, so we get an isomorphism of right $\Sn$-sets
  \[
    \SymGrp_{n-1}\backslash\Sn\isoarr\{1,\dots,n\},\quad
    \SymGrp_{n-1}\sigma\mapsto\sigma^\inv(n).
  \]
  Identifying $\Map(\{1,\dots,n\},A)$ with $A^n$, an $n$-tuple $(a_1,\dots,a_n)\in A^n$ gets acted on as
  \[
    \sigma.(a_i)_i = (a_{i.\sigma})_i = (a_{\sigma^\inv(i)})_i,
  \]
  exactly as in the permutation representation.
  
  \itemnum
  Let us now compute the maps $\pi$, $\nu$, and $\iota$ under the identifications above.
  By definition $\pi(\phi)=\phi(\id)$, for $\phi\in\Ind_{\SymGrp_{n-1}}^{\Sn}(A)$.
  Now under the identification $\SymGrp_{n-1}\backslash\Sn\simeq\{1,\dots,n\}$, the element $\id$ corresponds to the element $n$.
  Denote by $\overline{\phi}\in\Map(\{1,\dots,n\},A)$ the element induced by $\phi$.
  Then we have $\pi(\overline{\phi})=\overline{\phi}(n)$, which means that $\pi$ becomes the projection onto the $n$-th coordinate.
  
  \itemnum
  Consider the map $\iota\colon A\to \Ind_{\SymGrp_{n-1}}^{\Sn}(A)$.
  By definition we have $\iota(a)=(\sigma\mapsto \sigma.a)\eqqcolon\phi$.
  Since the action on $A$ is trivial, this is just the constant map $\const_a$ with value $a$.
  But this implies that also $\overline{\phi}=\const_a$, which corresponds to the $n$-tuple $(a,\dots,a)\in A^n$, as desired.
  
  \itemnum
  Consider the map $\nu\colon\Ind_{\SymGrp_{n-1}}^{\Sn}(A)\to A$, which is given by
  \[
    \phi
    \mapsto\sum_{[\sigma]\in \Sn/\SymGrp_{n-1}}{\sigma.\phi(\sigma^\inv)}
    =\sum_{[\sigma]\in \Sn/\SymGrp_{n-1}}{\phi(\sigma^\inv)}.
  \]
  If $\{\sigma_1,\dots,\sigma_n\}$ is a set of left-coset representatives, then $\{\sigma_1^\inv,\dots,\sigma_n^\inv\}$ is a set of right-coset representatives.
  So an element $(a_i)_i\in A^n$ corresponds to $\phi$ satisfying $\phi(\sigma_i^\inv)=a_i$.
  Now $\nu(\phi)=\sum_{i=1}^{n}{\phi(\sigma_i^\inv)}=\sum_{i=1}^{n}{a_i}$, which witnesses that $\nu$ becomes the summation map~$\Sigma$.
\end{proof}

\begin{proposition}[{\cite[Prop.~1.5.7]{NSW:CNF}}]\label{prop:cor_res_composition}
  Let $G$ be a finite group with some subgroup $H\subset G$.
  Then we have for every $\ZZ[G]$-module $N$ and $n\geq0$ the identity of endomorphism of $\Ho^n(G,N)$
  \[
    \cor_{H}^{G}\circ\res_{H}^{G}=\GrpIndex{G}{H}\cdot\id.
  \]
\end{proposition}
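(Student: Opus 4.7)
The plan is to leverage the description of restriction and corestriction through Shapiro's lemma that was just recorded in \cref{prop:res_and_cor_via_shapiro}. Concretely, combining the two diagrams there, one has for any $\ZZ[G]$-module $N$ the factorization
\[
  \cor_{H}^{G}\circ\res_{H}^{G} = \nu_{*}\circ\shapiro^{-1}\circ\shapiro\circ\iota_{*} = \nu_{*}\circ\iota_{*} = (\nu\circ\iota)_{*}
\]
as endomorphisms of $\Ho^{n}(G,N)$. So it suffices to show, at the level of $\ZZ[G]$-modules, the identity $\nu\circ\iota = \GrpIndex{G}{H}\cdot\id_{N}$, and then apply the functor $\Ho^{n}(G,-)$.

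To establish this module identity, I would unwind the definitions of $\iota$ and $\nu$ directly. For $x\in N$, the unit of the restriction--coinduction adjunction produces the function $\iota(x)\colon G\to N$, $g\mapsto g.x$. Feeding this into the counit of the induction--restriction adjunction gives
\[
  \nu(\iota(x)) = \sum_{[g]\in G/H} g.\bigl(\iota(x)(g^{-1})\bigr) = \sum_{[g]\in G/H} g.(g^{-1}.x) = \sum_{[g]\in G/H} x = \GrpIndex{G}{H}\cdot x,
\]
where in the second equality I use that the sum depends only on the coset $[g]\in G/H$, since $\iota(x)(h g^{-1}) = h.\iota(x)(g^{-1})$ and the resulting factor of $h$ is absorbed when multiplying on the outside by an element of the same coset. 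Applying the functor $\Ho^{n}(G,-)$ and combining with the factorization above then gives the claimed identity.

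There is no real obstacle here beyond carefully verifying that the sum $\sum_{[g]\in G/H} g.\phi(g^{-1})$ is well-defined (independent of coset representatives), which is the content built into the definition of $\nu$. The alternative and essentially equivalent route would be a universal $\delta$-functor argument: both $\cor\circ\res$ and $\GrpIndex{G}{H}\cdot\id$ are morphisms of universal $\delta$-functors in $N$, so it suffices to check the identity in degree $0$, where $\Ho^{0}(G,N)=N^{G}$, $\res$ is the inclusion $N^{G}\hookrightarrow N^{H}$, and $\cor$ is the norm $y\mapsto\sum_{[g]\in G/H} g.y$; for $x\in N^{G}$ this immediately yields $\GrpIndex{G}{H}\cdot x$. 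Either route is short and conceptual given the machinery already set up in the section.
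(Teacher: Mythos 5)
Your proof is correct and complete: the paper itself offers no argument for this proposition, merely citing \cite[Prop.~1.5.7]{NSW:CNF}, and both of your routes — the factorization $\cor_H^G\circ\res_H^G=(\nu\circ\iota)_*$ via \cref{prop:res_and_cor_via_shapiro} together with the explicit computation $\nu(\iota(x))=\sum_{[g]\in G/H}g.(g^{-1}.x)=\GrpIndex{G}{H}\cdot x$, and the universal $\delta$-functor reduction to the degree-zero norm map — are exactly the standard arguments found in that reference. The only tiny imprecision is in your justification of the second equality of the display: that step is simply the definition of $\iota$, while the coset-independence you invoke is the (separate, already-established) well-definedness of $\nu$; this does not affect the validity of the proof.
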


\begin{proposition}[Nakaoka~{\cite[Cor.~6.7]{Nak:60}}]\label{prop:nakaoka_res_isomorphism}
  Let $A$ be an abelian group endowed with trivial $\Sn$-action.
  Then the restriction map
  \[
    \res_{\SymGrp_{n-1}}^{\Sn}\colon\Ho^k(\Sn,A)\to\Ho^k(\SymGrp_{n-1},\Res_{\SymGrp_{n-1}}^{\Sn}(A))
  \]
  is an isomorphism for $k<n/2$.
\end{proposition}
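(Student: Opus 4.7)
The result is classical and due to Nakaoka; my plan is to follow the strategy of \cite{Nak:60}. Since $A$ carries the trivial action and both sides commute with direct limits and finite direct sums in the coefficient module, the statement first reduces to finitely generated $A$, and then via the Bockstein long exact sequence associated to $0 \to \ZZ \xrightarrow{p^r} \ZZ \to \ZZ/p^r\ZZ \to 0$ to the two cases $A=\ZZ$ and $A=\FFp$ for each prime $p$. The integral case reduces further to the $\FFp$-cases together with a direct computation of the finite groups $\Ho^{k}(\Sn,\ZZ)$ for $k\geq 1$.

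For $A=\FFp$, the core calculation analyzes the mod-$p$ cohomology ring of $\Sn$ via its Sylow $p$-subgroups. Writing $n$ in base $p$, the Sylow $p$-subgroup of $\Sn$ is a product of iterated wreath products $\SymGrp_p\wr\dots\wr\SymGrp_p$, whose cohomology is accessible via successive Lyndon--Hochschild--Serre spectral sequences for the normal extensions $(\SymGrp_{p^{k-1}})^{p}\triangleleft\SymGrp_{p^{k-1}}\wr\SymGrp_p$. A transfer argument (\cref{prop:cor_res_composition}, with $\GrpIndex{\Sn}{\text{Syl}_p}$ prime to $p$) passes from the Sylow subgroup to $\Sn$ itself, yielding explicit generators of $\Ho^{\bullet}(\Sn,\FFp)$. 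Nakaoka's key observation is that every generator in cohomological degree $k$ is already supported on $\SymGrp_{2k}$, whence the restriction $\res_{\SymGrp_{n-1}}^{\Sn}$ is an isomorphism as soon as $n-1\geq 2k$, \ie $k<n/2$.

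The main obstacle is the combinatorial bookkeeping on these Dyer--Lashof-like generators of $\Ho^{\bullet}(\Sn,\FFp)$ and the verification that restriction along the stabilization $\SymGrp_{n-1}\hookrightarrow\Sn$ is compatible with the wreath-product description of the generators. This is precisely the technical content of \cite{Nak:60}, and a full write-up would essentially reproduce that paper; for our applications the stated range $k<n/2$ is enough, so I would simply invoke Nakaoka's theorem as a black box.
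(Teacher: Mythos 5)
The paper gives no proof of this proposition at all: it is stated as a classical result and simply attributed to Nakaoka \cite[Cor.~6.7]{Nak:60}, exactly as you ultimately do by invoking the theorem as a black box. Your sketch of the internal strategy of Nakaoka's argument is a reasonable (if not load-bearing) gloss, but the operative step — deferring to the reference for the stability range $k<n/2$ — coincides with the paper's treatment.
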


\begin{proposition}\label{prop:stdrep_coho_ses}
  Let $A$ be an abelian group endowed with trivial $\Sn$-action.
  Then we have a short exact sequence of group cohomology groups
  \[
    0\to\Ho^{k-1}(\Sn,A)/n\Ho^{k-1}(\Sn,A)\to\Ho^k(\Sn,\StdRep_n\otimesZ A)\to\Ho^k(\Sn,\ZZ^n\otimesZ A)[n]\to0
  \]
  for $k<n/2$.
  When $k=n/2$, we still have the injection on the left side.
\end{proposition}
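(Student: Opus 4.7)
The plan is to apply the long exact sequence in group cohomology to the short exact sequence~\cref{ses:abvar_stdrep_def}
\[
  0\to A\otimesZ\StdRep_n\to A^n\xrightarrow{\Sigma}A\to 0,
\]
and then to identify the connecting map $\Sigma_{*}\colon\Ho^{k}(\Sn,A^n)\to\Ho^{k}(\Sn,A)$ explicitly. From the long exact sequence we immediately obtain
\[
  0\to \coker\bigl(\Sigma_{*}\colon\Ho^{k-1}(\Sn,A^n)\to\Ho^{k-1}(\Sn,A)\bigr)\to\Ho^{k}(\Sn,\StdRep_n\otimesZ A)\to\kernel\bigl(\Sigma_{*}\colon\Ho^{k}(\Sn,A^n)\to\Ho^{k}(\Sn,A)\bigr)\to 0,
\]
so the proposition reduces to computing these kernel and cokernel terms.

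Next, by \cref{prop:permutation_rep_as_induced_rep_identifications} the permutation module $A^n$ is isomorphic to $\Ind_{\SymGrp_{n-1}}^{\Sn}(A)$, and under this isomorphism $\Sigma$ becomes the counit $\nu$. Shapiro's lemma (\cref{prop:shapiros_lemma}) then gives $\Ho^{k}(\Sn,A^n)\simeq\Ho^{k}(\SymGrp_{n-1},A)$, and by \cref{prop:res_and_cor_via_shapiro} the map $\Sigma_{*}=\nu_{*}$ corresponds under this isomorphism to the corestriction $\cor_{\SymGrp_{n-1}}^{\Sn}\colon\Ho^{k}(\SymGrp_{n-1},A)\to\Ho^{k}(\Sn,A)$.

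For $k<n/2$, Nakaoka's theorem (\cref{prop:nakaoka_res_isomorphism}) asserts that $\res_{\SymGrp_{n-1}}^{\Sn}$ is an isomorphism in degree $k$. Combined with $\cor\circ\res=[\Sn:\SymGrp_{n-1}]\cdot\id=n\cdot\id$ from \cref{prop:cor_res_composition}, this shows that under the identification $\Ho^{k}(\Sn,A^n)\simeq\Ho^{k}(\Sn,A)$ obtained by composing Shapiro with $\res^{-1}$, the map $\Sigma_{*}$ becomes multiplication by~$n$. Its image is therefore $n\Ho^{k}(\Sn,A)$ and its kernel is $\Ho^{k}(\Sn,A)[n]\simeq\Ho^{k}(\Sn,A^n)[n]$. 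Applying this to degrees $k-1$ and $k$ (both of which satisfy the Nakaoka condition whenever $k<n/2$) yields the desired short exact sequence.

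The only remaining point is the boundary case $k=n/2$: here $\res$ is still an isomorphism in degree $k-1$ (since $k-1<n/2$), so the cokernel computation for $\Sigma_{*}$ in degree $k-1$ is unaffected and gives the injection $\Ho^{k-1}(\Sn,A)/n\Ho^{k-1}(\Sn,A)\hookrightarrow\Ho^{k}(\Sn,\StdRep_n\otimesZ A)$, while the right-hand kernel term is no longer under control. The main subtlety in the write-up is simply to be careful that the identification of $\Sigma_{*}$ with $\cor$ (and ultimately with $n\cdot\id$) is genuinely the map appearing in the long exact sequence; this is guaranteed by naturality of Shapiro's isomorphism and \cref{prop:res_and_cor_via_shapiro}, so no honest obstacle arises.
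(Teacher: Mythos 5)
Your proposal is correct and follows essentially the same route as the paper: apply the long exact sequence to \cref{ses:abvar_stdrep_def}, identify $\Sigma_{*}$ with $\cor_{\SymGrp_{n-1}}^{\Sn}$ via \cref{prop:permutation_rep_as_induced_rep_identifications}, Shapiro's lemma, and \cref{prop:res_and_cor_via_shapiro}, then use Nakaoka's theorem together with $\cor\circ\res=n\cdot\id$ to compute the kernel and cokernel in the relevant degrees. Your treatment of the boundary case $k=n/2$ also matches the paper's.
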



\begin{proof}
  Apply group group cohomology to the defining short exact sequence~Seq.~\eqref{ses:abvar_stdrep_def} of $\StdRep_n\otimesZ A$ to get the exact sequence
  \[
    \cdots\to\Ho^i(\Sn,\StdRep_n\otimesZ A)\to\Ho^i(\Sn,\ZZ^n\otimesZ A)\xrightarrow{\Sigma_{*}^i}\Ho^i(\Sn,\ZZ\otimesZ A)\to\cdots
  \]
  Using the identification from \cref{prop:permutation_rep_as_induced_rep_identifications}, Shapiro's isomorphism (\cref{prop:shapiros_lemma}) and \cref{prop:res_and_cor_via_shapiro}, we extend this to a commutative diagram
  \[\begin{tikzcd}[rotate90south/.style={anchor=south, rotate=270}, column sep=1em]
    \cdots \arrow[r] & \Ho^i(\Sn,\StdRep_n\otimesZ A) \arrow[r] & \Ho^i(\Sn,\ZZ^n\otimesZ A) \arrow[r, "\Sigma_{*}^i"] \arrow[d, "\shapiro"', "\sim" rotate90south] & \Ho^i(\Sn,\ZZ\otimesZ A) \arrow[r] & \cdots \\
    & & \Ho^i(\SymGrp_{n-1},\ZZ\otimesZ A). \arrow[ru, "\cor_{\SymGrp_{n-1}}^{\Sn}"'] & &
  \end{tikzcd}\]
  
  Now using the formula $\cor_{\SymGrp_{n-1}}^{\Sn}\circ\res_{\SymGrp_{n-1}}^{\Sn}=\GrpIndex{\Sn}{\SymGrp_{n-1}}=n$, \cf~\cref{prop:cor_res_composition}, and that $\res_{\SymGrp_{n-1}}^{\Sn}$ is an isomorphism for $i<n/2$, \cf~\cref{prop:nakaoka_res_isomorphism}, we get for $i=k$
  \[
    \ker(\cor_{\SymGrp_{n-1}}^{\Sn}\colon\Ho^k(\SymGrp_{n-1},A)\to\Ho^k(\Sn,A))=\Ho^k(\SymGrp_{n-1},A)[n],
  \]
  which determines $\ker(\Sigma_{*}^k)=\Ho^k(\Sn,\ZZ^n\otimesZ A)[n]$.
  For $i=k-1$ we get
  \[
    \im(\Sigma_{*}^{k-1})=\im(\cor_{\SymGrp_{n-1}}^{\Sn}\colon\Ho^{k-1}(\SymGrp_{n-1},A)\to\Ho^{k-1}(\Sn,A))=n\cdot\Ho^{k-1}(\Sn,A).
  \]
  So the long exact sequence above induces the desired short exact sequence.
\end{proof}

%

\begin{corollary}\label{cor:stdrep_coho_h1_ses}
  Let $A$ be an abelian group, and assume $n\geq3$, then we have a short exact sequence
  \[
    0\to A/n A\to\Ho^1(\Sn,\StdRep_n\otimesZ A)\to A[2][n]\to0,
  \]
  where $A[2][n]$ denotes the $n$-torsion subgroup of the $2$-torsion subgroup of $A$.
\end{corollary}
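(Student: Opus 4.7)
The plan is to deduce the corollary from \cref{prop:stdrep_coho_ses} by specializing to $k=1$ and then explicitly identifying the two outer terms. Since $n\geq3$, we have $1<n/2$, so the stable-range hypothesis is satisfied, and \cref{prop:stdrep_coho_ses} yields the short exact sequence
\[
  0\to \Ho^{0}(\Sn,A)/n\Ho^{0}(\Sn,A) \to \Ho^{1}(\Sn,\StdRep_n\otimesZ A) \to \Ho^{1}(\Sn,\ZZ^n\otimesZ A)[n]\to0.
\]
For the left term, I would use that $\Sn$ acts trivially on $A$, which immediately gives $\Ho^{0}(\Sn,A)=A$ and hence the quotient $A/nA$, as needed.

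For the right term, I would combine Shapiro's lemma with the identification $\ZZ^n\otimesZ A\simeq\Ind_{\SymGrp_{n-1}}^{\Sn}(A)$ from \cref{prop:permutation_rep_as_induced_rep_identifications} (applied with trivial $\SymGrp_{n-1}$-action) to obtain
\[
  \Ho^{1}(\Sn,\ZZ^n\otimesZ A)\simeq \Ho^{1}(\SymGrp_{n-1},A).
\]
Since $A$ has trivial $\SymGrp_{n-1}$-action, $\Ho^{1}(\SymGrp_{n-1},A)=\Hom(\SymGrp_{n-1}^{\ab},A)$. For $n\geq3$ one has $n-1\geq2$, so $\SymGrp_{n-1}^{\ab}\simeq\ZZ/2\ZZ$, and therefore $\Ho^{1}(\SymGrp_{n-1},A)\simeq A[2]$. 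Passing to $n$-torsion gives $A[2][n]$.

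Plugging both computations back into the short exact sequence gives precisely the claimed sequence. There is no real obstacle here; the only points requiring a modicum of care are verifying that the stable-range condition $k<n/2$ holds for $k=1$ and $n\geq3$ (so that the proposition applies and in particular the right-hand map is surjective, not merely that the left-hand map is injective), and keeping straight that the outcome $A[2][n]$ is exactly the $n$-torsion of the $2$-torsion subgroup of $A$ (which coincides with $A[\gcd(2,n)]$, though that reformulation is not needed).
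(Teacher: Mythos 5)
Your proposal is correct and follows the same route as the paper: specialize \cref{prop:stdrep_coho_ses} to $k=1$ (noting $1<n/2$ for $n\geq3$), identify the left term via $\Ho^0(\Sn,A)=A$, and identify the right term via Shapiro's lemma together with $\Ho^1(\SymGrp_{n-1},A)\simeq\Hom((\SymGrp_{n-1})^{\ab},A)\simeq A[2]$.
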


\begin{proof}
  Since the action on $A$ is trivial, we have $\Ho^0(\Sn,A)=A$.
  Taking the viewpoint that elements of the first cohomology group can be represented by crossed-homomorphisms, which are just homomorphisms when $A$ carries the trivial action, we see that
  \[
    \Ho^1(\SymGrp_{n-1},A)
    \simeq\Hom((\SymGrp_{n-1})^\ab,A)
    \simeq\Hom(\ZZ/2\ZZ,A)
    \simeq A[2].
  \]
  Now we use Shapiro's isomorphism $\Ho^1(\Sn,A^n)\isoarr\Ho^1(\SymGrp_{n-1},A)$, and conclude by applying \cref{prop:stdrep_coho_ses}.
\end{proof}

\begin{proposition}\label{prop:stdrep_dual_coho_vanishing_and_seq}
  Let $A$ be an abelian group endowed with trivial $\Sn$-action.
  Then we have for $k<n/2 -1$ the identity
  \[
    \Ho^k(\Sn,\StdRep_{n}^{\vee}\otimesZ A)=0,
  \]
  and for $k<n/2$ we have an exact sequence of group cohomology groups
  \[
    0\to\Ho^k(\Sn,\StdRep_{n}^{\vee}\otimesZ A)\to\Ho^{k+1}(\Sn,\ZZ\otimesZ A)\xrightarrow{\res}\Ho^{k+1}(\SymGrp_{n-1},\ZZ\otimesZ A).
  \]
\end{proposition}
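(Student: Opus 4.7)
The plan is to derive both statements from the defining short exact sequence of $\StdRep_n^\vee$ together with the identifications already set up earlier in the section. First I would tensor the sequence
\[
  0 \to \ZZ \xrightarrow{\Delta} \ZZ^n \to \StdRep_n^\vee \to 0
\]
with $A$ (which preserves exactness since $\Delta$ is split injective) to obtain a short exact sequence
\[
  0 \to A \to A^n \to \StdRep_n^\vee \otimesZ A \to 0
\]
of $\ZZ[\Sn]$-modules, where $A^n$ carries the permutation action.

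Applying group cohomology yields a long exact sequence containing
\[
  \Ho^k(\Sn,A) \xrightarrow{\Delta_*} \Ho^k(\Sn,A^n) \to \Ho^k(\Sn,\StdRep_n^\vee\otimesZ A) \to \Ho^{k+1}(\Sn,A) \xrightarrow{\Delta_*} \Ho^{k+1}(\Sn,A^n).
\]
I would then use \cref{prop:permutation_rep_as_induced_rep_identifications} to identify $A^n \simeq \Ind_{\SymGrp_{n-1}}^{\Sn}(A)$, under which $\Delta$ becomes the adjunction unit $\iota$. Shapiro's lemma (\cref{prop:shapiros_lemma}) turns $\Ho^j(\Sn,A^n)$ into $\Ho^j(\SymGrp_{n-1},A)$, and by \cref{prop:res_and_cor_via_shapiro} the composition $\shapiro \circ \iota_*$ equals $\res_{\SymGrp_{n-1}}^{\Sn}$. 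So the sequence rewrites as
\[
  \Ho^k(\Sn,A) \xrightarrow{\res} \Ho^k(\SymGrp_{n-1},A) \to \Ho^k(\Sn,\StdRep_n^\vee\otimesZ A) \to \Ho^{k+1}(\Sn,A) \xrightarrow{\res} \Ho^{k+1}(\SymGrp_{n-1},A).
\]

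To finish I would invoke Nakaoka's theorem (\cref{prop:nakaoka_res_isomorphism}): the restriction $\res_{\SymGrp_{n-1}}^{\Sn}$ is an isomorphism on $\Ho^j$ whenever $j < n/2$. When $k < n/2$, surjectivity of $\res$ at degree $k$ already yields the exact sequence asserted in the second claim. When $k < n/2 - 1$, the restriction maps at both degrees $k$ and $k+1$ are isomorphisms, so $\Ho^k(\Sn,\StdRep_n^\vee \otimesZ A)$ is sandwiched between a surjection on the left and an injection on the right, forcing it to vanish. The whole argument is formal bookkeeping; the only substantive input is the identification of $\Delta_*$ with the restriction map, which is precisely the combination of \cref{prop:permutation_rep_as_induced_rep_identifications} and \cref{prop:res_and_cor_via_shapiro}, so I do not anticipate any serious obstacle.
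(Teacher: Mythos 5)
Your proposal is correct and follows essentially the same route as the paper: tensor the cokernel sequence defining $\StdRep_n^{\vee}$ with $A$, take the long exact sequence in group cohomology, identify $\Delta_*$ with $\res_{\SymGrp_{n-1}}^{\Sn}$ via \cref{prop:permutation_rep_as_induced_rep_identifications}, Shapiro's lemma, and \cref{prop:res_and_cor_via_shapiro}, then apply Nakaoka's stability theorem in degrees $k$ and $k+1$. The bookkeeping at the end (surjectivity in degree $k$ gives the exact sequence; adding injectivity in degree $k+1$ gives the vanishing) is exactly right.
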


\begin{proof}
  Similar to above, apply group group cohomology to the defining short exact cokernel sequence of $\StdRep_{n}^{\vee}\otimesZ A$, \cf\cref{par:stdrep_dual_definition}, to get the commutative diagram with exact rows
  \[\begin{tikzcd}[rotate90south/.style={anchor=south, rotate=270}, column sep=1em]
    \cdots \arrow[r] & \Ho^i(\Sn,\ZZ\otimesZ A) \arrow[rd, "\res_{\SymGrp_{n-1}}^{\Sn}"'] \arrow[r, "\Delta_{*}"] & \Ho^i(\Sn,\ZZ^n\otimesZ A) \arrow[d, "\shapiro"', "\sim" rotate90south] \arrow[r] & \Ho^i(\Sn,\StdRep_{n}^{\vee}\otimesZ A) \arrow[r] & \cdots \\
    & & \Ho^i(\SymGrp_{n-1},\ZZ\otimesZ A). & &
  \end{tikzcd}\]
  By \cref{prop:nakaoka_res_isomorphism}, the restriction map $\res_{\SymGrp_{n-1}}^{\Sn}$ is an isomorphism for $i<n/2$.
  This information for $i=k$ and $i=k+1$ yields the claims in the proposition.
\end{proof}

In order to handle the case $k=1$, we recall the following facts about Schur multipliers:

\begin{proposition}\label{prop:schur-multiplier}
  Let $G$ be a finite group, acting trivially on $\ZZ$ and $\Bbbk^{\times}$, where $\Bbbk$ is an algebraically closed field of characteristic zero.
  \begin{enumerate}
    \item
    We have $\Ho^k(G,\Bbbk^{\times})\simeq\Hom_{\ZZ}(\Ho_k(G,\ZZ),\Bbbk^{\times})$.
    In particular, the Schur multiplier $\Ho^2(G,\Bbbk^\times)$ and $\Ho_2(G,\ZZ)$ are isomorphic as abstract groups.
    \item
    For $n\geq4$, we have $\Ho^2(\Sn,\Bbbk^{\times})\simeq\ZZ/2\ZZ$.
    For $n\leq3$, we have $\Ho^2(\Sn,\Bbbk^{\times})=0$.
  \end{enumerate}
\end{proposition}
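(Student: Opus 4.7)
The plan is to reduce both parts to two classical ingredients: the universal coefficient theorem for group cohomology, and Schur's computation of the Schur multiplier of the symmetric group.

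For part (i), I would invoke the universal coefficient short exact sequence
\[
0 \to \Ext^1_{\ZZ}(\Ho_{k-1}(G,\ZZ),\Bbbk^{\times}) \to \Ho^k(G,\Bbbk^{\times}) \to \Hom_{\ZZ}(\Ho_k(G,\ZZ),\Bbbk^{\times}) \to 0.
\]
The crucial observation is that $\Bbbk^{\times}$ is a divisible abelian group: since $\Bbbk$ is algebraically closed, every nonzero element admits an $n$-th root for every $n \geq 1$. A divisible abelian group is an injective $\ZZ$-module, so $\Ext^1_{\ZZ}(-,\Bbbk^{\times})$ vanishes identically and the comparison map on the right becomes an isomorphism. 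This yields (i), and in particular the asserted abstract isomorphism between the Schur multiplier and $\Ho_2(G,\ZZ)$.

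For part (ii), I would then quote Schur's classical calculation that $\Ho_2(\Sn,\ZZ) \simeq \ZZ/2\ZZ$ for $n \geq 4$ and $\Ho_2(\Sn,\ZZ) = 0$ for $n \leq 3$ (an exposition may be found in standard references on group (co)homology, for instance Karpilovsky or Weibel). Combining this with (i) gives
\[
\Ho^2(\Sn,\Bbbk^{\times}) \simeq \Hom_{\ZZ}(\Ho_2(\Sn,\ZZ),\Bbbk^{\times}),
\]
which is $0$ for $n \leq 3$, and for $n \geq 4$ is $\Hom_{\ZZ}(\ZZ/2\ZZ,\Bbbk^{\times}) = (\Bbbk^{\times})[2] = \{\pm 1\} \simeq \ZZ/2\ZZ$, where we use $\chara\Bbbk = 0$ to identify the $2$-torsion.

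There is no genuine obstacle here; the argument is essentially bookkeeping. The only subtleties are verifying divisibility of $\Bbbk^{\times}$ to apply the universal coefficient theorem cleanly, and pinning down an appropriate citation for Schur's calculation of $\Ho_2(\Sn,\ZZ)$ in the small-$n$ cases (where the low-degree exceptional behavior must be checked by hand or looked up).
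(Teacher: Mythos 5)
Your proposal is correct and follows essentially the same route as the paper: the universal coefficient theorem to identify $\Ho^k(G,\Bbbk^{\times})$ with $\Hom_{\ZZ}(\Ho_k(G,\ZZ),\Bbbk^{\times})$, followed by Schur's computation of the multiplier of $\Sn$. The only (harmless) variation is that you kill the $\Ext^1$ term by observing that $\Bbbk^{\times}$ is divisible, hence injective, whereas the paper decomposes the homology group into cyclic summands and uses $\Bbbk^{\times}/(\Bbbk^{\times})^{n}=1$ — the same underlying fact, and your version is if anything slightly cleaner.
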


\begin{proof}
  \itemnum
  The universal coefficient theorem for group cohomology gives a short exact sequence
  \[
    0\to\Ext_{\ZZ}^1(\Ho_{k-1}(G,\ZZ),\Bbbk^{\times})\to\Ho^k(G,\Bbbk^{\times})\to\Hom_{\ZZ}(\Ho_k(G,\ZZ),\Bbbk^{\times})\to0.
  \]
  The group $\Ho_{k-1}(G,\ZZ)$ is a finite abelian group, since $G$ is a finite group and $\ZZ$ is a finitely generated abelian group.
  So it is isomorphic to a direct sum of finite cyclic groups.
  Finally, use that $\Ext_{\ZZ}^1(\ZZ/n\ZZ,\Bbbk^{\times})\simeq\Bbbk^{\times}/(\Bbbk^{\times})^{n}=1$.
  
  For the second claim, write again $\Ho_{2}(G,\ZZ)$ as a direct sum of finite cyclic groups.
  Now note that $\ZZ/n\ZZ$ is isomorphic to the group of roots of unity $\ROU_n(\Bbbk)\simeq\Hom_{\ZZ}(\ZZ/n\ZZ,\Bbbk^{\times})$.

  \itemnum
  This goes back to Schur \cite{Sch:11}.
  For reference see \cite[Thm.~2.12.3]{Kar:87}.
%
\end{proof}

\begin{proposition}\label{prop:h1-dual-stdrep-vanishing}
  Let $A$ be an abelian group endowed with trivial $\Sn$-action.
  Then we have
  \[
    \Ho^1(\Sn,\StdRep_{n}^{\vee}\otimesZ A)=0
  \]
  when $n=3$ or $n\geq5$.
  For $n=4$ we have
  \[
    \Ho^1(\SymGrp_{4},\StdRep_{4}^{\vee}\otimesZ A)\simeq A[2].
  \]
\end{proposition}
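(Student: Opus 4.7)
The plan is to deduce everything from Proposition \ref{prop:stdrep_dual_coho_vanishing_and_seq}. Taking $k = 1$ there, which is admissible precisely when $1 < n/2$, i.e.\ when $n \geq 3$, one obtains an exact sequence
\[
  0 \to \Ho^1(\Sn, \StdRep_n^{\vee} \otimesZ A) \to \Ho^2(\Sn, A) \xrightarrow{\res_{\SymGrp_{n-1}}^{\Sn}} \Ho^2(\SymGrp_{n-1}, A),
\]
so the problem reduces to computing the kernel of this restriction in degree~$2$.

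The stable range $n \geq 5$ is immediate: there $k = 2 < n/2$, hence by Nakaoka's theorem (Proposition \ref{prop:nakaoka_res_isomorphism}) the restriction is already an isomorphism, and the kernel vanishes.

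For the remaining values $n \in \{3, 4\}$ I would invoke the universal coefficient theorem to break apart both $\Ho^2$ groups,
\[
  0 \to \Ext_{\ZZ}^1(\Sn^{\ab}, A) \to \Ho^2(\Sn, A) \to \Hom_{\ZZ}(\Ho_2(\Sn, \ZZ), A) \to 0,
\]
and analogously for $\SymGrp_{n-1}$. Naturality of UCT in the group identifies $\res$ with the map induced by the inclusion of abelianizations $\SymGrp_{n-1}^{\ab} \to \Sn^{\ab}$ on the $\Ext^1$ piece, and by the inclusion $\Ho_2(\SymGrp_{n-1}, \ZZ) \to \Ho_2(\Sn, \ZZ)$ on the $\Hom$ piece. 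The ingredients to plug in are $\Sn^{\ab} \simeq \ZZ/2\ZZ$ together with the Schur multiplier data recalled in Proposition \ref{prop:schur-multiplier}: $\Ho_2(\SymGrp_m, \ZZ)$ is zero for $m \leq 3$ and $\ZZ/2\ZZ$ for $m \geq 4$. For $n = 3$ both Schur multipliers vanish, both $\Ho^2$ groups collapse to $A/2A$, and the restriction is induced by the identity $\ZZ/2\ZZ \to \ZZ/2\ZZ$ on abelianizations, so its kernel is zero. For $n = 4$ the UCT sequences become
\[
  0 \to A/2A \to \Ho^2(\SymGrp_4, A) \to A[2] \to 0
  \quad\text{and}\quad
  \Ho^2(\SymGrp_3, A) = A/2A;
\]
the induced map on the $\Hom$-quotient is zero because its target is $\Hom_{\ZZ}(0, A) = 0$, and a short snake-lemma argument applied to the two UCT sequences then yields $\ker(\res) = A[2]$, as desired.

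The main obstacle I anticipate is keeping the bookkeeping in the $n = 4$ case honest: one must ensure that the UCT diagram commutes in the required sense, which is exactly the naturality of the universal coefficient sequence in the group, and must avoid implicitly invoking the (non-natural) splitting of UCT. Since the snake-lemma step uses only the two short exact sequences and the map between them, this concern resolves itself.
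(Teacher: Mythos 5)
Your proposal is correct and follows essentially the same route as the paper: reduce via \cref{prop:stdrep_dual_coho_vanishing_and_seq} to the kernel of $\res\colon\Ho^2(\Sn,A)\to\Ho^2(\SymGrp_{n-1},A)$, dispatch $n\geq5$ by the stable-range isomorphism, and handle $n=3,4$ with the natural map of universal coefficient sequences together with the Schur multiplier data $\Ho_2(\SymGrp_3,\ZZ)=0$ and $\Ho_2(\SymGrp_4,\ZZ)\simeq\ZZ/2\ZZ$. The only cosmetic difference is that the paper cites the vanishing clause of \cref{prop:stdrep_dual_coho_vanishing_and_seq} directly for $n\geq5$ rather than re-invoking Nakaoka, and it spells out the compatibility of the $\Ext^1$-inclusion with restriction via the factorization through $\Ho^2((\Sn)^{\ab},A)$.
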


\begin{proof}
  The claim for $n\geq5$ follows directly from \cref{prop:stdrep_dual_coho_vanishing_and_seq}, which also provides the following information
  \begin{align*}
    \Ho^1(\SymGrp_{3},\StdRep_{3}^{\vee}\otimesZ A) &=\ker(\res\colon\Ho^2(\SymGrp_{3},A)\to\Ho^2(\SymGrp_{2},A)), \\
    \Ho^1(\SymGrp_{4},\StdRep_{4}^{\vee}\otimesZ A) &=\ker(\res\colon\Ho^2(\SymGrp_{4},A)\to\Ho^2(\SymGrp_{3},A)).
  \end{align*}
  
  Since the action on $A$ is trivial, we can apply the universal coefficient theorem for group cohomology to get the short exact sequence
  \[
    0\to\Ext_{\ZZ}^1(\Ho_1(\Sn,\ZZ),A)\xrightarrow{\gamma}\Ho^2(\Sn,A)\to\Hom_{\ZZ}(\Ho_2(\Sn,\ZZ),A)\to0.
  \]
  Recall that $\Ho_1(\Sn,\ZZ)\simeq(\Sn)^\ab$.
  We can describe $\gamma$ as the composition
  \[\begin{tikzcd}
    \Ext_{\ZZ}^1((\Sn)^\ab,A) \arrow[rr, "\gamma"] \arrow[dr,"f"'] & & \Ho^2(\Sn,A) \\
    & \Ho^2((\Sn)^\ab,A), \arrow[ur, "g"'] &
  \end{tikzcd}\]
  where the map $g$ is induced from $\Sn\twoheadrightarrow(\Sn)^\ab$ by functoriality, and the map $f$  sends an extension of abelian groups to itself, but viewed as a central extension of (non-necessarily abelian) groups, \cf\cite[Thm.~1.8]{Bey:82}.
  In particular, these maps are compatible with the restriction maps~$\res_{\SymGrp_{n-1}}^{\Sn}$.
    
  For $n\leq3$ we have a trivial Schur multiplier $\Ho_2(\Sn,\ZZ)=0$, so $\gamma$ is an isomorphism.
  The inclusion $\SymGrp_2\hookrightarrow\SymGrp_3$ induces an isomorphism $(\SymGrp_{2})^{\ab}\isoarr(\SymGrp_{3})^{\ab}$, and hence and isomorphism of $\Ext$-groups.
  In conclusion, the restriction map $\res\colon\Ho^2(\SymGrp_{3},A)\to\Ho^2(\SymGrp_{2},A)$ is an isomorphism, and $\Ho^1(\SymGrp_{3},\StdRep_{3}^{\vee}\otimesZ A)=0$.
  
  Now we consider the case $n=4$.
  We have the following commutative diagram with exact rows
  \[\begin{tikzcd}
    0 \arrow[r] & \Ext_{\ZZ}^1((\SymGrp_{4})^{\ab},A) \arrow[r] \arrow[d, "\simeq"', "\res"] & \Ho^2(\SymGrp_{4},A) \arrow[r] \arrow[d, "\res"] & \Hom_{\ZZ}(\Ho_2(\SymGrp_{4},\ZZ),A) \arrow[r] \arrow[d] & 0 \\
    0 \arrow[r] & \Ext_{\ZZ}^1((\SymGrp_{3})^{\ab},A) \arrow[r] & \Ho^2(\SymGrp_{3},A) \arrow[r] & \Hom_{\ZZ}(\Ho_2(\SymGrp_{3},\ZZ),A) \arrow[r] & 0.
  \end{tikzcd}\]
  From this, and the facts $\Ho_2(\SymGrp_{3},\ZZ)=0$ and $\Ho_2(\SymGrp_{4},\ZZ)\simeq\ZZ/2\ZZ$, we conclude that
  \begin{align*}
    \Ho^1(\SymGrp_{4},\StdRep_{4}^{\vee}\otimesZ A)
    &=\ker(\res\colon\Ho^2(\SymGrp_{4},A)\to\Ho^2(\SymGrp_{3},A)) \\
    &\simeq\Hom_{\ZZ}(\Ho_2(\SymGrp_{4},\ZZ),A) \\
    &\simeq\Hom_{\ZZ}(\ZZ/2\ZZ,A) \\
    &\simeq A[2].
  \qedhere
  \end{align*}
\end{proof}

\begin{theorem}\label{thm:first-group-cohomology-ab-var-with-std-rep}
  Let $A$ be the group of $\kbar$-rational points of an abelian variety.
  Then we have
  \[
    \Ho^1(\Sn,\ZZ\times(A\otimesZ\StdRep_n)\times(A\otimesZ\StdRep_n)^{\vee})\simeq
    \begin{cases}
      0 & \text{if}\ n\ \text{odd, or}\ n=2 \\
      A[2] & \text{if}\ n\ \text{even, and}\ n\neq2,4 \\
      A[2]\oplus A^{\vee}[2] & \text{if}\ n=4.
    \end{cases}
  \]
\end{theorem}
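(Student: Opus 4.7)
The plan is to use additivity of group cohomology to decompose
\[
\Ho^1(\Sn, \ZZ \times (A\otimesZ\StdRep_n) \times (A\otimesZ\StdRep_n)^\vee) \simeq \Ho^1(\Sn, \ZZ) \oplus \Ho^1(\Sn, A\otimesZ\StdRep_n) \oplus \Ho^1(\Sn, (A\otimesZ\StdRep_n)^\vee).
\]
The first summand vanishes immediately: the $\Sn$-action on $\ZZ$ is trivial, so $\Ho^1(\Sn,\ZZ) = \Hom(\Sn,\ZZ) = 0$, since $\Sn$ is finite and $\ZZ$ is torsion-free. Identifying $(A\otimesZ\StdRep_n)^\vee \simeq A^\vee\otimesZ\StdRep_n^\vee$ as $\ZZ[\Sn]$-modules as in the paper's preliminaries, the task reduces to the two remaining summands.

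For $\Ho^1(\Sn, A\otimesZ\StdRep_n)$ with $n\geq 3$, I would apply \cref{cor:stdrep_coho_h1_ses} to obtain a short exact sequence $0\to A/nA \to \Ho^1(\Sn, A\otimesZ\StdRep_n) \to A[2][n] \to 0$. The key input is that $A$, being the group of $\kbar$-points of an abelian variety, is divisible; hence $A/nA = 0$ and the middle term collapses to $A[2][n]$. A parity case split then gives $0$ for odd $n$ and $A[2]$ for even $n$ (since $2\mid n$ makes the extra $n$-torsion condition automatic on $A[2]$). For the dual summand $\Ho^1(\Sn, A^\vee \otimesZ \StdRep_n^\vee)$ with $n\geq 3$, I would quote \cref{prop:h1-dual-stdrep-vanishing} directly, which gives $0$ for $n = 3$ or $n \geq 5$, and $A^\vee[2]$ for $n = 4$.

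The case $n=2$ falls outside the ranges of these two propositions and must be handled by hand. Here I would invoke \cref{rmk:canonical-map-n-two}, which identifies both $\StdRep_2$ and $\StdRep_2^\vee$ with the sign representation. Consequently $A\otimesZ\StdRep_2$ and $A^\vee\otimesZ\StdRep_2^\vee$ become $A$ and $A^\vee$ equipped with the $\SymGrp_2$-action by negation, and a direct cyclic-group cohomology calculation gives $\Ho^1(\SymGrp_2, M_{-}) = M/2M$ for $\sigma$ acting as $-1$; divisibility of $A$ and $A^\vee$ makes these quotients vanish.

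Assembling the four cases (odd $n$; $n=2$; even $n\geq 6$; $n=4$) reproduces exactly the table in the statement. The main (modest) obstacle is the $n=2$ case, since the general propositions start at $n\geq 3$; everything else reduces mechanically to the cohomological computations already established in \cref{sec:group_coho_calculations}, with the divisibility of abelian varieties uniformly killing the $A/nA$-type contributions.
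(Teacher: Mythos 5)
Your proposal is correct and follows essentially the same route as the paper: additivity over the three factors, vanishing of $\Ho^1(\Sn,\ZZ)$, \cref{cor:stdrep_coho_h1_ses} plus divisibility of $A(\kbar)$ for the $A\otimesZ\StdRep_n$ factor, \cref{prop:h1-dual-stdrep-vanishing} for the dual factor, and a separate hands-on treatment of $n=2$ via the sign representation. Your $n=2$ computation via $\Ho^1(\SymGrp_2,M_{-})\simeq M/2M$ is just a repackaging of the paper's explicit coboundary $(1\,2).(-a/2,a/2)-(-a/2,a/2)=(a,-a)$, both resting on $2$-divisibility.
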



\begin{proof}
  We have $\Ho^1(\Sn,\ZZ)=\Hom(\Sn,\ZZ)=0$, since $\Sn$ acts trivially on $\ZZ$ and the latter is torsion free.
  For $n\geq3$, we have $\Ho^1(\Sn,A\otimesZ\StdRep_{n})\isoarr A[2][n]$, since the $\kbar$-rational points of an abelian variety are an ($n$-)divisible group, \ie $A/nA=0$.
  Of course, $A[2][n]$ is just $A[2]$ when $n$ is even, and $0$ when $n$ is odd.
  We saw that $\Ho^1(\Sn,A^{\vee}\otimesZ\StdRep_{n}^{\vee})=0$ when $n\geq5$ or $n=3$, and $\Ho^1(\SymGrp_{4},A^{\vee}\otimesZ\StdRep_{4}^{\vee})=A^{\vee}[2]$ when $n=4$.
  
  The case $n=2$ was treated in \cite[Prop.~4.8]{Plo:Thesis}.
  Let us briefly spell out the details.
  Recall that we have an isomorphism $\StdRep_{2}\simeq\StdRep_{2}^{\vee}$, both being isomorphic to the sign-representation.
  Now a 1-cocycle $f\colon\SymGrp_{2}\to A\otimesZ\StdRep_{2}$ is determined by the image point $f((1\,2))=(a,-a)$.
  But the equality $(1\,2).(-a/2,a/2)-(-a/2,a/2)=(a,-a)$ realized $f$ as a 1-coboundary.
\end{proof}


\section{Preliminaries on simple abelian varieties}
\label{sec:abelian-varieties-preliminaries}

In this section we review some facts about the groups of homomorphisms between an abelian variety $A$ and its dual $A^\vee$ when $\End(A)=\ZZ$, and we discuss the notion of dual polarizations.

\begin{situation}
  Let $A$ be an abelian variety over a field $k$.
  Assume for simplicity that the characteristic of $k$ is $0$.
  Let $g=\dim(A)$; we are ultimately only interested in the case $g=2$.
\end{situation}

\begin{passage}
\label{prop:lambda_zero_polarization}
  Recall that if $\End(A)=\ZZ$, then $\End(A^\vee)=\ZZ$ and $\Hom(A,A^\vee)\simeq\ZZ$.
  Indeed, dualizing morphisms gives an isomorphism $\End(A)\isoarr\End(A^\vee)$, and composition with some isogeny $\lambda'\colon A^{\vee}\to A$, \eg the dual of a polarization, provides an injection $\Hom(A,A^\vee)\hookrightarrow\End(A)$.
  
  Furthermore, whenever $\Hom(A,A^\vee)=\ZZ\cdot\lambda_0$, then $\lambda_0$ or $-\lambda_0$ is a polarization, since by the existence of polarizations some multiple of $\lambda_0$ has to be one.
  In the following, when $\lambda_0$ or $-\lambda_0$ could be a polarization, we will implicitly assume that the former is one.
\end{passage}

\begin{definition}
  \begin{enumerate}
    \item The \emph{exponent}~$\erm(\lambda)$ of a polarization $\lambda\colon A\to A^\vee$ is defined as the smallest natural number $n$ such that $\ker(\lambda)\subset\ker([n])$.
    
    \item Writing $\ker(\lambda)_{\kbar}\simeq(\ZZ/d_1\ZZ\times\dots\times\ZZ/d_g\ZZ)^2$ where $d_i\in\NN$ such that $d_i\mid d_{i+1}$, we call $(d_1,\dots,d_g)$ the \emph{type of the polarization}.
  \end{enumerate}
\end{definition}

\begin{passage}[Dual polarization]
  \Cf\cite[§2, Thm.~2.1]{BL:02}.
  Let $\lambda\colon A\to A^\vee$ be a polarization, then there exists a unique polarization $\lambda^D\colon A^\vee\to A^{\vee\vee}\simeq A$ such that $\lambda\circ\lambda^D=[\erm(\lambda)]$ and $\lambda^D\circ\lambda=[\erm(\lambda)]$.
  Let $(d_1,\dots,d_g)$ be the type of the polarization $\lambda$, and define the \emph{dual polarization} as $\dualpol{\lambda}\coloneqq d_{1}\lambda^D$.
  
  \label{prop:dual_polariazion_symmetric}
  In particular, since $\dualpol{\lambda}$ is a polarization, it is symmetric, \ie $(\dualpol{\lambda_0})^{\vee}=\eval\circ\dualpol{\lambda_0}$.
\end{passage}

\begin{remark}
  The dual polarization should not be confused with the \emph{dual isogeny} $\dualisog{\lambda}\colon A^{\vee}\to A$, which is the unique morphism satisfying $\dualisog{\lambda}\circ\lambda=[\deg(\lambda)]$ and $\lambda\circ\dualisog{\lambda}=[\deg(\lambda)]$.
  Of course, it should neither be confused with the dual homomorphism $\lambda^{\vee}\colon A^{\vee\vee}\to A^{\vee}$.
\end{remark}  

\begin{proposition}
  Assume $g=\dim(A)=2$ and $\Hom(A,A^\vee)=\ZZ\cdot\lambda_0$, then
  \begin{enumerate}
    \item $\Hom(A^\vee,A)=\ZZ\cdot\dualpol{\lambda_0}$, and
    \item $\dualpol{\lambda_0}\circ\lambda_0=[\erm(\lambda_0)]$, as well as $\lambda_0\circ\dualpol{\lambda_0}=[\erm(\lambda_0)]$, with $\erm(\lambda_0)^2=\deg(\lambda_0)$.
  \end{enumerate}
\end{proposition}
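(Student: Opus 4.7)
My plan is to reduce everything to the preliminary observation that the hypothesis $\Hom(A,A^{\vee})=\ZZ\cdot\lambda_{0}$ forces $\lambda_{0}$ to have type $(1,e)$ with $e=\erm(\lambda_{0})$. Once this is in hand, part (ii) becomes formal: by definition $\dualpol{\lambda_{0}}=d_{1}\lambda_{0}^{D}=\lambda_{0}^{D}$, the two identities $\dualpol{\lambda_{0}}\circ\lambda_{0}=[e]=\lambda_{0}\circ\dualpol{\lambda_{0}}$ are the defining relations of $\lambda_{0}^{D}$, and $\deg(\lambda_{0})=(1\cdot e)^{2}=e^{2}=\erm(\lambda_{0})^{2}$ is read off from the type.

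For the type reduction I would argue by contradiction: if $\lambda_{0}$ had type $(d_{1},d_{2})$ with $d_{1}>1$, then $A[d_{1}]\subset\ker(\lambda_{0})$, so $\lambda_{0}$ would factor through $[d_{1}]$ as $\lambda_{0}=d_{1}\lambda_{0}'$ with $\lambda_{0}'\in\Hom(A,A^{\vee})$, contradicting that $\lambda_{0}$ generates $\Hom(A,A^{\vee})$.

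The main obstacle is part (i): showing that $\dualpol{\lambda_{0}}$ generates $\Hom(A^{\vee},A)$. First, $\Hom(A^{\vee},A)$ is free of rank one by the analogue of the passage before the proposition: dualisation gives $\End(A^{\vee})=\ZZ$, and composition with $\lambda_{0}$ embeds $\Hom(A^{\vee},A)\hookrightarrow\End(A^{\vee})=\ZZ$. Pick a generator $\mu$ and write $\dualpol{\lambda_{0}}=k\mu$ with $k\geq 1$. Write $\lambda_{0}\circ\mu=[m]\in\End(A^{\vee})$ and $\mu\circ\lambda_{0}=[m']\in\End(A)$; comparing $\lambda_{0}\circ\mu\circ\lambda_{0}=m\lambda_{0}=m'\lambda_{0}$ in the torsion-free group $\Hom(A,A^{\vee})$ forces $m=m'$. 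Now (ii), applied to $\dualpol{\lambda_{0}}=k\mu$, gives $[km]=\lambda_{0}\circ\dualpol{\lambda_{0}}=[e]$, so $km=e$; and $\ker(\lambda_{0})\subset\ker(\mu\circ\lambda_{0})=A[m]$, combined with the minimality defining $\erm(\lambda_{0})=e$, gives $e\mid m$. The two divisibilities $km=e$ and $e\mid m$ force $k=\pm 1$, and after replacing $\mu$ by $-\mu$ if necessary, $k=1$, which shows $\dualpol{\lambda_{0}}$ generates $\Hom(A^{\vee},A)$.
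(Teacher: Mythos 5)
Your proof is correct, but it takes a genuinely different route from the paper's on both key points. For the reduction to type $(1,e)$, the paper invokes the type formulas for $\lambda_0^D$ and $(\lambda_0^D)^D$ from Birkenhake--Lange and deduces $d_1=1$ from the divisibility $\deg(\lambda_0)\mid\deg((\lambda_0^D)^D)=\deg(\lambda_0)/d_1^{2g}$; your observation that $A[d_1]\subset\ker(\lambda_0)$ forces $\lambda_0=d_1\lambda_0'$ is more direct and avoids the double-dual type computation entirely (do cite or briefly justify the containment $A[d_1]\subset\ker(\lambda_0)$ for a polarization of type $(d_1,\dots,d_g)$ --- it is visible from a symplectic basis of the kernel, but it is the one non-formal input of your reduction). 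For part (i), the paper again argues with degrees: it uses $g=2$ to get $\deg(\lambda_0^D)=\deg(\lambda_0)$ and then compares degrees of generators, whereas you exploit the exponent relation $km=e$ together with $e\mid m$ coming from $\ker(\lambda_0)\subset\ker(\mu\circ\lambda_0)=A[m]$ and the minimality in the definition of $\erm(\lambda_0)$. A pleasant byproduct of your argument is that part (i) and the identities in (ii) hold in arbitrary dimension $g$ once $d_1=1$; only the final equality $\erm(\lambda_0)^2=\deg(\lambda_0)$ actually uses $g=2$. One cosmetic remark: you normalize $k\geq1$ at the outset and then again say ``after replacing $\mu$ by $-\mu$''; either normalization alone suffices. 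Also note that your appeal to $\End(A^\vee)=\ZZ$ is legitimate since the hypothesis $\Hom(A,A^\vee)=\ZZ\cdot\lambda_0$ already forces $\End(A)=\ZZ$ (compose with $\lambda_0$), matching the standing assumption of the paper's preliminary passage.
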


\begin{proof}
  \itemnum
  In order to keep the formulas a bit more transparent, we consider $g$ to be arbitrary in the first part of this argument.
  Let $(d_1,\dots,d_g)$ be the type of the polarization $\lambda_0$, then by \cite[Prop.~2.2]{BL:02} the type of $\lambda_0^D$ is $(1,d_g/d_{g-1},\dots,d_g/d_1)$, and the type of $(\lambda_0^D)^D$ is $(1,d_2/d_1,\dots,d_g/d_1)$.
  This lets us compute the degrees
  \begin{align*}
    \deg(\lambda_0) &=d_1^2\dots d_g^2, \\
    \deg(\lambda_0^D) &=(d_g\dots d_g)^2/(d_g\dots d_1)^2=d_g^{2g}/\deg(\lambda_0), \\
    \deg((\lambda_0^D)^D) &=(d_1\dots d_g)^2/(d_1\dots d_1)^2=\deg(\lambda_0)/d_1^{2g}.
  \end{align*}
  
  Now, $\Hom(A,A^\vee)=\ZZ\cdot\lambda_0$ implies that $\deg(\lambda_0)$ divides $\deg((\lambda_0^D)^D)=\deg(\lambda_0)/d_1^{2g}$, which shows that $d_1=1$.
  Using $g=2$, we see that
  \[
    \deg(\lambda_0^D)=d_2^4/d_2^2=\deg(\lambda_0).
  \]
  
  Writing $\Hom(A^\vee,A)=\ZZ\cdot\widetilde{\lambda}$, and applying the above arguments to $\widetilde{\lambda}$, yields $\deg(\widetilde{\lambda}^D)=\deg(\widetilde{\lambda})$.
  So $\deg(\widetilde{\lambda})\geq\deg(\lambda_0^D)$, otherwise $\deg(\widetilde{\lambda}^D)$ contradicts the minimality of $\deg(\lambda_0)$. 
  In conclusion, since $\lambda_0^D$ is already a multiple of $\widetilde{\lambda}$, we get $\widetilde{\lambda}=\lambda_0^D$.
  
  \itemnum
  Note that the arguments above show that $\dualpol{\lambda_0}=\lambda_0^D$ and $\erm(\lambda_0)^2=d_2^2=\deg(\lambda_0)$.
\end{proof}

%


\section{$\Sn$-invariant symplectic isomorphisms}

We are concerned with the computation of the $\Sn$-invariants of the group of symplectic automorphisms $\Sp(A\otimesZ\StdRep_n)$, as well as of the set of symplectic isomorphisms~$\Sp(A\otimesZ\StdRep_n,A^{\vee}\otimesZ\StdRep_n)$.
As before, we let $A$ be some abelian variety over a field of characteristic zero.

\begin{passage}
  Since $\StdRep_n\simeq\ZZ^{n-1}$ as abelian groups, we have a natural isomorphism $(A\otimesZ\StdRep_n)^\vee\simeq A^{\vee}\otimesZ\StdRep_n^\vee$ of abelian varieties.
  The $\Sn$-action on $A\otimesZ\StdRep_n$ induces a (left) action on $(A\otimesZ\StdRep_n)^\vee$ via $\sigma.\alpha\coloneqq ((\sigma^{-1}).)^{\vee}(\alpha)$.
  Of course, the $\Sn$-action on $\StdRep_n^\vee$ also induces an action on $A^{\vee}\otimesZ\StdRep_n^\vee$.
\end{passage}

\begin{proposition}\label{prop:abvar-dual-of-product}
  The natural isomorphism $\varphi\colon(A\otimesZ\StdRep_n)^\vee \isoarr A^{\vee}\otimesZ\StdRep_n^\vee$ is $\Sn$-equivariant.
\end{proposition}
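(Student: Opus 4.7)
The plan is to verify $\Sn$-equivariance of $\varphi$ by unpacking its construction step by step. Applying the contravariant dualization functor to the defining sequence~\cref{ses:abvar_stdrep_def} of $A\otimesZ\StdRep_n$ produces the short exact sequence of abelian varieties
\[
  0 \to A^\vee \xrightarrow{\Sigma^\vee} (A^n)^\vee \to (A\otimesZ\StdRep_n)^\vee \to 0.
\]
Under the canonical identification $(A^n)^\vee \simeq (A^\vee)^n$, the morphism $\Sigma^\vee$ becomes the diagonal $\Delta\colon A^\vee \to (A^\vee)^n$, so the cokernel is canonically identified with $A^\vee\otimesZ\StdRep_n^\vee$ via the defining sequence of $\StdRep_n^\vee$ (\cref{par:stdrep_dual_definition}) applied with coefficients in $A^\vee$. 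This composition realizes $\varphi$.

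To prove equivariance, I would check each of the three steps separately. The canonical identification $(A^n)^\vee \simeq (A^\vee)^n$ is $\Sn$-equivariant once $(A^n)^\vee$ is endowed with the dualized action $\sigma.\alpha = ((\sigma^{-1}).)^\vee(\alpha)$: the dual of the permutation morphism $(\sigma^{-1}).\colon A^n\to A^n$ is its transpose, which coincides with the direct permutation action by $\sigma$ on $(A^\vee)^n$, since permutation matrices satisfy $P_{\sigma^{-1}}^{\top} = P_\sigma$. This is exactly the combinatorial observation already carried out in \cref{sec:diag-is-dual-to-sum}, now with coefficients in $A^\vee$ rather than $\ZZ$. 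The same observation identifies $\Sigma^\vee$ with $\Delta$ equivariantly, and the surjection $(A^\vee)^n \twoheadrightarrow A^\vee\otimesZ\StdRep_n^\vee$ is $\Sn$-equivariant by construction, being the $A^\vee$-analogue of the defining quotient of $\StdRep_n^\vee$.

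Composing these three $\Sn$-equivariant identifications presents $\varphi$ as an $\Sn$-equivariant isomorphism. The only real subtlety is keeping covariant and contravariant conventions straight—the inversion appearing in the dualized action is there precisely because dualization reverses composition—but this bookkeeping is already performed in \cref{sec:diag-is-dual-to-sum}, and transports verbatim to the present setting of abelian varieties.
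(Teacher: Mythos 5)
Your proposal is correct and follows essentially the same route as the paper: dualize \cref{ses:abvar_stdrep_def}, identify $(A^n)^\vee$ with $(A^\vee)^n$ equivariantly (the transpose of $P_{\sigma^{-1}}$ is $P_\sigma$, as in \cref{sec:diag-is-dual-to-sum}), and deduce equivariance of the induced map on cokernels. The paper packages this as a morphism of short exact sequences with vertical maps $\id$, $\psi$, $\varphi$ and likewise leaves the compatibility of the induced cokernel map with the explicit description of $\varphi$ to the reader, so your level of detail matches.
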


\begin{proof}
  Recall that $\varphi$ is given by the pullback morphism $(\iota_1^\vee,\dots,\iota_{n-1}^\vee)$, where $\iota_j\colon A\to A^{n-1}$ is the `$j$-th coordinate embedding'. 
  
  Dualizing the sequence Seq.~\eqref{ses:abvar_stdrep_def} of abelian varieties yields the short exact sequence in the top row of the following diagram, \cf\cite[\nopp 8.9]{EGM}:
  \[\begin{tikzcd}
    0 \arrow[r] & A^\vee \arrow[r, "\Sigma^\vee"] \arrow[d, "\id"] & (A^{n})^\vee \arrow[r, "f^{\vee}"] \arrow[d, "\psi"] & (A\otimesZ\StdRep_n)^\vee \arrow[r] \arrow[d, "\varphi"] & 0 \\
    0 \arrow[r] & A^\vee \arrow[r, "\Diagonal"] & (A^{\vee})^{n} \arrow[r, "g"] & A^{\vee}\otimesZ\StdRep_n^\vee \arrow[r] & 0
  \end{tikzcd}\]
  The isomorphism $\psi$ is defined analogously to $\varphi$.
  We leave the check that both squares commute to the reader.
  
  Finally, note that $\psi$ is equivariant.
  This can be checked quickly since the $\Sn$-action only ``permutes the indices''.
  This forces $\varphi$ to be equivariant as well.
\end{proof}

\begin{proposition}
  We have $\Sn$-equivariant isomorphisms
  \begin{enumerate}
    \item $\Hom(A\otimesZ\StdRep_n,A\otimesZ\StdRep_n) \quad\quad\;\; \simeq\Hom(A,A)\otimesZ\Hom_{\ZZ}(\StdRep_n,\StdRep_n)$,
    \item $\Hom((A\otimesZ\StdRep_n)^{\vee},A\otimesZ\StdRep_n) \;\;\;\;\, \simeq\Hom(A^{\vee},A)\otimesZ\Hom_{\ZZ}(\StdRep_n^{\vee},\StdRep_n)$,
    \item $\Hom(A\otimesZ\StdRep_n,(A\otimesZ\StdRep_n)^{\vee}) \quad\; \simeq\Hom(A,A^{\vee})\otimesZ\Hom_{\ZZ}(\StdRep_n,\StdRep_n^{\vee})$,
    \item $\Hom((A\otimesZ\StdRep_n)^{\vee},(A\otimesZ\StdRep_n)^{\vee}) \simeq\Hom(A^{\vee},A^{\vee})\otimesZ\Hom_{\ZZ}(\StdRep_n^{\vee},\StdRep_n^{\vee})$.
  \end{enumerate}
\end{proposition}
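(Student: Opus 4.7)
The plan is to reduce all four isomorphisms to a single "base case" and then check $\Sn$-equivariance separately.

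First I would establish the following general statement: for any two abelian varieties $X,Y$ over $k$ and any two finitely generated free abelian groups $M,N$, there is a canonical isomorphism
\[
  \mu\colon \Hom(X,Y)\otimesZ\Hom_{\ZZ}(M,N) \isoarr \Hom(X\otimesZ M,\, Y\otimesZ N),
\]
sending $f\otimes g$ to the morphism determined by $x\otimes m \mapsto f(x)\otimes g(m)$. To see that $\mu$ is an isomorphism, choose $\ZZ$-bases of $M$ and $N$ of ranks $r$ and $s$ respectively, identifying $X\otimesZ M\simeq X^{r}$ and $Y\otimesZ N\simeq Y^{s}$; then both sides become naturally identified with $\Mat_{s\times r}(\Hom(X,Y))$. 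Naturality in $M$ and $N$ makes the resulting isomorphism independent of the bases chosen.

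Next I would verify $\Sn$-equivariance. Assume $M$ and $N$ carry $\Sn$-actions while $X$ and $Y$ carry the trivial $\Sn$-action, so that the induced actions on $X\otimesZ M$ and $Y\otimesZ N$ act only on the tensor factor $M$ (resp.\ $N$). The conjugation action on the left-hand side of $\mu$ satisfies, for a pure tensor $f\otimes g$,
\[
  (\sigma.(f\otimes g))(x\otimes m) = \sigma.\bigl(f(x)\otimes g(\sigma^{-1}.m)\bigr) = f(x)\otimes (\sigma g\sigma^{-1})(m),
\]
so $\mu$ intertwines the trivial action on $\Hom(X,Y)$ combined with conjugation on $\Hom_{\ZZ}(M,N)$ with the conjugation action on $\Hom(X\otimesZ M, Y\otimesZ N)$. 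Hence $\mu$ is $\Sn$-equivariant.

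Finally I would deduce the four claims. Claim \rom{1} follows by taking $X=Y=A$ and $M=N=\StdRep_n$. For the cases involving duals, first apply \cref{prop:abvar-dual-of-product} to replace $(A\otimesZ\StdRep_n)^{\vee}$ by $A^{\vee}\otimesZ\StdRep_n^{\vee}$ as $\Sn$-equivariant abelian varieties; then apply the base case with $(X,Y,M,N)$ equal to $(A^{\vee},A,\StdRep_n^{\vee},\StdRep_n)$, $(A,A^{\vee},\StdRep_n,\StdRep_n^{\vee})$, and $(A^{\vee},A^{\vee},\StdRep_n^{\vee},\StdRep_n^{\vee})$, respectively. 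The whole argument is mostly bookkeeping; the only point requiring care is the equivariance check above, since one must confirm that conjugation of morphisms of abelian varieties by $\sigma$ acts purely on the $\ZZ$-linear factor of the tensor decomposition.
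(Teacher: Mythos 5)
Your proposal is correct and follows essentially the same route as the paper's proof: reduce to the untwisted form via \cref{prop:abvar-dual-of-product}, identify both sides with $(n-1)\times(n-1)$ matrices over $\Hom(X,Y)$ by choosing $\ZZ$-bases, and verify equivariance by computing the conjugation action on pure tensors, noting that $\sigma$ acts only through the $\ZZ$-linear tensor factor. The only cosmetic difference is that you package the argument as a single general lemma instantiated four times, whereas the paper spells out case (iii) and declares the rest analogous.
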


\begin{proof}
  We spell out the proof for \rom{3}; the other cases are analogous.
  In view of the previous proposition, we want to show
  $
    \Hom(A\otimesZ\StdRep_n,A^{\vee}\otimesZ\StdRep_n^{\vee}) \simeq\Hom(A,A^{\vee})\otimesZ\Hom_{\ZZ}(\StdRep_n,\StdRep_n^{\vee}).
  $
  Since $\StdRep_n$ and $\StdRep_{n}^{\vee}$ are isomorphic to $\ZZ^{n-1}$ as abelian groups, we see that
  \begin{align*}
    \Hom(A\otimesZ\StdRep_n,A^{\vee}\otimesZ\StdRep_n^{\vee})
    &\simeq \Mat((n-1)\times(n-1),\Hom(A,A^{\vee})) \\
    &\simeq \Hom(A,A^{\vee})\otimesZ\Hom(\StdRep_n,\StdRep_n^{\vee}).
  \end{align*}
  
  Recall that the induced action on homomorphism spaces is given by $\sigma.f\coloneqq \sigma.\circ f\circ (\sigma^{-1})_{.}$, so the action on the right hand side is given by
  \[
    \sigma.(g\otimes h)=g\otimes (\sigma.\circ h\circ (\sigma^{-1})_{.}),
  \]
  while the action on the left hand side is given by
  \begin{align*}
    \sigma.(g\otimes h)
    &=(\sigma.\circ (g\otimes h)\circ (\sigma^{-1})_{.}) \\
    &=(\id\otimes\sigma.)\circ(g\otimes h)\circ(\id\otimes(\sigma^{-1})_{.}) \\
    &=g\otimes(\sigma.\circ h\circ (\sigma^{-1})_{.}).
    \qedhere
  \end{align*}
\end{proof}

%
%

Mukai~\cite{Muk:98}, Polishchuk~\cite{Pol:96}, and later Orlov~\cite{Orl:02}, introduced and studied the following group of ``symplectic'' isomorphisms in the context of derived equivalences of abelian varieties. 

\begin{definition}\label{def:symplectic-isomorphism}
  For a homomorphism $f\colon A\times A^{\vee}\to B\times B^{\vee}$ of abelian varieties we write
  \[
    f=\begin{pmatrix} f_1 & f_2 \\ f_3 & f_4 \end{pmatrix}
    \ \text{and}\ 
    \tilde{f}\coloneqq\begin{pmatrix} f_{4}^{\vee} & -f_{2}^{\vee} \\ -f_{3}^{\vee} & f_{1}^{\vee} \end{pmatrix},
  \]
  where we have implicitly identified $A$ with $A^{\vee\vee}$, and $B$ with $B^{\vee\vee}$.
  Denote by
  \[
    \Sp(A,B)\coloneqq\{f\colon A\times A^{\vee}\isoarr B\times B^{\vee} \mid \tilde{f}=f^{-1}\}
  \]
  the set of \emph{symplectic isomorphisms}, and define the \emph{Mukai--Polishchuk group} as $\Sp(A)\coloneqq\Sp(A,A)$.
\end{definition}

The next proposition gives a hint why one calls elements of $\Sp(A,B)$ symplectic isomorphisms.
See \cite[§1]{LT:17} for a more thorough justification for the name.

\begin{proposition}
  Assume that $\dim(A)=2$ and $\End(A)=\ZZ$.
  Write $\Hom(A,A^\vee)=\ZZ\cdot\lambda_0$, and let $d^2=\deg(\lambda_0)$.
  Then we have an isomorphism
  \[
    \Sp(A\otimesZ\StdRep_n) \simeq \left\{ \begin{pmatrix}M_1 & M_2 \\ M_3 & M_4\end{pmatrix}\in\Sp(2(n-1),\ZZ) \;\middle|\; M_3\equiv 0 \mod d \right\}.
  \]
\end{proposition}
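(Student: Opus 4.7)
The plan is to translate the symplectic condition into explicit matrix equations and recognize the resulting group as a congruence subgroup of $\Sp(2(n-1),\ZZ)$. Setting $X\coloneqq A\otimesZ\StdRep_n$, I would combine the preceding proposition with the facts $\End(A)=\End(A^\vee)=\ZZ$, $\Hom(A,A^\vee)=\ZZ\cdot\lambda_0$, $\Hom(A^\vee,A)=\ZZ\cdot\dualpol{\lambda_0}$ (from \cref{prop:lambda_zero_polarization} and what follows) to identify $\Hom(X,X)\simeq\Hom(X^\vee,X^\vee)\simeq\Mat(n-1,\ZZ)$, $\Hom(X,X^\vee)\simeq\Mat(n-1,\ZZ)\cdot\lambda_0$, and $\Hom(X^\vee,X)\simeq\Mat(n-1,\ZZ)\cdot\dualpol{\lambda_0}$. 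Any morphism $f\colon X\times X^\vee\to X\times X^\vee$ then has a unique block decomposition
\[
  f=\begin{pmatrix} M_1 & M_2\dualpol{\lambda_0} \\ M_3\lambda_0 & M_4 \end{pmatrix},\qquad M_i\in\Mat(n-1,\ZZ).
\]

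Next I would compute $\tilde f$. Since $\lambda_0^\vee=\lambda_0$ and $\dualpol{\lambda_0}^\vee=\dualpol{\lambda_0}$ via $\eval\colon X\isoarr X^{\vee\vee}$ (both polarizations are symmetric, \cref{prop:dual_polariazion_symmetric}) and dualization of integer block-matrices is just transposition, the four blocks of $\tilde f$ are $(M_4^T,-M_2^T\dualpol{\lambda_0},-M_3^T\lambda_0,M_1^T)$. Expanding $f\tilde f=\id$ block by block, and repeatedly using the key identity $\lambda_0\circ\dualpol{\lambda_0}=\dualpol{\lambda_0}\circ\lambda_0=[d]$ (valid with $d=\erm(\lambda_0)$, since $d^2=\deg(\lambda_0)$), produces the three independent relations
\begin{align*}
  M_1 M_4^T - d\, M_2 M_3^T &= I, \\
  M_1 M_2^T &= M_2 M_1^T, \\
  M_3 M_4^T &= M_4 M_3^T.
\end{align*}
The remaining block identity $M_4 M_1^T - d\, M_3 M_2^T = I$ is just the transpose of the first, and invertibility of $f$ is automatic in the category of abelian varieties once $f\tilde f=\id$.

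Finally, the substitution $(M_1,M_2,M_3,M_4)\mapsto(M_1,M_2,dM_3,M_4)$ carries these relations exactly onto the defining equations of $\Sp(2(n-1),\ZZ)$ in the convention $MJM^T=J$. Hence
\[
  f\longmapsto\begin{pmatrix} M_1 & M_2 \\ dM_3 & M_4 \end{pmatrix}
\]
gives a bijection from $\Sp(A\otimesZ\StdRep_n)$ onto the subgroup of $\Sp(2(n-1),\ZZ)$ whose lower-left block is $\equiv 0\pmod d$; the inverse divides the lower-left block by $d$, which is well defined precisely on that subgroup. A short computation, using that integer matrices commute with the scalar morphisms $\lambda_0$ and $\dualpol{\lambda_0}$, shows that this bijection is a group homomorphism (the factor of $d$ introduced on the image side in the lower-left block accounts for the factor of $d$ arising in the composition on the domain side). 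The main technical obstacle is bookkeeping: keeping consistent conventions for block transposition versus the $\lambda_0$-duality, and correctly tracking the factor of $d$ at each composition of $\lambda_0$ with $\dualpol{\lambda_0}$; once this dictionary is in place, the verification is entirely routine.
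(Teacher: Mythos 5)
Your proposal is correct and follows essentially the same route as the paper: decompose $f$ into blocks using $\End(A)=\ZZ$, $\Hom(A,A^\vee)=\ZZ\lambda_0$, $\Hom(A^\vee,A)=\ZZ\dualpol{\lambda_0}$, use the symmetry of $\lambda_0$ and $\dualpol{\lambda_0}$ to compute $\tilde f$, absorb the identity $\lambda_0\circ\dualpol{\lambda_0}=[d]$ into the substitution $M_3\mapsto dM_3$, and recognize the condition $\tilde f=f^{-1}$ as the symplectic condition $MJM^{\trm}=J$. The only cosmetic difference is that you expand $f\tilde f=\id$ block by block, whereas the paper packages the same computation as the single identity $\tilde f\correq J^{\trm}M^{\trm}J$.
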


\begin{proof}
  Let $f\in\Sp(A\otimesZ\StdRep_n)$, then by the results of~\cref{sec:abelian-varieties-preliminaries}
  \begin{alignat*}{2}
    f_1 &=\id &&\otimes g_1 \in\Hom(A,A)\otimes\Hom_\ZZ(\StdRep_n,\StdRep_n) \\
    f_2 &=\dualpol{\lambda_0} &&\otimes g_2 \in\Hom(A^\vee,A)\otimes\Hom_\ZZ(\StdRep_n^\vee,\StdRep_n) \\
    f_3 &=\lambda_0 &&\otimes g_3 \in\Hom(A,A^\vee)\otimes\Hom_\ZZ(\StdRep_n,\StdRep_n^\vee) \\
    f_4 &=\id &&\otimes g_4 \in\Hom(A^\vee,A^\vee)\otimes\Hom_\ZZ(\StdRep_n^\vee,\StdRep_n^\vee).
  \end{alignat*}
  Ignoring the $\Sn$-action for a moment, we have that $\StdRep_n\simeq\ZZ^{n-1}$ and $\StdRep_n^\vee\simeq\ZZ^{n-1}$, so we can view each $g_i$ as a matrix $M_i\in\Mat((n-1)\times(n-1),\ZZ)$.
  Since $\dualpol{\lambda_0}\circ\lambda_0=[d]$ and $\lambda_0\circ\dualpol{\lambda_0}=[d]$, multiplication in $\Sp(A\otimesZ\StdRep_n)$ is not quite matrix multiplication.
  Instead we get a group homomorphism
  \begin{align}\label{eq:sp-group-map}
    \Sp(A\otimesZ\StdRep_n) &\to\Mat(2(n-1)\times 2(n-1),\ZZ) \\
    f &\mapsto\begin{pmatrix}M_1 & M_2 \\ d\cdot M_3 & M_4\end{pmatrix}. \nonumber
  \end{align}

  Let us compute which matrix corresponds to $\widetilde{f}$.
  Recall that $g_i^\vee$ corresponds to the transposed matrix $M_i^{\trm}$; of course, when choosing a basis of $\StdRep_n$, we endow $\StdRep_n^\vee$ with the dual basis.
  Now, identifying $A$ with $A^{\vee\vee}$ implicitly, we get
  \[
    \tilde{f}
    =\begin{pmatrix} f_{4}^{\vee} & -f_{2}^{\vee} \\ -f_{3}^{\vee} & f_{1}^{\vee} \end{pmatrix}
    =\begin{pmatrix} \id^\vee \otimes g_4^\vee & -(\dualpol{\lambda_0})^\vee \otimes g_2^\vee \\ -\lambda_0^\vee \otimes g_3^\vee & \id^\vee \otimes g_1^\vee \end{pmatrix}
    =\begin{pmatrix} \id \otimes g_4^\vee & -(\dualpol{\lambda_0}) \otimes g_2^\vee \\ -\lambda_0 \otimes g_3^\vee & \id \otimes g_1^\vee \end{pmatrix},
  \]
  \cf\cref{prop:lambda_zero_polarization} and \cref{prop:dual_polariazion_symmetric}, which corresponds to the matrix
  \begin{align*}
    \begin{pmatrix} M_{4}^{\trm} & -M_{2}^{\trm} \\ -d\cdot M_{3}^{\trm} & M_{1}^{\trm} \end{pmatrix}
    &=J^{\trm}\begin{pmatrix}M_1 & M_2 \\ d\cdot M_3 & M_4\end{pmatrix}^{\!\!\trm} J
    && \text{where}\ J=\begin{pmatrix}0 & \Irm_{n-1} \\ -\Irm_{n-1} & 0\end{pmatrix}.
  \end{align*}
  This means that the condition $\widetilde{f}=f^{-1}$ singles out symplectic matrices.
  In conclusion, the map in~\labelcref{eq:sp-group-map} provides the desired isomorphism.
\end{proof}

\begin{definition}
  The \emph{Hecke congruence subgroup of level $l\in\NN\setminus\{0\}$} is defined as
  \[
    \HeckeCong{l}\coloneqq \left\{\begin{pmatrix}a_1 & a_2 \\ a_3 & a_4 \end{pmatrix} \in\SL(2,\ZZ) \;\middle|\; a_3\equiv 0\mod l \right\}.
  \]
\end{definition}

\begin{proposition}\label{prop:sp_invariants_are_hecke_cong}
  Let $n\geq3$ be an integer.
  \begin{enumerate}
    \item
    Assume that $\dim(A)=2$ and $\End(A)=\ZZ$, and let $d^2$ denote the minimal degree of a polarization of $A$.
    Then we have an isomorphism
    \[
      \Sp(A\otimesZ\StdRep_n)^{\Sn} \simeq \HeckeCong{n d}\subset\SL_2(\ZZ).
    \]
  
    \item
    When $A$ is an arbitrary abelian variety, one can associate to each polarization (or symmetric isogeny) $\lambda\colon A\to A^{\vee}$ of exponent $e$ an injective group homomorphism
    \[
      \HeckeCong{n e} \hookrightarrow \Sp(A\otimesZ\StdRep_n)^{\Sn}.
    \]
  \end{enumerate}
\end{proposition}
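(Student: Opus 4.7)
The plan is to construct, for each polarization (or symmetric isogeny) $\lambda\colon A\to A^{\vee}$ of exponent $e$, an explicit map $\Psi_\lambda\colon\HeckeCong{ne}\to\Sp(A\otimesZ\StdRep_n)^{\Sn}$, verify that it is an injective group homomorphism (yielding \rom{2}), and then, under the hypotheses of \rom{1}, verify surjectivity of $\Psi_{\lambda_0}$. Given $M=\bigl(\begin{smallmatrix}a & b\\ ne\, c & d\end{smallmatrix}\bigr)\in\HeckeCong{ne}$, define $\Psi_\lambda(M)$ to be the endomorphism $f$ of $A\otimesZ\StdRep_n\times A^{\vee}\otimesZ\StdRep_{n}^{\vee}$ (identifying the dual via~\cref{prop:abvar-dual-of-product}) with block-components
\[
  f_1=a\cdot\id\otimes\id,\quad f_2=b\cdot\dualpol{\lambda}\otimes\dualisog{\phi}_0,\quad f_3=c\cdot\lambda\otimes\phi_0,\quad f_4=d\cdot\id\otimes\id.
\]

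Invariance of $f$ is immediate because $\id$, $\phi_0$, and $\dualisog{\phi}_0$ are $\Sn$-equivariant. To check the symplectic condition $\widetilde{f}=f^{-1}$, I expand $f\circ\widetilde{f}$ block-wise: the diagonal blocks collapse to $(ad-ne\cdot bc)\cdot\id$ by means of the identities $\dualisog{\phi}_0\circ\phi_0=\phi_0\circ\dualisog{\phi}_0=n\cdot\id$ together with $\dualpol{\lambda}\circ\lambda=\lambda\circ\dualpol{\lambda}=[e]$, while the off-diagonal blocks cancel pairwise; the prefactor equals~$1$ precisely because $\det M=1$. The transposes $(-)^{\vee}$ inside $\widetilde{f}$ are absorbed via the symmetry of $\phi_0,\dualisog{\phi}_0$ (\cref{prop:phi_and_phi_hat_symmetric}) and of $\lambda,\dualpol{\lambda}$ (both polarizations, \cref{prop:dual_polariazion_symmetric}). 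Multiplicativity $\Psi_\lambda(M)\circ\Psi_\lambda(M')=\Psi_\lambda(MM')$ follows from an analogous block-wise expansion in which the same two identities contribute the factor $ne$ in the cross-terms, matching the $\SL_2$ matrix product in $\HeckeCong{ne}$. Finally, $\Psi_\lambda(M)=\id$ forces $b=c=0$ (by torsion-freeness of the relevant $\Hom$-groups) and then $a=d=1$, giving injectivity; this proves \rom{2}.

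For \rom{1}, the assumption $\End(A)=\ZZ$ combined with $\dim A=2$ gives $\Hom(A,A^{\vee})=\ZZ\cdot\lambda_0$ and $\Hom(A^{\vee},A)=\ZZ\cdot\dualpol{\lambda_0}$, with $\erm(\lambda_0)=d$ (\cref{sec:abelian-varieties-preliminaries}). Combined with \cref{prop:stdrep_to_stdrep_equivariant_maps,prop:stdrep_dual_to_stdrep_equivariant_maps} (valid since $n\geq3$) and the earlier $\Sn$-equivariant tensor decomposition of the four relevant $\Hom$-spaces, any $\Sn$-invariant endomorphism of $A\otimesZ\StdRep_n\times A^{\vee}\otimesZ\StdRep_{n}^{\vee}$ has each of its four blocks of the exact form prescribed above (for integer scalars $a_i$). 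Consequently every $\Sn$-invariant symplectic element lies in the image of $\Psi_{\lambda_0}$, which is therefore surjective and hence an isomorphism onto $\HeckeCong{nd}$.

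The main obstacle is bookkeeping in the symplectic and multiplicativity checks: both reduce to a single scalar identity, and one must verify that the factor $n$ (from $\phi_0\circ\dualisog{\phi}_0=n\id$) and the factor $e$ (from $\dualpol{\lambda}\circ\lambda=[e]$) combine to produce precisely the congruence ``lower-left divisible by $ne$'' cutting out $\HeckeCong{ne}\subset\SL_2(\ZZ)$; here the correct placement of the shifts between $\StdRep_n$ and $\StdRep_n^\vee$ (and between $A$ and $A^\vee$) in the four blocks is crucial for the two prefactors to multiply rather than fight each other.
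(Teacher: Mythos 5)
Your proposal is correct and follows essentially the same route as the paper: both reduce to the computation of the $\Sn$-fixed points of the four $\Hom$-blocks via \cref{prop:stdrep_to_stdrep_equivariant_maps,prop:stdrep_dual_to_stdrep_equivariant_maps}, use the identities $\dualisog{\phi}_0\circ\phi_0=n\cdot\id$ and $\dualpol{\lambda}\circ\lambda=[e]$ together with the symmetry statements to turn the condition $\widetilde{f}=f^{-1}$ into $\det=1$ on the associated $2\times2$ integer matrix with lower-left entry divisible by $ne$. The only difference is organizational (you build the injection of \rom{2} first and then prove surjectivity for \rom{1}, whereas the paper does the reverse), which does not change the substance.
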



\begin{proof}
  \itemnum
  Write $\Hom(A,A^\vee)=\ZZ\cdot\lambda_0$, so that $d^2=\deg(\lambda_0)$.
  By \cref{prop:stdrep_to_stdrep_equivariant_maps,prop:stdrep_dual_to_stdrep_equivariant_maps} and the results of~\cref{sec:abelian-varieties-preliminaries} we can compute for $n\geq3$ the $\Sn$-fixed-points
  \begin{align*}
    \Hom(A\otimesZ\StdRep_n,A\otimesZ\StdRep_n)^{\Sn}
    &\simeq\Hom(A,A)\otimesZ\Hom_{\ZZ}(\StdRep_n,\StdRep_n)^{\Sn} \\
    &\simeq\Hom(A,A)\otimesZ\Hom_{\ZZ[\Sn]}(\StdRep_n,\StdRep_n) \\
    &\simeq(\ZZ\cdot\id)\otimesZ(\ZZ\cdot\id), \\
    \Hom(A\otimesZ\StdRep_n,(A\otimesZ\StdRep_n)^{\vee})^{\Sn}
    &\simeq(\ZZ\cdot\lambda_0)\otimesZ(\ZZ\cdot\phi_0), \\
    \Hom((A\otimesZ\StdRep_n)^{\vee},A\otimesZ\StdRep_n)^{\Sn}
    &\simeq(\ZZ\cdot\dualpol{\lambda_0})\otimesZ(\ZZ\cdot\dualisog{\phi}_0), \\
    \Hom((A\otimesZ\StdRep_n)^{\vee},(A\otimesZ\StdRep_n)^{\vee})^{\Sn}
    &\simeq(\ZZ\cdot\id)\otimesZ(\ZZ\cdot\id).
  \end{align*}
  This shows that
  \[
    \Sp(A\otimesZ\StdRep_n)^{\Sn} = \left\{ f=\begin{pmatrix}\!\!\! a_1\cdot(\id\otimes\id) & a_2\cdot(\dualpol{\lambda_0}\otimes\dualisog{\phi}_0) \\ a_3\cdot(\lambda_0\otimes\phi_0) & \!\!\! a_4\cdot(\id\otimes\id)\end{pmatrix} \;\middle|\; a_i\in\ZZ,\ \text{and}\ f^{-1}=\widetilde{f} \right\}.
  \]
  Again, since $\dualpol{\lambda_0}\circ\lambda_0=[d]$ and $\lambda_0\circ\dualpol{\lambda_0}=[d]$, as well as $\dualisog{\phi}_0\circ\phi_0=n=\phi_0\circ\dualisog{\phi}_0$, we get a group homomorphism
  \begin{align*}
    \Sp(A\otimesZ\StdRep_n)^{\Sn} &\to\GL(2,\ZZ) \\
    f &\mapsto\begin{pmatrix}a_1 & a_2 \\ n d\cdot a_3 & a_4 \end{pmatrix}.
  \end{align*}
  As above, and using \cref{prop:phi_and_phi_hat_symmetric}, we have
  \[
    \widetilde{f}
    =\begin{pmatrix}\; a_4\cdot(\id\otimes\id) & -a_2\cdot(\dualpol{\lambda_0}\otimes\dualisog{\phi}_0) \\ -a_3\cdot(\lambda_0\otimes\phi_0) & \; a_1\cdot(\id\otimes\id)\end{pmatrix}.
  \]
  Now the condition $f^{-1}=\widetilde{f}$ becomes again in matrix form $M^{-1}=J^{\trm}M^{\trm}J$, which just means $\det(M)=1$ in this instance.
  
  \itemnum
  Recall that $\lambda\circ\lambda^{\Drm}=[e]=\lambda^{\Drm}\circ\lambda$.
  Finally note that we have the following isomorphism and inclusion
   \[
    \HeckeCong{n e}\simeq
    \left\{\begin{pmatrix}\!\!\! a_1\cdot(\id\otimes\id) & a_2\cdot(\lambda^{\Drm}\otimes\dualisog{\phi}_0) \\ a_3\cdot(\lambda\otimes\phi_0) & \!\!\!\! a_4\cdot(\id\otimes\id)\end{pmatrix} \;\middle|\; a_1 a_4 - n e a_2 a_3=1 \right\}
    \subset \Sp(A\otimesZ\StdRep_n)^{\Sn}.
    \qedhere
  \]
\end{proof}

\begin{remark}
  When $n=2$, we have to replace $\HeckeCong{2d}$ by $\HeckeCong{d}\simeq\Sp(A)$ in \cref{prop:sp_invariants_are_hecke_cong}.\rom{1}, since $\phi_0\in\Hom(\StdRep_2,\StdRep_{2}^\vee)$ is not a generator, \cf\cref{rmk:canonical-map-n-two}.
  More generally, without assuming $\End(A)=\ZZ$, the action of $\SymGrp_{2}$ on $\Sp(A\otimes\StdRep_{2})$ is trivial, since any morphism of abelian varieties commutes with negation.
\end{remark}




Now we compute the $\Sn$-invariants of $\Sp(A\otimesZ\StdRep_n,A^{\vee}\otimesZ\StdRep_n)$.


\begin{proposition}\label{prop:sp_a_to_adual_invariants}
  Let $n\geq3$ be an integer.
  \begin{enumerate}
    \item
    If $A$ admits a polarization (or just a symmetric isogeny) $\lambda\colon A\to A^{\vee}$ of exponent $e$ with $\gcd(n,e)=1$, then $\Sp(A\otimesZ\StdRep_n,A^{\vee}\otimesZ\StdRep_n)^{\Sn}\neq\emptyset$ is non-empty.
    
    \item
    Conversely, assume that $\dim(A)=2$ and $\End(A)=\ZZ$, and let $d^2$ denote the minimal degree of a polarization of $A$, then we have $\Sp(A\otimesZ\StdRep_n,A^{\vee}\otimesZ\StdRep_n)^{\Sn}\neq\emptyset$ \iff $\gcd(n,d)=1$.
    
    \item
    In case $\Sp(A\otimesZ\StdRep_n,A^{\vee}\otimesZ\StdRep_n)^{\Sn}$ is non-empty, it is a (right) torsor under $\Sp(A\otimesZ\StdRep_n)^{\Sn}$.
  \end{enumerate}
%
\end{proposition}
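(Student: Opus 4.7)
The plan is to mimic the argument for the analogous \cref{prop:sp_invariants_are_hecke_cong} but for isomorphisms $A\otimesZ\StdRep_n \to A^\vee\otimesZ\StdRep_n$ instead of automorphisms. First, for an arbitrary element $f\in\Sp(A\otimesZ\StdRep_n,A^\vee\otimesZ\StdRep_n)^{\Sn}$, I would decompose it using the identification $(A\otimesZ\StdRep_n)^{\vee}\simeq A^{\vee}\otimesZ\StdRep_{n}^{\vee}$ from \cref{prop:abvar-dual-of-product} into four components with the appropriate source and target. Taking $\Sn$-invariants in each Hom-space, and using \cref{prop:stdrep_to_stdrep_equivariant_maps,prop:stdrep_dual_to_stdrep_equivariant_maps}, the invariant pieces are parametrized by integer coefficients in front of $\lambda\otimes\id$, $\id\otimes\hat\phi_0$, $\id\otimes\phi_0$, and $\lambda^D\otimes\id$ (in the block $(1,1)$, $(1,2)$, $(2,1)$, $(2,2)$ respectively), where $\lambda$ is the chosen symmetric isogeny of exponent $e$ and $\lambda^D$ is its dual satisfying $\lambda\circ\lambda^D=[e]=\lambda^D\circ\lambda$.

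Next I would compute $\widetilde{f}$ and impose the symplectic condition $\widetilde{f}=f^{\inv}$. Using the symmetries $\phi_0=\phi_0^{\vee}\circ\eval$, $\dualisog{\phi}_0=\eval^{\inv}\circ(\dualisog{\phi}_0)^{\vee}$ (\cref{prop:phi_and_phi_hat_symmetric}) and the symmetry of $\lambda$, $\lambda^D$, the matrix $\widetilde{f}$ has the same structural form as $f$ with entries $a_4,-a_2,-a_3,a_1$. Composing $\widetilde{f}\circ f$ and using $\lambda^D\circ\lambda=[e]$ together with $\dualisog{\phi}_0\circ\phi_0=[n]=\phi_0\circ\dualisog{\phi}_0$, the symplectic condition reduces to the Diophantine equation
\[
  e\cdot a_1 a_4 - n\cdot a_2 a_3 = 1
\]
(and an analogous identity from $f\circ\widetilde{f}=\id$). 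This is exactly the point at which the arithmetic condition $\gcd(n,e)=1$ enters: by Bézout, this equation has integer solutions \iff $\gcd(n,e)=1$. Choosing $a_1=1$ and using Bézout to find $a_4,a_2a_3$ then produces an explicit $\Sn$-invariant symplectic isomorphism, proving \rom{1}.

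For \rom{2}, under the assumption $\dim(A)=2$ and $\End(A)=\ZZ$, the results of \cref{sec:abelian-varieties-preliminaries} show that $\Hom(A,A^{\vee})=\ZZ\cdot\lambda_0$ with $\erm(\lambda_0)=d$, and every symmetric isogeny $A\to A^{\vee}$ is a multiple $m\lambda_0$ of exponent $md$. Thus \emph{every} invariant symplectic isomorphism necessarily has its off-diagonal blocks built from multiples of $\lambda_0$ and $\lambda_0^D$, so the solvability analysis forces $\gcd(n,md)=1$ for some $m$, which immediately implies $\gcd(n,d)=1$. Conversely, if $\gcd(n,d)=1$ then \rom{1} applied to $\lambda_0$ produces an element.

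For \rom{3}, the torsor structure is routine: given any fixed $f_0\in\Sp(A\otimesZ\StdRep_n,A^{\vee}\otimesZ\StdRep_n)^{\Sn}$ and any other element $f$ in this set, the composition $f_0^{\inv}\circ f$ lies in $\Sp(A\otimesZ\StdRep_n)^{\Sn}$, and conversely right-multiplication by an element of $\Sp(A\otimesZ\StdRep_n)^{\Sn}$ preserves the set; $\Sn$-equivariance is preserved under inversion and composition, and compositions of symplectic isomorphisms are symplectic. The main technical obstacle is the bookkeeping for the first two steps: carefully tracking the canonical identifications (especially $A\simeq A^{\vee\vee}$, the symmetry of $\phi_0,\dualisog{\phi}_0,\lambda,\lambda^D$, and the $\Sn$-equivariance of the duality isomorphism) so that the symplectic condition on $f$ really does reduce to the single arithmetic equation $e\cdot a_1 a_4 - n\cdot a_2 a_3=1$ without extraneous sign or convention issues.
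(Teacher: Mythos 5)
Your plan follows the paper's proof essentially verbatim: the same block decomposition of an invariant $f$ via the invariant Hom computations, the same reduction of $\widetilde{f}=f^{\inv}$ to the Bézout equation $e\,a_1a_4-n\,a_2a_3=1$, the same explicit witness for (i), and the same torsor argument for (iii) (which the paper merely packages as the general statement that invariants of an equivariant torsor form a pseudo-torsor). The only caveat is presentational: the claim that an \emph{arbitrary} invariant $f$ decomposes with blocks built from a single $\lambda$ is only justified under $\End(A)=\ZZ$ as in part (ii), while part (i) needs only the converse construction — which is exactly how the paper orders the argument.
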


\begin{proof}
  \rom{2}
  As above, using the results of \cref{sec:standard_rep,sec:abelian-varieties-preliminaries}, each $f\in\Sp(A\otimesZ\StdRep_n,A^{\vee}\otimesZ\StdRep_n)^{\Sn}$ is of the form
  \[
    f=
    \begin{pmatrix}
      \; a_1\cdot(\lambda_0\otimes\id) & \!\! a_2\cdot(\id\otimes\dualisog{\phi}_0) \\
      a_3\cdot(\id\otimes\phi_0) & a_4\cdot(\dualpol{\lambda_0}\otimes\id) 
    \end{pmatrix},
  \]
  for some $a_i\in\ZZ$, where $\lambda_0$ is the polarization of minimal degree.
  Using \cref{prop:phi_and_phi_hat_symmetric} and \cref{prop:lambda_zero_polarization} and \cref{prop:dual_polariazion_symmetric}, we can write $\widetilde{f}\in\Sp(A^{\vee}\otimesZ\StdRep_n,A\otimesZ\StdRep_n)$ as
  \[
    \widetilde{f}=
    \begin{pmatrix}
      \quad a_4\cdot(\dualpol{\lambda_0}\otimes\id) & -a_2\cdot(\id\otimes\dualisog{\phi}_0) \\
      -a_3\cdot(\id\otimes\phi_0) & \quad a_1\cdot(\lambda_0\otimes\id) 
    \end{pmatrix}.
  \]
  Keeping in mind $\dualpol{\lambda_0}\circ\lambda_0=[d]$ and $\lambda_0\circ\dualpol{\lambda_0}=[d]$, as well as $\dualisog{\phi}_0\circ\phi_0=n=\phi_0\circ\dualisog{\phi}_0$, the condition $\widetilde{f}=f^{-1}$, \ie $\widetilde{f}\circ f=\id=f\circ\widetilde{f}$, becomes $\det(f)\coloneqq a_1 a_4 d-a_2 a_3 n = 1$.
  Note that the last equation has a solution \iff $\gcd(n,d)=1$.
  
  \rom{1}
  Since $\gcd(n,e)=1$, there is a solution to $a_1 a_4 e-a_2 a_3 n = 1$ with $a_i\in\ZZ$.
  Now the element
  \[
    f=
    \begin{pmatrix}
      a_1\cdot(\lambda\otimes\id) & \!\! a_2\cdot(\id\otimes\dualisog{\phi}_0) \\
      \, a_3\cdot(\id\otimes\phi_0) & a_4\cdot(\lambda^{\Drm}\otimes\id) 
    \end{pmatrix}
  \]
  witnesses that $\Sp(A\otimesZ\StdRep_n,A^{\vee}\otimesZ\StdRep_n)^{\Sn}$ is non-empty.
  
  \rom{3}
  The claim about torsors follows from \cref{prop:invariants_of_torsor_is_pseudo_torsor} in view of \cref{par:sp_torsor} below.
\end{proof}

\begin{remark}
  For $n=2$, the condition in \cref{prop:sp_a_to_adual_invariants} should read $\gcd(1,d)=1$, which is vacuous.
  So we have $\Sp(A\otimesZ\StdRep_2,A^{\vee}\otimesZ\StdRep_2)^{\SymGrp_2}\neq\emptyset$, and under the assumptions of \rom{2} it is a (right) torsor under $\HeckeCong{d}$.
\end{remark}


\section{Invariant derived autoequivalences in Orlov's sequence}

We first recall Orlov's fundamental short exact sequence for derived equivalences of abelian varieties.
Afterwards we consider the sequence in the equivariant setup of the previous sections and apply group cohomology to it in order to arrive at the proof of \cref{thm:intro-main-thm-ses}.

\begin{theorem}[Orlov]\label{thm:orlov}
  Let $A$ and $B$ be two abelian varieties over an algebraically closed field of characteristic $0$, then we have a short exact sequence of groups
  \begin{equation}\label{ses:orlov_ses}
    0\to\ZZ\times A\times A^{\vee}\to \Aut(\Db(A))\xrightarrow{\gamma_A} \Sp(A)\to0,
  \end{equation}
  and a surjective map
  \[
    \gamma_{A,B}\colon\Eq(\Db(A),\Db(B))\twoheadrightarrow\Sp(A,B).
  \]
  Here $n\in\ZZ$ is mapped to the shift functor $[n]$, and $a\in A$ is mapped to the push-forward $(\trm_{a})_{*}$, where $\trm_a$ denotes translation by $a$, and $\alpha\in A^\vee$ is mapped to the twist functor $\Lca_{\alpha}\otimes-$.\footnotemark
  \footnotetext{This is different from Ploog's convention in \cite{Plo:Thesis} by a sign, where $a\in A$ is mapped to the pull-back functor $(\trm_{a})^{*}=(\trm_{-a})_{*}$.}
  
  Moreover, the maps $\gamma_A$ and $\gamma_{A,B}$ are compatible in the sense that \[\gamma_{A,B}(\Phi'\circ\Phi)=\gamma_{A,B}(\Phi')\gamma_{A}(\Phi)\] for every $\Phi\in\Aut(\Db(A))$ and $\Phi'\in\Eq(\Db(A),\Db(B))$.
\end{theorem}

\begin{proof}[Reference]
  \Cf\cite[Thm.~4.14, Prop.~4.11, Prop.~4.12]{Orl:02}, \cite[Appendix]{Orl:02a}.
  See also~\cite{Pol:96}.
\end{proof}



Now we consider Orlov's short exact sequence in the equivariant setup.

\begin{passage}[Induced action on $\Aut(\Db(X))$]
  When a group $G$ acts on a variety $X$, then $G$ acts from the left on $\Db(X)$ via $g.\Fca\coloneqq(g^\inv)^{*}\Fca$.
  Accordingly, the diagonal action of $G$ on $\Db(X\times X)$ becomes $g.\Pca=(g^\inv,g^\inv)^{*}\Pca$.
  Recall that we have for (auto)morphisms $g$ and $h$ of $X$
  \[
    \FM_{(g,h)^{*}\Pca} = h^{*}\circ\FM_{\Pca}\circ g_{*},
  \]
   \cf\cite[Ex.~1.6.(4)]{Plo:Thesis}.
  So for $F=\FM_{\Pca}\in\Aut(\Db(X))$, we get
  \begin{align*}
     g.F
    =g.\FM_\Pca
    &=\FM_{(g^\inv,g^\inv)^{*}\Pca} \\
    &=(g^\inv)^{*}\circ\FM_{\Pca}\circ g^{\inv}_{*} \\
    &=(g^\inv)^{*}\circ F\circ g^{\inv}_{*} \\
    &=(g^\inv)^{*}\circ F\circ g^{*},
  \end{align*}
  using $g^{\inv}_{*}\simeq g^{*}$ for the isomorphism $g$.
  Analogously, $G$ acts also on $\Eq(\Db(X),\Db(Y))$ by conjugation.
\end{passage}


\begin{proposition}\label{prop:orlovs_map_gamma_is_equivariant}
  Let $A$ and $B$ be abelian varieties, then the maps in Seq.~\eqref{ses:orlov_ses}
  \[
    0\to\ZZ\times(A\otimesZ\StdRep_n)\times(A\otimesZ\StdRep_n)^{\vee}
    \xrightarrow{\iota}\Aut(\Db(A\otimesZ\StdRep_n))
    \xrightarrow{\gamma_{A\otimesZ\StdRep_n}}\Sp(A\otimesZ\StdRep_n)\to0
  \]
  and, more generally, the map
  \[
    \gamma_{A\otimesZ\StdRep_n,B\otimesZ\StdRep_n}\colon
    \Eq(\Db(A\otimesZ\StdRep_n),\Db(B\otimesZ\StdRep_n))\to\Sp(A\otimesZ\StdRep_{n},B\otimesZ\StdRep_{n})
  \]
  are $\Sn$-equivariant.
\end{proposition}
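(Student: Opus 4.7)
The plan is to verify equivariance separately for the inclusion $\iota$ and for the map $\gamma$; the latter contains the real content. Set $X\coloneqq A\otimesZ\StdRep_n$ and $Y\coloneqq B\otimesZ\StdRep_n$, and recall from the preceding passage that $\Sn$ acts by conjugation $\sigma.F=(\sigma^{-1})^{*}\circ F\circ \sigma^{*}$, both on $\Aut(\Db(X))$ and on $\Eq(\Db(X),\Db(Y))$; on $\ZZ\times X\times X^{\vee}$ it acts trivially on $\ZZ$ and in the evident way on the two other factors (using $\sigma.\alpha=((\sigma^{-1}).)^{\vee}(\alpha)$ on the dual).

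For $\iota$ I would check each of the three generators. The shift $[k]$ commutes with any pullback, so $\sigma.[k]=[k]$, matching the trivial action on $\ZZ$. For $a\in X$, each $\sigma\in\Sn$ acts on $X$ by a group homomorphism, hence $\sigma\circ t_a=t_{\sigma.a}\circ\sigma$; combined with $\sigma^{*}=(\sigma^{-1})_{*}$ for an isomorphism, this yields
\[
  \sigma.(t_a)_{*}=(\sigma^{-1})^{*}\circ(t_a)_{*}\circ\sigma^{*}=(\sigma\circ t_a\circ\sigma^{-1})_{*}=(t_{\sigma.a})_{*}.
\]
For $\alpha\in X^{\vee}$ the projection formula reduces $\sigma.(\Lca_{\alpha}\otimes-)$ to $((\sigma^{-1})^{*}\Lca_{\alpha})\otimes-$, and under the identification $X^{\vee}\simeq\Pic^{0}(X)$ pullback by $\sigma^{-1}$ corresponds precisely to $((\sigma^{-1}).)^{\vee}=\sigma.$, so $(\sigma^{-1})^{*}\Lca_{\alpha}\simeq\Lca_{\sigma.\alpha}$.

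For $\gamma$ I would exploit its compatibility with composition from \cref{thm:orlov}. Writing
\[
  \gamma(\sigma.F)=\gamma((\sigma^{-1})^{*})\cdot\gamma(F)\cdot\gamma(\sigma^{*}),
\]
the task reduces to computing $\gamma$ on pushforwards by isomorphisms. The key lemma to extract from \cite[\S4]{Orl:02} is that for any isomorphism $f\colon X\isoarr X$ of abelian varieties, $\gamma(f_{*})$ is the block-diagonal symplectic automorphism with entries $f$ and $(f^{\vee})^{-1}$; this is checked directly from the characterization of $\gamma$ by the way $f_{*}$ intertwines translations and line bundle twists (exactly the generators handled for $\iota$ above). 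Applying this with $f=\sigma^{-1}$ identifies $\gamma(\sigma^{*})=\gamma((\sigma^{-1})_{*})$ with the block-diagonal automorphism $\Sigma_{\sigma^{-1}}$ of $X\oplus X^{\vee}$ that implements the $\Sn$-action. Substituting back gives
\[
  \gamma(\sigma.F)=\Sigma_{\sigma}\,\gamma(F)\,\Sigma_{\sigma}^{-1}=\sigma.\gamma(F),
\]
which is exactly equivariance. The same reasoning transfers verbatim to $\gamma_{A,B}$, since one conjugates $F\in\Eq(\Db(X),\Db(Y))$ by isomorphisms on both sides and the formal properties of $\gamma$ under composition still apply.

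The main obstacle is the explicit identification of $\gamma(f_{*})$ with the block-diagonal symplectic isomorphism, together with the verification that the $\Sn$-action on $\Sp(X)$ (defined via its action on $X\oplus X^{\vee}$) is genuinely conjugation by $\Sigma_{\sigma}$; once both are pinned down, the argument collapses to formal manipulation of Orlov's multiplicativity.
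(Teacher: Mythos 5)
Your proposal is correct and follows essentially the same route as the paper: checking $\iota$ on the three generators (shift, translation, twist) and reducing equivariance of $\gamma$ to its multiplicativity together with the block-diagonal formula $\gamma(g^{*})=\operatorname{diag}(g^{-1},g^{\vee})$, which the paper takes from \cite[Ex.~4.5.(3)]{Plo:Thesis} rather than re-deriving. The only cosmetic difference is that you obtain $\sigma.(\trm_a)_{*}=(\trm_{\sigma.a})_{*}$ from $\sigma\circ\trm_a\circ\sigma^{-1}=\trm_{\sigma(a)}$, whereas the paper passes through $(\trm_a)_{*}=\trm_{-a}^{*}$; both are valid.
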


\begin{proof}
  We consider the more general situation where a group $G$ acts on abelian varieties~$X$ and $X'$ by homomorphisms.
  The proposition then follows from the specialization $G=\Sn$ and $X=A\otimes\StdRep_n$ and $X'=B\otimes\StdRep_n$.
  
  Let us first treat the map $\iota$, which sends $k\in\ZZ$ to the shift functor $[k]$, a point $a\in X$ to the pushforward $(\trm_a)_{*}$ along translation by $a$, and a point $\alpha\in X^{\vee}$ to the twist $\Lca_{\alpha}\otimes -$ by the degree zero line bundle $\Lca_{\alpha}$ corresponding to $\alpha$.
  The equalities below refer to equality in $\Aut(\Db(X))$, \ie natural isomorphisms of functors.
  
  Let $g\in G$.
  On the one hand, $G$ acts trivially on $\ZZ$.
  On the other hand, we have
  \[
    g.[k]
    =(g^\inv)^{*}\circ [k]\circ g^{*}
    =(g^\inv)^{*}\circ g^{*}\circ [k]
    =[k].
  \]
  We claim that $g.(\trm_{a})_{*}=(\trm_{g(a)})_{*}$.
  Indeed, using $(\trm_{a})_{*}=\trm_{(-a)}^{*}$, we have
  \[
    g.(\trm_{a})_{*}
    =(g^\inv)^{*}\circ\trm_{(-a)}^{*}\circ g^{*}
    =(g\circ\trm_{(-a)}\circ g^{\inv})^{*}
    =(\trm_{-g(a)})^{*}
    =(\trm_{g(a)})_{*}.
  \]
  We claim that $g.(\Lca_{\alpha}\otimes-)=\Lca_{g.\alpha}\otimes-$.
  Indeed, recalling that $g.\alpha=(g^\inv)^{\vee}(\alpha)$, we have
  \begin{align*}
    g.(\Lca_{\alpha}\otimes-)
    &=(g^\inv)^{*}\circ(\Lca_{\alpha}\otimes-)\circ g^{*} \\
    &=((g^\inv)^{*}\Lca_{\alpha}\otimes(g^\inv)^{*}(-))\circ g^{*} \\
    &=(g^\inv)^{*}\Lca_{\alpha}\otimes - \\
    &=\Lca_{(g^\inv)^{\vee}(\alpha)}\otimes - \\
    &=\Lca_{g.\alpha}\otimes -.
  \end{align*}
  
  Now we treat the map $\gamma_{X,X'}\colon\Eq(\Db(X),\Db(X'))\to\Sp(X,X')$.
  By \cite[Ex.~4.5.(3)]{Plo:Thesis}\footnotemark\ we have
  \[
    \gamma_{X}(g^{*})=\begin{pmatrix}g^{-1} & 0 \\ 0 & g^{\vee}\end{pmatrix}.
  \]
  \footnotetext{Beware of a misprint in \loccit where $g^{*}$ was mistyped as $g_{*}$.}
  Now for $F\in\Eq(\Db(X),\Db(X'))$ let us write $\gamma_{X,X'}(F)=\begin{psmallmatrix}f_1&f_2\\f_3&f_4\end{psmallmatrix}$ and calculate
  \begin{align*}
    \gamma_{X,X'}(g.F)
    &=\gamma_{X,X'}((g^\inv)^{*}\circ F\circ g^{*}) \\
    &=\gamma_{X'}((g^\inv)^{*})\cdot\gamma_{X,X'}(F)\cdot\gamma_{X}(g^{*}) \\
    &=\begin{pmatrix}g & 0 \\ 0 & (g^\inv)^{\vee}\end{pmatrix}
      \begin{pmatrix}f_1 & f_2 \\ f_3 & f_4\end{pmatrix}
      \begin{pmatrix}g^{-1} & 0 \\ 0 & g^{\vee}\end{pmatrix} \\
    &=\begin{pmatrix}g\circ f_1\circ g^\inv & g\circ f_2\circ g^\vee \\ (g^\inv)^{\vee}\circ f_3\circ g^\inv & (g^\inv)^{\vee}\circ f_4\circ g^{\vee}\end{pmatrix}.
  \end{align*}
  This is exactly how $G$ acts on $\Sp(X,X')$.
  Indeed, recall again that $g\in G$ acts on a map $\phi$ between two $G$-sets by $g.\phi=g.\circ\phi\circ(g^{\inv}).$, and the action of $G$ on $X^\vee$ (\resp $X'^{\vee}$) is given by $g.\alpha=(g^\inv)^{\vee}(\alpha)$.
  Considering $\Hom(X,X')$, $\Hom(X^{\vee},X')$, $\Hom(X,X'^{\vee})$ and $\Hom(X^{\vee},X'^{\vee})$ produces exactly the formulas above.
\end{proof}

Let us briefly recall some results from non-abelian group cohomology.
We use \cite[Ch.~II]{Ber:IGH} as a reference. Alternatively, see \cite[§1]{BS:64}, \cite[§I.5]{Ser:GC} or \cite[§III.3]{Gir:CNA}. 

\begin{passage}[Non-abelian group cohomology]
  Let $G$ be a group and let $1\to A\xrightarrow{i} B\xrightarrow{p} C\to 1$ be a short exact sequence of groups with $G$-action.
  Then we have a 6-term exact sequence of pointed sets (\ie kernels coincide with images)
\filbreak
  \[\begin{tikzcd}[row sep=tiny]
    0 \arrow[r]  & \Ho^0(G,A) \arrow[r, "i_{*}"] & \Ho^0(G,B) \arrow[r, "p_{*}"] & \Ho^0(G,C) \\
    {} \arrow[r, "\delta"] & \Ho^1(G,A) \arrow[r] & \Ho^1(G,B) \arrow[r] & \Ho^1(G,C),
  \end{tikzcd}\]
  where $\Ho^0(G,-)$ is the fixed-point functor $(-)^G$.
  The pointed sets $\Ho^1(G,-)$ can be defined via 1-cocycles and 1-coboundaries, and the map $\delta$ is called the \emph{connecting map}, \cf\cite[§II.4]{Ber:IGH}.
\end{passage}


\begin{passage}
  In the case that $A$ is an abelian group, $\Ho^1(G,A)$ is also an abelian group, but the connecting map $\delta$ does not need to be a group homomorphism.
  Let us explain that instead $\delta$ will be a crossed homomorphism for the following action:
  The group $C$ acts from the right on $A$ via inner automorphisms of $B$.
  That is, for $c\in C$ and $a\in A$, choose $b\in p^\inv(c)$ and define
  \[
    a.c
    \coloneqq b^{\inv}a b
    \coloneqq i^{\inv}(b^{\inv}i(a) b).
  \]
  Using that $A$ is abelian, one checks that this is independent of the choice of preimage $b$.
  Since for $g\in G$ one has $g.(a.c)=(g.a).(g.c)$, this right action is $G$-equivariant as long as $c\in \Ho^0(G,C)=C^G$.
  By functoriality, this induces a right action of $\Ho^0(G,C)$ on $\Ho^1(G,A)$.
  Concretely, $c\in \Ho^0(G,C)$ acts on a class $[\alpha]$ given by a cocycle $(\alpha_g)_{g\in G}$ via $[\alpha].c = [(\alpha_{g}.c)_{g\in G}]$.
\end{passage}

\begin{proposition}
  Let $G$ be a group and let $0\to A\to B\to C\to 1$ be a short exact sequence of groups with $G$-action.
  Assume that $A$ is an abelian group.
  Then the connecting map $\delta\colon \Ho^0(G,C)\to\Ho^1(G,A)$ is a crossed homomorphism, \ie
  \[
    \delta(c c')=\delta(c).c'+\delta(c')
  \]
  for $c,c'\in \Ho^0(G,C)$.
\end{proposition}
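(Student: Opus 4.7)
The plan is to unwind the definition of $\delta$ at the level of explicit $1$-cocycles and then verify the crossed homomorphism identity by direct manipulation in $B$, exploiting that $A$ is abelian to pass to the additive notation at the end.

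First I would fix cocycle representatives. Given $c\in \Ho^0(G,C)=C^G$, pick any lift $b\in p^\inv(c)\subset B$. Since $g.c = c$, both $b$ and $g.b$ lie in $p^\inv(c)$, so $\alpha_g\coloneqq b^\inv(g.b)$ lies in $\kernel(p)=A$. A short calculation shows that $(\alpha_g)_{g\in G}$ is a $1$-cocycle: in multiplicative notation, $\alpha_{gh}=b^\inv(gh.b)=\alpha_g\cdot g.\alpha_h$. Replacing $b$ by another lift $ba$ (with $a\in A$) alters $\alpha_g$ by $a^\inv\alpha_g(g.a)=\alpha_g\cdot a^\inv(g.a)$; since $A$ is abelian this is exactly a coboundary. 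Thus the class $\delta(c)=[\alpha]\in\Ho^1(G,A)$ is well-defined and independent of choices, and everything reduces to a computation with these representatives.

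Next I would run the main computation. For $c,c'\in C^G$ with chosen lifts $b,b'$, the product $bb'$ is a lift of $cc'$, so
\[
  \delta(cc')_g = (bb')^\inv\bigl(g.(bb')\bigr) = b'^\inv b^\inv(g.b)(g.b') = \bigl(b'^\inv\alpha_g\, b'\bigr)\cdot \alpha'_g,
\]
where $\alpha=\delta(c)$ and $\alpha'=\delta(c')$ are the cocycles attached to $b$ and $b'$. By the definition of the right action of $C^G$ on $A$ recalled in the previous passage, $b'^\inv\alpha_g b'$ equals $\alpha_g.c'$. Switching to additive notation in $A$ this reads $\delta(cc')_g = \alpha_g.c' + \alpha'_g$, which is exactly the cocycle representing $\delta(c).c' + \delta(c')$. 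Passing to cohomology classes yields the asserted identity.

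There is no real obstacle here; the argument is formal once the right action and the cocycle formula for $\delta$ are in place. The points that deserve attention are bookkeeping: checking that the right action of $C^G$ on $1$-cocycles descends to the action on $\Ho^1(G,A)$ discussed in the preceding passage (which uses the $G$-equivariance of $a\mapsto a.c'$ for $c'\in C^G$), and verifying that the formula $b'^\inv\alpha_g b'=\alpha_g.c'$ is unambiguous. Both are consequences of $A$ being abelian, which explains why that hypothesis enters the statement.
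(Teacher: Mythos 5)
Your proof is correct and is exactly the ``straightforward check'' that the paper defers to Giraud: the same cocycle formula $\alpha_g=b^{\inv}(g.b)$ for $\delta$, the same choice of lift $bb'$ for $cc'$, and the same identification of $b'^{\inv}\alpha_g b'$ with $\alpha_g.c'$ via the inner-automorphism action. No gaps; the bookkeeping points you flag (well-definedness and $G$-equivariance of the action for $c'\in C^G$) are precisely the ones the paper's surrounding passages handle.
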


\begin{proof}
  \Cf\cite[Prop.~III.3.4.1]{Gir:CNA}.
  The proof is a straightforward check once one recalls that the map $\delta$ sends an element $c\in \Ho^0(G,C)$ to the class represented by the cocycle $(\alpha_g)_{g\in G}$, where
  \[
    \alpha_g\coloneqq i^{\inv}(b^{\inv}\cdot(g.b)),
  \]
  for some choice of $b\in p^{\inv}(c)$. 
\end{proof}

\begin{theorem}[Main \cref{thm:intro-main-thm-ses}]\label{thm:fmaut_invariants_seqs}
  Let $A$ be an abelian variety of dimension $\dim(A)=2$ over an algebraically closed field $\kbar$ of characteristic zero.
  Assume $\End(A)=\ZZ$, and let $d^2$ denote the minimal degree of a polarization of $A$.
  \begin{enumerate}
    \item For $n\neq 2,4$, we have an exact sequence of groups
    \[
      0\to\ZZ\times A[n]\to \Aut(\Db(A\otimesZ\StdRep_n))^{\Sn} \to\HeckeCong{n d}\xrightarrow{\delta} A[2][n].
    \]
    \item For $n=4$, we have an exact sequence of pointed sets ($\delta$ might not be a homomorphism)
     \[
      0\to\ZZ\times A[4]\to \Aut(\Db(A\otimesZ\StdRep_{4}))^{\SymGrp_{4}} \to\HeckeCong{4 d}\xrightarrow{\delta} A[2]\times A^{\vee}[2].
    \]
    \item For $n=2$, we have an exact sequence of groups
    \[
      0\to\ZZ\times A[2]\times A^{\vee}[2]\to \Aut(\Db(A\otimesZ\StdRep_2))^{\SymGrp_{2}} \to\HeckeCong{d}\simeq\Sp(A)\to0.
    \] 
  \end{enumerate}
  We have identified above, by abuse of notation, $A$ with its group of $\kbar$-rational points $A(\kbar)$.
\end{theorem}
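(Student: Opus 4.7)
The plan is to apply non-abelian group cohomology of $\Sn$ to the $\Sn$-equivariant form of Orlov's short exact sequence from \cref{thm:orlov}. Writing $N\coloneqq\ZZ\times(A\otimesZ\StdRep_n)\times(A\otimesZ\StdRep_n)^{\vee}$, the sequence
\[
0 \to N \to \Aut(\Db(A\otimesZ\StdRep_n)) \to \Sp(A\otimesZ\StdRep_n) \to 0
\]
is $\Sn$-equivariant by \cref{prop:orlovs_map_gamma_is_equivariant}, so the associated 6-term sequence of non-abelian cohomology reads
\[
0 \to N^{\Sn} \to \Aut(\Db(A\otimesZ\StdRep_n))^{\Sn} \to \Sp(A\otimesZ\StdRep_n)^{\Sn} \xrightarrow{\delta} \Ho^1(\Sn, N).
\]
Each term is then identified using the earlier sections of the paper.

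For the kernel, $N^{\Sn}=\ZZ\times(A\otimesZ\StdRep_n)^{\Sn}\times((A\otimesZ\StdRep_n)^{\vee})^{\Sn}$; the three factors evaluate respectively to $\ZZ$, to $A[n]$ by \cref{prop:stdrep_h0}, and (via the equivariant isomorphism $(A\otimesZ\StdRep_n)^{\vee}\simeq A^{\vee}\otimesZ\StdRep_n^{\vee}$ of \cref{prop:abvar-dual-of-product}) to $0$ for $n\geq 3$ and to $A^{\vee}[2]$ for $n=2$. The symplectic term is identified with $\HeckeCong{n d}$ for $n\geq 3$ by \cref{prop:sp_invariants_are_hecke_cong}, and with $\HeckeCong{d}\simeq\Sp(A)$ for $n=2$ by the remark following it. Finally, $\Ho^1(\Sn, N)$ is computed in \cref{thm:first-group-cohomology-ab-var-with-std-rep}: it equals $0$ for $n$ odd or $n=2$, equals $A[2]$ for $n$ even with $n\neq 2,4$, and equals $A[2]\oplus A^{\vee}[2]$ for $n=4$. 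Substituting into the 6-term sequence yields cases (i), (ii), (iii) directly: for $n$ odd the endpoint $A[2][n]$ reduces to $0$, merging with the even case into (i); for $n=2$ the vanishing of $\Ho^1$ yields surjectivity, and since the kernel is abelian and all maps below $\delta$ are genuine group homomorphisms, the sequence of pointed sets upgrades to a short exact sequence of groups.

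The main obstacle is that Orlov's original right-exactness in \cref{thm:orlov} contains a gap, as flagged in \cref{rmk:orlov_surjectivity_fix}; the 6-term non-abelian cohomology sequence requires the underlying sequence of $\Sn$-groups to be genuinely right-exact. The remark resolves this by observing that every $\Sn$-invariant symplectic isomorphism has components that are isogenies or zero, thanks to the explicit block description in \cref{prop:sp_invariants_are_hecke_cong}, and therefore falls within the hypotheses of the Lombardi--Tirabassi repair, which produces a (not necessarily invariant) lift in $\Aut(\Db(A\otimesZ\StdRep_n))$. This justifies the invocation of the 6-term sequence and, in particular, secures the surjectivity needed in case (iii).
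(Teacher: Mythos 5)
Your overall strategy is the same as the paper's: apply the six-term non-abelian cohomology sequence to the $\Sn$-equivariant form of Orlov's sequence (justified by \cref{prop:orlovs_map_gamma_is_equivariant}), identify the $\Ho^0$-terms via \cref{prop:stdrep_h0} and \cref{prop:sp_invariants_are_hecke_cong}, and identify the $\Ho^1$-target via \cref{thm:first-group-cohomology-ab-var-with-std-rep}. Your handling of the surjectivity gap in Orlov's theorem is also consistent with \cref{rmk:orlov_surjectivity_fix}.

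However, there is a genuine gap in case (i) for $n$ even with $n\neq 2,4$. The six-term sequence only produces $\delta$ as a map of \emph{pointed sets}; by the general theory it is a crossed homomorphism $\delta(cc')=\delta(c).c'+\delta(c')$ for the right action of $\Ho^0(\Sn,\Sp(A\otimesZ\StdRep_n))$ on $\Ho^1(\Sn,N)$. The theorem claims (i) is an exact sequence of \emph{groups}, which requires $\delta\colon\HeckeCong{nd}\to A[2]$ to be an honest homomorphism; this is exactly the distinction the statement draws with case (ii), where it warns that $\delta$ might not be one. For $n$ odd the target $A[2][n]$ vanishes and there is nothing to check, but for $n$ even and $\neq 2,4$ the target is $A[2]$ and an argument is needed. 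The paper supplies it: using $\Phi\circ\Phi_{(a,\alpha)}\circ\Phi^{\inv}\simeq\Phi_{\gamma(\Phi)(a,\alpha)}$ one computes that $f\in\Sp(A\otimesZ\StdRep_n)^{\Sn}$ acts on a class $[(a,0)]$ by $[(f_1\circ a,0)]$ with $f_1=k_1\cdot\id$; the determinant condition $k_1k_4-nd\,k_2k_3=1$ with $n$ even forces $k_1$ odd, and since $\Ho^1(\Sn,A\otimesZ\StdRep_n)\simeq A[2]$ is $2$-torsion, multiplication by an odd integer is the identity. Hence the action is trivial and the crossed homomorphism is a genuine homomorphism. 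Without this step (or an equivalent one), your proof only establishes exact sequences of pointed sets in case (i).
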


\begin{remark}
  The assumptions ``$\dim(A)=2$'' as well as ``$\End(A)=\ZZ$'' only serve to be able to identify $\Ho^0(\Sn,\Sp(A\otimes\StdRep_n))$ with certain Hecke congruence groups in the sequences above, and for the analysis of the connecting map.
  The characteristic zero assumption is inherited from Orlov's sequence, \cf\cref{thm:orlov}, where it eventually comes from Bondal--Orlov's criterion for Fourier--Mukai equivalences.
\end{remark}

\begin{proof}
  Apply non-abelian group cohomology to Orlov's short exact sequence
  \begin{equation}\label{ses:orlovs_ses_with_stdrep}
    0\to\ZZ\times(A\otimes\StdRep_n)\times(A\otimes\StdRep_n)^{\vee}
    \to\Aut(\Db(A\otimes\StdRep_n))
    \to\Sp(A\otimes\StdRep_n)\to0
  \end{equation}
  to get the exact sequence
  \begin{align*}
    0&\to\Ho^0(\Sn,\ZZ\times(A\otimes\StdRep_n)\times(A\otimes\StdRep_n)^{\vee})
    \to\Ho^0(\Sn,\Aut(\Db(A\otimes\StdRep_n))) \\
    &\to\Ho^0(\Sn,\Sp(A\otimes\StdRep_n))
    \xrightarrow{\delta}\Ho^1(\Sn,\ZZ\times(A\otimes\StdRep_n)\times(A\otimes\StdRep_n)^{\vee}).
  \end{align*}
  For $n=3$ and $n\geq5$, we have computed in \cref{sec:group_coho_calculations}
  \begin{align*}
    \Ho^0(\Sn,\ZZ) &= \ZZ &
    \Ho^0(\Sn,A\otimes\StdRep_n) &\simeq A[n] &
    \Ho^0(\Sn,(A\otimes\StdRep_n)^{\vee}) &= 0 \\
    \Ho^1(\Sn,\ZZ) &= 0 &
    \Ho^1(\Sn,A\otimes\StdRep_n) &\simeq A[2][n] &
    \Ho^1(\Sn,(A\otimes\StdRep_n)^{\vee}) &= 0 \\
    && \Ho^0(\Sn,\Sp(A\otimes\StdRep_n)) &\simeq \HeckeCong{n d}.
  \end{align*}
  For $n=4$ the only difference is that $\Ho^1(\SymGrp_{4},(A\otimes\StdRep_{4})^{\vee}) \simeq A^{\vee}[2]$.
  The case $n=2$ was treated by Ploog in \cite[Prop.~4.8]{Plo:Thesis}; it also follows from our remarks and computations above.
  The differences are that $\Ho^0(\SymGrp_{2},(A\otimes\StdRep_{2})^{\vee})\simeq A^{\vee}[2]$, and $\Ho^0(\SymGrp_{2},\Sp(A\otimes\StdRep_{2}))=\Sp(A) \simeq \HeckeCong{d}$, and $\Ho^1(\SymGrp_{2},A\otimes\StdRep_{2})=0$.
  
  It is left to show that, for even $n\neq4$, the connecting map $\delta\colon\HeckeCong{n d}\to A[2]$ is a group homomorphism.
  Denote the equivalence associated to $(a,\alpha)\in(A\otimesZ\StdRep_n)\times(A\otimesZ\StdRep_n)^{\vee}$ by $\Phi_{(a,\alpha)}\coloneqq(\trm_a)_{*}\circ(\Lca_{\alpha}\otimes-)$.
  For any $\Phi\in\Aut(\Db(A\otimesZ\StdRep_n))$ we have
  \[
    \Phi\circ\Phi_{(a,\alpha)}\circ\Phi^{\inv} \simeq\Phi_{\gamma(\Phi)(a,\alpha)}
  \]
  up to shifts, \cf\cite[Cor.~9.58]{Huy:FM}.
  So for $f\in\Sp(A\otimesZ\StdRep_n)$ with $\gamma(\Phi)=f$, we have up to shifts
  \[
    \Phi_{(a,\alpha)}.f\coloneqq \Phi^{\inv}\circ\Phi_{(a,\alpha)}\circ\Phi \simeq\Phi_{f^{\inv}(a,\alpha)}.
  \]
  This means that the right-action of $\Sp(A\otimesZ\StdRep_n)$ on $(A\otimesZ\StdRep_n)\times(A\otimesZ\StdRep_n)^{\vee}$ is $(a,\alpha).f=f^{\inv}(a,\alpha)$.
  Since we are eventually interested in the action on $\Ho^1(\Sn,\ZZ\times(A\otimesZ\StdRep_n)\times(A\otimesZ\StdRep_n)^{\vee})$, and we have $\Ho^1(\Sn,\ZZ)=0$, we are fine with ignoring shifts.
  Now assume $f\in\Sp(A\otimesZ\StdRep_n)^{\Sn}$, and take any class $[(a_g,\alpha_g)_g]\in\Ho^1(\Sn,(A\otimesZ\StdRep_n)\times(A\otimesZ\StdRep_n)^{\vee})$, represented by a cocycle written as $(a,\alpha)\colon\Sn\to(A\otimesZ\StdRep_n)\times(A\otimesZ\StdRep_n)^{\vee}$.
  When $n\neq4$, we have $\Ho^1(\Sn,(A\otimesZ\StdRep_n)^{\vee})=0$, so we can take $\alpha=0$.
  The action reads now
  \[
    [(a,0)].f=[(f_{1}\circ a,f_{3}\circ a)]=[(f_{1}\circ a,0)],
  \]
  where we write $f^{\inv}=\begin{psmallmatrix}f_{1}&f_{2}\\f_{3}&f_{4}\end{psmallmatrix}$.
  Since $f^{\inv}\in\Sp(A\otimesZ\StdRep_n)^{\Sn}$, we have $f_{1}=k_{1}\cdot\id$ for some $k_{1}\in\ZZ$, \cf\cref{prop:sp_invariants_are_hecke_cong}.
  We claim that $k_{1}$ is odd, since $n$ is even.
  Indeed, identifying $f^{\inv}$ with an element of $\HeckeCong{nd}$, the condition $\det(f^{\inv})=k_{1}k_{4}-nd k_{3}k_{2}=1$ forces $k_1$ to be odd since~$n$ is already even. 
  Finally, using that $\Ho^1(\Sn,A\otimesZ\StdRep_n)\simeq A[2]$ is $2$-torsion, we see that $f$ acts trivially.
  In conclusion, $\delta$ is a crossed homomorphism for the trivial action, which makes it a group homomorphism.
\end{proof}

\begin{passage}
  Let us discuss the connecting map $\delta\colon\HeckeCong{4 d}\to A[2]\times A^{\vee}[2]$ in the case $n=4$.
  Assume $f\in\Sp(A\otimesZ\StdRep_{n})^{\SymGrp_{n}}$  and continue with the notation and setup from the proof of \cref{thm:fmaut_invariants_seqs}.
  We have the action
  \[
    \begin{pmatrix}a \\ \alpha\end{pmatrix}\!.f
    =\begin{pmatrix}f_{1}&f_{2}\\f_{3}&f_{4}\end{pmatrix}\begin{pmatrix}a \\ \alpha\end{pmatrix}
    =\begin{pmatrix}f_{1}\circ a +f_{2}\circ\alpha \\ f_{3}\circ a +f_{4}\circ\alpha\end{pmatrix}.
  \]
  The following proposition explains that this action is in general non-trivial.
  In particular, when $d$ is odd, $\delta$ will \emph{not} be a homomorphism unless $\im(\delta)\subset A[2]\times 0$.
\end{passage}

\begin{proposition}\label{prop:connecting-map-analysis-n4}
  Denote by $(f_i)_{*}$ the induced maps on first group cohomology.
  We have
  \begin{enumerate}
    \item $(f_{1})_{*}=\id$, 
    and $(f_{4})_{*}=\id$, 
    \item $(f_{3})_{*}=0$, 
    \item $f_{2}=k_2\cdot(\dualpol{\lambda_0}\otimes\dualisog{\phi}_0)$ and $\ker((\dualpol{\lambda_0}\otimes\dualisog{\phi}_0)_{*}) \simeq \ker(\dualpol{\lambda_0}\colon A^{\vee}[2]\to A[2])$, for some $k_2\in\ZZ$.
  \end{enumerate}
\end{proposition}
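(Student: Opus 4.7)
The plan is to verify \rom{1}, \rom{2}, \rom{3} by combining the block-matrix description of $\SymGrp_{4}$-invariant symplectic isomorphisms from \cref{prop:sp_invariants_are_hecke_cong} with the cohomology calculations of \cref{sec:group_coho_calculations}. The key inputs from the latter are the identifications $\Ho^1(\SymGrp_{4}, A\otimesZ\StdRep_{4})\simeq A[2]$ and $\Ho^1(\SymGrp_{4}, A^\vee\otimesZ\StdRep_{4}^\vee)\simeq A^\vee[2]$ provided by \cref{thm:first-group-cohomology-ab-var-with-std-rep}, both of which are $2$-torsion groups. Parts \rom{1} and \rom{2} are short; part \rom{3} is the main obstacle.

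For \rom{1}, by \cref{prop:sp_invariants_are_hecke_cong} we write $f_{1}=k_{1}\cdot(\id\otimes\id)$ and $f_{4}=k_{4}\cdot(\id\otimes\id)$ with $k_{1}k_{4}-4dk_{2}k_{3}=1$, so $k_{1}$ and $k_{4}$ are odd since $n=4$ is even. Multiplication by an odd integer is the identity on any $2$-torsion group, giving $(f_{1})_{*}=\id_{A[2]}$ and $(f_{4})_{*}=\id_{A^\vee[2]}$.

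For \rom{2}, factor $\phi_{0}$ as $p\circ i$, where $i\colon\StdRep_{4}\hookrightarrow\ZZ^{4}$ and $p\colon\ZZ^{4}\twoheadrightarrow\StdRep_{4}^\vee$, so that $f_{3}=k_{3}(\id_{A^\vee}\otimes p)\circ(\lambda_{0}\otimes i)$. It therefore suffices to prove $(\id_{A^\vee}\otimes p)_{*}=0$ on $\Ho^1(\SymGrp_{4},(A^\vee)^4)$. Apply the long exact cohomology sequence of $0\to A^\vee\xrightarrow{\Delta}(A^\vee)^{4}\xrightarrow{p}A^\vee\otimesZ\StdRep_{4}^\vee\to 0$ and observe, via \cref{prop:permutation_rep_as_induced_rep_identifications} and \cref{prop:res_and_cor_via_shapiro}, that $\Delta_{*}$ corresponds under Shapiro's lemma to $\res^{\SymGrp_{4}}_{\SymGrp_{3}}\colon\Ho^1(\SymGrp_{4},A^\vee)\to\Ho^1(\SymGrp_{3},A^\vee)$. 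By Nakaoka's theorem (\cref{prop:nakaoka_res_isomorphism}), this restriction is an isomorphism in degree $1<n/2=2$, and exactness then forces $p_{*}=0$, whence $(f_{3})_{*}=0$.

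For \rom{3}, the identification $f_{2}=k_{2}\cdot(\dualpol{\lambda_{0}}\otimes\dualisog{\phi}_{0})$ follows immediately from the classification of $\SymGrp_{n}$-invariant homomorphism spaces in \cref{prop:stdrep_to_stdrep_equivariant_maps} and \cref{prop:stdrep_dual_to_stdrep_equivariant_maps} together with $\End(A)=\ZZ$ and the analysis of \cref{sec:abelian-varieties-preliminaries}. For the kernel computation, factor
\[
  (\dualpol{\lambda_{0}}\otimes\dualisog{\phi}_{0})_{*} = (\dualpol{\lambda_{0}}\otimes\id_{\StdRep_{4}})_{*}\circ(\id_{A^\vee}\otimes\dualisog{\phi}_{0})_{*}.
\]
Since the isomorphism $\Ho^1(\SymGrp_{4},(-)\otimesZ\StdRep_{4})\simeq(-)[2]$ is functorial in $(-)$ (arising from the injection into $\Ho^1(\SymGrp_{4},(-)^{4})\simeq\Ho^1(\SymGrp_{3},(-))$), the first factor identifies with the honest restriction $\dualpol{\lambda_{0}}|_{A^\vee[2]}\colon A^\vee[2]\to A[2]$. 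Consequently (iii) reduces to showing that $(\id_{A^\vee}\otimes\dualisog{\phi}_{0})_{*}$ is an isomorphism between the two copies of $A^\vee[2]$.

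This last step is the main obstacle, since the identifications of $\Ho^1(\SymGrp_{4},A^\vee\otimesZ\StdRep_{4}^\vee)$ and $\Ho^1(\SymGrp_{4},A^\vee\otimesZ\StdRep_{4})$ with $A^\vee[2]$ arise through genuinely different constructions (a connecting map to the Schur multiplier in $\Ho^{2}$ versus the Shapiro--Nakaoka route via $(A^\vee)^{4}$). My plan is to exploit the Tor long exact sequences obtained by tensoring
\[
  0\to\StdRep_{n}\xrightarrow{\phi_{0}}\StdRep_{n}^\vee\to\ZZ/n\to 0\quad\text{and}\quad 0\to\StdRep_{n}^\vee\xrightarrow{\dualisog{\phi}_{0}}\StdRep_{n}\to C\to 0
\]
with the $n$-divisible group $A^\vee$, and to leverage the compositions $\phi_{0}\circ\dualisog{\phi}_{0}=n\cdot\id=\dualisog{\phi}_{0}\circ\phi_{0}$ together with the vanishing $(\id_{A^\vee}\otimes\phi_{0})_{*}=0$ (which follows from the proof of \rom{2} applied with $\lambda_{0}$ replaced by $\id_{A^\vee}$). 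A careful bookkeeping of the connecting maps at the cocycle level should yield that $(\id_{A^\vee}\otimes\dualisog{\phi}_{0})_{*}$ acts as the identity on $A^\vee[2]$ under the natural identifications, completing the proof.
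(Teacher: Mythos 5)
Parts \rom{1} and \rom{2} of your argument are correct, and your route for \rom{2} (factoring $\lambda_0\otimes\phi_0$ through $(A^\vee)^4$ and killing $(\id\otimes p)_*$ via exactness of the cokernel sequence defining $\StdRep_4^\vee$, using Shapiro and Nakaoka) is a valid variant of the paper's argument, which instead uses the kernel sequence $0\to A[n]\to A\otimes\StdRep_n\to A\otimes\StdRep_n^\vee\to 0$ and the surjectivity of $\Delta_*$ onto $\Hom(\SymGrp_n,A[n])$. The reduction in \rom{3} to the two factors $(\dualpol{\lambda_0}\otimes\id)_*$ and $(\id\otimes\dualisog{\phi}_0)_*$, and the identification of the first factor with $\dualpol{\lambda_0}|_{A^\vee[2]}$ via functoriality of $\Ho^1(\SymGrp_4,(-)\otimes\StdRep_4)\simeq(-)[2]$, also match the paper.

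However, the decisive step of \rom{3} --- that $(\id_{A^\vee}\otimes\dualisog{\phi}_0)_*\colon\Ho^1(\SymGrp_4,A^\vee\otimes\StdRep_4^\vee)\to\Ho^1(\SymGrp_4,A^\vee\otimes\StdRep_4)$ is an isomorphism --- is not proved; you only outline a plan, and the lever you propose does not move anything. The relations $\phi_0\circ\dualisog{\phi}_0=n\cdot\id=\dualisog{\phi}_0\circ\phi_0$ give, on first cohomology, only that $(\phi_0)_*\circ(\dualisog{\phi}_0)_*$ and $(\dualisog{\phi}_0)_*\circ(\phi_0)_*$ equal multiplication by $4$, which is automatically zero on the $2$-torsion groups involved; combined with $(\phi_0)_*=0$ this is consistent both with $(\dualisog{\phi}_0)_*$ being an isomorphism and with it being zero, so no information is extracted. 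What is actually needed (and what the paper supplies) is control of the kernel of $\id\otimes\dualisog{\phi}_0$ as a $\ZZ[\SymGrp_4]$-module: one identifies it with $(A^\vee[n]\otimes\StdRep_n)/A^\vee[n]$ via the four-term sequence $0\to A^\vee[n]\to A^\vee[n]\otimes\StdRep_n\to A^\vee\otimes\StdRep_n^\vee\to A^\vee\otimes\StdRep_n\to 0$, and then shows that the induced map $\Ho^1(\SymGrp_4,(A^\vee[n]\otimes\StdRep_n)/A^\vee[n])\to\Ho^1(\SymGrp_4,A^\vee\otimes\StdRep_n^\vee)$ vanishes. The paper does this by comparing with the analogous sequence with $A^\vee[n]$-coefficients and exhibiting, by hand from the Coxeter--Moore presentation of $\SymGrp_4$, explicit $1$-cocycles witnessing the surjectivity of the summation map on $\Ho^1$; this concrete computation is the real content of \rom{3} and is absent from your proposal. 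Until you either carry out that cocycle construction (or an equivalent computation of the cohomology of $\Tor_1(\coker\dualisog{\phi}_0,A^\vee)$ as an $\SymGrp_4$-module), the claim that $(\id\otimes\dualisog{\phi}_0)_*$ is an isomorphism remains unestablished.
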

  
\begin{proof}
  \itemnum:
  As before, the condition $\det(f^{\inv})=1$ yields that we can view $f_1$ and $f_4$ as odd integers, so that we get the identities of cohomology classes $[f_{1}\circ a]=[a]$ and $[f_{4}\circ\alpha]=[\alpha]$.
  
  \itemnum
  We have $f_{3}=k_{3}\cdot(\lambda_0\otimes\phi_0)$ for some $k_{3}\in\ZZ$, \cf\cref{prop:sp_invariants_are_hecke_cong}.
  We obtain a short exact sequence
  \[
    0\to A[n]\xrightarrow{\Delta} A\otimesZ\StdRep_{n}\xrightarrow{\id\otimes\phi_0}A\otimesZ\StdRep_{n}^{\vee}\to0
  \]
  by tensoring Seq.~\eqref{ses:stdrep_canonical_map} $0\to\StdRep_{n}\xrightarrow{\phi_0}\StdRep_{n}^{\vee}\to\ZZ/n\ZZ\to0$ with $A$, and recalling the remark on the kernel of $\phi_0$ in the proof of \cref{par:stdrep_canonical_map}.
  Applying cohomology and using our calculations in \cref{cor:stdrep_coho_h1_ses}, we get
  \[\begin{tikzcd}[rotate90center/.style={anchor=center, rotate=270}, row sep=small]
    \Ho^1(\Sn,A[n]) \arrow[r, "\Delta_{*}"] \arrow[d, phantom, "\isoarr" rotate90center] & \Ho^1(\Sn,A\otimesZ\StdRep_n) \arrow[r, "(\id\otimes\phi_0)_{*}"] \arrow[d, phantom, "\isoarr" rotate90center] & \Ho^1(\Sn,A\otimesZ\StdRep_{n}^{\vee}) \\
    \Hom(\Sn,A[n]) & A[2][n], &
  \end{tikzcd}\]
  where the first isomorphism recognizes a cocycle $(g\mapsto a(g))$ as a homomorphism, and the second isomorphism sends a cocycle $(g\mapsto(a_i(g))_i)$ to $a_n(\tau)$ for some transposition $\tau\in\SymGrp_{n-1}\subset\Sn$.
  Using $\Hom(\Sn,A[n])\simeq\Hom(\Sn/\AlterGrp_{n},A[2])$ we see that $\Delta_{*}$ is surjective.
  Hence $(\id\otimes\phi_0)_{*}=0$, which implies $(\lambda_0\otimes\phi_0)_{*}=0$.
  
  \itemnum
  As before, we can write the map $f_2$ as $f_{2}=k_2\cdot(\dualpol{\lambda_0}\otimes\dualisog{\phi}_0)$ for some $k_{2}\in\ZZ$.
  We claim that  $\ker((\dualpol{\lambda_0}\otimes\id)_{*}\colon\Ho^1(\SymGrp_{4},A^{\vee}\otimesZ\StdRep_{4})\to\Ho^1(\SymGrp_{4},A\otimesZ\StdRep_{4}))$ is isomorphic to $\ker(\dualpol{\lambda_0}\colon A^{\vee}[2]\to A[2])$.
  For this, note that the map $(\dualpol{\lambda_0}\otimes\id)_{*}$ becomes $\dualpol{\lambda_0}$ under the identifications $\Ho^1(\SymGrp_{4},A^{\vee}\otimesZ\StdRep_{4})\simeq A^{\vee}[2]$ and $\Ho^1(\SymGrp_{4},A\otimesZ\StdRep_{4})\simeq A[2]$ of \cref{cor:stdrep_coho_h1_ses}.
  
  Finally we claim that $(\id\otimes\dualisog{\phi}_0)_{*}\colon\Ho^1(\SymGrp_{4},A^{\vee}\otimesZ\StdRep_{4}^{\vee})\to\Ho^1(\SymGrp_{4},A^{\vee}\otimesZ\StdRep_{4})$ is an isomorphism.
  For notation reasons we use $A$ in place of $A^{\vee}$ in the argument.
  Using the explicit form of $\dualisog{\phi}_0$ from \cref{rmk:dualisog-explicit-form}, and that $A$ is ($n$-)divisible, one checks that we have an exact sequence
  \[
    0\to A[n]\xrightarrow{\Delta} A[n]\otimesZ\StdRep_{n} \xrightarrow{\phi_0} A\otimesZ\StdRep_{n}^{\vee}\xrightarrow{\dualisog{\phi}_0} A\otimesZ\StdRep_{n} \to0.
  \]
  As above, by tensoring Seq.~\eqref{ses:stdrep_canonical_map} with $A[n]$, we obtain
  \[
    0\to A[n]\xrightarrow{\Delta} A[n]\otimesZ\StdRep_{n} \xrightarrow{\phi_0} A[n]\otimesZ\StdRep_{n}^{\vee} \xrightarrow{\Sigma}A[n]\to0,
  \]
  where $\Sigma([(a,0,\dots,0)])=a$.
  We recognize the map $\Sigma$ as the summation map.
  We get
  \begin{align*}
    \Ho^1(\Sn,(A[n]\otimesZ\StdRep_{n})/A[n]) \xrightarrow{(\phi_{0})_{*}} \Ho^1(\Sn,A[n]\otimesZ\StdRep_{n}^{\vee}) &\xrightarrow{\Sigma_{*}} \Ho^1(\Sn,A[n]) \\
    [f\colon\Sn\to A[n]\otimesZ\StdRep_{n}^{\vee}] &\mapsto [\Sigma\circ f].
  \end{align*}
  For $n=4$, the middle and right group are isomorphic to $A[2]$, where the latter isomorphism is given by evaluation at a transposition.
  One can (tediously) check using the Coxeter--Moore presentation of the symmetric group that, for $a\in A[2]$, the assignment
  \begin{align*}
    f((1\,2)) &\coloneqq [(0,0,a,0)] \\
    f((2\,3)) &\coloneqq [(a,0,0,0)] \\
    f((3\,4)) &\coloneqq [(0,a,0,0)]
  \end{align*}
  extends to a 1-cocycle on $\SymGrp_{4}$.
  By construction, these cocycles witness the surjectivity of $\Sigma_{*}$.
  So~$\Sigma_{*}$ becomes an isomorphism and $(\phi_0)_{*}=0$.
  It follows that the map $(\phi_0)_{*}$ in the sequence
  \[
    \Ho^1(\Sn,(A[n]\otimesZ\StdRep_{n})/A[n]) \xrightarrow{(\phi_{0})_{*}}  \Ho^1(\Sn,A\otimesZ\StdRep_{n}^{\vee}) \xrightarrow{(\dualisog{\phi}_0)_{*}}  \Ho^1(\Sn,A\otimesZ\StdRep_{n})
  \]
  is the zero map as well, since it factors as the previous map $(\phi_0)_{*}$ followed by the map induced by the inclusion $A[n]\subset A$.
  We conclude that $(\dualisog{\phi}_0)_{*}$ is injective.
  For $n=4$ its domain and codomain are isomorphic to $A[2]$, so it is an isomorphism.
  %
\end{proof}

\begin{question}
  For $n\neq2,4$, we have seen that $\ker(\delta)\subset\HeckeCong{nd}$ is a normal subgroup. It can be written as the intersection of at most $4$ subgroups of index $2$, since $A[2]$ is abstractly isomorphic to $(\ZZ/2\ZZ)^4$.
  We wonder if $\ker(\delta)\subset\HeckeCong{nd}$ is a congruence subgroup.
\end{question}



\section{Invariant derived equivalences via equivariant torsors}
\label{sec:equivariant_torsors}
\label{sec:ploogs-method}

In this section we will consider torsors which are equipped with an additional action by a group $G$.
See for reference \cite[§I.2]{NSW:CNF}, \cite[§1]{BS:64} and \cite[§I.5]{Ser:GC}.
Let $A$ be a \emph{$G$-group}, \ie a group equipped with a left-action by $G$.

\begin{definition}
  A \emph{$G$-equivariant $A$-pseudo-torsor} is a set $T$ equipped with a left-action by $G$ and a free transitive right-action by $A$, which are compatible in the sense that $g.(t.a)=(g.t).(g.a)$ for every $g\in G$, $a\in A$, $t\in T$.
  Provided that $T$ is non-empty, we call it an \emph{$G$-equivariant $A$-torsor}.
\end{definition}

\begin{definition}\label{def:equivariant-map-of-torsors}
  A morphism of $G$-equivariant $A$-pseudo-torsors is a map of sets which is $G$- and $A$-equivariant.
  More generally, when $\gamma\colon A\to A'$ is a homomorphism of $G$-groups, then a map $\widetilde{\gamma}\colon T\to T'$ from a $G$-equivariant $A$-pseudo-torsor to a $G$-equivariant $A'$-pseudo-torsor is called \emph{equivariant} provided that it is $G$-equivariant and $\widetilde{\gamma}(t.a)=\widetilde{\gamma}(t).\gamma(a)$ for every $t\in T$ and $a\in A$.
\end{definition}

Whenever $G$ is the trivial group, we drop ``$G$-equivariant'' from the terminology.

\begin{proposition}\label{prop:invariants_of_torsor_is_pseudo_torsor}
  Let $T$ be a $G$-equivariant $A$-pseudo-torsor, then the fixed-point set $T^G$ is a $A^G$-pseudo-torsor.
\end{proposition}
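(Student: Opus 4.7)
The plan is to verify the three defining properties of a pseudo-torsor (well-defined action, freeness, transitivity) for the restricted action of $A^G$ on $T^G$, each of which follows formally from the compatibility condition $g.(t.a) = (g.t).(g.a)$.

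First I would check that $A^G$ really does act on $T^G$, i.e.\ that $t.a \in T^G$ whenever $t\in T^G$ and $a\in A^G$. This is immediate from the compatibility: for any $g\in G$,
\[
  g.(t.a) = (g.t).(g.a) = t.a.
\]
Freeness of the $A^G$-action on $T^G$ is then inherited from the freeness of the ambient $A$-action on $T$ with no extra work.

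The only content is transitivity. Given $t_1, t_2 \in T^G$, the free transitive $A$-action on $T$ produces a unique $a\in A$ with $t_1.a = t_2$; the task is to show $a\in A^G$. For any $g\in G$, using that $t_1,t_2$ are fixed and applying the compatibility once,
\[
  t_1.a = t_2 = g.t_2 = g.(t_1.a) = (g.t_1).(g.a) = t_1.(g.a).
\]
By the uniqueness clause in the definition of a pseudo-torsor, $g.a = a$, so $a\in A^G$. This witnesses transitivity of the $A^G$-action on $T^G$.

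There is no real obstacle here; the statement is essentially formal, and the whole argument is a matter of unpacking the equivariance condition in the definition of a $G$-equivariant pseudo-torsor. One small thing worth flagging is that the proposition does not claim $T^G$ to be nonempty (that would promote ``pseudo-torsor'' to ``torsor''); non-emptiness is precisely what is measured by $\Ho^1(G,A)$ and will be addressed separately when the proposition is applied.
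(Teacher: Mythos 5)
Your argument is correct and is essentially identical to the paper's proof: both verify well-definedness of the restricted action via the compatibility condition, inherit freeness, and establish transitivity by showing the unique $a\in A$ with $t_1.a=t_2$ is $G$-fixed using the uniqueness clause. Nothing to add.
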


\begin{proof}
  The right-action of $A^G$ on $T^G$ is inherited from the action of $A$ on $T$.
  It is well-defined, since for $t\in T^G$ and $a\in A^G$, we have $g.(t.a)=(g.t).(g.a)=t.a$ for every $g\in G$.
  The action is still free and transitive, since for every $t,t'\in T^G$ there exists a unique $a\in A$ such that $t'=t.a$.
  We check that $a\in A^G$
  Indeed, for every $g\in G$ we have $t'=g.t'=g.(t.a)=(g.t).(g.a)=t.(g.a)$, and thus $g.a=a$ by uniqueness of $a$.
\end{proof}

\begin{proposition}\label{prop:torsors_as_cohomology_classes}\label{prop:torsor_cohomology_class_and_fixed_points}
  We have a canonical bijection between $\Ho^1(G,A)$ and the set of isomorphism classes of $G$-equivariant $A$-torsors:
  \begin{align*}
    \Ho^1(G,A) &\leftrightarrow\{\text{$G$-equivariant $A$-torsors}\}/{\simeq} \\
      [T] & \mapsfrom T
  \end{align*}
  Moreover, a $G$-equivariant $A$-torsor $T$ has a $G$-fixed-point \iff $[T]=0$.
\end{proposition}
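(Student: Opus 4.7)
The plan is to construct the bijection explicitly in both directions and deduce the fixed-point criterion as a byproduct. Given a non-empty $G$-equivariant $A$-torsor $T$, I choose a base point $t_0\in T$, and for each $g\in G$ use the freeness and transitivity of the $A$-action to define $\alpha_g\in A$ by the relation $g.t_0 = t_0.\alpha_g$. Exploiting the compatibility $g.(t.a)=(g.t).(g.a)$ together with associativity of the $A$-action, a direct computation gives $t_0.\alpha_{gh} = (gh).t_0 = g.(t_0.\alpha_h) = (g.t_0).(g.\alpha_h) = t_0.\bigl(\alpha_g\cdot(g.\alpha_h)\bigr)$, which is the 1-cocycle condition for non-abelian group cohomology. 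Changing the base point from $t_0$ to $t_0.a$ replaces $\alpha_g$ by $a^{\inv}\cdot\alpha_g\cdot(g.a)$, which is precisely the coboundary relation, so the class $[\alpha]\in\Ho^1(G,A)$ is independent of the choice of $t_0$. Finally, an isomorphism of $G$-equivariant $A$-torsors sends a base point to a base point and preserves the $A$-action, so the associated cocycle (and hence class) is unchanged.

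For the inverse direction, given a cocycle $\alpha\colon G\to A$ I twist the trivial torsor: set $T_{\alpha}\coloneqq A$ with its right-regular $A$-action and with $G$-action defined by $g*a\coloneqq \alpha_g\cdot(g.a)$. The cocycle condition is exactly what makes this a group action, and the compatibility with the right $A$-action is straightforward. The assignments $T\mapsto[\alpha]$ and $[\alpha]\mapsto T_\alpha$ are mutually inverse: starting from $T_\alpha$ with the base point $1\in A$ recovers the cocycle $\alpha$, and conversely an isomorphism $T\isoarr T_\alpha$ is given by $t_0.a\mapsto a$ for the choice of base point used to build $\alpha$.

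For the fixed-point statement, if $t_0\in T^G$ then the cocycle built from $t_0$ is identically $e$, so $[T]=0$. Conversely, if $[\alpha]=0$ then $\alpha_g = a^{\inv}\cdot(g.a)$ for some $a\in A$, and a short calculation shows $g.(t_0.a^{\inv}) = (g.t_0).(g.a^{\inv}) = t_0.\alpha_g.(g.a^{\inv}) = t_0.a^{\inv}$, so $t_0.a^{\inv}\in T^G$.

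The only mildly subtle point is that $A$ need not be abelian, so all verifications must respect the order of multiplication and use the non-abelian coboundary relation $\alpha'_g = a^{\inv}\cdot\alpha_g\cdot(g.a)$; beyond this bookkeeping, every step reduces to unwinding definitions. I would therefore present the proof as the two constructions above, note briefly that they are inverse, and derive the fixed-point criterion directly from the formula for the cocycle in terms of the base point.
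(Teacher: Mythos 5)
Your proof is correct and follows the standard construction: the paper simply cites \cite[Prop.~1.2.3]{NSW:CNF} and \cite[Prop.~1.8]{BS:64} for the bijection (which is exactly the cocycle-from-base-point and twisted-trivial-torsor argument you spell out) and writes out only the fixed-point computation, which coincides with yours up to an equivalent normalization of the coboundary relation ($a_g=a\cdot(g.a)^{\inv}$ versus your $\alpha_g=a^{\inv}\cdot(g.a)$).
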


\begin{proof}
  \Cf\cite[Prop.~1.2.3]{NSW:CNF} or \cite[Prop.~1.8]{BS:64} for the first part.
  Regarding fixed-points:
  Let $t_{0}\in T$.
  Then the cohomology class $[T]$ is represented by the 1-cocycle $(a_g)_g$ where $g.t_{0}=t_{0}.a_g$.
  If $t_{0}\in T^{G}$, then we have $g.t_{0}=t_{0}$, which implies $a_{g}=1$ for every $g\in G$, \ie $(a_g)_g$ is the trivial 1-cocycle.
  Conversely, assume $[T]=0$, \ie $(a_g)_g$ is cohomologous to the trivial 1-cocycle.
  This means that there exists an $a\in A$ such that $a_{g}=a\cdot(g.a)^{\inv}$ for every $g\in G$.
  Finally, the element $t'_{0}\coloneqq t_{0}.a$ is a $G$-fixed-point, since
  \[
    g.t'_{0}
    =g.(t_{0}.a)
    =(g.t_{0}).(g.a)
    =(t_{0}.a_{g}).(g.a)
    =t_{0}.(a\cdot(g.a)^{\inv}\cdot(g.a))
    =t_{0}.a
    =t'_{0}.
    \qedhere
  \]
\end{proof}

\begin{proposition}\label{prop:relative_equivariant_maps_and_cohomology_classes}
  Let $\gamma\colon A\to A'$ be a homomorphism of $G$-groups.
  Assume that there exists an equivariant map $\widetilde{\gamma}\colon T\to T'$ between a $G$-equivariant $A$-torsor and a $G$-equivariant $A'$-torsor.
  Then $\gamma_{*}\colon\Ho^1(G,A)\to\Ho^1(G,A')$ sends $[T]$ to $[T']$.
\end{proposition}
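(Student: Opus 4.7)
The plan is to trace what the equivariant map $\widetilde{\gamma}$ does to a chosen basepoint of $T$ and compare the resulting cocycles. Concretely, I would pick any element $t_{0}\in T$ and set $t_{0}':=\widetilde{\gamma}(t_{0})\in T'$. By the recipe in the proof of \cref{prop:torsor_cohomology_class_and_fixed_points}, the class $[T]\in\Ho^{1}(G,A)$ is represented by the 1-cocycle $(a_{g})_{g\in G}$ determined uniquely by the equations $g.t_{0}=t_{0}.a_{g}$, and similarly $[T']\in\Ho^{1}(G,A')$ is represented by the 1-cocycle $(a'_{g})_{g\in G}$ determined by $g.t_{0}'=t_{0}'.a'_{g}$.

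The key step is then the direct computation
\[
  g.t_{0}' = g.\widetilde{\gamma}(t_{0}) = \widetilde{\gamma}(g.t_{0}) = \widetilde{\gamma}(t_{0}.a_{g}) = \widetilde{\gamma}(t_{0}).\gamma(a_{g}) = t_{0}'.\gamma(a_{g}),
\]
where the second equality uses $G$-equivariance of $\widetilde{\gamma}$ and the fourth uses its compatibility with the $A$-action (in the sense of \cref{def:equivariant-map-of-torsors}). By uniqueness of $a'_{g}$ we conclude $a'_{g}=\gamma(a_{g})$ for all $g\in G$, i.e.\ the cocycle representing $[T']$ is precisely the image under $\gamma$ of the cocycle representing $[T]$. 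By the definition of the functorial map $\gamma_{*}$ on cohomology (pushforward of cocycles), this gives $\gamma_{*}[T]=[T']$.

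There is essentially no obstacle here: the statement is a formal consequence of the cocycle description of $\Ho^{1}$ and the two compatibilities that define an equivariant map of torsors. The only point one should mention explicitly is that $t_{0}'=\widetilde{\gamma}(t_{0})$ is a legitimate basepoint of $T'$, which is automatic since $T'$ is assumed non-empty and any element is a valid basepoint; no choice of section of $\widetilde{\gamma}$ is needed, and the argument is independent of the basepoint $t_{0}$ because changing $t_{0}$ alters both cocycles by cohomologous coboundaries that correspond under $\gamma$.
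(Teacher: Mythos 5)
Your proof is correct and follows exactly the paper's argument: choose a basepoint $t_{0}$, set $t_{0}'=\widetilde{\gamma}(t_{0})$, and use $G$-equivariance together with compatibility with the torsor actions to show the cocycle for $[T']$ is $(\gamma(a_{g}))_{g}$. Nothing is missing.
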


\begin{proof}
  Pick some element $t_{0}\in T$, and define $t'_{0}\coloneqq\widetilde{\gamma}(t_0)\in T'$.
  Then $[T]$ is represented by the cocycle $(a_g)_g$ where $g.t_0=t_{0}.a_g$.
  Now $[T']$ is represented by the cocycle $(\gamma(a_g))_g$, since
  \[
    g.t'_{0}=g.\widetilde{\gamma}(t_0)=\widetilde{\gamma}(g.t_0)=\widetilde{\gamma}(t_{0}.a_g)=\widetilde{\gamma}(t_0).\gamma(a_g)=t'_{0}.\gamma(a_g).
  \]
  This is exactly a cocycle representing $\gamma_{*}([(a_g)_g])$, as desired.
\end{proof}

Now we specialize the discussion to torsors that are relevant to our study of Fourier--Mukai equivalences.

\begin{passage}\label{par:sp_torsor}\label{par:orlovs_map_gamma_is_relative_equivariant}
  Let $A$ and $B$ be abelian varieties, then $\Sp(A,B)$ is a pseudo-torsor under $\Sp(A)$, where the right action is afforded by function composition.
  Indeed, use that $\Sp(A,B)$ is a subset of $\Isom(A\times A^{\vee},B\times B^{\vee})$ and analogously for $\Sp(A)$, and that $(g\circ f)^{\sim}=\widetilde{f}\circ\widetilde{g}$ for $f\in\Sp(A)$ and $g\in\Sp(A,B)$.
  Similarly $\Eq(\Db(A),\Db(B))$ is a pseudo-torsor under $\Aut(\Db(A))$, where the right action is given by functor composition.
  
  Recall that the map $\gamma_{A,B}\colon\Eq(\Db(A),\Db(B))\to\Sp(A,B)$ from \cref{thm:orlov} is an equivariant map of pseudo-torsors with respect to the homomorphism $\gamma_{A}\colon\Aut(\Db(A))\to\Sp(A)$, \ie $\gamma_{A,B}(\Phi'\circ\Phi)=\gamma_{A,B}(\Phi')\circ\gamma_{A}(\Phi)$, \cf\cite[Exer.~9.41]{Huy:FM}.
%
\end{passage}

\begin{passage}
  We specialize the situation further and consider firstly the $\Sn$-equivariant $\Sp(A\otimesZ\StdRep_{n})$-torsor
  \[
    T_1\coloneqq\Sp(A\otimesZ\StdRep_{n},A^{\vee}\otimesZ\StdRep_{n}).
  \]
  The actions are indeed compatible, since $\Sn$ acts by conjugation on homomorphism sets.
  Note that $T_1$ is non-empty since $A^{\vee}\otimesZ\StdRep_{n}\simeq (A^{n-1})^{\vee}$ is the dual abelian variety of $A\otimesZ\StdRep_{n}\simeq A^{n-1}$; a concrete witness is $g=\begin{psmallmatrix}0&\id\\-\id&0\end{psmallmatrix}\in T_1$.
  
  Secondly, consider the $\Sn$-equivariant $\Aut(\Db(A\otimesZ\StdRep_{n}))$-torsor
  \[
    T_2\coloneqq\Eq(\Db(A\otimesZ\StdRep_{n}),\Db(A^{\vee}\otimesZ\StdRep_{n})).
  \]
  Again, the actions are compatible since $\Sn$ acts by conjugation, and $T_2$ is non-empty since the map $\gamma_{(A\otimesZ\StdRep_n),(A^{\vee}\otimesZ\StdRep_n)}$ is surjective.
\end{passage}


\begin{corollary}
  Let $A$ be an abelian variety.
  Then $\Eq(\Db(A\otimesZ\StdRep_{n}),\Db(A^{\vee}\otimesZ\StdRep_{n}))^{\Sn}$ is a pseudo-torsor under $\Aut(\Db(A\otimesZ\StdRep_{n}))^{\Sn}$.
\end{corollary}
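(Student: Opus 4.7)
The plan is to deduce this directly from the general torsor machinery developed in this section, applied to the specific $\Sn$-equivariant torsor $T_2$ singled out just above.

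First, I would record the key inputs. By the passage preceding the corollary, $T_2 \coloneqq \Eq(\Db(A\otimesZ\StdRep_{n}),\Db(A^{\vee}\otimesZ\StdRep_{n}))$ carries a left $\Sn$-action (by conjugation, as described in the paragraph on the induced action on $\Aut(\Db(X))$) and a free transitive right action by $\Aut(\Db(A\otimesZ\StdRep_{n}))$ given by functor composition. The compatibility $\sigma.(\Phi\circ\Psi) = (\sigma.\Phi)\circ(\sigma.\Psi)$ is automatic because conjugation by $(\sigma^{-1})^{*}$ distributes over composition of functors, so $T_2$ is indeed an $\Sn$-equivariant $\Aut(\Db(A\otimesZ\StdRep_{n}))$-pseudo-torsor in the sense of the definition above.

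Next I would invoke \cref{prop:invariants_of_torsor_is_pseudo_torsor}: the $\Sn$-fixed-point set of any $\Sn$-equivariant $A$-pseudo-torsor $T$ is automatically an $A^{\Sn}$-pseudo-torsor. Applying this to $T_2$ with $A=\Aut(\Db(A\otimesZ\StdRep_{n}))$ gives exactly the statement, \ie that $\Eq(\Db(A\otimesZ\StdRep_{n}),\Db(A^{\vee}\otimesZ\StdRep_{n}))^{\Sn}$ is a pseudo-torsor under $\Aut(\Db(A\otimesZ\StdRep_{n}))^{\Sn}$. Note that ``pseudo-torsor'' is weaker than ``torsor''—we are not asserting non-emptiness here, which is the content of the later \cref{thm:main_invariant_derived_eq} and requires the arithmetic hypothesis on the polarization.

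There is no real obstacle; the only thing worth double-checking is the compatibility condition $\sigma.(t.a) = (\sigma.t).(\sigma.a)$ for the $\Sn$-action on $T_2$. This follows from the fact that $\Sn$ acts on both $T_2$ and $\Aut(\Db(A\otimesZ\StdRep_n))$ by the same conjugation recipe $\Phi \mapsto (\sigma^{-1})^{*}\circ\Phi\circ\sigma^{*}$, and this conjugation is a group homomorphism with respect to composition. So the proof is a one-line reduction to \cref{prop:invariants_of_torsor_is_pseudo_torsor} together with the identification of $T_2$ as an $\Sn$-equivariant torsor already spelled out above.
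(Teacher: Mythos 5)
Your proof is correct and is essentially identical to the paper's: the paper also proves this as a direct application of \cref{prop:invariants_of_torsor_is_pseudo_torsor} to the $\Sn$-equivariant torsor $T_2$, with the compatibility of the conjugation action and functor composition established in the passage just before. Your additional verification of the compatibility condition is a fine (if already-covered) sanity check.
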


\begin{proof}
  This is a direct application of \cref{prop:invariants_of_torsor_is_pseudo_torsor} to the $\Sn$-equivariant torsor $T_2$.
\end{proof}

\begin{situation}
  From now on we work over an algebraically closed field $\Bbbk$ of characteristic zero.
\end{situation}

\begin{theorem}\label{thm:invariant-equivalence}
  Let $A$ be an abelian variety over an algebraically closed field of characteristic zero.
  Let $n\geq3$ and let $\lambda\colon A\to A^{\vee}$ be a polarization (or  symmetric isogeny) of exponent $e$.
  \begin{enumerate}
    \item Assume $n$ is odd.
    Then $\gcd(n,e)=1$ implies $\Eq(\Db(A\otimesZ\StdRep_{n}),\Db(A^{\vee}\otimesZ\StdRep_{n}))^{\Sn}\neq\emptyset$.
    \item If $\dim(A)=2$ and $\End(A)=\ZZ$, then the converse implication is true when we take $\lambda$ to be the polarization of minimal degree.
  \end{enumerate}
\end{theorem}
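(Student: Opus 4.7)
The plan is to exploit the $\Sn$-equivariant torsor structure on the set of equivalences (as set up in \cref{sec:equivariant_torsors}) combined with Orlov's short exact sequence and the group cohomology calculations of \cref{sec:group_coho_calculations}. Write $X \coloneqq A \otimesZ \StdRep_{n}$ and $Y \coloneqq A^{\vee} \otimesZ \StdRep_{n}$. Consider the two $\Sn$-equivariant torsors
\[
  T_{1} \coloneqq \Sp(X, Y) \quad\text{under}\quad \Sp(X), \qquad
  T_{2} \coloneqq \Eq(\Db(X), \Db(Y)) \quad\text{under}\quad \Aut(\Db(X)),
\]
which are linked by Orlov's $\Sn$-equivariant map $\gamma$ of \cref{prop:orlovs_map_gamma_is_equivariant}, which is a relative equivariant map of torsors in the sense of \cref{def:equivariant-map-of-torsors} (possibly after replacing $\Sp$ with the image of $\gamma$, as discussed after \cref{rmk:orlov_surjectivity_fix}).

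For part \rom{1}, the first step is to observe that $\gcd(n, e)=1$ together with \cref{prop:sp_a_to_adual_invariants}.\rom{1} gives $T_{1}^{\Sn} \neq \emptyset$; by \cref{prop:torsor_cohomology_class_and_fixed_points} this translates into the vanishing $[T_{1}] = 0$ in $\Ho^{1}(\Sn, \Sp(X))$. Next, apply non-abelian group cohomology to Orlov's $\Sn$-equivariant short exact sequence \cref{ses:orlovs_ses_with_stdrep} to obtain the exact sequence of pointed sets
\[
  \Ho^{1}(\Sn, \ZZ \times X \times X^{\vee}) \to \Ho^{1}(\Sn, \Aut(\Db(X))) \to \Ho^{1}(\Sn, \Sp(X)).
\]
Since $n$ is odd, the leftmost term vanishes by \cref{thm:first-group-cohomology-ab-var-with-std-rep}, so the middle map has trivial kernel. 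By \cref{prop:relative_equivariant_maps_and_cohomology_classes}, the class $[T_{2}]$ maps to $[T_{1}] = 0$, forcing $[T_{2}] = 0$. Invoking the second half of \cref{prop:torsor_cohomology_class_and_fixed_points} once more produces an $\Sn$-fixed point in $T_{2}$, i.e.\ an $\Sn$-invariant equivalence.

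For part \rom{2}, suppose conversely that $\Eq(\Db(X), \Db(Y))^{\Sn}\neq\emptyset$. Since $\gamma_{X,Y}$ is $\Sn$-equivariant, applying it to such an $\Sn$-invariant equivalence produces an element of $\Sp(X, Y)^{\Sn}$. Under the standing assumptions $\dim(A) = 2$ and $\End(A) = \ZZ$ with $\lambda$ of minimal degree $d^{2}$, \cref{prop:sp_a_to_adual_invariants}.\rom{2} forces $\gcd(n, d) = 1$, which proves the converse.

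The only potentially delicate point is the diagram chase in non-abelian cohomology of part \rom{1}: kernels in non-abelian $\Ho^{1}$ only coincide with images, and there is no group structure on $\Ho^{1}(\Sn, \Aut(\Db(X)))$, so one must phrase everything in terms of pointed sets. However this is precisely the content of the six-term exact sequence recalled in \cref{sec:equivariant_torsors}, and the reduction ``$[T_{1}] = 0$ plus vanishing of the kernel term imply $[T_{2}] = 0$'' is standard; the group-cohomology input from \cref{thm:first-group-cohomology-ab-var-with-std-rep} is what makes the argument work for odd $n$ and breaks down otherwise.
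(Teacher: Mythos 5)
Your proposal is correct and follows essentially the same route as the paper: the same torsors $T_1$, $T_2$, the same use of \cref{prop:relative_equivariant_maps_and_cohomology_classes} to send $[T_2]\mapsto[T_1]$, and the same vanishing of $\Ho^1(\Sn,\ZZ\times X\times X^{\vee})$ for odd $n$ to conclude $[T_2]=0$. Your part \rom{2} is phrased slightly more directly (pushing an $\Sn$-fixed equivalence forward along the equivariant map $\gamma_{X,Y}$ rather than arguing via $[T_2]=0\Rightarrow[T_1]=0$), but this is an equivalent formulation of the paper's argument.
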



\begin{proof}
%
  We associate to the torsor $T_1$ the cohomology class $[T_1]\in\Ho^1(\Sn,\Sp(A\otimesZ\StdRep_n))$ and to the torsor $T_2$ the class $[T_2]\in\Ho^1(\Sn,\Aut(\Db(A\otimesZ\StdRep_n)))$, \cf\cref{prop:torsors_as_cohomology_classes}.
  Then the map $\gamma_{A\otimesZ\StdRep_n}\colon\Aut(\Db(A\otimesZ\StdRep_n))\to\Sp(A\otimesZ\StdRep_n))$ induces a map of non-abelian cohomology sets
  \[
    (\gamma_{A\otimesZ\StdRep_n})_{*}\colon\Ho^1(\Sn,\Aut(\Db(A\otimesZ\StdRep_n)))\to\Ho^1(\Sn,\Sp(A\otimesZ\StdRep_n)),
  \]
  which sends $[T_2]$ to $[T_1]$.
  Indeed, we have seen by \cref{prop:orlovs_map_gamma_is_equivariant} that $\gamma_{(A\otimesZ\StdRep_n),(A^{\vee}\otimesZ\StdRep_n)}\colon T_2\to T_1$ is an $\Sn$-equivariant map which is equivariant relative to $\gamma_{A\otimesZ\StdRep_n}$, so we can apply \cref{prop:relative_equivariant_maps_and_cohomology_classes}.
  
  \itemnum
  We have seen that $\gcd(n,e)=1$ implies $T_{1}^{\Sn}\neq\emptyset$, the latter being equivalent to $[T_1]=0$, \cf\cref{prop:sp_a_to_adual_invariants,prop:torsor_cohomology_class_and_fixed_points}.
  Thus $[T_2]\in\ker((\gamma_{A\otimesZ\StdRep_n})_{*})$.
  Using the cohomology sequence associated to Seq.~\eqref{ses:orlovs_ses_with_stdrep}
  \[
    0\to\ZZ\times(A\otimes\StdRep_n)\times(A\otimes\StdRep_n)^{\vee}
    \to\Aut(\Db(A\otimes\StdRep_n))
    \to\Sp(A\otimes\StdRep_n)\to0,
  \]
  this means that $[T_2]$ is in the image of $\Ho^1(\Sn,\ZZ\times(A\otimesZ\StdRep_n)\times(A\otimesZ\StdRep_n)^{\vee})$.
  But by \cref{thm:first-group-cohomology-ab-var-with-std-rep} for $n$ odd, the latter is zero.
  So we have $[T_2]=0$ and thus $T_{2}^{\Sn}\neq\emptyset$ by \cref{prop:torsor_cohomology_class_and_fixed_points}.

  \itemnum
  We have seen in \cref{prop:sp_a_to_adual_invariants} that in this case the condition $\gcd(n,e)=1$ is equivalent to $T_{1}^{\Sn}\neq\emptyset$ and to $[T_1]=0$, \cf\cref{prop:torsor_cohomology_class_and_fixed_points}.
  Finally, $[T_2]=0$ implies $[T_1]=(\gamma_{A\otimesZ\StdRep_n})_{*}[T_2]=0$, which implies $\gcd(n,e)=1$.
\end{proof}

\begin{remark}
  \begin{enumerate}
    \item In part \rom{2} of the theorem, we can take $\lambda\colon A\to A^{\vee}$ to be the polarization of minimal degree $d$.
    Then $\erm(\lambda)^2=d$, and the numerical condtion can be read as ``$\gcd(n,d)=1$''.
    \item The assumptions on the ground field $\Bbbk$ are inherited from Orlov's theorem (\cref{thm:orlov}).
    Also the vanishing of $\Ho^1(\Sn,A(\Bbbk)\otimesZ\StdRep_n)$ in the proof above uses that $A(\Bbbk)$ is $n$-divisible, which is fine when $\Bbbk=\overline{\Bbbk}$.
  \end{enumerate}
\end{remark}

%
%

Let us recall Ploog's method to enhance an invariant derived equivalence to an equivalence of equivariant derived categories.
For reference see \cite[§§1--2]{Plo:07} or \cite[Ch.~3]{Plo:Thesis}.

\begin{passage}
  Let $G$ be a finite group acting on two smooth projective varieties $X$ and $Y$.
  We consider the following three sets of derived equivalences:
  Firstly, we have the set of derived equivalences between $\Db(X)$ and $\Db(Y)$ which commute with the $G$-action up to isomorphism.
  In terms of Fourier--Mukai kernels, this is
  \begin{multline*}
    \Eq(\Db(X),\Db(Y))^{G} \\
    \simeq \{ \Pca\in\Db(X\times Y) \mid \FM_{\Pca}\colon\Db(X)\isoarr\Db(Y),\ \text{and}\ \forall g\in G\colon (g,g)^{*}\Pca\simeq\Pca \}.
  \end{multline*}
  Secondly, we have the set of derived equivalences between $\DbG{G}(X)$ and $\DbG{G}(Y)$.
  These are represented by kernels which are endowed with an equivariant structure for the $G\times G$-action on $X\times Y$:
  \[
    \Eq(\DbG{G}(X),\DbG{G}(Y))
    \simeq \{ (\widetilde{\Pca},\widetilde{\phi})\in\DbG{G\times G}(X\times Y) \mid \FM_{(\widetilde{\Pca},\widetilde{\phi})}\colon\DbG{G}(X)\isoarr\DbG{G}(Y) \}.
  \]
  Thirdly, interpolating between the two cases above, we have the set of derived equivalences $F\colon\Db(X)\isoarr\Db(Y)$ which are endowed with an equivariant structure witnessing that $F$~``commutes coherently'' with the $G$-action.
  Again, in terms of kernels, this is
  \[
    \Eq(\Db(X),\Db(Y))^{\hrm G}
    \coloneqq \{ (\Pca,\phi)\in\DbG{\Diagonal G}(X\times Y) \mid  \FM_{\Pca}\colon\Db(X)\isoarr\Db(Y) \},
  \]
  where $\Diagonal G\subset G\times G$ denotes the diagonal subgroup.
\end{passage}

\begin{passage}
  These sets are related to each other by a `forgetful' map and and `inflation' map:
  The `forgetful' map $\forget\colon\DbG{\Diagonal G}(X\times Y)\to\Db(X\times Y)$ maps $(\Pca,\phi)\mapsto\Pca$.
  Note that the kernels in the image of the forgetful map are still $G$-invariant under the diagonal $G$-action.
  
  For the subgroup $\Diagonal G\subset G\times G$ (or more generally any pair of sub-/supergroup) there is an inflation map $\inflation_{\Diagonal G}^{G\times G}\colon\DbG{\Diagonal G}(X\times Y)\to\DbG{G\times G}(X\times Y)$ which maps $(\Pca,\phi)$ to \[\inflation_{\Diagonal G}^{G\times G}(\Pca,\phi)= \bigoplus_{[g]\in {\Diagonal G}\backslash G\times G}g^{*}\Pca\] endowed with a suitable equivariant structure, see \cite[Def.~8.2.1]{BL:94} for details.
\end{passage}


We want to apply \cite[Thm.~6]{Plo:07} not only to autoequivalences but to the sets of equivalences described above, so we spell out the following more general statement of the theorem.

\begin{theorem}[Ploog]
  Let $G$ be a finite group which acts on two smooth projective varieties~$X$ and $Y$.
  Assume in \rom{2} that $G$ acts faithfully.
  \begin{enumerate}
    \item
    We have an exact sequence of groups
    \[
      0 \to \Hom(G,\Bbbk^{\times}) \to \Aut(\Db(X))^{\hrm G} \xrightarrow{\forget} \Aut(\Db(X))^G \xrightarrow{\delta_X} \Ho^2(G,\Bbbk^{\times})
    \]
    and an exact sequence of pseudo-torsors over the respective last three terms of the sequence above, \ie the maps are equivariant in the sense of \cref{def:equivariant-map-of-torsors} and images equal kernels:
    \[
      \Eq(\Db(X),\Db(Y))^{\hrm G} \xrightarrow{\forget} \Eq(\Db(X),\Db(Y))^{G} \xrightarrow{\delta_{X,Y}} \Ho^2(G,\Bbbk^{\times}).
    \]
    
    \item
    We have an exact sequence of groups
    \[
      0 \to \Center(G) \to \Aut(\Db(X))^{\hrm G} \xrightarrow{\inflation_{\Diagonal G}^{G\times G}} \Aut(\DbG{G}(X))
    \]
    and an equivariant map of pseudo-torsors over the respective last two terms of the sequence above:
    \[
      \Eq(\Db(X),\Db(Y))^{\hrm G} \xrightarrow{\inflation_{\Diagonal G}^{G\times G}} \Eq(\DbG{G}(X),\DbG{G}(Y)).
    \]
  \end{enumerate}
\end{theorem}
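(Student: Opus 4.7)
The plan is to reduce both parts to Ploog's theorem on autoequivalences \cite{Plo:07, Plo:Thesis} by exploiting the pseudo-torsor structures established earlier in this section. The two upper (group-level) exact sequences in \rom{1} and \rom{2} are exactly Ploog's original result applied to the single variety $X$, so I would simply invoke them.

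For the lower sequences, I would first observe that whenever non-empty, $\Eq(\Db(X), \Db(Y))$ is a pseudo-torsor under $\Aut(\Db(X))$ via pre-composition (equivalently, convolution of kernels). This torsor structure is manifestly preserved by the $G$-action on kernels and by the formation of $\Diagonal G$-equivariant structures, so passing to invariants yields the required pseudo-torsor structures on $\Eq(\Db(X), \Db(Y))^G$ over $\Aut(\Db(X))^G$ and on $\Eq(\Db(X), \Db(Y))^{\hrm G}$ over $\Aut(\Db(X))^{\hrm G}$. The maps $\forget$ and $\inflation_{\Diagonal G}^{G\times G}$ act on kernels without reference to the source and target, so they automatically descend to maps of these pseudo-torsors that are equivariant relative to their autoequivalence counterparts.

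Next, I would define $\delta_{X,Y}$ by mimicking Ploog's construction of $\delta_X$: given $\Pca \in \Eq(\Db(X), \Db(Y))^G$, pick an isomorphism $\phi_g \colon (g,g)^*\Pca \isoarr \Pca$ for each $g \in G$; since $\FM_{\Pca}$ is an equivalence, $\End(\Pca) = \Bbbk\cdot\id$, so there is a unique scalar $c(g,h) \in \Bbbk^\times$ with $\phi_g \circ (g,g)^*\phi_h = c(g,h)\cdot\phi_{gh}$. One checks that $c$ is a 2-cocycle and that rescaling the $\phi_g$'s changes it by a coboundary, so $\delta_{X,Y}(\Pca) \coloneqq [c] \in \Ho^2(G, \Bbbk^\times)$ is well defined. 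Exactness at $\Eq(\Db(X), \Db(Y))^G$ is then tautological: the $\phi_g$'s can be rescaled into a cocycle iff $\delta_{X,Y}(\Pca) = 0$ iff $\Pca$ lifts to a $\Diagonal G$-equivariant object, \ie to an element of $\Eq(\Db(X),\Db(Y))^{\hrm G}$.

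The hard part will be verifying that $\delta_{X,Y}$ is equivariant relative to $\delta_X$: namely that for $F \in \Aut(\Db(X))^G$ acting on $\Pca$ by pre-composition, one has $\delta_{X,Y}(\Pca \circ F) = \delta_{X,Y}(\Pca)\cdot\delta_X(F)$ in $\Ho^2(G, \Bbbk^\times)$, viewing $\Ho^2$ as a torsor over itself. I expect this to follow by taking as the distinguished trivializations of $(g,g)^*(\Pca \circ F) \isoarr \Pca \circ F$ the convolutions of the chosen $\phi_g$'s for $\Pca$ and $F$ and then expanding the cocycle identity; the same kind of bookkeeping also handles the equivariance of $\inflation_{\Diagonal G}^{G\times G}$ in \rom{2}, where the underlying computation ultimately reduces to the naturality of induction of equivariant structures along $\Diagonal G \subset G \times G$.
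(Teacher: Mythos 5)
Your proposal is correct and follows essentially the same route as the paper: both reduce the group-level sequences to Ploog's theorem, obtain the pseudo-torsor structures from convolution of kernels, define $\delta_{X,Y}$ by the same cocycle construction as $\delta_X$ (this is Ploog's Lemma~1), and verify equivariance of $\delta_{X,Y}$ over $\delta_X$ by the same bookkeeping that shows $\delta_X$ is a homomorphism. The only point you pass over quickly is that freeness and transitivity of the $\Aut(\Db(X))^{\hrm G}$-action on $\Eq(\Db(X),\Db(Y))^{\hrm G}$ needs the inverse of an equivariantly structured kernel of an equivalence to again carry an equivariant structure, which is exactly Ploog's Lemma~5.(5) as cited in the paper.
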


\begin{remark}
  Strictly speaking we should work with isomorphism classes of objects, otherwise we would have to deal with higher groups instead of abstract groups.
\end{remark}

\begin{proof}
  The part about groups is exactly \cite[Thm.~6]{Plo:07}; the part about pseudo-torsors is essentially proven in \loccit but not spelled out as such, so we provide a few pointers:
  
  The group structure on $\Aut(\Db(X))^{\hrm G}$ and its action on $\Eq(\Db(X),\Db(Y))^{\hrm G}$ are given by convolution of Fourier--Mukai kernels $(\Pca,\phi)\star(\Pca',\phi')\coloneqq(\Pca\star\Pca',\phi\star\phi')$ with $(\phi\star\phi')_{g}\coloneqq(\phi_{g}\star\phi'_{g})$, which corresponds to composition of associated Fourier--Mukai functors by \cite[Lem.~5.(3)]{Plo:07}, and a composition of equivalences is again an equivalence.
  Now \cite[Lem.~5.(5)]{Plo:07} provides inverses for kernels of equivalences endowed with an equivariant structure, which lets one deduce that the action is free and transitive.
  Similarly the group structure on $\Aut(\Db(X))^G$ and its action on $\Eq(\Db(X),\Db(Y))^{G}$ are given by convolution, and as above one sees that the action is free and transitive.
  It is clear that $\Eq(\DbG{G}(X),\DbG{G}(Y))$ is a pseudo-torsor under $\Aut(\DbG{G}(X))$ since equivalences of categories are invertible.
  
  The description of the actions above settle that the forgetful map is equivariant.
  The inflation map is equivariant since \cite[Lem.~5.(3)]{Plo:07} implies that $\inflation(\Pca,\phi)\star\inflation(\Pca',\phi')\simeq\inflation((\Pca,\phi)\star(\Pca',\phi'))$.
  The map $\delta_{X,Y}$ is defined in \cite[Lem.~1]{Plo:07}, where also the equality $\im(\forget)=\ker(\delta_{X,Y})$ is proven.
  The proof in \cite[Thm.~6.(2)]{Plo:07} that $\delta_X$ is a group homomorphism shows more generally that $\delta_{X,Y}$ is equivariant over $\delta_X$.
\end{proof}

\begin{remark}
  When $G=\Sn$ with $n\geq3$, then we have trivial center $\Center(\Sn)=0$, so the inflation map $\inflation_{\Diagonal \Sn}^{\Sn\times \Sn}$ is injective.
  Since $\Hom(\Sn,\Bbbk^{\times})=\{\id,\sgn\}$, a equivariant structure on an invariant object is unique up to the sign representation of $\Sn$.
\end{remark}

Putting everything together, we are ready to prove the main theorem for generalized Kummer fourfolds.
In \cref{sec:equivariant-semi-homogeneous} we explain how to treat generalized Kummer varieties of dimension $2m$ with $m$ even.

\begin{theorem}[Main \cref{thm:main_kum_two_derived_eq} Part 1]\label{thm:main-theorem-generalized-kummer-derived-equivalence}
  Let $A$ be an abelian variety over an algebraically closed field $\Bbbk$ of characteristic zero, which admits a polarization (or symmetric isogeny) $\lambda\colon A\to A^{\vee}$ of exponent $e$ such that $3\notdivide e$.
  Then the derived categories $\Db(\Kummer^{2}(A))$ and $\Db(\Kummer^{2}(A^{\vee}))$ of~$4$-dimensional generalized Kummer varieties are derived equivalent.
\end{theorem}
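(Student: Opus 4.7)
The plan is to chain together the main results of the preceding sections. First, I would apply \cref{thm:invariant-equivalence} with $n = 3$: the hypothesis $\gcd(n,e) = 1$ matches exactly the given condition $3 \nmid e$, which produces
\[
  \Eq(\Db(A \otimesZ \StdRep_3), \Db(A^{\vee} \otimesZ \StdRep_3))^{\SymGrp_3} \neq \emptyset.
\]

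Next, I would promote such an $\SymGrp_3$-invariant equivalence to one that commutes coherently with the $\SymGrp_3$-action. By Ploog's theorem (recalled in \cref{sec:ploogs-method}), the obstruction to lifting along the forgetful map $\forget$ lives in the Schur multiplier $\Ho^2(\SymGrp_3, \Bbbk^{\times})$. The decisive point is that for $n = 3$ this group vanishes by \cref{prop:schur-multiplier}, so the obstruction is automatically zero and we obtain an element of $\Eq(\Db(A \otimesZ \StdRep_3), \Db(A^{\vee} \otimesZ \StdRep_3))^{\hrm \SymGrp_3}$.

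Applying the inflation map $\inflation_{\Diagonal \SymGrp_3}^{\SymGrp_3 \times \SymGrp_3}$ (the second half of Ploog's theorem) then yields an equivalence of equivariant derived categories
\[
  \DbG{\SymGrp_3}(A \otimesZ \StdRep_3) \simeq \DbG{\SymGrp_3}(A^{\vee} \otimesZ \StdRep_3).
\]
Finally, the derived McKay correspondence $\DbG{\SymGrp_3}(A \otimesZ \StdRep_3) \simeq \Db(\Kummer^2(A))$ (and its analogue for $A^{\vee}$), recalled in the introduction, translates this into the sought-after equivalence $\Db(\Kummer^2(A)) \simeq \Db(\Kummer^2(A^{\vee}))$.

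The step that is \emph{usually} the main obstacle, the Schur-multiplier obstruction in the second step, happens to be trivially dispatched here because $\Ho^2(\SymGrp_3, \Bbbk^{\times}) = 0$. For general odd $n$ this obstruction is non-trivial and will require the finer analysis via equivariant semi-homogeneous bundles carried out in the subsequent section; in the present case $n = 3$, however, the argument closes cleanly with no further input beyond \cref{thm:invariant-equivalence,prop:schur-multiplier} and Ploog's method.
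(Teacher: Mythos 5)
Your proposal is correct and follows exactly the paper's own argument: invoke \cref{thm:invariant-equivalence} with $n=3$, use the vanishing of $\Ho^2(\SymGrp_3,\Bbbk^{\times})$ to equip the invariant kernel with a $\Diagonal\SymGrp_3$-equivariant structure, inflate to $\SymGrp_3\times\SymGrp_3$, and finish with the derived McKay correspondence. Nothing is missing.
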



\begin{proof}
  We apply the method of Ploog recalled above.
  \Cref{thm:invariant-equivalence} instantiated with $n=3$ provides a $\Sn$-invariant derived equivalence 
  \[
    \FM_{\Pca}\colon\Db(A\otimesZ\StdRep_{3})\isoarr\Db(A^{\vee}\otimesZ\StdRep_{3}).
  \]
  Since the Schur multiplier $\Ho^2(\SymGrp_{3},\Bbbk^{\times})$ is zero, \cf\cref{prop:schur-multiplier}, we can enhance the Fourier--Mukai kernel $\Pca$ to an equivariant object $(\Pca,\phi)\in \DbG{\Diagonal\SymGrp_{3}}((A\otimesZ\StdRep_{3})\times(A^{\vee}\otimesZ\StdRep_{3}))$. 
  Now the inflation of $(\Pca,\phi)$ along the inclusion $\Diagonal\SymGrp_{3}\subset\SymGrp_{3}\times\SymGrp_{3}$ provides a kernel for an equivalence
  \[
    \DbG{\SymGrp_{3}}(A\otimesZ\StdRep_3)\isoarr\DbG{\SymGrp_{3}}(A^{\vee}\otimesZ\StdRep_3).
  \]
  Finally, as explained in the introduction, we have $\Kummer^{2}(A)\simeq\Hilb_{\SymGrp_{3}}(A\otimesZ\StdRep_3)$ by \cite{Hai:01}, and by the derived McKay-correspondence \cite{BKR:01} we know $\Db(\Hilb_{\SymGrp_{3}}(A\otimesZ\StdRep_3))\simeq\DbG{\SymGrp_{3}}(A\otimesZ\StdRep_3)$, and similarly for $A^{\vee}$ instead of $A$.
\end{proof}


\section{Equivariant semi-homogeneous vector bundles and Orlov's construction}
\label{sec:equivariant-semi-homogeneous}

In this section we study Orlov's construction demonstrating the surjectivity of the map $\gamma\colon\Eq(A\otimes\StdRep_n,A^\vee\otimes\StdRep_n) \to \Sp(A\otimes\StdRep_n,A^\vee\otimes\StdRep_n)$ from an equivariant perspective, with the goal to construct an equivariant Fourier--Mukai kernel out of an invariant symplectic isomorphism.
The construction makes use of Mukai's theory of semi-homogeneous vector bundles, which we will briefly recall below.

\begin{passage}[Orlov's construction {\cite[Constr.~4.10]{Orl:02}}]
  Let $A$ and $B$ be abelian varieties.
  The construction consists of the following steps:
  \begin{enumerate}[label=(\arabic*)]
    \item
    Let $f=\begin{psmallmatrix}f_{1}&f_{2}\\f_{3}&f_{4}\end{psmallmatrix}\in\Sp(A,B)$ be a symplectic isomorphism and assume that $f_2\colon A^\vee \to B$ is an isogeny.
    \item
    Denote by $f_{2}^{\inv}$ the inverse isogeny of $f_2$ with rational coefficients, and subsequently define the map $g\in\Hom(A\times B,A^\vee\times B^\vee)\otimesZ\QQ$ by \[g\coloneqq\begin{pmatrix}f_{2}^{\inv} \circ f_{1} & -f_{2}^{\inv} \\ -(f_{2}^{\inv})^{\vee} & f_{4} \circ f_{2}^{\inv}\end{pmatrix}.\]
    \item
    Since $f$ is symplectic, one can check that $g$ is symmetric, \ie $g=g^\vee\circ\eval$, where $\eval$ denotes the evaluation map which identifies an abelian variety with its double dual.
    Hence $g$ is contained in the image of the injection $\NS(A\times B)\otimes\QQ\hookrightarrow\Hom(A\times B,A^\vee\times B^\vee)\otimesZ\QQ$ which associates to a line bundle $\Lca$ the map $\varphi_\Lca(x)=\trm_{x}^{*}\Lca\otimes\Lca^\vee$.
    So $g$ corresponds to an element \[\mu\coloneqq [\Lca]\otimes\frac{1}{\ell} \in\NS(A\times B)\otimes\QQ.\]
    \item[(4--6)]
    Then Orlov takes a ``simple semi-homogeneous vector bundle $\Eca$ on $A\times B$ of slope $\mu$'' and shows that this is a kernel of a derived equivalence $\Db(A)\simeq\Db(B)$ with $\gamma(\Eca)=f$, \cf\cite[Prop.~4.11]{Orl:02}.
    To make sense of this and fill in the details, we explain next what a semi-homogeneous vector bundle is and how to construct them.
  \end{enumerate}
\end{passage}

We briefly exposit Mukai's theory of semi-homogeneous vector bundles and state a few facts about them that will be useful later on in this section.
See \cite{Muk:78} for reference and details.
Let $X$ be an abelian variety.

\begin{definition}
  A vector bundle $\Eca$ on $X$ is called \emph{semi-homogeneous} if for every $x\in X$ there exists some line bundle $\Lca_x\in\Pic^0(X)$ such that $\trm_x^{*}\Eca\simeq\Eca\otimes\Lca_x$.
\end{definition}

\begin{definition}
  \begin{enumerate}
    \item
    Let $\Eca$ be a vector bundle on $X$.
    Define the \emph{slope} $\mu(\Eca)$ of $\Eca$ as \[\mu(\Eca)\coloneqq [\det(\Eca)]\otimes\frac{1}{\rank(\Eca)} \in\NS(X)\otimes\QQ.\]
    \item
    Given an element $\mu=[\Lca]\otimes\frac{1}{\ell}\in\NS(X)\otimes\QQ$, define \[\mathrm{\Phi}_\mu\coloneqq \im((\ell,\varphi_\Lca)\colon X\to X\times X^\vee),\]
    and denote the projection onto the first factor by $\pr_1\colon\mathrm{\Phi}_\mu\to X$.
    Following Mukai, we denote the kernel of $\pr_1$ by $\SigmaMukai(\mu)\coloneqq\ker(\pr_1)$.\footnotemark
    \footnotetext{We hope this doesn't lead to confusion with the summation map of the previous sections, which will not feature in this section.}
  \end{enumerate}
\end{definition}

\begin{remark}
  By \cite[Lem.~4.9]{Orl:02}, if $\Eca$ is a simple semi-homogeneous vector bundle of slope $\mu$, then $(a,\alpha)\in\mathrm{\Phi}_\mu$ \iff $\trm_a^{*}\Eca\simeq\Eca\otimes\Pca_\alpha$, where $\Pca\in\Pic(X\times X^\vee)$ is the Poincaré bundle.
\end{remark}

\begin{proposition}\label{prop:mukai-semi-homogeneous-facts}
  Let $\mu\in\NS(X)\otimes\QQ$ be given.
  \begin{enumerate}
    \item
    There exists a simple semi-homogeneous vector bundle $\Eca$ on $X$ of slope $\mu$.
    \item
    Every other such $\Eca'$ is of the form $\Eca'\simeq\Eca\otimes\Mca$ for some line bundle $\Mca\in\Pic^0(X)$.
    \item
    The rank of $\Eca$ can be computed using $\rank(\Eca)^2=\deg(\pr_1)$.
  \end{enumerate}
\end{proposition}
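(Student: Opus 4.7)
The plan is to invoke and adapt Mukai's structure theory for simple semi-homogeneous vector bundles~\cite{Muk:78}; all three claims are essentially theorems in that paper, and the work consists mostly of matching notation.

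For part~\rom{1}, I would follow Mukai's construction. Given $\mu = [\Lca]\otimes\frac{1}{\ell}$, form the finite subgroup scheme $K\coloneqq\SigmaMukai(\mu)\subset X^{\vee}$, which carries a natural alternating pairing. Choose a maximal isotropic subgroup $L\subset K$ (passing to a finite extension of the base field if necessary, which is harmless for existence) and form the associated isogeny $\pi\colon Y\to X$ of degree $|L|$. Mukai then produces a line bundle $\Mca$ on $Y$ such that $\Eca\coloneqq\pi_{*}\Mca$ is a simple semi-homogeneous vector bundle on~$X$ of slope exactly~$\mu$.

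For part~\rom{2}, the plan is to use that the set of isomorphism classes of simple semi-homogeneous vector bundles of fixed slope~$\mu$ is naturally a principal homogeneous space for $\Pic^{0}(X)$ acting by tensor product; transitivity of this action is the nontrivial input from Mukai, and freeness is immediate from simplicity.

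For part~\rom{3}, I would compute both sides of the equation directly. From the construction in~\rom{1}, $\rank(\Eca)=\deg(\pi)=|L|$, while $\pr_{1}\colon\mathrm{\Phi}_{\mu}\to X$ is an isogeny with $\deg(\pr_{1}) = |\ker(\pr_{1})| = |\SigmaMukai(\mu)| = |K| = |L|^{2}$, since $L$ is Lagrangian in $K$. Combining these gives $\rank(\Eca)^{2}=|L|^{2}=\deg(\pr_{1})$.

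I expect the main obstacle to be notational rather than mathematical: Mukai attaches subgroup schemes directly to a given bundle $\Eca$, whereas $\SigmaMukai(\mu)$ is here defined in terms of the slope~$\mu$ alone. The reconciliation is the characterization $(a,\alpha)\in\mathrm{\Phi}_{\mu}\iff\trm_{a}^{*}\Eca\simeq\Eca\otimes\Pca_{\alpha}$ recalled from~\cite[Lem.~4.9]{Orl:02} just before the proposition, together with the symplectic structure on $K$ coming from the theta (Heisenberg) group of any such $\Eca$ — which is what guarantees the existence of the Lagrangian $L$ used in~\rom{1} and \rom{3}.
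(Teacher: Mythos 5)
Your proposal is correct and, like the paper, ultimately rests on citing Mukai: the paper's proof of this proposition is essentially the reference \cite[Thm.~7.11]{Muk:78}, with existence in \rom{1} routed through the pushforward $[\ell]_{*}(\Lca^{\otimes\ell})$ and its Jordan--H\"older filtration from \cref{prop:mukai-semi-homog-construction}, whereas you route it through the isogeny-pushforward description $\Eca=\pi_{*}\Mca$ (\cf\cite[Thm.~5.8]{Muk:78}, recorded in the remark following the proposition) together with a Lagrangian subgroup of $\SigmaMukai(\mu)$. Both constructions are Mukai's and both yield \rom{1} and the rank count in \rom{3}; your version does have a small chicken-and-egg wrinkle, which you partly acknowledge: for existence you need the nondegenerate alternating form on $\SigmaMukai(\mu)$ to be defined from $\mu$ alone, before any $\Eca$ is available, so you cannot obtain it from the theta group of ``any such $\Eca$'' --- Mukai defines it intrinsically.

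One genuine, though harmless for the statement, error: the action of $\Pic^{0}(X)$ on simple semi-homogeneous bundles of slope $\mu$ is \emph{not} free, and freeness does not follow from simplicity. The stabilizer of $\Eca$ is exactly $\SigmaMukai(\mu)$: one has $\Eca\otimes\Pca_{\alpha}\simeq\Eca$ for every $\alpha$ in this subgroup, which is nontrivial precisely when $\rank(\Eca)>1$ (for $\Eca=\pi_{*}\Mca$ this is the projection formula applied to $\alpha\in\ker(\pi^{\vee})$), and the paper relies on this stabilizer computation later, \cf\cref{sec:jh-filtration-action}. Since part \rom{2} asserts only transitivity, your proof of \rom{2} stands, but the ``principal homogeneous space'' claim should be dropped.
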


\begin{proof}
  \Cf\cite[Thm.~7.11]{Muk:78}.
  Part \rom{1} uses the constructions of the following proposition.
\end{proof}

\begin{proposition}\label{prop:mukai-semi-homog-construction}
  Let $\mu=[\Lca]\otimes\frac{1}{\ell}\in\NS(X)\otimes\QQ$ with $\ell>0$.
  \begin{enumerate}
    \item
    The sheaf $\Fca\coloneqq [\ell]_{*}(\Lca^{\otimes\ell})$ is a semi-homogeneous vector bundle on $X$ of slope $\mu(\Fca)=\mu$.
    \item
    Let $\Fca$ be a semi-homogeneous vector bundle on $X$ of slope $\mu(\Fca)=\mu$, then there exists a Jordan--Hölder filtration \[0=\Fca_0\subset\Fca_1\subset\dots\subset\Fca_k=\Fca\] such that each $\Eca_i=\Fca_i/\Fca_{i-1}$ is a simple semi-homogeneous vector bundle of slope $\mu(\Eca_i)=\mu$.
    Such a filtration is not necessarily unique but the associated multiset of grades pieces $\{\!\{\Eca_i\}\!\}$ is unique.
  \end{enumerate}
\end{proposition}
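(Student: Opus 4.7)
The plan for (i) is direct. Since $[\ell]$ is a finite flat morphism of degree $\ell^{2g}$ (with $g=\dim X$), the sheaf $\Fca = [\ell]_{*}(\Lca^{\otimes\ell})$ is locally free of rank $\ell^{2g}$. To verify semi-homogeneity, fix $x\in X$ and a preimage $y\in[\ell]^{-1}(x)$. The identity $[\ell]\circ\trm_y = \trm_x\circ[\ell]$ together with flat base change yields $\trm_x^{*}\Fca\simeq[\ell]_{*}\trm_y^{*}(\Lca^{\otimes\ell})$. By definition of $\varphi_\Lca$ one has $\trm_y^{*}(\Lca^{\otimes\ell})\simeq\Lca^{\otimes\ell}\otimes\Pca_{\ell\varphi_\Lca(y)}$, and the class $\ell\varphi_\Lca(y)$ admits $\varphi_\Lca(y)$ as an $\ell$-th root in $X^{\vee}$, so $\Pca_{\ell\varphi_\Lca(y)}\simeq[\ell]^{*}\Pca_{\varphi_\Lca(y)}$. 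The projection formula then gives $\trm_x^{*}\Fca\simeq\Fca\otimes\Pca_{\varphi_\Lca(y)}$, witnessing semi-homogeneity.

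For the slope, I would apply flat base change to the Cartesian square of $[\ell]$ against itself. Since $X\times_X X \simeq \bigsqcup_{t\in X[\ell]}X$, this produces $[\ell]^{*}\Fca\simeq\bigoplus_{t\in X[\ell]}\trm_t^{*}(\Lca^{\otimes\ell})$. For $t\in X[\ell]$ we have $\ell\varphi_\Lca(t)=0$, so each summand is isomorphic to $\Lca^{\otimes\ell}$, giving $[\ell]^{*}\Fca\simeq(\Lca^{\otimes\ell})^{\oplus\ell^{2g}}$. Passing to determinants and using that $[\ell]^{*}$ acts as multiplication by $\ell^{2}$ on $\NS(X)$ yields $[\det\Fca]=\ell^{2g-1}\cdot[\Lca]$ in $\NS(X)$, whence $\mu(\Fca)=[\Lca]/\ell=\mu$ as claimed.

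For (ii), the plan is to reduce to a Jordan--Hölder argument inside the full subcategory $\Category{SH}_\mu(X)$ of $\Category{Coh}(X)$ consisting of semi-homogeneous vector bundles of slope $\mu$, augmented by the zero sheaf. Invoking Mukai's structural results \cite[\S6--7]{Muk:78}, one verifies that this subcategory is closed under extensions and that any short exact sequence with locally free middle term and outer terms in $\Category{SH}_\mu(X)$ has its middle term in $\Category{SH}_\mu(X)$ as well. The ``simple'' objects of $\Category{SH}_\mu(X)$ are exactly the simple semi-homogeneous bundles of slope $\mu$. Existence of the required filtration then follows by induction on the rank, at each step picking a subbundle of minimal positive rank inside $\Category{SH}_\mu(X)$; uniqueness of the multiset of graded pieces is the standard Jordan--Hölder/Zassenhaus argument.

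The main obstacle will be the categorical input for (ii): showing that every nonzero object of $\Category{SH}_\mu(X)$ contains a simple semi-homogeneous subbundle of the same slope and that the corresponding quotient remains in $\Category{SH}_\mu(X)$. Once these structural properties have been extracted from Mukai's theory, the inductive construction of the filtration and the uniqueness of its graded pieces are formal.
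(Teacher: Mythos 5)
Your proposal is correct, but it does considerably more than the paper, whose entire proof of this proposition is the citation ``\cite[Prop.~6.22, Prop.~6.15]{Muk:78}''. For part (i) you give a genuine self-contained argument: the Cartesian square intertwining $[\ell]$ with $\trm_y$ and $\trm_x$ (for $\ell y=x$), the identity $[\ell]^{*}\Pca_{\alpha}\simeq\Pca_{\ell\alpha}$ for $\alpha\in X^{\vee}$, and the projection formula give semi-homogeneity, while the computation $[\ell]^{*}\Fca\simeq(\Lca^{\otimes\ell})^{\oplus\ell^{2g}}$ together with torsion-freeness of $\NS(X)$ and the fact that $[\ell]^{*}$ acts by $\ell^{2}$ on it pins down the slope; all steps check out (in characteristic zero $[\ell]$ is finite flat of degree $\ell^{2g}$, so $\Fca$ is indeed locally free of that rank). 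For part (ii) your reduction to a Jordan--Hölder argument in the category of slope-$\mu$ semi-homogeneous bundles is exactly Mukai's route; the ``categorical input'' you flag as the main obstacle --- closure under extensions, existence of a simple semi-homogeneous sub-bundle of the same slope with semi-homogeneous quotient --- is precisely the content of \cite[\S 6]{Muk:78} (semi-homogeneous bundles are Gieseker-semistable, and the simple ones are the stable objects of a fixed slope), which is what the paper's citation of Prop.~6.15 invokes, so your treatment of (ii) is neither more nor less complete than the paper's. The net effect is that you replace the citation for (i) by an actual proof and correctly identify, without reproving, the structural facts behind (ii).
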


\begin{proof}
  \Cf\cite[Prop.~6.22, Prop.~6.15]{Muk:78}.
\end{proof}

\begin{remark}
  In fact every simple semi-homogeneous vector bundle arises as the push-forward of a line bundle under a suitable isogeny $\pi\colon Y\to X$, \cf\cite[Thm.~5.8]{Muk:78}.
\end{remark}

\begin{passage}[Continuation of Orlov's construction]
  We fill in the details of step (4) to (6) using \cref{prop:mukai-semi-homog-construction}:
  \begin{enumerate}[label=(\arabic*),start=4]
    \item
    The sheaf $\Fca\coloneqq [\ell]_{*}(\Lca^{\otimes\ell})$ is a semi-homogeneous vector bundle on $A\times B$ of slope $\mu(\Fca)=\mu$.
    \item
    Consider a Jordan--Hölder filtration $0=\Fca_0\subset\dots\subset\Fca_k=\Fca$, where each $\Eca_i=\Fca_i/\Fca_{i-1}$ is a \emph{simple} semi-homogeneous vector bundle of slope $\mu(\Eca_i)=\mu$.
    \item
    Take any of the vector bundles $\Eca_i$ as the kernel of a Fourier--Mukai functor \[\FM_{\Eca_i}\colon\Db(A)\to\Db(B),\]
    which, by Orlov, provides a desired derived equivalence mapping under $\gamma$ to $f$.
  \end{enumerate}
\end{passage}

Now we specialize our situation to the equivariant setting of the previous sections, that is, we let $A_0$ be an abelian surface and take $A=A_0\otimes\StdRep_n$ and $B=A_0^\vee\otimes\StdRep_n$ with their $\Sn$-action.
We follow the steps of Orlov's and Mukai's constructions and wish to construct on the way a suitable equivariant structure which we can carry through the argument to the final Fourier--Mukai kernel.

\begin{passage}\label{par:construction-mu}
  Ad (1):
  Let $\lambda\colon A\to A^{\vee}$ be a polarization of exponent $\erm(\lambda)$.
  Pick integers $n_3$ and $n_4$ which solve the equation $n_{4}\erm(\lambda)-n_{3}n=1$; this can be done whenever $\gcd(\erm(\lambda),n)=1$.
  Now recall the maps $\phi_0$ and $\dualisog{\phi}_0$ from \cref{sec:standard_rep} and consider the element
  \begin{equation*}\renewcommand\arraystretch{1.4}
    f\coloneqq\begin{pmatrix}\lambda & \dualisog{\phi}_0 \\ n_{3}\phi_0 & n_{4}\dualpol{\lambda}\end{pmatrix} \in\Sp(A_0\otimes\StdRep_n,A_0^\vee\otimes\StdRep_n).
  \end{equation*}
  Note that indeed $\dualisog{\phi}_0\coloneqq\id\otimes\dualisog{\phi}_0\colon A_0^\vee\otimes\StdRep_n^\vee \to A_0^\vee\otimes\StdRep_n$ is an isogeny.
  
  Ad (2):
  Recall that by construction we have $(\dualisog{\phi}_0)^\inv = \frac{1}{n}\phi_0 \in\Hom(A_0^\vee\otimes\StdRep_n,A_0^\vee\otimes\StdRep_n^\vee)\otimes\QQ$, and that $\phi_0^\vee = \phi_0$, so we get
  \begin{equation*}\renewcommand\arraystretch{1.6}
    g
    =\begin{pmatrix}\dualisog{\phi}_0^\inv\circ\lambda & -\dualisog{\phi}_0^\inv \\ -(\dualisog{\phi}_0^\inv)^\vee & n_{4}\dualpol{\lambda}\circ (\dualisog{\phi}_0)^\inv\end{pmatrix}
    =\begin{pmatrix}\frac{1}{n}\lambda\otimes\phi_0 & -\frac{1}{n}\id\otimes\phi_0 \\ -\frac{1}{n}\id\otimes\phi_0 & \frac{n_4}{n}\dualpol{\lambda}\otimes\phi_0\end{pmatrix}
    =\begin{pmatrix}\lambda & -\id \\ -\id & n_{4}\dualpol{\lambda}\end{pmatrix} \otimes\frac{1}{n}\phi_0.
  \end{equation*}

  Ad (3):
  The symmetric map $g$ corresponds to $\mu\coloneqq \frac{1}{n} [\Lca]$ where $\Lca\in\Pic((A_0\otimes\StdRep_n)\times(A_0^\vee\otimes\StdRep_n))$ is a line bundle with $\varphi_\Lca=n g$.
  The next proposition applied to $X=A_0\times A_0^\vee$ shows that we can take \[\Lca=\restr{\Lca_{0}^{\boxtimes n}}{(A_0\times A_0^\vee)\otimes\StdRep_n},\] where $\Lca_0\in\Pic(A_0\times A_0^\vee)$ satisfies $\varphi_{\Lca_0}=\begin{psmallmatrix}\lambda & -\id \\ -\id & n_{4}\dualpol{\lambda}\end{psmallmatrix}$.
  In particular, $\Lca$ becomes an $\Sn$-equivariant line bundle since the box-product $\Lca_0^{\boxtimes n} = \pr_{1}^{*}\Lca_{0}\otimes\dots\otimes\pr_{n}^{*}\Lca_{0} \in\Pic((A_0\times A_0^\vee)^{\times n})$ carries a canonical $\Sn$-equivariant structure afforded by the commutativity of the tensor product.
\end{passage}

\begin{proposition}
  Let $X$ be an abelian variety and let $\Lca_{0}\in\Pic(X)$ be a line bundle.
  Then we have the equality
  \[
    \varphi_{(\Lca_{0}^{\boxtimes n}|{X\otimes\StdRep_n})} = \varphi_{\Lca_{0}}\otimes\phi_0 \colon X\otimes\StdRep_n\to X^{\vee}\otimes\StdRep_n^\vee.
  \]
\end{proposition}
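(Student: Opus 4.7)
The plan is to combine two elementary properties of the polarization map $\varphi_{(-)}$: its contravariant functoriality $\varphi_{f^{*}\Lca}=f^{\vee}\circ\varphi_{\Lca}\circ f$ for a morphism $f\colon B\to A$ of abelian varieties, and its additivity $\varphi_{\Lca\otimes\Mca}=\varphi_{\Lca}+\varphi_{\Mca}$.

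First I would compute $\varphi_{\Lca_{0}^{\boxtimes n}}$ on the whole product $X^{n}$. Writing $\Lca_{0}^{\boxtimes n}=\bigotimes_{i=1}^{n}\pr_{i}^{*}\Lca_{0}$, additivity and functoriality yield
\[
  \varphi_{\Lca_{0}^{\boxtimes n}}=\sum_{i=1}^{n}\pr_{i}^{\vee}\circ\varphi_{\Lca_{0}}\circ\pr_{i}.
\]
Under the standard identification $(X^{n})^{\vee}\simeq(X^{\vee})^{n}$ induced by the projections, this is precisely the coordinate-wise map; equivalently, upon writing $X^{n}=X\otimesZ\ZZ^{n}$ and $(X^{\vee})^{n}=X^{\vee}\otimesZ\ZZ^{n}$, the map is $\varphi_{\Lca_{0}}\otimes\id_{\ZZ^{n}}$.

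Next, let $\iota\colon X\otimesZ\StdRep_{n}\hookrightarrow X\otimesZ\ZZ^{n}$ denote the inclusion, which decomposes as $\id_{X}\otimes i_{\StdRep}$ with $i_{\StdRep}\colon\StdRep_{n}\hookrightarrow\ZZ^{n}$ the tautological inclusion. By the same sort of bookkeeping of dualities carried out in \cref{prop:abvar-dual-of-product}, the identification $(X\otimesZ\StdRep_{n})^{\vee}\simeq X^{\vee}\otimesZ\StdRep_{n}^{\vee}$ translates $\iota^{\vee}$ into $\id_{X^{\vee}}\otimes q$, where $q\colon\ZZ^{n}\twoheadrightarrow\StdRep_{n}^{\vee}$ is the canonical quotient map. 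Contravariant functoriality then gives
\[
  \varphi_{\Lca_{0}^{\boxtimes n}|_{X\otimesZ\StdRep_{n}}}=\iota^{\vee}\circ\varphi_{\Lca_{0}^{\boxtimes n}}\circ\iota=(\id_{X^{\vee}}\otimes q)\circ(\varphi_{\Lca_{0}}\otimes\id_{\ZZ^{n}})\circ(\id_{X}\otimes i_{\StdRep})=\varphi_{\Lca_{0}}\otimes(q\circ i_{\StdRep}),
\]
and $q\circ i_{\StdRep}=\phi_{0}$ by the very definition of the canonical map.

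There is no real obstacle here: the only thing to be careful about is the compatibility of the two identifications $(X^{n})^{\vee}\simeq(X^{\vee})^{n}$ and $(X\otimesZ\StdRep_{n})^{\vee}\simeq X^{\vee}\otimesZ\StdRep_{n}^{\vee}$, but both arise from dualizing inclusions and projections of the underlying $\ZZ$-modules, so compatibility is automatic and the computation goes through without friction.
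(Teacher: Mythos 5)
Your proposal is correct and follows essentially the same route as the paper: both rest on the functoriality $\varphi_{i^{*}\Lca}=i^{\vee}\circ\varphi_{\Lca}\circ i$ for the inclusion $X\otimesZ\StdRep_n\hookrightarrow X^n$, the identification of $\varphi_{\Lca_0^{\boxtimes n}}$ with the coordinate-wise map $\varphi_{\Lca_0}\times\cdots\times\varphi_{\Lca_0}$, and the recognition that dualizing the inclusion gives the quotient $\ZZ^n\twoheadrightarrow\StdRep_n^{\vee}$, so that $q\circ i_{\StdRep}=\phi_0$ by definition. The only cosmetic difference is that you derive the coordinate-wise formula from additivity of $\varphi_{(-)}$, whereas the paper simply asserts the corresponding triangle commutes.
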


\begin{proof}
  Consider the following diagram:
  \[\begin{tikzcd}[column sep=large]
    X\otimes\StdRep_{n} \arrow[d, hook', "i"] \arrow[r, "\varphi_{i^{*}(\Lca_{0}^{\boxtimes n})}"]                              & (X\otimes\StdRep_{n})^{\vee} \arrow[r, "\sim"] & X^{\vee}\otimes\StdRep_{n}^{\vee} \\
    X^n \arrow[r, "\varphi_{\Lca_{0}^{\boxtimes n}}"] \arrow[rr, "\varphi_{\Lca_{0}}\times\cdots\times\varphi_{\Lca_{0}}"', bend right=20] & (X^n)^{\vee} \arrow[u, "i^{\vee}"] \arrow[r, "\sim"] & (X^{\vee})^n \arrow[u, two heads]
  \end{tikzcd}\]
  The left square commutes by \cite[Prop.~7.6]{EGM}; for the right square and lower triangle see \cref{sec:diag-is-dual-to-sum} and \cref{prop:abvar-dual-of-product}, which explain in particular that $\Sigma^\vee=\Delta$ and $i^\vee$ becomes the quotient projection in the definition of $\StdRep_n^\vee$.
  
  It is clear that the following square commutes:
  \[\begin{tikzcd}
    X\otimes\StdRep_{n} \arrow[d, "i", hook'] \arrow[r, "\varphi_{\Lca_{0}}\otimes\id"] & X^{\vee}\otimes\StdRep_{n} \arrow[d, hook'] \\
    X^n \arrow[r, "\varphi_{\Lca_{0}}\times\cdots\times\varphi_{\Lca_{0}}"'] & (X^{\vee})^n                              
  \end{tikzcd}\]
  Finally, the composition $X^{\vee}\otimes\StdRep_{n}\hookrightarrow(X^n)^{\vee}\twoheadrightarrow X^{\vee}\otimes\StdRep_{n}^{\vee}$ equals $\id\otimes\phi_0 \colon X^{\vee}\otimes\StdRep_{n}\to X^{\vee}\otimes\StdRep_{n}^{\vee}$ by the definition of $\phi_0$.
  Putting these facts together yields the claimed result.
\end{proof}

\begin{passage}
  Ad (4):
  Now we consider the semi-homogeneous vector bundle \[\Fca\coloneqq [n]_{*}\Lca^{\otimes n}.\]
  Note that the $\Sn$-equivariant structure on $\Lca$ induces one on $\Lca^{\otimes n}$.
  Also $\Fca$ inherits an $\Sn$-equivariant structure from~$\Lca^{\otimes n}$, since the push-forward along an $\Sn$-equivariant morphism $F$ of a sheave $\Gca$ with $\Sn$-equivariant structure $\phi$ also carries a natural $\Sn$-equivariant structure defined by $(F_{*}\phi_\sigma)\colon F_{*}\Gca\to F_{*}\sigma^{*}\Gca\simeq \sigma^{*}F_{*}\Gca$, for $\sigma\in\Sn$.
\end{passage}


\begin{passage}
  Now we are dealing with an $\Sn$-equivariant semi-homogeneous vector bundle $\Fca$ of slope~$\mu$, but we have the further requirement that it should be simple.
  We saw above that we should consider a Jordan--Hölder filtration of $\Fca$ and pick one of the graded pieces.
  But we face the problem that the equivariant structure of $\Fca$ does not readily restrict to one of its graded pieces.
  Thus a more precise study of $\Fca$ and its Jordan--Hölder filtrations is needed:
  
  We want to show that $\Fca$ admits a Jordan--Hölder filtration which is split.
  We can rearrange by \cite[Prop.~6.18]{Muk:78} a Jordan--Hölder filtration into the form
  \[
    \Fca\simeq\bigoplus_{j\in J}\Uca_{j}\otimes\Eca_{j}
  \]
  where the simple semi-homogeneous vector bundles $\Eca_j$ of slope $\mu$ are pairwise distinct, and the~$\Uca_j$ are unipotent vector bundles in the following sense.
\end{passage}

\begin{definition}
  A vector bundle $\Uca$ on a scheme $X$ is called \emph{unipotent} of it admits an increasing filtration $0=\Uca_{0}\subset\Uca_{1}\subset\dots\subset\Uca_{r}=\Uca$ such that $\Uca_{i}/\Uca_{i-1}\simeq\Oca_X$ for every $i=1,\dots,r$.
\end{definition}

\begin{proposition}\label{prop:unipotent-vb-splitting-criterion}
  Let $\Uca$ be a unipotent vector bundle on a variety $X$, and set $r=\rank(\Uca)$, then
  \begin{enumerate}
    \item
    $\dim\End(\Uca)\leq r^2$, and
    \item
    $\dim\End(\Uca) = r^2$ implies that $\Uca$ is split: $\Uca\simeq\Oca_X^{\oplus r}$.
  \end{enumerate}
\end{proposition}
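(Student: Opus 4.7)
The plan is to induct on the filtration length $r$. The base case $r=1$ is immediate: $\Uca \simeq \Oca_X$ and $\dim\End(\Oca_X) = \dim H^0(X,\Oca_X) = 1$. For the inductive step, consider the top of the unipotent filtration
\[
  0 \to \Uca_{r-1} \to \Uca \to \Oca_X \to 0,
\]
where $\Uca_{r-1}$ is unipotent of rank $r-1$, and bound $\dim\End(\Uca)$ via two successive applications of $\Hom$.

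Applying $\Hom(-,\Uca)$ gives $\dim\End(\Uca) \le \dim H^0(X,\Uca) + \dim\Hom(\Uca_{r-1},\Uca)$ by left-exactness. For the first summand, iterating $H^0$ along the short exact sequences $0 \to \Uca_{i-1} \to \Uca_i \to \Oca_X \to 0$ and using $\dim H^0(X,\Oca_X) = 1$ yields $\dim H^0(X,\Uca) \le r$. For the second summand, applying $\Hom(\Uca_{r-1},-)$ to the same sequence gives $\dim\Hom(\Uca_{r-1},\Uca) \le \dim\End(\Uca_{r-1}) + \dim\Hom(\Uca_{r-1},\Oca_X)$; by the inductive hypothesis the first is at most $(r-1)^2$, and a contravariant iteration analogous to the $H^0$-bound (using $\Hom(\Oca_X,\Oca_X) = k$) shows $\dim\Hom(\Uca_{r-1},\Oca_X) \le r-1$. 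Combining the estimates gives
\[
  \dim\End(\Uca) \;\le\; r + (r-1)^2 + (r-1) \;=\; r^2,
\]
which proves \rom{1}.

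For \rom{2}, equality $\dim\End(\Uca) = r^2$ forces every inequality above to be tight; in particular $\dim H^0(X,\Uca) = r$ and $\dim H^0(X,\Uca_{r-1}) \le r-1$. The long exact sequence
\[
  0 \to H^0(X,\Uca_{r-1}) \to H^0(X,\Uca) \to H^0(X,\Oca_X)
\]
then forces the right-hand map to be surjective, so a section of $\Uca \twoheadrightarrow \Oca_X$ exists and the top filtration step splits as $\Uca \simeq \Uca_{r-1} \oplus \Oca_X$. Expanding $\End(\Uca)$ into the four blocks $\End(\Uca_{r-1})$, $\Hom(\Uca_{r-1},\Oca_X)$, $\Hom(\Oca_X,\Uca_{r-1}) = H^0(X,\Uca_{r-1})$, and $\End(\Oca_X)$ and plugging in our bounds, the sum is at most $(r-1)^2 + (r-1) + (r-1) + 1 = r^2$, with equality only if $\dim\End(\Uca_{r-1}) = (r-1)^2$. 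By induction $\Uca_{r-1} \simeq \Oca_X^{\oplus(r-1)}$, hence $\Uca \simeq \Oca_X^{\oplus r}$.

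The only non-routine point is the use of $\dim H^0(X,\Oca_X) = 1$, which requires $X$ to be connected and proper (both automatic in the application, where $X$ will be an abelian variety). All remaining ingredients are standard diagram chases built from the left-exactness of $\Hom$.
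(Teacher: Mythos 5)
Your argument is correct and is precisely the inductive diagram chase that the paper's proof explicitly leaves to the reader: induct on the filtration length, bound $\dim\End(\Uca)$ by $\dim H^0(X,\Uca)+\dim\End(\Uca_{r-1})+\dim\Hom(\Uca_{r-1},\Oca_X)\leq r+(r-1)^2+(r-1)=r^2$ via left-exactness of $\Hom$, and in the equality case use the forced surjectivity of $H^0(X,\Uca)\to H^0(X,\Oca_X)$ to split off an $\Oca_X$ summand. Your remark that one needs $H^0(X,\Oca_X)=\Bbbk$ (automatic for the proper connected $X$ arising in the application) is a fair observation about the statement's implicit hypotheses.
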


\begin{proof}
  This can be proved inductively and the resulting diagram chase is left to the reader.
\end{proof}

\begin{situation}
  From now on we use the abbreviation $X\coloneqq (A_0\otimes\StdRep_n)\times(A_0^\vee\otimes\StdRep_n)$.
  So we have $\dim(X)=4(n-1)$.
  Recall that for an abelian variety $X$ of dimension $g$ we have $\#X[n]=n^{2g}$.
\end{situation}

\begin{passage}\label{sec:jh-filtration-action}
  Let $\alpha\in X^{\vee}[n]$ and let $\Pca_\alpha\in\Pic^0(X)$ be the associated line bundle.
  We have by the projection formula and the equality $[n]^{*}\Pca_\alpha \simeq \Pca_{n\alpha} \simeq \Pca_0 \simeq \Oca_X$ that
  \[
    \Fca\otimes\Pca_\alpha
    = ([n]_{*}\Lca^{\otimes n})\otimes\Pca_\alpha
    \simeq [n]_{*}(\Lca^{\otimes n}\otimes [n]^{*}\Pca_\alpha)
    \simeq [n]_{*}(\Lca^{\otimes n}) = \Fca.
  \]
  This means that $X^{\vee}[n]$ acts on the set (of isomorphism classes) $\{\Eca_j\}_{j\in J}$ via $\alpha.\Eca_j\coloneqq\Eca_{j}\otimes\Pca_\alpha$.
  (Recall that the multiset of associated graded pieces of a Jordan--Hölder filtration is unique.)
  
  Next we compute the stabilizers of this action.
  We can write each $\Eca_j$ as $\Eca_{0}\otimes\Pca_{\alpha_j}$ for some $\alpha_j\in X^{\vee}$ by \cref{prop:mukai-semi-homogeneous-facts}, so
  \[
    \Stab(\Eca_j)
    =\Stab(\Eca_{0}\otimes\Pca_{\alpha_j})
    =\Stab(\Eca_0)
    =\{\alpha\in X^{\vee}[n] \mid \Eca_{0}\simeq\Eca_{0}\otimes\Pca_{\alpha}\}.
  \]
  By \cite[§3, Prop.~7.7]{Muk:78} this is nothing else than $\SigmaMukai(\mu)\cap X^{\vee}[n]=\SigmaMukai(\mu)$.
\end{passage}

\begin{proposition}\label{prop:jh-filtration-split}
  The semi-homogeneous vector bundle $\Fca$ of slope $\mu$ constructed above admits a split Jordan--Hölder filtration, explicitly:
  \[
    \Fca\simeq\bigoplus_{\alpha\in X^{\vee}[n]/\SigmaMukai(\mu)}(\Eca_{0}\otimes\Pca_{\alpha})^{\oplus n^{2n-4}},
  \]
  where $\Eca_0$ is a simple semi-homogeneous vector bundle of slope $\mu$.
\end{proposition}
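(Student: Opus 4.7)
The plan is to combine the action of $X^{\vee}[n]$ on the semi-homogeneous decomposition of $\Fca$ with a direct computation of $\dim\End(\Fca)$, so as to force that decomposition to be totally split with a single $X^{\vee}[n]$-orbit of simple factors. Invoking \cite[Prop.~6.18]{Muk:78}, I first fix a decomposition $\Fca\simeq\bigoplus_{j\in J}\Uca_{j}\otimes\Eca_{j}$ with pairwise non-isomorphic simple semi-homogeneous $\Eca_{j}$ of slope $\mu$ and unipotent multiplicity bundles $\Uca_{j}$.  By \cref{sec:jh-filtration-action} the isomorphism $\Fca\otimes\Pca_{\alpha}\simeq\Fca$ for $\alpha\in X^{\vee}[n]$ induces an action of $X^{\vee}[n]$ on $J$ with stabilizer $\SigmaMukai(\mu)$; comparing the decomposition with its $\Pca_{\alpha}$-twist moreover yields $\Uca_{\alpha.j}\simeq\Uca_{j}$, so within each orbit the multiplicity bundles agree.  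Writing $N$ for the number of orbits and $u_{i}$ for the common rank of the $\Uca_{j}$ on the $i$-th orbit, the rank identity combined with $\rank(\Fca)=\deg([n])=n^{2\dim X}=|X^{\vee}[n]|$ and $\rank(\Eca_{0})^{2}=|\SigmaMukai(\mu)|$ (\cref{prop:mukai-semi-homogeneous-facts}(iii)) yields $\sum_{i}u_{i}=\rank(\Eca_{0})$.

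Next, I compute $\dim\End(\Fca)$ in two ways.  On one hand, by the adjunction $[n]^{*}\dashv[n]_{*}$ and the identity $[n]^{*}[n]_{*}\simeq\bigoplus_{k\in X[n]}\trm_{k}^{*}$ for the isogeny $[n]\colon X\to X$, one has $[n]^{*}\Fca\simeq\bigoplus_{k\in X[n]}\trm_{k}^{*}(\Lca^{\otimes n})$; since $\trm_{k}^{*}(\Lca^{\otimes n})\otimes(\Lca^{\otimes n})^{-1}=\varphi_{\Lca^{\otimes n}}(k)=n\varphi_{\Lca}(k)=\varphi_{\Lca}(nk)=0$ for $k\in X[n]$, each summand is isomorphic to $\Lca^{\otimes n}$, giving $\dim\End(\Fca)=\dim\Hom([n]^{*}\Fca,\Lca^{\otimes n})=|X[n]|=n^{2\dim X}$.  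On the other hand, $\dim\End(\Fca)=\sum_{j,j'}\dim\Hom(\Uca_{j}\otimes\Eca_{j},\Uca_{j'}\otimes\Eca_{j'})$; the off-diagonal terms vanish because for $j\neq j'$ the distinctness and simplicity of $\Eca_{j},\Eca_{j'}$ give $\Hom(\Eca_{j},\Eca_{j'})=0$, and filtering $\Uca_{j}^{\vee}\otimes\Uca_{j'}$ by its composition series of trivial line bundles propagates this vanishing to the twisted bundles.  For the diagonal contribution, decomposing $\Eca_{j}^{\vee}\otimes\Eca_{j}\simeq\Oca_{X}\oplus\bigoplus_{\alpha_{k}\neq 0}\Pca_{\alpha_{k}}^{\oplus m_{k}}$ as a semi-homogeneous bundle of slope $0$ and using $H^{0}(\Uca_{j}^{\vee}\otimes\Uca_{j}\otimes\Pca_{\alpha_{k}})=0$ for $\alpha_{k}\neq 0$ reduces the computation to $\dim\End(\Uca_{j}\otimes\Eca_{j})=\dim\End(\Uca_{j})\leq u_{j}^{2}$, with equality if and only if $\Uca_{j}$ is split, by \cref{prop:unipotent-vb-splitting-criterion}.

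Combining the two counts gives $n^{2\dim X}\leq\bigl(|X^{\vee}[n]|/|\SigmaMukai(\mu)|\bigr)\sum_{i}u_{i}^{2}$, which simplifies to $\sum_{i}u_{i}^{2}\geq\rank(\Eca_{0})^{2}=\bigl(\sum_{i}u_{i}\bigr)^{2}$.  Since the reverse inequality $\sum_{i}u_{i}^{2}\leq\bigl(\sum_{i}u_{i}\bigr)^{2}$ always holds for non-negative summands, with equality iff at most one $u_{i}$ is non-zero, we conclude $N=1$, $u_{1}=\rank(\Eca_{0})$, and each $\Uca_{j}$ splits as $\Oca_{X}^{\oplus\rank(\Eca_{0})}$.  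The identification $\rank(\Eca_{0})=n^{2n-4}$ then follows from the explicit computation $|\SigmaMukai(\mu)|=n^{4n-8}$: in bases where $\phi_{0}$ has Smith normal form $\diag(1,\ldots,1,n)$, the map $\varphi_{\Lca}=\varphi_{\Lca_{0}}\otimes\phi_{0}$ decomposes as a block-diagonal map on $Y^{n-1}$ with blocks $\varphi_{\Lca_{0}}$ (appearing $n-2$ times) and $n\varphi_{\Lca_{0}}$, and the explicit form of $\varphi_{\Lca_{0}}$ from \cref{par:construction-mu} yields $\ker(\varphi_{\Lca_{0}})\cap Y[n]\simeq A_{0}[n]$, so that $|\ker(\varphi_{\Lca})\cap X[n]|=n^{4n}$.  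The most delicate step is the $\Hom$-vanishing with the filtration bookkeeping; once this is in place, the remaining conclusions are forced by a rigid comparison of the two counts.
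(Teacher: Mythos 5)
Your proof is correct and follows essentially the same strategy as the paper: a double count of $\dim\End(\Fca)$ (via adjunction for $[n]_*$ on one side, via the decomposition $\bigoplus\Uca_j\otimes\Eca_j$ and the $X^{\vee}[n]$-orbit structure on the other), closed by the equality case of $\sum_i u_i^2\leq(\sum_i u_i)^2$ together with the unipotent splitting criterion. The only cosmetic differences are that you make the $\Hom$-vanishing and the diagonal term $\End(\Uca_j\otimes\Eca_j)=\End(\Uca_j)$ explicit rather than citing Mukai, and you compute $\#(\ker(\varphi_{\Lca})\cap X[n])=n^{4n}$ via the Smith normal form of $\phi_0$ instead of solving the system of equations directly; both give the same counts.
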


Before coming to the proof of this central proposition, we need to calculate the rank of the vector bundles $\Eca_j$.

\begin{proposition}\label{prop:simple-semi-homogeneous-rank-calculation}
  Let $\Eca$ be a simple semi-homogeneous vector bundle on $X$ of slope $\mu$ as constructed above in \cref{par:construction-mu}, then the rank of $\Eca$ is $\rank(\Eca)=n^{2n-4}$, and $\#\SigmaMukai(\mu)=n^{4n-8}$.
\end{proposition}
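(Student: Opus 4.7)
The plan is to apply Mukai's identity $\rank(\Eca)^2=\deg(\pr_1)=\#\SigmaMukai(\mu)$ from \cref{prop:mukai-semi-homogeneous-facts} and to compute $\#\SigmaMukai(\mu)$ by bringing $\varphi_\Lca$ into elementary-divisor form. First I observe that $\SigmaMukai(\mu)\simeq\varphi_\Lca(X[n])$: any element $(0,\alpha)\in\ker(\pr_1)\subset\mathrm{\Phi}_\mu$ arises as $(nx,\varphi_\Lca(x))$ for some $x\in X$, and $nx=0$ forces $x\in X[n]$. Since $\dim X=4(n-1)$, we have $\#X[n]=n^{8n-8}$, so it suffices to prove $\#\ker(\varphi_\Lca|_{X[n]})=n^{4n}$, whence $\#\SigmaMukai(\mu)=n^{4n-8}$ and $\rank(\Eca)=n^{2n-4}$ as claimed.

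Next I record the invariant factors of the two building blocks of $\varphi_\Lca=\varphi_{\Lca_0}\otimes\phi_0$. From the block form $\varphi_{\Lca_0}=\begin{psmallmatrix}\lambda&-\id\\-\id&n_4\dualpol{\lambda}\end{psmallmatrix}$, combining the defining relation $n_4\erm(\lambda)-n_3 n=1$ with $\dualpol{\lambda}\circ\lambda=[\erm(\lambda)]$, one checks that $\ker(\varphi_{\Lca_0})=\{(a,\lambda a):a\in A_0[n_3 n]\}\simeq(\ZZ/n_3 n)^{4}$, so $\varphi_{\Lca_0}$ has invariant factors $(1,1,1,1,n_3 n,n_3 n,n_3 n,n_3 n)$. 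As spelled out in the proof of \cref{prop:stdrep_to_stdrep_equivariant_maps}, the map $\phi_0$ has invariant factors $(1,\dots,1,n)$ with $n-2$ ones.

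Finally I combine these. For each prime $\ell\mid n$, the map $T_\ell\varphi_\Lca=T_\ell\varphi_{\Lca_0}\otimes_{\ZZ_\ell}(\phi_0\otimes_\ZZ\ZZ_\ell)$ can be diagonalized in the tensor basis with entries $a_i b_j$, where $(a_i)_i$ and $(b_j)_j$ are the invariant factors above. Since a diagonal map $\diag(c_k)$ has $n$-torsion kernel of size $\prod\gcd(c_k,n)$, enumerating the four types of pairs $(a_i,b_j)$ gives
\[
\#\ker(\varphi_\Lca|_{X[n]})=\prod_{i,j}\gcd(a_i b_j,n)=1^{4(n-2)}\cdot n^{4}\cdot n^{4(n-2)}\cdot n^{4}=n^{4n}.
\]
The main obstacle is essentially bookkeeping: identifying $\SigmaMukai(\mu)$ as the correct image, reading off the invariant factors of $\varphi_{\Lca_0}$ from the block form, and tracking the four-way partition of the tensor product. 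The prime-by-prime Tate module picture must be set up cleanly so that the $\gcd$-formula applies uniformly across primes dividing $n$; beyond that, the calculation is elementary.
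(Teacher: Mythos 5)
Your proof is correct, and its skeleton coincides with the paper's: both invoke $\rank(\Eca)^2=\deg(\pr_1)$ from \cref{prop:mukai-semi-homogeneous-facts}, identify $\SigmaMukai(\mu)=\ker(\pr_1)$ with $\varphi_{\Lca}(X[n])$, and reduce everything to showing $\#\ker(\varphi_{\Lca}|_{X[n]})=n^{4n}$. Where you genuinely diverge is in that last count. The paper solves the system $\varphi_{\Lca}(a,\alpha)=0$ by hand, using $\ker(\id\otimes\phi_0)=\Diagonal(A_0^{\vee}[n])$ to parametrize the solutions as $(a,\lambda(a)+\Diagonal(\alpha_0))$ with $a\in(A_0\otimes\StdRep_n)[n]$ and $\alpha_0\in A_0^{\vee}[n]$, giving $n^{4(n-1)}\cdot n^{4}=n^{4n}$; you instead exploit the factorization $\varphi_{\Lca}=\varphi_{\Lca_0}\otimes\phi_0$ (which the paper establishes anyway) and multiply Smith normal forms prime by prime. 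Your input data check out: $\ker(\varphi_{\Lca_0})$ is indeed the graph of $\lambda$ over $A_0[n_3n]$, consistent with $\deg(\varphi_{\Lca_0})=(n_3n)^4$ (note this is $A_0[n_4\erm(\lambda)-1]$, which differs from the $A_0[\erm(\lambda)-1]$ asserted in the remark following \cref{prop:jh-filtration-invariant-summand} unless $n_4=1$; your version is the correct one), and $\phi_0\sim\diag(1,\dots,1,n)$ is quoted from \cref{prop:stdrep_to_stdrep_equivariant_maps}. The four-way $\gcd$ bookkeeping then yields $n^{4n}$. Your route is more systematic and transfers immediately to other tensor-product situations; the paper's route produces the kernel as an explicit subgroup rather than just its order, though that extra precision is not used later. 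One small caveat: the inference ``$\ker(\varphi_{\Lca_0})\simeq(\ZZ/n_3n)^4$, hence invariant factors $(1,1,1,1,n_3n,n_3n,n_3n,n_3n)$'' presupposes that $\varphi_{\Lca_0}$ is an isogeny, i.e.\ $n_3\neq0$; in the degenerate case $e=1$, $(n_3,n_4)=(0,1)$ the kernel is positive-dimensional. Either observe that the Tate-module map then has four zero diagonal entries and $\gcd(0,n)=n$ still gives $n^{4n}$, or simply normalize $n_3\neq0$ by replacing $(n_3,n_4)$ with $(n_3+e,\,n_4+n)$.
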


\begin{proof}
  By \cref{prop:mukai-semi-homogeneous-facts} we have $\rank(\Eca)^2=\deg(\pr_1)$.
  Now from the equation
  \[
    \SigmaMukai(\mu)=\ker(\pr_1)=\{(a,\alpha)\in\mathrm{\Phi}_\mu \mid a=0\} = \{(n x,\varphi_{\Lca}(x)) \mid n x=0, x\in X\}
  \]
  we get a short exact sequence
  \[
    0\to X[n]\cap\ker(\varphi_{\Lca})\to X[n]\xrightarrow{\varphi_{\Lca}}\SigmaMukai(\mu)\to 0,
  \]
  and $\rank(\Eca)^2 = \#\varphi_{\Lca}(X[n])$.
  
  Regarding the kernel, let $(a,\alpha)\in X[n]=((A_0\otimes\StdRep_n)\times(A_0^{\vee}\otimes\StdRep_n))[n]$, then the condition $\varphi_{\Lca}(a,\alpha)=0$ becomes by $\varphi_{\Lca}=n g$ and the definition of $g$:
  \begin{equation*}\begin{cases}
    \phi_0(\lambda(a)) = \phi_0(\alpha) \\
    \phi_0(a) = n_{4} \phi_{0}(\dualpol{\lambda}(\alpha))
  \end{cases}\end{equation*}
  Here we have, by abuse of notation, implicitly applied the maps $\lambda$ and $\dualpol{\lambda}$ entry-wise to tuples.
  Define $\alpha'\coloneqq\lambda(a)$.
  Since $\ker(\phi_0|A_0^{\vee}\otimes\StdRep_n)=\Diagonal(A_0^{\vee}[n])$ we can write by the first equation $\alpha=\alpha'+\Diagonal(\alpha_0)$ for some $\alpha_0\in A_0^{\vee}[n]$.
  Similarly, we see after substituting into the second equation that $n_{4}\dualpol{\lambda}(\alpha'+\Diagonal(\alpha_0)) = a+\Diagonal(a_0)$ for some $a_0\in A_0[n]$.
  After substituting $\alpha'=\lambda(a)$ the left hand side of this equals
  $
    n_{4}\erm(\lambda)a + n_{4}\dualpol{\lambda}(\Diagonal(\alpha_0)) = n_{4}\erm(\lambda)a + \Diagonal(n_{4}\dualpol{\lambda}(\alpha_0)).
  $
  Using $n_{4}\erm(\lambda)-1=n_{3} n$ we arrive at $0=n_{3}n a=\Diagonal(a_{0}-n_{4}\dualpol{\lambda}(\alpha_0))$, which always admits a solution $a_0\in A_0[n]$.
  We conclude that
  \[
    \ker(\varphi_{\Lca})\cap X[n] = (A_0\otimes\StdRep_n)[n]\times\Diagonal(A_0^{\vee}[n]).
  \]
  
  We can now calculate the cardinalities $\#(\ker(\varphi_{\Lca})\cap X[n]) = n^{4(n-1)}\cdot n^{4} = n^{4n}$, so that we finally get $\#\varphi_{\Lca}(X[n]) = n^{8(n-1)}/n^{4n} = n^{4n-8}$ and $\rank(\Eca)=n^{2n-4}$.
\end{proof}

\begin{remark}\label{rmk:jh-filtration-orbit-count}
  Note that $\rank(\Fca)=\rank([n]_{*}\Lca^{\otimes n})=\deg([n]\colon X\to X)=n^{8(n-1)}$.
  In particular, we see that our $\Fca$ is definitely not simple.
  Instead we see that the length $N$ of any Jordan--Hölder filtration of $\Fca$ will be $N=n^{8(n-1)}/n^{2n-4}=n^{6n-4}$.
  Further we see that each orbit of the action in \cref{sec:jh-filtration-action} contains exactly $n^{4n}=n^{8(n-1)}/n^{4n-8}$ elements.
\end{remark}

\begin{proof}[Proof of \cref{prop:jh-filtration-split}]
  We aim to apply the splitting criterion for unipotent vector bundles above, \cf\cref{prop:unipotent-vb-splitting-criterion}.
  For this we calculate $\dim\End(\Fca)$ in two ways:
  Firstly we abbreviate $\widetilde{\Lca}\coloneqq\Lca^{\otimes n}$ where $\Lca$ is as in the construction of $\Fca$.
  Then we have
  \[
    \Hom(\Fca,\Fca)
    \simeq \Hom([n]_{*}\widetilde{\Lca},[n]_{*}\widetilde{\Lca})
    \simeq \Hom([n]^{*}[n]_{*}\widetilde{\Lca},\widetilde{\Lca})
    \simeq \Hom(\bigoplus_{x\in X[n]}\trm_x^{*}\widetilde{\Lca},\widetilde{\Lca}),
  \]
  and for $x\in X[n]$
  \[
    \Hom(\trm_x^{*}\widetilde{\Lca},\widetilde{\Lca})
    \simeq \Hom(\widetilde{\Lca}\otimes\Pca_{\varphi_{\widetilde{\Lca}}(x)},\widetilde{\Lca})
    \simeq \Hom(\Pca_{\varphi_{\widetilde{\Lca}}(x)},\Oca_X)
    \simeq \Ho^0(X,\Oca_X)
    \simeq \Bbbk,
  \]
  since $\varphi_{\widetilde{\Lca}}=n\varphi_{\Lca}$ implies $\varphi_{\widetilde{\Lca}}(x)=\varphi_{\Lca}(n x)=\varphi_{\Lca}(0)=0$.
  We conclude that \[\dim\End(\Fca)=\#X[n]=n^{8(n-1)}.\]
  
  Secondly we take on the viewpoint that $\Fca\simeq\bigoplus \Uca_{j}\otimes\Eca_{j}$.
  Using $\Hom(\Eca_{j},\Eca_{j'})=0$ for $j\neq j'$ and $\Hom(\Eca_{j},\Eca_{j})=\Bbbk$, \cf\cite[Prop.~6.17]{Muk:78}, we get that \[\dim\End(\Fca)=\sum_{j\in J}\dim\End(\Uca_j).\]
  Define $r_j\coloneqq\rank(\Uca_j)$ and keep \cref{rmk:jh-filtration-orbit-count} in the following calculations in mind.
  By \cref{prop:simple-semi-homogeneous-rank-calculation} we have $n^{8(n-1)}=\rank(\Fca)=\sum_{j}r_{j}\rank(\Eca_j)=n^{2n-4}\sum_{j}r_{j}$, which implies that the $r_j$'s give a partition of $N=n^{6n-4}$.
  Now we want to take the action from \cref{sec:jh-filtration-action} into account.
  Pick from each orbit a representative $\Eca_j$ and denote the set of indices of these elements by $J_0\subset J$.
  Thus the $r_j$'s for $j\in J_0$ constitute a partition of \[N/(\#\text{elements in an orbit})=n^{6n-4}/n^{4n}=n^{2n-4}.\]
  Making the abbreviation $e_j\coloneqq\dim\End(\Uca_j)$, we get from the considerations above
  \[
    \dim\End(\Fca) = n^{4n}\sum_{j\in J_0}e_j
  \]
  by \cref{rmk:jh-filtration-orbit-count} and the fact that the unipotent bundles in an orbit must be isomorphic, \cf~\cite[Prop.~6.2]{Muk:78}.
  
  Finally, comparing both dimension calculations and using \cref{prop:unipotent-vb-splitting-criterion}, we calculate that 
  \[
    n^{4n-8} = \sum_{j\in J_0}e_j \leq \sum_{j\in J_0}(r_j)^2 \leq (\sum_{j\in J_0}r_j)^2 = (n^{2n-4})^2.
  \]
  We see that the inequalities have to be equalities, which forces $J_0$ to be a singleton, say for notation $J_0=\{0\}$, and also $e_0=r_0^2$.
  By \cref{prop:unipotent-vb-splitting-criterion} this means that $\Uca_0\simeq\Oca_X^{n^{2n-4}}$ as desired.
  Note that $J_0=\{0\}$ means that the action of \cref{sec:jh-filtration-action} is transitive, since $J_0$ indexes by definition the set of orbits.
\end{proof}

\begin{passage}
  Coming back to the $\Sn$-action on $X$, we already saw that $\Fca$ is $\Sn$-invariant, so pullback along $\sigma\in\Sn$ permutes the graded pieces $\Eca_j$ of a Jordan--Hölder filtration of $\Fca$.
  Taking \cref{prop:jh-filtration-split} into account, we see that for each $\sigma\in\Sn$ there is a unique $\alpha(\sigma)\in X^{\vee}[n]/\SigmaMukai(\mu)$ such that $\sigma^{*}\Eca_0\simeq\Eca_{0}\otimes\Pca_{\alpha(\sigma)}$.
\end{passage}

\begin{proposition}\label{prop:jh-filtration-invariant-summand}
  When $n\geq5$ is odd there exists $\alpha_0\in X^{\vee}[n]/\SigmaMukai(\mu)$ such that $\Eca_{0}\otimes\Pca_{\alpha_0}$ is $\Sn$-invariant.
  More precisely,
  \begin{enumerate}
    \item
    the map $\alpha\colon\Sn\to X^{\vee}[n]/\SigmaMukai(\mu)$ is a crossed homomorphism for the (right) action of $\Sn$ inherited from $X^{\vee}$, and
    \item
    when $n\geq5$ is odd, we have that $\alpha$ is a principal crossed homomorphism.
  \end{enumerate}
\end{proposition}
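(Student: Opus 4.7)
The plan for part (i) is a direct chain of pullback isomorphisms: using functoriality $(\sigma\tau)^{*} = \tau^{*}\sigma^{*}$ together with the defining equation $\sigma^{*}\Eca_{0} \simeq \Eca_{0} \otimes \Pca_{\alpha(\sigma)}$ applied twice, one obtains
\[
  \Eca_{0} \otimes \Pca_{\alpha(\sigma\tau)} \simeq (\sigma\tau)^{*}\Eca_{0} = \tau^{*}\sigma^{*}\Eca_{0} \simeq \Eca_{0} \otimes \Pca_{\alpha(\tau)} \otimes \Pca_{\tau^{*}\alpha(\sigma)}.
\]
Since simple semi-homogeneous vector bundles of a given slope $\mu$ are unique up to $\SigmaMukai(\mu)$-twist (\cref{prop:mukai-semi-homogeneous-facts}), this forces the cocycle identity $\alpha(\sigma\tau) = \alpha(\tau) + \alpha(\sigma).\tau$ in $X^{\vee}[n]/\SigmaMukai(\mu)$ for the right action $\beta.\tau \coloneqq \tau^{*}\beta$ inherited from the $\Sn$-action on $X^{\vee}$.

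For part (ii), the plan is to show that $[\alpha] \in \Ho^{1}(\Sn, X^{\vee}[n]/\SigmaMukai(\mu))$ vanishes by lifting it to $\Ho^{1}(\Sn, X^{\vee}[n])$ and invoking
\[
  \Ho^{1}(\Sn, X^{\vee}[n]) = 0 \quad \text{for odd } n \geq 5.
\]
This vanishing follows from the $\Sn$-module decomposition $X^{\vee}[n] \simeq (A_{0}^{\vee}[n] \otimes_{\ZZ} \StdRep_{n}^{\vee}) \oplus (A_{0}[n] \otimes_{\ZZ} \StdRep_{n}^{\vee})$ (using \cref{prop:abvar-dual-of-product}) combined with the vanishing $\Ho^{1}(\Sn, \StdRep_{n}^{\vee} \otimes_{\ZZ} B) = 0$ of \cref{prop:h1-dual-stdrep-vanishing} applied to each summand with trivial action. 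The lift is constructed from the canonical $\Sn$-equivariant structure on $\Fca = [n]_{*}\widetilde{\Lca}$ (inherited from that of $\widetilde{\Lca}$ via $\Sn$-equivariance of $[n]$): fixing the splitting of \cref{prop:jh-filtration-split} and tracking how each equivariant isomorphism $\sigma^{*}\Fca \isoarr \Fca$ permutes the chosen summands yields a set-theoretic map $\widetilde{\alpha}\colon \Sn \to X^{\vee}[n]$ covering $\alpha$.

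The hardest part will be verifying that $\widetilde{\alpha}$ is a genuine 1-cocycle in $X^{\vee}[n]$ and not merely one modulo $\SigmaMukai(\mu)$; the obstruction sits in $\Ho^{2}(\Sn, \SigmaMukai(\mu))$ via the connecting homomorphism of the short exact sequence $0 \to \SigmaMukai(\mu) \to X^{\vee}[n] \to X^{\vee}[n]/\SigmaMukai(\mu) \to 0$. To control this, I plan to exploit the explicit description $\SigmaMukai(\mu) = \varphi_{\Lca}(X[n])$ from the proof of \cref{prop:simple-semi-homogeneous-rank-calculation}, which presents $\SigmaMukai(\mu)$ as an $\Sn$-equivariant quotient of $X[n]$ with the explicitly computed kernel $(A_{0}\otimes\StdRep_{n})[n]\times\Diagonal(A_{0}^{\vee}[n])$; combining this presentation with the stable-range cohomology computations of \cref{sec:group_coho_calculations} for tensor products involving $\StdRep_{n}$ and $\StdRep_{n}^{\vee}$ should yield the requisite $\Ho^{2}$-vanishing for odd $n \geq 5$. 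A fallback strategy, should this direct computation prove unwieldy, is to refine the choice of JH splitting so that $\widetilde{\alpha}$ is forced by construction into the kernel of the connecting map, bypassing $\Ho^{2}$ entirely.
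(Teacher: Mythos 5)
Part (i) of your proposal is exactly the paper's argument, and the overall strategy for part (ii) — reduce to the short exact sequence $0\to\SigmaMukai(\mu)\to X^{\vee}[n]\to X^{\vee}[n]/\SigmaMukai(\mu)\to0$, kill $\Ho^1(\Sn,X^{\vee}[n])$ via \cref{prop:h1-dual-stdrep-vanishing}, and kill the obstruction in $\Ho^2(\Sn,\SigmaMukai(\mu))$ — is also the paper's (the paper runs the long exact cohomology sequence where you phrase it at the cocycle level, but these are the same argument). Two remarks on the lift: the equivariant structure on $\Fca$ only determines $\widetilde{\alpha}(\sigma)$ modulo $\SigmaMukai(\mu)$, since the isomorphism class of a summand $\Eca_0\otimes\Pca_\beta$ only sees the coset of $\beta$; so your $\widetilde{\alpha}$ is just an arbitrary set-theoretic section, which is fine but means the whole content is carried by the $\Ho^2$-obstruction, exactly as in the connecting-map formalism.

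The genuine gap is in the step you flag as hardest. First, $\SigmaMukai(\mu)$ is not itself of the form $B\otimesZ\StdRep_n$ or $B\otimesZ\StdRep_n^{\vee}$: from your own kernel computation it is $(\SigmaMukai_0\otimesZ\StdRep_n)/\Diagonal(\SigmaMukai_0)$ with $\SigmaMukai_0=\varphi_{\Lca_0}((A_0\times A_0^{\vee})[n])$, and computing its $\Ho^2$ requires a further dévissage through the four-term sequence $0\to\SigmaMukai_0\to\SigmaMukai_0\otimes\StdRep_n\xrightarrow{\phi_0}\SigmaMukai_0\otimes\StdRep_n^{\vee}\to\SigmaMukai_0\to0$, together with $\Hom(\Sn,\SigmaMukai_0)=0$ (which uses that $\SigmaMukai_0$ is $n$-torsion with $n$ odd). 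Second, and more seriously, the "stable-range computations of \cref{sec:group_coho_calculations}" do not cover $n=5$: the vanishing $\Ho^2(\Sn,\StdRep_n^{\vee}\otimesZ B)=0$ from \cref{prop:stdrep_dual_coho_vanishing_and_seq} requires $2<n/2-1$, i.e.\ $n\geq7$. For $n=5$ one needs the extra classical input $\Ho^3(\SymGrp_5,\ZZ/5\ZZ)=0$ (this is \cref{prop:h2-s5-vanishing} in the paper), fed through the injection $\Ho^2(\SymGrp_5,\StdRep_5^{\vee}\otimesZ B)\hookrightarrow\Ho^3(\SymGrp_5,B)$ for $5$-torsion $B$. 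Since $n=5$ is explicitly included in the statement, your plan as written does not close this case; your fallback strategy (refining the JH splitting) would not obviously help either, because the obstruction class is intrinsic to the extension and does not depend on the choice of section.
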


\begin{proof}
  \itemnum
  Let $\sigma,\tau\in\Sn$, by calculating
  \begin{multline*}
    \Eca_{0}\otimes\Pca_{\alpha(\sigma\tau)} \simeq (\sigma\tau)^{*}\Eca_0 \simeq \tau^{*}\sigma^{*}\Eca_0 \simeq \tau^{*}(\Eca_0\otimes\Pca_{\alpha(\sigma)}) \simeq \Eca_0\otimes\Pca_{\alpha(\tau)}\otimes\tau^{*}\Pca_{\alpha(\sigma)} \simeq \Eca_0\otimes\Pca_{\alpha(\tau)+\tau^{\vee}(\alpha(\sigma))}
  \end{multline*}
  we see that $\alpha(\sigma\tau) = \alpha(\tau)+\tau^{\vee}(\alpha(\sigma))$, so $\alpha$ is indeed a crossed homomorphism.
  
  \itemnum
  We know that $\SigmaMukai(\mu) = \varphi_{\Lca}(X[n]) = (\varphi_{\Lca_0}\otimes\phi_0)(X[n])$, and by definition $X=(A_0\times A_0^\vee)\otimes\StdRep_n$, so we have a cokernel sequence
  \[
    (A_0\times A_0^\vee)[n]\otimes\StdRep_n \xrightarrow{\varphi_{\Lca_0}\otimes\phi_0} (A_0\times A_0^\vee)^{\vee}[n]\otimes\StdRep_n^{\vee} \to X^{\vee}[n]/\SigmaMukai(\mu) \to 0.
  \]
  Define $\SigmaMukai_0\coloneqq\varphi_{\Lca_0}((A_0\times A_0^\vee)[n])$, then we get the exact sequence
  \[
    0 \to \SigmaMukai_0 \xrightarrow{\Diagonal} \SigmaMukai_0\otimes\StdRep_n \xrightarrow{\id\otimes\phi_0} (A_0\times A_0^\vee)^{\vee}[n]\otimes\StdRep_n^{\vee} \to X^{\vee}[n]/\SigmaMukai(\mu) \to 0,
  \]
  where we used for the leftmost term that $\ker(\id\otimes\phi_0)=\{(a,\dots,a)\mid n a=\sum_{i=1}^{n}a=0\}$ and $\SigmaMukai_0$ consists of $n$-torsion elements.
  Applying group cohomology, we get
  \begin{multline*}
     \Ho^1(\Sn,(A_0\times A_0^\vee)^{\vee}[n]\otimes\StdRep_n^{\vee})
    \to \Ho^1(\Sn,X^{\vee}[n]/\SigmaMukai(\mu)) \\
    \to \Ho^2(\Sn,(\SigmaMukai_0\otimes\StdRep_n)/\SigmaMukai_0)
    \to \Ho^2(\Sn,(A_0\times A_0^\vee)^{\vee}[n]\otimes\StdRep_n^{\vee}),
  \end{multline*}
  where the first term is zero by \cref{prop:h1-dual-stdrep-vanishing}, and the last term is zero for $n\geq7$ by \cref{prop:stdrep_dual_coho_vanishing_and_seq} and for $n=5$ by \cref{prop:h2-s5-vanishing} below.
  Next, using that $\SigmaMukai_0$ is $n$-torsion, we can consider the following sequence, which we already encountered in the proof of \cref{prop:connecting-map-analysis-n4},
  \[
    0 \to \SigmaMukai_0 \xrightarrow{\Diagonal} \SigmaMukai_0\otimes\StdRep_n \xrightarrow{\id\otimes\phi_0} \SigmaMukai_0\otimes\StdRep_n^{\vee} \xrightarrow{\text{sum}} \SigmaMukai_0 \to 0
  \]
  and apply group cohomology to get
  \begin{equation*}
    \Ho^1(\Sn,\SigmaMukai_0\otimes\StdRep_n^{\vee})
    \to \Ho^1(\Sn,\SigmaMukai_0)
    \to \Ho^2(\Sn,(\SigmaMukai_0\otimes\StdRep_n)/\SigmaMukai_0)
    \to \Ho^2(\Sn,\SigmaMukai_0\otimes\StdRep_n^{\vee}),
  \end{equation*}
  where the first term is zero for $n\geq5$ by \cref{prop:h1-dual-stdrep-vanishing}, and the last term is zero for $n\geq7$ by \cref{prop:stdrep_dual_coho_vanishing_and_seq} and for $n=5$ by \cref{prop:h2-s5-vanishing} below.
  
  Finally, using that $\SigmaMukai_0$ is abelian and has no $2$-torsion since $n$ is odd, we see that
  \[
    \Ho^1(\Sn,\SigmaMukai_0)
    \simeq \Hom(\Sn,\SigmaMukai_0)
    \simeq \Hom(\ZZ/2\ZZ,\SigmaMukai_0)
    = 0.
  \]
  In conclusion, $\Ho^1(\Sn,X^{\vee}[n]/\SigmaMukai(\mu)) = 0$, so the crossed homomorphism $\alpha\colon\Sn\to X^{\vee}[n]/\SigmaMukai(\mu)$ must be principal, \ie there exists $\alpha_0\in X^{\vee}[n]/\SigmaMukai(\mu)$ such that $\alpha(\sigma)=\sigma^{\vee}(\alpha_0)-\alpha_0$ for each $\sigma\in\Sn$.
  This means by construction of $\alpha$ that $\Eca_0\otimes\Pca_{-\alpha_0}$ is $\Sn$-invariant.
\end{proof}

\begin{remark}
  The concrete definition of $\varphi_{\Lca_0}$ above lets one calculate that $\ker(\varphi_{\Lca_0})\simeq A_0[\erm(\lambda)-1]$.
  So if in addition to $\gcd(\erm(\lambda),n)=1$ also $\gcd(\erm(\lambda)-1,n)=1$ holds, we have that the homomorphism $\varphi_{\Lca_0}\colon(A_0\times A_0^{\vee})[n]\to(A_0\times A_0^{\vee})^{\vee}[n]$ is an isomorphism.
  Then we can see that
  \[
    X^{\vee}[n]/\SigmaMukai(\mu) = ((A_0\times A_0^{\vee})^{\vee}[n]\otimes\StdRep_{n}^{\vee})/(\varphi_{\Lca_0}\otimes\phi_0)((A_0\times A_0^{\vee})[n]\otimes\StdRep_n)
  \]
  actually carries a trivial $\Sn$-action, since $\Sn$ acts trivially on $\StdRep_{n}^{\vee}/\phi_{0}(\StdRep_n)$ by \cref{par:stdrep_canonical_map}.
  This renders most of the proof of \rom{2} in the previous proposition unnecessary.
\end{remark}

\begin{proposition}\label{prop:h2-s5-vanishing}
  We have $\Ho^2(\SymGrp_5,A[5]\otimes\StdRep_5^\vee)=0$ and $\Ho^2(\SymGrp_5,\SigmaMukai_0\otimes\StdRep_5^\vee)=0$ where $A$ is an abelian variety and $\SigmaMukai_0$ is the group from above.
\end{proposition}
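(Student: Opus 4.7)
The approach is to apply \cref{prop:stdrep_coho_ses} with $n=5$ and $k=2$. Note that $2<5/2$, so the proposition yields the full short exact sequence
\[
  0\to \Ho^1(\SymGrp_5,B)/5\Ho^1(\SymGrp_5,B)\to \Ho^2(\SymGrp_5,\StdRep_5\otimesZ B)\to \Ho^2(\SymGrp_5,\ZZ^5\otimesZ B)[5]\to 0,
\]
where $B$ is either $A[5]$ or $\SigmaMukai_0$. In both cases $B$ is a $5$-torsion abelian group on which $\SymGrp_5$ acts trivially (the only $\SymGrp_5$-action on $A[5]\otimesZ\StdRep_5$ and $\SigmaMukai_0\otimesZ\StdRep_5$ is through the $\StdRep_5$ factor), so the hypotheses of \cref{prop:stdrep_coho_ses} are satisfied.

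The plan is then to show that both outer terms vanish. For the left-hand term, since $B$ is $5$-torsion and $(\SymGrp_5)^{\ab}\simeq\ZZ/2\ZZ$, I would compute
\[
  \Ho^1(\SymGrp_5,B)\simeq\Hom(\SymGrp_5,B)\simeq\Hom(\ZZ/2\ZZ,B)=B[2]=0.
\]
For the right-hand term, Shapiro's lemma (\cref{prop:shapiros_lemma}) together with \cref{prop:permutation_rep_as_induced_rep_identifications} gives $\Ho^2(\SymGrp_5,\ZZ^5\otimesZ B)\simeq \Ho^2(\SymGrp_4,B)$. Since $\SymGrp_4$ acts trivially on $B$, the universal coefficient theorem (\cf proof of \cref{prop:h1-dual-stdrep-vanishing}) provides an exact sequence
\[
  0\to\Ext_{\ZZ}^1(\Ho_1(\SymGrp_4,\ZZ),B)\to \Ho^2(\SymGrp_4,B)\to\Hom_{\ZZ}(\Ho_2(\SymGrp_4,\ZZ),B)\to 0.
\]
Using $\Ho_1(\SymGrp_4,\ZZ)\simeq(\SymGrp_4)^{\ab}\simeq\ZZ/2\ZZ$ and the Schur multiplier $\Ho_2(\SymGrp_4,\ZZ)\simeq\ZZ/2\ZZ$ (\cref{prop:schur-multiplier}), the fact that $B$ is $5$-torsion forces both $\Ext_{\ZZ}^1(\ZZ/2\ZZ,B)\simeq B/2B=0$ and $\Hom_{\ZZ}(\ZZ/2\ZZ,B)\simeq B[2]=0$. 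Hence $\Ho^2(\SymGrp_4,B)=0$, and in particular its $5$-torsion subgroup vanishes.

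Combining these two vanishings with the short exact sequence above yields $\Ho^2(\SymGrp_5,\StdRep_5\otimesZ B)=0$ for $B=A[5]$ and for $B=\SigmaMukai_0$, as desired. There is no real obstacle here: the argument is a bookkeeping exercise in the machinery already set up in \cref{sec:group_coho_calculations}, exploiting the key coprimality $\gcd(|(\SymGrp_4)^{\ab}|\cdot|\Ho_2(\SymGrp_4,\ZZ)|,5)=\gcd(4,5)=1$.
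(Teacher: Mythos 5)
Your argument is correct and complete for the statement as literally printed, and it takes a genuinely different route from the paper's. You use the non‑dual sequence of \cref{prop:stdrep_coho_ses} with $n=5$, $k=2$, kill the left term by $\Ho^1(\SymGrp_5,B)\simeq B[2]=0$, and kill the right term via Shapiro plus the universal coefficient theorem for $\SymGrp_4$; the only input is that a $5$-torsion group has no $2$-torsion and no $2$-cotorsion. This is cleaner and more elementary than what the paper does, and it stays entirely inside the stable range where \cref{prop:nakaoka_res_isomorphism} applies.

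However, be aware of a mismatch: the statement almost certainly carries a typo and is meant to read $\StdRep_{5}^{\vee}$ rather than $\StdRep_5$. Both places where the proposition is invoked (in the proof of \cref{prop:jh-filtration-invariant-summand}) require the vanishing of $\Ho^2(\SymGrp_5,-\otimesZ\StdRep_{5}^{\vee})$, and the paper's own proof argues via \cref{prop:stdrep_dual_coho_vanishing_and_seq}, which concerns the dual representation: it embeds $\Ho^2(\SymGrp_5,\StdRep_{5}^{\vee}\otimesZ B)$ into $\Ho^3(\SymGrp_5,B)$ and then uses the classical fact $\Ho^3(\SymGrp_5,\ZZ/5\ZZ)=0$. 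Your proof does not cover that dual version, and it cannot be adapted by mere bookkeeping: for the dual one needs degree‑$3$ information about $\SymGrp_5$ itself (equivalently, that $\Ho_2(\SymGrp_5,\ZZ)\simeq\ZZ/2\ZZ$ and $\Ho_3(\SymGrp_5,\ZZ)\simeq\ZZ/12\ZZ$ have no $5$-torsion), which lies outside the stable range $k<n/2$ exploited in \cref{sec:group_coho_calculations} and is not implied by your coprimality argument for $\SymGrp_4$. So as a proof of what the paper actually needs, there is a gap; as a proof of the printed statement, there is none.
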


\begin{proof}
  Both claims will follow from the vanishing of $\Ho^3(\SymGrp_5,\ZZ/5\ZZ)$, which is classically known and can for example be deduced using the sequence $0\to\ZZ\xrightarrow{\cdot 5}\ZZ\to\ZZ/5\ZZ\to0$ from the group cohomology $\Ho^{\bullet}(\SymGrp_5,\ZZ)$ of the symmetric group.
  
  By \cref{prop:stdrep_dual_coho_vanishing_and_seq} we know that $\Ho^2(\SymGrp_5,A[5]\otimes\StdRep_5^\vee)$ injects into $\Ho^3(\SymGrp_5,A[5])$, but $A[5]$ is isomorphic to a direct sum of copies of $\ZZ/5\ZZ$.
  Similarly, the group $\SigmaMukai_0$ is a finite abelian group which is $5$-torsion, so it is also isomorphic to a direct sum of copies of $\ZZ/5\ZZ$, so we can again conclude by \cref{prop:stdrep_dual_coho_vanishing_and_seq}.
\end{proof}

\begin{passage}\label{par:summary-split-equivariant-semi-homogenous-bundle}
  To summarize so far, we have an $\Sn$-equivariant sheaf $\Fca$ which is the direct sum of simple semi-homogeneous vector bundles, each of which provides the kernel of a desired derived equivalence:
  \[
    \Fca\simeq\bigoplus_{\alpha}(\Eca_0\otimes\Pca_\alpha)^{\oplus r},
  \]
  where $\alpha\in X^{\vee}[n]/\SigmaMukai(\mu)$ and $r=n^{2n-4}$, \cf\cref{prop:jh-filtration-split}.
  For $n\geq5$ odd, at least one of the summands must be $\Sn$-invariant by \cref{prop:jh-filtration-invariant-summand}, say $\Eca_0$ for notation.
  
  Since the $\Eca_0\otimes\Pca_\alpha$ are pairwise non-isomorphic, there are no non-zero homomorphisms between them, so the $\Sn$-equivariant structure on $\Fca$ restricts to one on $\Eca_{0}^{\oplus r}$.
  Also recall that $r=n^{2n-4}$ is odd since $n$ is odd.
  Next we study such a situation more generally.
\end{passage}


\begin{proposition}\label{prop:direct-sum-equivariant}
  Let $G$ be a finite group acting on a variety $X$, and let $\Eca\in\CatCoh(X)$ be a sheaf on $X$.
  Assume that
  \begin{enumerate}
    \item
    $\Eca$ is simple, \ie $\End(\Eca)=\Bbbk$,
    \item 
    $\Eca$ is $G$-invariant, \ie $g^{*}\Eca\simeq\Eca$ for every $g\in G$, and that
    \item 
    $\Eca^{\oplus r}$ carries a $G$-equivariant structure for some integer $r\in\NN$.
  \end{enumerate}
  If every projective representation of $G$ of dimension $r$ is isomorphic to a linear representation of $G$, then $\Eca$ itself carries a $G$-equivariant structure.
\end{proposition}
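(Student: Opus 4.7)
The plan is to interpret the obstruction to promoting $\Eca$ from a $G$-invariant to a $G$-equivariant sheaf as a class in $\Ho^{2}(G,\Bbbk^{\times})$, and then to exhibit this class as the obstruction to lifting a specific projective representation of $G$ of dimension $r$ to a linear one. The hypothesis will then force the class to vanish.

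First I would use (ii) to pick for each $g\in G$ an isomorphism $\alpha_{g}\colon g^{*}\Eca\isoarr\Eca$. By simplicity (assumption (i)), the composition $\alpha_{g}\circ g^{*}\alpha_{h}\colon(gh)^{*}\Eca\to\Eca$ is a scalar multiple $c(g,h)\in\Bbbk^{\times}$ of $\alpha_{gh}$, and an associativity check shows that $c$ is a $2$-cocycle. Any rescaling $\alpha_{g}\mapsto\mu_{g}\alpha_{g}$ by a function $\mu\colon G\to\Bbbk^{\times}$ modifies $c$ by the coboundary $\partial\mu$, so the class $[c]\in\Ho^{2}(G,\Bbbk^{\times})$ is a well-defined invariant of $\Eca$ and vanishes precisely when $\Eca$ can be equipped with a $G$-equivariant structure.

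Next I would use (iii) to realize $[c]$ as the class of an explicit projective representation of dimension $r$. Writing the given equivariant structure on $\Eca^{\oplus r}$ as $\psi_{g}\colon g^{*}(\Eca^{\oplus r})\isoarr\Eca^{\oplus r}$ and using $\End(\Eca^{\oplus r})=\Mat_{r}(\Bbbk)$ (again from simplicity of $\Eca$), one can write uniquely $\psi_{g}=M_{g}\circ\alpha_{g}^{\oplus r}$ for some $M_{g}\in\GL_{r}(\Bbbk)$. The main technical point, and where I expect most care to be needed, is to check that the cocycle condition $\psi_{gh}=\psi_{g}\circ g^{*}\psi_{h}$ turns into the projective relation
\[
  M_{gh}=c(g,h)\cdot M_{g}M_{h};
\]
this hinges on the fact that scalar matrix endomorphisms of $\Eca^{\oplus r}$ commute with $g^{*}$, a routine observation once one identifies $\Eca^{\oplus r}$ with $\Eca\otimes_{\Bbbk}\Bbbk^{r}$ and notes that $M_{h}$ acts only on the second tensor factor. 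Consequently $g\mapsto M_{g}$ is a projective representation of $G$ on $\Bbbk^{r}$ whose class in $\Ho^{2}(G,\Bbbk^{\times})$ equals $[c]$ (up to a sign convention).

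Finally, the hypothesis supplies scalars $\mu_{g}\in\Bbbk^{\times}$ such that $\mu_{g}^{-1}M_{g}$ is a genuine linear representation, which is equivalent to $c$ being the coboundary $\partial\mu$. Setting $\alpha'_{g}\coloneqq\mu_{g}\alpha_{g}$ then yields the strict cocycle identity $\alpha'_{g}\circ g^{*}\alpha'_{h}=\alpha'_{gh}$, and therefore a $G$-equivariant structure on $\Eca$, as required. Beyond the middle step, everything amounts to straightforward $2$-cocycle bookkeeping.
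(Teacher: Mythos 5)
Your proposal is correct and takes essentially the same route as the paper's proof: both extract the obstruction $2$-cocycle $c$ from chosen isomorphisms $g^{*}\Eca\simeq\Eca$, factor the given equivariant structure on $\Eca^{\oplus r}$ through the diagonal lift of these isomorphisms to obtain matrices $M_{g}\in\GL(r,\Bbbk)$ satisfying $M_{gh}=c(g,h)M_{g}M_{h}$ (using that the diagonal maps commute with the matrix endomorphisms), and then invoke the linearizability hypothesis to conclude $[c]=0$. The only differences are notational conventions for the direction of the isomorphisms and the side of the factorization.
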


\begin{proof}
  As a motivation, apply group cohomology (with trivial actions) to the sequence
  \[
    1 \to \Bbbk^{\times} \to \GL(r,\Bbbk) \to \PGL(r,\Bbbk) \to 1
  \]
  to get the exact sequence
  \[
    \Ho^1(G,\GL(r,\Bbbk))
    \to \Ho^1(G,\PGL(r,\Bbbk))
    \to \Ho^2(G,\Bbbk^{\times})
    \xrightarrow{\Diagonal} ``\Ho^2(G,\GL(r,\Bbbk))".
  \]
  The assumption concerning projective representations means that the first map is surjective, so the map $\Diagonal$ is injective.
  But $\Diagonal$ maps the class which obstructs the existence of an equivariant structure on $\Eca$ to the obstruction class for $\Eca^{\oplus r}$, which is trivial by assumption.
  
  But to make sense of this, we need to make sense of the $\Ho^2(G,\GL(r,\Bbbk))$ term.
  Instead of going through the trouble of explaining a suitable non-abelian second group cohomology construction, we instead mimic a concrete cocycle based approach from abelian group cohomology:
  
  Since $\Eca$ is $G$-invariant, we can pick isomorphisms $\lambda_g\colon\Eca\isoarr g^{*}\Eca$ for each $g\in G$, and define isomorphisms $\lambda'_g\coloneqq\lambda_{g}^{\oplus r} \colon\Eca^{\oplus r}\isoarr g^{*}\Eca^{\oplus r}$.
  The obstruction for $(\lambda_g)_g$ to give an equivariant structure is measured by $\delta_{g,h}\in\Bbbk^{\times}$ which is defined by $\lambda_{gh}=h^{*}\lambda_{g}\circ\lambda_{h}\circ\delta_{g,h}$.
  Note that $\delta_{g,h}\in\Bbbk^{\times}$ because $\Eca$ is simple.
  Similarly we have $\delta'_{g,h}\in\GL(r,\Bbbk)$ for $(\lambda'_g)_g$; actually we have $\delta'_{g,h}=(\delta_{g,h})^{\oplus r}$.
  By assumption $\Eca^{\oplus r}$ admits an equivariant structure $(\lambda''_g)_g$, so we can find elements $\varphi_g\in\GL(r,\Bbbk)$ such that $\lambda''_g=\lambda'_g\circ\varphi_g$.
  
  We claim that the obstruction $\delta''_{g,h}$ of $\lambda''$ satisfies $\delta'_{g,h}=\varphi_{gh}^{\inv}\circ\varphi_{g}\circ\varphi_{h}\circ\delta''_{g,h}$:
  Indeed we have by construction
  \begin{align*}
    \lambda''_{gh} &= h^{*}\lambda''_{g}\circ\lambda''_{h}\circ\delta''_{g,h} = h^{*}(\lambda'_{g}\circ\varphi_g)\circ\lambda'_{h}\circ\varphi_h\circ\delta''_{g,h} \\
    \lambda''_{gh} &= \lambda'_{gh}\circ\varphi_{gh} = h^{*}\lambda'_g\circ\lambda'_h\circ\delta'_{g,h}\circ\varphi_{gh},
  \end{align*}
  so we get, since the functor $h^{*}$ is linear, $\varphi_g\circ\lambda'_{h}\circ\varphi_h\circ\delta''_{g,h} = \lambda'_h\circ\delta'_{g,h}\circ\varphi_{gh}$.
  Now the matrices $\varphi_g\in\GL(r,\Bbbk)$ and $\lambda'_{h}=\diag(\lambda_h,\dots,\lambda_h)$ commute, so we get $\varphi_g\circ\varphi_h\circ\delta''_{g,h} = \delta'_{g,h}\circ\varphi_{gh}$.
  Finally we arrive at the desired $\varphi_g\circ\varphi_h\circ\delta''_{g,h} = \varphi_{gh}\circ\delta'_{g,h}$, since $\delta'_{g,h}=\diag(\delta_{g,h},\dots,\delta_{g,h})$ commutes with~$\varphi_{gh}$.
  
  As a consequence, since by assumption $\delta''_{g,h}=\id$ and $\delta'_{g,h}=\diag(\delta_{g,h},\dots,\delta_{g,h})$ is diagonal, the map $\varphi\colon G\to\GL(r,\Bbbk)$ is a projective representation of $G$ whose obstruction to being linear is exactly measured by $(\delta_{g,h})_{g,h}$.
  When the projective representation $\varphi$ comes from a linear one, the class $[\delta]\in\Ho^2(G,\Bbbk^{\times})$ becomes zero, so $\Eca$ admits an $G$-equivariant structure.
\end{proof}

\begin{proposition}\label{prop:sn-odd-projective-reprs}
  Let $G=\Sn$ be a symmetric group and assume $r\in\NN$ is odd, then every projective representation of $\Sn$ of dimension $r$ is already linear, \ie the map \[\Hom(\Sn,\GL(r,\Bbbk))\to\Hom(\Sn,\PGL(r,\Bbbk))\] is surjective.
\end{proposition}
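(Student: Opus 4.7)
The plan is to use the standard obstruction-theoretic characterization of projective representations. Given a projective representation $\bar{\rho}\colon\Sn\to\PGL(r,\Bbbk)$, choose any set-theoretic lift $\rho\colon\Sn\to\GL(r,\Bbbk)$. The failure of $\rho$ to be a group homomorphism is encoded by the 2-cocycle $c\colon\Sn\times\Sn\to\Bbbk^{\times}$ defined by
\[
  \rho(g)\rho(h)=c(g,h)\,\rho(gh),
\]
and $\bar{\rho}$ lifts to a linear representation if and only if the class $[c]\in\Ho^2(\Sn,\Bbbk^{\times})$ is trivial (one can modify $\rho$ by a 1-cochain $\lambda\colon\Sn\to\Bbbk^{\times}$, which changes $c$ by the coboundary $d\lambda$).

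The key observation is that $[c]$ is annihilated by $r$. Indeed, taking determinants of the cocycle relation yields
\[
  c(g,h)^{r}=\det(\rho(g))\det(\rho(h))\det(\rho(gh))^{-1}=(d\mu)(g,h),
\]
where $\mu\coloneqq\det\circ\rho\colon\Sn\to\Bbbk^{\times}$. Hence $r\cdot[c]=0$ in $\Ho^2(\Sn,\Bbbk^{\times})$.

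By \cref{prop:schur-multiplier}, the Schur multiplier $\Ho^2(\Sn,\Bbbk^{\times})$ is $\ZZ/2\ZZ$ for $n\geq4$ and trivial for $n\leq3$; in either case it is $2$-torsion. Since $r$ is odd, $\gcd(r,2)=1$ and multiplication by $r$ is an isomorphism on this group. Combined with $r[c]=0$, this forces $[c]=0$, so $\bar{\rho}$ lifts as required. The main (and only non-routine) ingredient is the determinant identity, which is a two-line calculation; the rest is formal.
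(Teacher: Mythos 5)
Your proof is correct, but it takes a genuinely different route from the paper's. You use the general obstruction-theoretic fact that the class $[c]\in\Ho^2(\Sn,\Bbbk^{\times})$ of an $r$-dimensional projective representation is annihilated by $r$ (via the determinant trick $c^{r}=d(\det\circ\rho)$), and then combine this with the computation of the Schur multiplier in \cref{prop:schur-multiplier}, which is $2$-torsion; since $r$ is odd, $[c]=0$ and the representation lifts. The paper instead decomposes the projective representation into irreducible projective representations (all sharing the same obstruction class) and invokes Schur's classification of the irreducible genuinely projective representations of $\Sn$ by strict partitions, together with the dimension formula $f_{\lambda}=2^{\lfloor(n-\ell)/2\rfloor}g_{\lambda}$ and the observation $n\geq\ell+2$, to conclude that every irreducible strictly projective representation is even-dimensional, hence an odd-dimensional one must have trivial obstruction. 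Your argument is more elementary and self-contained: it needs only the value of $\Ho^2(\Sn,\Bbbk^{\times})$, which the paper has already established, and avoids the representation-theoretic input entirely. The paper's argument yields finer information (the explicit structure of the spin representations), but for the purpose of this proposition your determinant argument is shorter and equally rigorous. One small point worth making explicit in a polished write-up: the class $[c]$ is independent of the chosen set-theoretic lift (changing the lift alters $c$ by a coboundary), so the vanishing criterion is well posed; you indicate this parenthetically and it is indeed routine.
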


\begin{proof}
  This follows from facts about the representation theory of finite groups and the symmetric group in particular.
  Every projective representation $\rho\colon G\to \PGL(r,\Bbbk)$ has an obstruction class in $\Ho^2(G,\Bbbk^{\times})$ which measures its failure to be linear.
  Now any projective representation $\rho$ decomposes into a direct sum of irreducible projective representations with the same obstruction class as the one of $\rho$, see \cite[Ch.~2--3]{Kar:93} for details.

  But every irreducible strictly projective representation of $\Sn$ is \emph{even} dimensional, so the odd dimensional representation $\rho$ cannot have non-trivial obstruction class.
  In fact Schur \cite{Sch:11} shows that for $n\geq4$ these irreducible projective representations are indexed by strict partitions~$(\lambda_i)_i$ of $n$, that is $\lambda_1+\dots+\lambda_\ell=n$ and $\lambda_1>\dots>\lambda_\ell$; for $n\leq3$ there are none.
  Their dimension is given by the formula
  \[
    f_{\lambda}=2^{\lfloor\frac{n-\ell}{2}\rfloor}g_{\lambda}
    \quad\text{with}\quad
    g_{\lambda}=\frac{n!}{\lambda_{1}!\cdots\lambda_{\ell}!}\prod_{i<j}\frac{\lambda_{i}-\lambda_{j}}{\lambda_{i}+\lambda_{j}}.
  \]
  The number $g_\lambda$ is in fact an integer since it counts certain ``shifted standard tableaux of shape~$\lambda$'', \cf\cite[III.8 Ex.~12]{Mac:98}.
  Finally, by the strictness of the partitions, we have $n\geq\ell+2$, so $f_\lambda$ is an even integer.
\end{proof}

\begin{theorem}[Main \cref{thm:main_kum_two_derived_eq} Part 2]\label{thm:main-theorem-generalized-kummer-derived-equivalence-part2}
  Let $n\geq5$ be an odd integer and let $A$ be an abelian variety over an algebraically closed field of characteristic zero, which admits a polarization (or symmetric isogeny) $\lambda\colon A\to A^{\vee}$ of exponent $e$ such that $\gcd(e,n)=1$.
  Then the two derived categories $\Db(\Kummer^{n-1}(A))$ and $\Db(\Kummer^{n-1}(A^{\vee}))$ of~$2(n-1)$-dimensional generalized Kummer varieties are derived equivalent.
\end{theorem}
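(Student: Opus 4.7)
The plan is to run Orlov's construction of Fourier--Mukai kernels from symplectic isomorphisms in a carefully $\Sn$-equivariant fashion, and then apply Ploog's method to pass to equivariant derived categories. The key difficulty, compared to the case $n=3$ of \cref{thm:main-theorem-generalized-kummer-derived-equivalence}, is that the Schur multiplier $\Ho^2(\Sn,\Bbbk^\times)\simeq\ZZ/2\ZZ$ is non-trivial for $n\geq 4$, so we cannot directly enhance an $\Sn$-invariant kernel to a $\Diagonal\Sn$-equivariant one.

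First, starting from the polarization $\lambda\colon A\to A^{\vee}$ of exponent $e$ with $\gcd(e,n)=1$, I would build the specific $\Sn$-invariant symplectic isomorphism
\[
  f=\begin{pmatrix}\lambda & \dualisog{\phi}_0 \\ n_{3}\phi_0 & n_{4}\dualpol{\lambda}\end{pmatrix}\in\Sp(A\otimes\StdRep_n,A^\vee\otimes\StdRep_n)^{\Sn}
\]
where $n_3,n_4\in\ZZ$ satisfy $n_4 e-n_3 n=1$. Applying Orlov's recipe to $f$, the associated rational N\'eron--Severi class comes from an explicitly $\Sn$-equivariant line bundle $\Lca=\restr{\Lca_0^{\boxtimes n}}{X}$ on $X\coloneqq (A\times A^\vee)\otimes\StdRep_n$, where the equivariance is induced from the canonical permutation structure on $\Lca_0^{\boxtimes n}$. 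Push-forward along the multiplication map $[n]$ preserves equivariance, so $\Fca\coloneqq [n]_{*}\Lca^{\otimes n}$ is an $\Sn$-equivariant semi-homogeneous vector bundle of slope $\mu$.

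Next, I would invoke \cref{prop:jh-filtration-split} and \cref{prop:jh-filtration-invariant-summand}: the bundle $\Fca$ splits as $\bigoplus_{\alpha}(\Eca_0\otimes\Pca_\alpha)^{\oplus r}$ with $r=n^{2n-4}$, where $\alpha$ runs over $X^\vee[n]/\SigmaMukai(\mu)$, and (for $n\geq 5$ odd) at least one simple summand $\Eca_0$ is $\Sn$-invariant. Because the distinct summands admit no non-zero maps between them, the $\Sn$-equivariant structure on $\Fca$ restricts to one on the isotypic piece $\Eca_0^{\oplus r}$. Now comes the main obstacle: $\Eca_0$ itself is only invariant up to isomorphism, and the obstruction to an actual equivariant structure lies in $\Ho^2(\Sn,\Bbbk^\times)=\ZZ/2\ZZ$. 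This is overcome by \cref{prop:direct-sum-equivariant,prop:sn-odd-projective-reprs}: since $r=n^{2n-4}$ is odd and every odd-dimensional projective representation of $\Sn$ lifts to a linear representation, the obstruction of $\Eca_0$ must already be trivial, so $\Eca_0$ carries a $\Diagonal\Sn$-equivariant structure.

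Finally, inflating this kernel along $\Diagonal\Sn\subset\Sn\times\Sn$ produces by Ploog's method an equivalence
\[
  \DbG{\Sn}(A\otimes\StdRep_n)\isoarr\DbG{\Sn}(A^\vee\otimes\StdRep_n).
\]
Combining Haiman's identification $\Kummer^{n-1}(A)\simeq\Hilb_{\Sn}(A\otimes\StdRep_n)$ with the derived McKay correspondence of Bridgeland--King--Reid, and likewise for $A^\vee$, yields the desired derived equivalence $\Db(\Kummer^{n-1}(A))\simeq\Db(\Kummer^{n-1}(A^\vee))$. The hardest step of the argument is isolating the invariant simple summand with the correct odd multiplicity: this is precisely what forces the hypothesis $n$ odd and is the reason the parity argument via \cref{prop:sn-odd-projective-reprs} succeeds in killing the Schur-multiplier obstruction that blocks a naive application of Ploog's theorem.
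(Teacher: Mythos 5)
Your proposal is correct and follows essentially the same route as the paper: the explicit invariant symplectic isomorphism from \cref{par:construction-mu}, the equivariant semi-homogeneous bundle $\Fca=[n]_{*}\Lca^{\otimes n}$, the split Jordan--Hölder decomposition with an invariant simple summand (\cref{prop:jh-filtration-split,prop:jh-filtration-invariant-summand}), the odd-multiplicity projective-representation argument (\cref{prop:direct-sum-equivariant,prop:sn-odd-projective-reprs}) to kill the Schur-multiplier obstruction, and finally inflation, Haiman, and Bridgeland--King--Reid. This matches the paper's proof, which is exactly this assembly of \cref{sec:equivariant-semi-homogeneous} followed by the conclusion of \cref{thm:main-theorem-generalized-kummer-derived-equivalence}.
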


\begin{proof}
  As summarized in \cref{par:summary-split-equivariant-semi-homogenous-bundle}, and taking \cref{prop:direct-sum-equivariant,prop:sn-odd-projective-reprs} into account, the arguments of this whole section culminate in the construction of an $\Sn$-equivariant kernel \[(\Pca,\phi)\in \DbG{\Diagonal\SymGrp_{n}}((A\otimesZ\StdRep_{n})\times(A^{\vee}\otimesZ\StdRep_{n}))\] such that $\FM_{\Pca}\colon\Db(A\otimesZ\StdRep_{n})\isoarr\Db(A^{\vee}\otimesZ\StdRep_{n})$ is a derived equivalence.
  So we can conclude exactly as in the proof of \cref{thm:main-theorem-generalized-kummer-derived-equivalence}.
\end{proof}

\section{Non-birational generalized Kummer varieties}
\label{sec:non-birat}

In this section we study birational equivalences between generalized Kummer fourfolds with the goal to exhibit examples which are not birationally equivalent but are derived equivalent according to \cref{thm:main_kum_two_derived_eq}.
We follow the strategy from \cite{Nam:02} and \cite{Oka:21}, while the calculations that lead to \cref{thm:gen-kummer-non-birat,thm:auto-group-of-gen-kummer} are original.
The interested reader is invited to consult the literature \cite{ADM:16,MMY:20,Oka:21} for further examples of non-birational but derived equivalent hyperkähler varieties.

\begin{situation}
  In this section we work over the complex numbers $\CC$, in order to consider singular cohomology of (analytifications of) varieties.
  Let $A$ be an abelian surface over $\CC$.
\end{situation}

\begin{theorem}\label{thm:gen-kummer-non-birat}
  Assume that $\End(A)=\ZZ$, and let $\deg(\lambda)=d^2$ be the degree of the polarization~$\lambda\colon A\to A^\vee$ of minimal degree.
  Assume that $d\neq1$ and either
  \begin{enumerate}
    \item
    $3$ divides $d$, and that $d/3$ is a perfect square, or
    \item
    $3$ does not divide $d$, and that the Pell equation $x^2-3d y^2=1$ has some solution with odd $y$.
  \end{enumerate}
  Then the generalized Kummer fourfolds $\Kummer^2(A)$ and $\Kummer^2(A^\vee)$ are \emph{not} birationally equivalent.
\end{theorem}

Before proving this theorem, we recall a few facts and notations about generalized Kummer varieties, which are also used in \cite[\P\P1--4]{Nam:02} and \cite[Prop.~2.2]{Oka:21}.
For further background and explanations the reader may consult~\cite[Ch.~7]{Mag:Thesis}.

\begin{passage}\label{par:pic-gen-kummer-lattice}
  We consider generalized Kummer varieties of dimension~$4$.
  \begin{enumerate}
    \item 
    We have a Hodge isometry
    \begin{equation}\label{eq:kummer-fourfold-coho}
      \Ho^2(\Kummer^2(A),\ZZ)\simeq\Ho^2(A,\ZZ)\oplus\ZZ\delta,
    \end{equation}
    where on the right hand side $\delta^2=-6$, and $2\delta=[E]$ is represented by the exceptional divisor $E\subset\Kummer^2(A)$ associated to the Hilbert--Chow morphism.
    
    In particular, we obtain from the Lefschetz theorem on $(1,1)$-classes and \eqref{eq:kummer-fourfold-coho} an isomorphism \[\NS(\Kummer^2(A))\simeq\NS(A)\oplus\ZZ\delta.\]
    Since the first Chern class $\crm_1\colon\Pic(\Kummer^2(A))\to\NS(\Kummer^2(A))$ is observed to be an isomorphism, we also have \[\Pic(\Kummer^2(A))\simeq\NS(A)\oplus\ZZ\delta.\]
    
    \item
    Taking a lattice theoretic viewpoint, recall that the lattice \[\Urm\coloneqq\ZZ\cdot \erm\oplus\ZZ\cdot \frm\] with $\erm^2=0$, $\frm^2=0$, and $\erm.\frm=1$ is called the \emph{hyperbolic plane}.
    For $k\in\ZZ$, denote by $\langle k\rangle\coloneqq\ZZ\cdot g$ the rank~$1$ lattice with $g^2=k$.
    Then one has isometries
    \begin{equation}\label{eq:isometry-h2-av-hyperbolic-lattice}
      \Ho^2(A,\ZZ)\simeq\Urm^{\oplus3}
    \end{equation}
    and \[\Ho^2(\Kummer^2(A),\ZZ)\simeq\Urm^{\oplus3}\oplus\langle -6\rangle.\]
    
    If $h\in\NS(A)$ is a primitive class, \ie it is not a multiple of any other class, with $h^2=2d$, then one can choose the isometry \eqref{eq:isometry-h2-av-hyperbolic-lattice} in such a way that $h$ corresponds to $\erm+d\frm$ in the first copy of $\Urm$ inside $\Urm^{\oplus 3}$. 
  \end{enumerate}
\end{passage}

\begin{passage}\label{par:cy-birat-iso-codim-one}
  Let $X$ and $Y$ be smooth, projective varieties, and let $f\colon X\isodasharrow Y$ be a birational map.
  \begin{enumerate}
    \item
    Assume that $X$ and $Y$ have trivial canonical bundles $\canosheaf_X\simeq\Oca_X$ and $\canosheaf_Y\simeq\Oca_Y$, \resp.
    Then $f$ is an isomorphism in codimension~$1$, so we obtain an isomorphism
    \begin{equation}\label{eq:induced-map-on-second-sing-coho}
      f^*\colon\Ho^2(Y,\ZZ)\isoarr\Ho^2(X,\ZZ).
    \end{equation}
    
    \item
    If $X$ and $Y$ are hyperkähler varieties, \eg $X=\Kummer^2(A)$ and $Y=\Kummer^2(A^\vee)$, then \eqref{eq:induced-map-on-second-sing-coho} is a Hodge isometry, where the second cohomology groups are endowed with their Beauville--Bogomolov--Fujiki form.
    
    \item
    Since $\NS(X)$ is a finitely generated abelian group, $\NS(X)\otimes_{\ZZ}\RR$ is a finite-dimensional vector space over $\RR$, which we endow with the euclidean topology.
    A divisor $D\in\Div(X)$ is called \emph{moveable} if $\codim_X(\Bl(\abs{D}))\geq2$, \ie the base locus of the complete linear system $\abs{D}$ has codimension at least $2$.
    Denote the convex cone generated by classes of movable divisors by \[\Mov(X)\subset\NS(X)\otimes_\ZZ\RR,\] and denote it closure in the euclidean topology by $\overline{\Mov}(X)$.
    
    Let $f\colon X\isodasharrow Y$ be a birational map which is an isomorphism in codimension~$1$.
    We see that if $D\in\Div(Y)$ is moveable, then $f^*D\in\Div(X)$ is also moveable.
    So the isomorphism $f^*\colon\NS(Y)\otimes_\ZZ\RR\isoarr\NS(X)\otimes_\ZZ\RR$ restricts to an isomorphism \[f^*\colon\overline{\Mov}(Y)\isoarr\overline{\Mov}(X)\] of convex cones.
    
    Recall that if $C$ is a convex cone, one calls $\RR_{\geq0}\cdot x\subset C$ an \emph{extremal ray} if for any $x_1,x_2\in C$ and $\alpha_1,\alpha_2\in\RR_{>0}$ the equation $x=\alpha_1 x_1 +\alpha_2 x_2$ implies that~$x_1,x_2\in \RR_{\geq0}\cdot x$.
    Note that $f^*$ maps extremal rays to extremal rays.
  \end{enumerate}
\end{passage}

\filbreak

\begin{passage}\label{par:gen-kummer-excep-divisor-facts}
  Denote by $E\subset\Kummer^2(A)$ the exceptional divisor of the desingularization given by the Hilbert--Chow morphism $\HilbChow\colon\Kummer^2(A)\to (A\otimes\StdRep_3)/\SymGrp_3$.
  \begin{enumerate}  
    \item
    The exceptional divisor $E\subset\Kummer^2(A)$ is effective and \emph{rigid}, \ie \[\dim\Ho^0(\Kummer^2(A),\Oca(E))=1,\] since it is the exceptional divisor of a desingularization of a normal, projective variety.
    
    \item
    Let $\widetilde{E}\to E$ be a desingularization of $E$.
    Then the Albanese of $\widetilde{E}$ is \[\Alb(\widetilde{E})\simeq A.\]
    More specifically, the singular locus of the variety $(A\otimes\StdRep_3)/\SymGrp_3 \subset\Sym^3(A)$ is given by $\Delta=\{a+a+b\mid {a\in A}, b=-2a\}$, and the morphism $A\to\Delta$ mapping $a\mapsto a+a+(-2a)$ induces an isomorphism $A\setminus A[3]\isoarr\Delta^\circ$, where $\Delta^\circ\coloneqq\Delta\setminus\{a+a+a\mid a\in A[3]\}$.
    The rational map $\widetilde{E}\to E\to\Delta\dasharrow A$ extends to a morphism $\widetilde{E}\to A$ since $\widetilde{E}$ is smooth and $A$ is an abelian variety.
    This map turns out to be an Albanese morphism (one verifies first that $E$ restricted over $\Delta^\circ$ becomes isomorphic to $\Delta^\circ\times\PP^1$).
  \end{enumerate}
\end{passage}

\begin{proof}[Proof of \cref{thm:gen-kummer-non-birat}]
  First, we follow the proof strategy from \cite{Nam:02}.
  By the assumptions on $A$, we can write $\NS(A)=\ZZ\cdot h$ with $h^2=2d$ and $d\coloneqq 3$.
  So, by \cref{par:pic-gen-kummer-lattice}, we have \[\Pic(\Kummer^2(A))\simeq \ZZ\cdot h\oplus\ZZ\cdot\delta,\] where $\delta^2=-6$ and $h.\delta=0$.
  The analogous statement holds for $A^\vee$, where we write $\check{h}$ and $\check{\delta}$ in place of $h$ and $\delta$.
  
  Let $f\colon\Kummer^2(A)\isodasharrow\Kummer^2(A^\vee)$ be a birational equivalence.
  By \cref{par:cy-birat-iso-codim-one}, $f$ is an isomorphism in codimension~$1$ and induces a Hodge-isometry $f^*\colon\Ho^2(\Kummer^2(A^\vee),\ZZ)\to\Ho^2(\Kummer^2(A),\ZZ)$, and hence it induces an isomorphism of lattices \[f^*\colon\Pic(\Kummer^2(A^\vee))\isoarr\Pic(\Kummer^2(A)).\]
  We assume that $f^*(\check{\delta})=\pm\delta$, which we verify afterwards.
  Then in fact \[f^*(\check{\delta})=\delta,\] since $2\delta$ and $2\check{\delta}$ are represented by the effective divisors $E$ and $\check{E}$, \cf\cref{par:pic-gen-kummer-lattice}.
  Since $E$ is rigid, \cf~\cref{par:gen-kummer-excep-divisor-facts}, $f\colon\Kummer^2(A)\isodasharrow\Kummer^2(A^\vee)$ restricts to a birational map \[\restr{f}{E}\colon E\isodasharrow\check{E}.\]
  Then the desingularizations $\widetilde{E}$ and $\widetilde{\check{E}}$ of $E$ and $\check{E}$, \resp, are also birationally equivalent, and since they are smooth, this induces by \cref{par:gen-kummer-excep-divisor-facts} an isomorphism of Albanese varieties \[A\simeq\Alb(\widetilde{E})\isoarr\Alb(\widetilde{\check{E}})\simeq A^\vee.\]
  This contradicts the assumption on the minimal degree of a polarization of~$A$.

  Now we verify that $f^*(\check{\delta})=\pm\delta$ (and indeed $f^*(\check{\delta})=\delta$ and $f^*(\check{h})=h$).
  We are in the situation that $f^*$ gives a lattice isometry \[\varphi\coloneqq f^*\colon\ZZ\check{h}\oplus\ZZ\check{\delta}\to\ZZ h\oplus\ZZ\delta,\] where
  \begin{empheq}[left=\empheqlbrace]{align}
    h^2 &=2d \label{eq:ns-lattice-1} \\
    \delta^2 &=-6 \\
    h.\delta &=0, \label{eq:ns-lattice-3}
  \end{empheq}
  and similarly for $\check{h}$ and $\check{\delta}$.
  We write
  \begin{empheq}[left=\empheqlbrace,box=\colorbox{black!15}]{align}
    \varphi(\check{h}) &=x h+y\delta \\
    \varphi(\check{\delta}) &=z h+w\delta
  \end{empheq}
  for some $x,y,z,w\in\ZZ$.
  Since $\varphi$ is an isometry we obtain from \eqref{eq:ns-lattice-1}-\eqref{eq:ns-lattice-3} the following equations:
  \begin{empheq}[left=\empheqlbrace]{align}
    -6 = (z h+w\delta)^2 &= 2d z^2-6w^2 \label{eq:phi-isome-eq-1} \\
    2d = (x h+y\delta)^2 &= 2d x^2-6y^2 \\
    0 = (x h+y\delta)(z h+w\delta) &= 2d xz-6yw \label{eq:phi-isome-eq-3}
  \end{empheq}

  \itemnum
  We can write $d=3d'$, since $3$ divides $d$ by assumption.
  So the system \eqref{eq:phi-isome-eq-1}-\eqref{eq:phi-isome-eq-3} becomes, after substituting $d$, dividing by $6$, and slight rearranging,
  \begin{empheq}[left=\empheqlbrace]{align}
    w^2-d'z^2 &=1 \label{eq:phi-isome-i-1} \\
    y^2-d'(x^2-1) &=0 \label{eq:phi-isome-i-2} \\
    yw-d'xz &=0 \label{eq:phi-isome-i-3}
  \end{empheq}
  Now we are already done, since the Pell equation $w^2-d'z^2=1$ has only the trivial solutions $(\pm1,0)$ when $d'$ is a perfect square.
  
  We still push the calculation further for application in \cref{thm:auto-group-of-gen-kummer} below.
  Equation \eqref{eq:phi-isome-i-3} times $w$ implies $yw^2-d'xzw=0$, and substituting \eqref{eq:phi-isome-i-1} into it yields $y(d'z^2+1)-d'xzw=0$.
  So $d'$ divides $y$, say $y=d'y'$ for some $y'\in\ZZ$.
  Then \eqref{eq:phi-isome-i-2} becomes $d'^2y'^2-d'(x^2-1)=0$, so $d'y'^2-x^2+1=0$ since $d'\neq0$.
  Also \eqref{eq:phi-isome-i-3} becomes $d'y'w-d'xz=0$, so $y'w-xz=0$.
  Collecting this together, the system \eqref{eq:phi-isome-i-1}-\eqref{eq:phi-isome-i-3} is equivalent to
  \begin{empheq}[left=\empheqlbrace]{align}
    w^2-d'z^2 &=1 \\
    x^2-d'y'^2 &=1 \\
    y'w-xz &=0 \\
    y &=d'y'.
  \end{empheq}
  Using \cref{prop:pell-double-lemma} we conclude that
  \begin{empheq}[left=\empheqlbrace,box=\colorbox{black!15}]{align}
    \varphi(\check{h}) &=x h+d'y'\delta \\
    \varphi(\check{\delta}) &= \pm(y' h+x\delta)
  \end{empheq}
  where $(x,y')$ is a solution of the Pell equation $x^2-d'y'^2=1$.
  But when $d'$ is a perfect square, then the latter Pell equation has only the trivial solutions $(\pm1,0)$, so $\varphi(\check{h})=\pm h$ and $\varphi(\check{\delta})=\pm\delta$, where the minus signs can be excluded since $h$, $\check{h}$, $\delta$, and~$\check{\delta}$ are represented by effective divisors on projective varieties.
  
  \itemnum
  From now on we assume that $\gcd(3,d)=1$.
  The system \eqref{eq:phi-isome-eq-1}-\eqref{eq:phi-isome-eq-3} becomes
  \begin{empheq}[left=\empheqlbrace]{align}
    dz^2-3(w^2-1) &=0 \label{eq:phi-isome-ii-1} \\
    d(x^2-1)-3y^2 &= 0 \\
    dxz-3yw &= 0. \label{eq:phi-isome-ii-3}
  \end{empheq}
  By \eqref{eq:phi-isome-ii-1} we know that $3|dz^2$, so $3$ divides $z$, say $z=3z'$.
  By \eqref{eq:phi-isome-ii-3} we know that $d|yw$, and by \eqref{eq:phi-isome-ii-1} we have $d|w^2-1$, so $\gcd(d,w)=1$ and $d$ divides $y$, say $y=dy'$.
  With these changes of variables, the system \eqref{eq:phi-isome-ii-1}-\eqref{eq:phi-isome-ii-3} becomes 
  \begin{empheq}[left=\empheqlbrace]{align}
    9dz'^2-3(w^2-1) &= 0 \\
    d(x^2-1)-3d^2y'^2 &= 0 \\
    3dxz'-3dy'w &= 0,
  \end{empheq}
  and after dividing by $3$ and $d$ respectively we get
  \begin{empheq}[left=\empheqlbrace]{align}
    w^2-3dz'^2 &=1 \\
    x^2-3dy'^2 &=1 \\
    xz'-y'w &=0.
  \end{empheq}
  Taking \cref{prop:pell-double-lemma} below into account, we arrive at
  \begin{empheq}[left=\empheqlbrace,box=\colorbox{black!15}]{align}
    \varphi(\check{h}) &= xh+y'd\delta \label{eq:phi-result-ii-a-1} \\
    \varphi(\check{\delta}) &= \pm(3y'h+x\delta),
  \end{empheq}
  where $(x,y')$ is a solution of the Pell equation $x^2-3dy'^2=1$.

  Now we will extract further conditions on the pairs $(x,y')$ from the fact that $\varphi$ is induced by a birational map.
  We follow the strategy of \cite[Prop.~2.2]{Oka:21}, where the non-birationality of Hilbert schemes of points on K3 surfaces is studied.
  Since $\NS(A)\simeq\ZZ$ and $\gcd(3,d)=1$, we know by \cite[Thm.~0.1]{Mor:21} that the moveable cone of $\Kummer^2(A)$ is \[\overline{\Mov}(\Kummer^2(A))=\RR_{\geq0}\cdot h\oplus\RR_{\geq0}\cdot\left(h-\frac{dy_0'}{3x_0}\delta\right),\] where $(x_0,y_0')$ is the fundamental solution of the diophantine equation $3x_0^2-dy_0'^2=3$, and similarly for $\Kummer^2(A^\vee)$.
  The latter equation implies that $y_0'$ is divisible by $3$, say~$y_0'=3y_0$.
  So \[\overline{\Mov}(\Kummer^2(A))=\RR_{\geq0}\cdot h\oplus\RR_{\geq0}\cdot\left(x_0 h-d y_0\delta\right),\] where $(x_0,y_0)$ is the fundamental solution of the Pell equation $x_0^2-3d y_0^2=1$.
  Since $\varphi$ is induced from a birational equivalence of hyperkähler varieties,
  \[\varphi\otimes\RR\colon\NS(\Kummer^2(A^\vee))\otimes\RR\to\NS(\Kummer^2(A))\otimes\RR\] maps the moveable cone to the movable cone.
  Hence $\varphi$ preserves the set $\{h,x_0 h-d y_0\delta\}$ of primitive generators of the extremal rays of the moveable cone.
  That is, either $\varphi(\check{h})=h$ or $\varphi(\check{h})=x_0 h-d y_0\delta$.
  In the first case, we get $\varphi(\check{\delta})=\delta$, as $\varphi(\check{\delta})=-\delta$ is geometrically impossible, as before.
  In the second case, comparing with \eqref{eq:phi-result-ii-a-1} yields that $(x,y')=(x_0,-y_0)$, so
  \begin{empheq}[left=\empheqlbrace]{align}
    \varphi(\check{h}) &= x_0 h - y_0 d\delta \\
    \varphi(\check{\delta}) &= \pm(3y_0 h - x_0\delta). \label{eq:phi-result-ii-b-2}
  \end{empheq}
  
  We claim that the undetermined sign in \eqref{eq:phi-result-ii-b-2} must be positive, so either $\varphi=\id$ or
  \begin{empheq}[left=\empheqlbrace,box=\colorbox{black!15}]{align}
    \varphi(\check{h}) &= x_0 h - y_0 d\delta \\
    \varphi(\check{\delta}) &= 3y_0 h - x_0\delta,
  \end{empheq}
  where $(x_0,y_0)$ is the fundamental solution of the Pell equation $x^2-3dy^2=1$.
  Indeed, consider the birational map \[f^\inv\colon\Kummer^2(A^\vee) \dasharrow \Kummer^2(A),\] which induces $\varphi^\inv=(f^\inv)^{*}$.
  By the arguments above, either $\varphi^\inv=\id$ (and thus $\varphi=\id$) or both $\varphi^\inv$ and~$\varphi$ correspond to the fundamental solution $(x_0,y_0)$, up to sign.
  There~are three cases to inspect:
  \begin{enumerate*}[label=(\alph*)]
    \item Both $\varphi$ and $\varphi^\inv$ have positive sign, \label{item:phi-isome-sign-pos-pos}
    \item $\varphi$ has positive sign and $\varphi^\inv$ has negative sign, \label{item:phi-isome-sign-pos-neg}
    \item both $\varphi$ and $\varphi^\inv$ have negative sign. \label{item:phi-isome-sign-neg-neg}
  \end{enumerate*}
  In case~\ref{item:phi-isome-sign-pos-neg}~and~\ref{item:phi-isome-sign-neg-neg} we calculate
  \begin{multline*}
    \varphi^\inv(\varphi(\check{h}))
    = \varphi^\inv(x_0 h-y_0 d\delta)
    = x_0(x_0\check{h}-y_0 d\check{\delta})+y_0 d(3y_0\check{h}-x_0\check{\delta}) \\
    = (x_0^2+3d y_0^2)\check{h} - 2x_0 y_0 d\check{\delta}
    \neq \check{h}.
  \end{multline*}
  Case \ref{item:phi-isome-sign-pos-pos} remains possible, since $\varphi^\inv(\varphi(\check{h}))=\check{h}$ and $\varphi^\inv(\varphi(\check{\delta}))=\check{\delta}$ does indeed hold.

  Next, we want to see that $\varphi=\id$ as soon as $y_0$ is odd.
  Since $\varphi=f^*$ is induced from a birational map $f\colon\Kummer^2(A)\dasharrow\Kummer^2(A^\vee)$, we have a commutative diagram
  \begin{equation}\label{diag:phi-isome-extension}
  \begin{tikzcd}[column sep=1.9ex]
    \!\! \ZZ \check{h}\oplus\ZZ\check{\delta} \arrow[d, "\varphi"] \arrow[r, phantom, "\simeq" description] & \NS(\Kummer^2(A^\vee)) \arrow[d, "f^*"] \arrow[r, hook] & {\Ho^2(\Kummer^2(A^\vee),\ZZ)} \arrow[d, "f^*"] \arrow[r, phantom, "\simeq" description] & {\Ho^2(A^\vee,\ZZ)\oplus\ZZ\delta} \arrow[r, phantom, "\simeq" description] & \Ulattice^{\oplus 3}\oplus\langle -6\rangle \arrow[d, "\varphi"] \\
    \!\! \ZZ h\oplus\ZZ\delta \arrow[r, phantom, "\simeq" description] & \NS(\Kummer^2(A)) \arrow[r, hook] & {\Ho^2(\Kummer^2(A),\ZZ)} \arrow[r, phantom, "\simeq" description] & {\Ho^2(A,\ZZ)\oplus\ZZ\delta} \arrow[r, phantom, "\simeq" description] & \Ulattice^{\oplus 3}\oplus\langle -6\rangle .
  \end{tikzcd}
  \end{equation}
  That is, the lattice isometry $\varphi\colon\ZZ \check{h}\oplus\ZZ\check{\delta}\to \ZZ h\oplus\ZZ\delta$ is the restriction of a lattice isometry $\Ulattice^{\oplus 3}\oplus\langle -6\rangle\isoarr\Ulattice^{\oplus 3}\oplus\langle -6\rangle$.
  Denote the generator of $\langle-6\rangle$ again by $\delta$, and denote the bases of the three copies of $\Ulattice$ by $\{\erm,\frm\}$, $\{\erm',\frm'\}$, and $\{\erm'',\frm''\}$ \resp.
  Recall that under the identifications in Diagram~\eqref{diag:phi-isome-extension}, we have
  \begin{align*}
    \ZZ h\oplus\ZZ\delta &\hookrightarrow \Ulattice^{\oplus 3}\oplus\langle -6\rangle \\
    h &\mapsto \erm+d\frm \\
    \delta &\mapsto \delta,
  \end{align*}
  and similarly for $\check{h}$ and $\check{\delta}$.
  For the readers convenience, we recall that $\erm^2=0$, $\frm^2=0$ and $\erm.\frm=1$, and that all directs sums in \eqref{diag:phi-isome-extension} are orthogonal direct sums.

  Now, let $(x_0,y_0)$ be (the fundamental) solution of $x^2-3d y^2=1$, and assume $\varphi(\check{\delta})=3y_0 h-x_0\delta$ and $\varphi(\erm+d\frm)=x_0(\erm+d\frm)-y_0 d\delta$.
  We study when it is possible to extend $\varphi$ to a map of lattices $\Ulattice\oplus\langle -6\rangle\to\Ulattice^{\oplus 3}\oplus\langle -6\rangle$.
  Write \[\varphi(\frm) = x \frm + y h + z\delta \;\;+\;\; v'\erm' + w'\frm' + v''\erm'' + w''\frm''\]
  for some $x,y,z,v',w',v'',w''\in\ZZ$.
  One verifies $\varphi(\frm)^2=2xy+2dy^2-6z^2 + 2v'w'+2v''w''$, so $f^2=0$ induces the equation \[xy+dy^2-3z^2+w=0 \quad\text{with}\quad w\coloneqq v'w'+v''w''.\]
  We have $\frm.\delta=0$, so $\varphi(\frm).\varphi(\delta)=3y_0 x + 6d y_0 y + 6x_0 z$ provides the equation \[y_0 x + 2d y_0 y + 2x_0 z = 0.\]
  From $\frm.h=1$ and $\varphi(\frm).\varphi(h) = x_0 x+2dx_0 y + 6d y_0 z$ we get \[x_0 x+2dx_0 y + 6d y_0 z = 1.\]
  Thus we have the system of equations
  \begin{empheq}[left=\empheqlbrace]{align}
    xy+dy^2-3z^2 + w &= 0 \label{eq:phi-extend-1} \\
    y_0 x + 2d y_0 y + 2x_0 z &= 0 \label{eq:phi-extend-2} \\
    x_0 x + 2d x_0 y + 6d y_0 z &= 1 \label{eq:phi-extend-3}
  \end{empheq}
  Equation~\eqref{eq:phi-extend-2} tells us that $x=\frac{-2}{y_0}(d y_0 y+x_0 z)$, and substituting this into \eqref{eq:phi-extend-3} yields \[1=\frac{-2 x_0}{y_0}(d y_0 y+x_0 z) + 2d x_0 y + 6d y_0 z = \left(\frac{-2x_0^2}{y_0}+6d y_0\right)z.\]
  Since $y_0\neq 0$, the latter is equivalent to $y_0 = (-2x_0^2 + 6d y_0^2) = (-2x_0^2 + 2x_0^2 - 2)z = -2z$, where we have used the Pell equation $3dy_0^2=x_0^2-1$.
  So we conclude $z=-\frac{y_0}{2}$ and see that $y_0$ has to be an even number, which is clearly in contradiction with the hypothesis that $y_0$ is odd.
\end{proof}

We used the following Lemma about Pell equations, which are diophantine equation of the form $x^2-N y^2=1$ with $N\in\NN$.
Also recall that the \emph{fundamental solution} $(x_0,y_0)$ is the solution with $x_0,y_0>0$ and $\abs{x_0}$ minimal among all solutions; it exists if $N$ is not a perfect square, otherwise only the trivial solutions $(x,y)=(\pm1,0)$ exist.

\begin{lemma}\label{prop:pell-double-lemma}
  Let $(x_1,y_1)$ and $(x_2,y_2)$ be two solutions of the Pell equation $x^2-N y^2=1$ which satisfy $x_1 y_2+x_2 y_1=0$, then $x_1=\pm x_2$ and $y_1=\mp y_2$.
\end{lemma}

\begin{proof}
  By Brahmagupta's identity, we have that $(x_1 x_2+N y_1 y_2)^2-N(x_1 y_2+x_2 y_1)^2=1$.
  So $x_1 y_2+x_2 y_1=0$ implies that $x_1 x_2+N y_1 y_2=\pm1$.
  Multiplying by $y_2$ yields $x_1 x_2 y_2+N y_1 y_2^2=\pm y_2$, and substituting the equation $x_1 y_2=-x_2 y_1$ from the hypothesis yields \[\pm y_2 = -x_2^2 y_1 + N y_1 y_2^2 = -y_1(x_2^2-N y_2^2)=-y_2.\]
  So the hypothesis $x_1 y_2+x_2 y_1=0$ becomes $x_1 y_2 \mp x_2 y_2 =0$, implying $x_1=\pm x_2$ (which is also true in the case of the trivial solutions $(\pm1,0)$).
\end{proof}

\begin{remark}
  Case~\rom{2} of \cref{thm:gen-kummer-non-birat} occurs for arbitrarily hight degree~$d^2$.
  Indeed, consider the Pell equation $x^2-3d y^2=1$ for $d\in\NN$, with fundamental solution~$(x_0,y_0)$.
  Then there are infinitely many $d\in\NN$, with $\gcd(3,d)=1$, such that $y_0=1$.
  Concretely, for any $k\in\NN$ set $d'\coloneqq 2k^2+(k+1)^2-1$ and $d''\coloneqq k^2+2(k+1)^2-1$.
  Then one calculates $(3k+1)^2-3d'=1$ and $(3k+2)^2-3d''=1$, as desired.
  Reducing modulo~$3$ one sees that for every $k$ either $d'$ or $d''$ is coprime to $3$. 
\end{remark}

\begin{remark}
  \cref{thm:gen-kummer-non-birat} remains true when replacing $A^\vee$ by any abelian surface $\check{A}$ with $\End(\check{A})=\ZZ$ and $A\not\simeq\check{A}$.
  Indeed, write $\NS(\check{A})=\ZZ\cdot \check{h}$ with $\check{h}^2=2\check{d}$.
  Then the existence of a lattice isometry $\varphi\colon\ZZ\check{h}\oplus\ZZ\check{\delta}\to\ZZ h\oplus\ZZ\delta$, as considered in the proof of \cref{thm:gen-kummer-non-birat}, implies the equality \[ -12d=\disc(\ZZ h\oplus\ZZ\delta)=\disc(\ZZ\check{h}\oplus\ZZ\check{\delta})=-12\check{d} \] of discriminants.
  So $d=\check{d}$, and the calculations in the proof of \cref{thm:gen-kummer-non-birat} are valid without any modification.
\end{remark}

\begin{theorem}\label{thm:auto-group-of-gen-kummer}
  Assume that $\End(A)=\ZZ$, and let $\deg(\lambda)=d^2$ be the degree of the polarization $\lambda\colon A\to A^\vee$ of minimal degree.
  Assume that $d\neq1$ and either
  \begin{enumerate}
    \item
    $3$ divides $d$, and that $d/3$ is a perfect square, or
    \item
    $3$ does not divide $d$, and that the Pell equation $x^2-3d y^2=1$ has some solution with odd $y$.
  \end{enumerate}
  Then we have isomorphisms \[\Bir(\Kummer^2(A))\simeq\Aut(\Kummer^2(A))\simeq A[3]\rtimes\Aut_{\mathrm{AV}}(A),\] where $\Bir(\Kummer^2(A))$ is the group of birational autoequivalences, and the action of $\Aut_{\mathrm{AV}}(A)$ on $A[3]$ is the obvious one.
\end{theorem}

\begin{proof}
  Let $n\geq 3$.
  In general, following \cite[§3.1]{BNS:11}, we have an injective homomorphism \[A\rtimes\Aut_{\mathrm{AV}}(A)\simeq\Aut(A)\hookrightarrow\Aut(\Hilb^n(A)),\] and one calls automorphisms in the image of this map \emph{natural}.
  A natural automorphism of $\Hilb^n(A)$ restricts to an automorphism of $\Kummer^{n-1}(A)$ \iff it corresponds to an element in $A[3]\rtimes\Aut_{\mathrm{AV}}(A)$; these are called again \emph{natural}.
  By \cite[Thm.~3, Cor.~5]{BNS:11}, this leads to an injective homomorphism \[A[3]\rtimes\Aut_{\mathrm{AV}}(A)\hookrightarrow\Aut(\Kummer^{n-1}(A))\] whose image consists of those automorphisms $f\colon\Kummer^{n-1}(A)\isoarr\Kummer^{n-1}(A)$ which satisfy $f(E)=E$.
  Recall here that $E\subset\Kummer^{n-1}(A)$ denotes the exceptional divisor of the Hilbert--Chow morphism, \cf\cref{par:gen-kummer-excep-divisor-facts}; there exists a class $\delta\in\NS(\Kummer^{n-1}(A))$ satisfying $2\delta=[E]$, \cf\cref{par:pic-gen-kummer-lattice}.
  Now the condition $f(E)=E$ is equivalent to $f^*(\delta)=\delta$, since $E$ is rigid, \cf\cref{par:gen-kummer-excep-divisor-facts}.
  
  Consider a birational equivalence $f\colon\Kummer^2(A)\isodasharrow\Kummer^2(A)$.
  Exactly as in the proof of \cref{thm:gen-kummer-non-birat}, we see that \[f^*\colon\NS(\Kummer^2(A))\isoarr\NS(\Kummer^2(A))\] is the identity map.
  In particular, $f^*$ fixes some ample class (\eg $k\cdot h-\delta$ for $k\gg1$), and since~$f$ is already an isomorphism in codimension~$1$, \cf\cref{par:cy-birat-iso-codim-one}, $f$ extends to an automorphism of $\Kummer^2(A)$.
  We conclude that \[\Bir(\Kummer^2(A))\simeq\Aut(\Kummer^2(A)).\]
  Finally, since $f^*(\delta)=\delta$, the automorphism $f$ must be natural.
\end{proof}


\printbibliography 

@Article{ADM:16,
  Author = {Addington, Nicolas and Donovan, Will and Meachan, Ciaran},
  Title = {Moduli spaces of torsion sheaves on {{\(K3\)}} surfaces and derived equivalences},
  FJournal = {Journal of the London Mathematical Society. Second Series},
  Journal = {J. Lond. Math. Soc., II. Ser.},
  ISSN = {0024-6107},
  Volume = {93},
  Number = {3},
  Pages = {846--865},
  Year = {2016},
  Language = {English},
  DOI = {10.1112/jlms/jdw022},
  Keywords = {14F05,14J60,14J28,14K30,53C26},
  zbMATH = {6618276},
  Zbl = {1361.14013}
}

@Online{Bal:09,
  Author = {Matthew Robert {Ballard}},
  Title = {{Equivalences of derived categories of sheaves on quasi-projective schemes}},
  Year = {2009},
  Version = {2 (Sep~2009)},
  Eprinttype = {arxiv},
  Eprint = {0905.3148},
}

@Book{Ber:IGH,
  Author = {Gr\'egory {Berhuy}},
  Title = {{An introduction to Galois cohomology and its applications}},
  Series = {{London Mathematical Society Lecture Note Series}},
  ISSN = {0076-0552},
  Volume = {377},
  ISBN = {978-0-521-73866-8/pbk},
  Pages = {xi + 315},
  Year = {2010},
  Publisher = {Cambridge University Press, Cambridge},
  Language = {English},
  MSC2010 = {12G05 12-02 11E04 11E72 12F12 16K20 16W10 20G10 20G15 20J06},
  Zbl = {1207.12003},
  DOI = {10.1017/CBO9781139107051},
  URL = {https://doi.org/10.1017/CBO9781139107051},
}

@Article{Bea:83,
  Author = {Arnaud {Beauville}},
  Title = {{Vari\'et\'es k\"ahleriennes dont la premi\`ere classe de Chern est nulle}},
  FJournal = {{Journal of Differential Geometry}},
  Journal = {{J. Differ. Geom.}},
  ISSN = {0022-040X},
  Volume = {18},
  Number = {4},
  Pages = {755--782},
  Year = {1983},
  Publisher = {International Press of Boston, Somerville, MA},
  Language = {French},
  DOI = {10.4310/jdg/1214438181},
  URL = {http://projecteuclid.org/euclid.jdg/1214438181},
  MSC2010 = {53C55 14J10},
  Zbl = {0537.53056}
}

@InCollection{Bey:82,
  Author = {F. R. {Beyl}},
  Title = {{Isoclinisms of group extensions and the Schur multiplicator}},
  BookTitle = {Groups---{S}t. {A}ndrews 1981},
  Series = {London Math. Soc. Lecture Note Ser.},
  Volume = {71},
  Pages = {169--185},
  Publisher = {Cambridge Univ. Press, Cambridge},
  Year = {1982},
  Language = {English},
  MSC2010 = {20J05 20E10 20E22},
  Zbl = {0509.20037}
}

@Article{BKR:01,
  Author = {Tom {Bridgeland} and Alastair {King} and Miles {Reid}},
  Title = {{The McKay correspondence as an equivalence of derived categories}},
  FJournal = {{Journal of the American Mathematical Society}},
  Journal = {{J. Am. Math. Soc.}},
  ISSN = {0894-0347},
  Volume = {14},
  Number = {3},
  Pages = {535--554},
  Year = {2001},
  Publisher = {American Mathematical Society (AMS), Providence, RI},
  Language = {English},
  DOI = {10.1090/S0894-0347-01-00368-X},
  URL = {https://doi.org/10.1090/S0894-0347-01-00368-X},
  MSC2010 = {14J50 18E30 14E15 19L47 14L30 14J30},
  Zbl = {0966.14028}
}

@Book{BL:94,
  Author = {Joseph {Bernstein} and Valery {Lunts}},
  Title = {{Equivariant Sheaves and Functors}},
  FSeries = {{Lecture Notes in Mathematics}},
  Series = {{Lect. Notes Math.}},
  ISSN = {0075-8434},
  Volume = {1578},
  ISBN = {3-540-58071-9},
  Year = {1994},
  Publisher = {Springer-Verlag, Berlin},
  Language = {English},
  DOI = {10.1007/BFb0073549},
  URL = {https://doi.org/10.1007/BFb0073549},
  Keywords = {14L30,14F05,18E30,14-02,18-02,14M25},
  MRCLASS = {55N91 (14M25 18E30 54B40 55N30)},
  Zbl = {0808.14038}
}

@Article{BL:02,
  Author = {Christina {Birkenhake} and Herbert {Lange}},
  Title = {{An isomorphism between moduli spaces of abelian varieties}},
  FJournal = {{Mathematische Nachrichten}},
  Journal = {{Math. Nachr.}},
  ISSN = {0025-584X; 1522-2616/e},
  Volume = {253},
  Pages = {3--7},
  Year = {2003},
  Publisher = {Wiley (Wiley-VCH), Weinheim},
  Language = {English},
  MSC2010 = {14K10 14K05},
  Zbl = {1034.14019},
  DOI = {10.1002/mana.200310041},
  URL = {https://doi.org/10.1002/mana.200310041},
}

@Article{BNS:11,
  Author = {Boissi{\`e}re, Samuel and Nieper-Wi{\ss}kirchen, Marc and Sarti, Alessandra},
  Title = {Higher dimensional {Enriques} varieties and automorphisms of generalized {Kummer} varieties},
  FJournal = {Journal de Math{\'e}matiques Pures et Appliqu{\'e}es. Neuvi{\`e}me S{\'e}rie},
  Journal = {J. Math. Pures Appl. (9)},
  ISSN = {0021-7824},
  Volume = {95},
  Number = {5},
  Pages = {553--563},
  Year = {2011},
  Language = {English},
  DOI = {10.1016/j.matpur.2010.12.003},
  Keywords = {14J50,14C05,14J28},
  zbMATH = {5888512},
  Zbl = {1215.14046}
}

@Online{BO:95,
  Author = {Alexey {Bondal} and Dmitri {Orlov}},
  Title = {{Semiorthogonal decomposition for algebraic varieties}},
  Year = {1995},
  Version = {1 (Jun~1995)},
  Eprinttype = {arxiv},
  Eprint = {alg-geom/9506012},
}

@Article{BO:01,
  Author = {Alexey {Bondal} and Dmitri {Orlov}},
  Title = {{Reconstruction of a variety from the derived category and groups of autoequivalences}},
  FJournal = {{Compositio Mathematica}},
  Journal = {{Compos. Math.}},
  ISSN = {0010-437X},
  Volume = {125},
  Number = {3},
  Pages = {327--344},
  Year = {2001},
  Publisher = {Cambridge University Press, Cambridge; London Mathematical Society, London},
  Language = {English},
  DOI = {10.1023/A:1002470302976},
  URL = {https://doi.org/10.1023/A:1002470302976},
  MSC2010 = {18E30 14F05 18F20},
  Zbl = {0994.18007}
}

@Article{Bog:74,
  Author = {F. A. {Bogomolov}},
  Title = {{The decomposition of K\"ahler manifolds with a trivial canonical class}},
  FJournal = {{Matematicheski\u{\i} Sbornik. Novaya Seriya}},
  Journal = {{Mat. Sb., Nov. Ser.}},
  Volume = {93},
  Number = {135},
  Pages = {573--575},
  Year = {1974},
  Publisher = {Izdatel'stvo Nauka, Moskva},
  Language = {Russian},
  MSC2010 = {32J99 32M99 53C55 57R20 14E05},
  Zbl = {0299.32023}
}

@Article{Bri:02,
  Author = {Tom {Bridgeland}},
  Title = {{Flops and derived categories.}},
  FJournal = {{Inventiones Mathematicae}},
  Journal = {{Invent. Math.}},
  ISSN = {0020-9910},
  Volume = {147},
  Number = {3},
  Pages = {613--632},
  Year = {2002},
  Publisher = {Springer, Berlin/Heidelberg},
  Language = {English},
  DOI = {10.1007/s002220100185},
  URL = {https://doi.org/10.1007/s002220100185},
  MSC2010 = {14E30 18E30 14F05 14J30 14J32},
  Zbl = {1085.14017}
}

@Book{Bro:82,
  Author = {Kenneth S. {Brown}},
  Title = {{Cohomology of groups}},
  Series = {{Graduate Texts in Mathematics}},
  Journal = {{Grad. Texts Math.}},
  ISSN = {0072-5285},
  Volume = {87},
  Year = {1982},
  Publisher = {Springer, New York},
  Language = {English},
  MSC2010 = {20J05 20-02 18-02 55-02 20J06},
  Zbl = {0584.20036}
}

@Article{BS:64,
  Author = {Armand {Borel} and Jean-Pierre {Serre}},
  Title = {{Th{\'e}oremes de finitude en cohomologie galoisienne}},
  FJournal = {{Commentarii Mathematici Helvetici}},
  Journal = {{Comment. Math. Helv.}},
  ISSN = {0010-2571},
  Volume = {39},
  Pages = {111--164},
  Year = {1964},
  Language = {French},
  DOI = {10.1007/BF02566948},
  URL = {https://doi.org/10.1007/BF02566948},
  MRCLASS = {14.50},
  Zbl = {0143.05901}
}

@Article{CHN:10,
  Author = {Frederick R. {Cohen} and David J. {Hemmer} and Daniel K. {Nakano}},
  Title = {{On the cohomology of Young modules for the symmetric group.}},
  FJournal = {{Advances in Mathematics}},
  Journal = {{Adv. Math.}},
  ISSN = {0001-8708},
  Volume = {224},
  Number = {4},
  Pages = {1419--1461},
  Year = {2010},
  Publisher = {Elsevier (Academic Press), San Diego, CA},
  Language = {English},
  DOI = {10.1016/j.aim.2010.01.004},
  MSC2010 = {20J06 20C30 05E15 55S12 55P47 16E40 20C20 18G40},
  Zbl = {1206.20057}
}

@Online{EGM,
  Shorthand = {EGM},
  Author = {Bas {Edixhoven} and Gerard {van der Geer} and Ben {Moonen}},
  Title = {{Abelian Varieties.}},
  Language = {English},
  Note = {Preliminary version of the first chapters},
  Url = {https://www.math.ru.nl/~bmoonen/research.html#bookabvar},
}

@Online{FH:22,
  Author = {Sarah {Frei} and Katrina {Honigs}},
  Title = {{Groups of symplectic involutions on symplectic varieties of Kummer type and their fixed loci}},
  Year = {2022},
  Version = {1 (Jul~2022)},
  Eprinttype = {arxiv},
  Eprint = {2207.14035},
}

@Manual{GAP4,
  Shorthand = {GAP},
  Organization = {{The GAP~Group}},
  Title = {{GAP -- Groups, Algorithms, and Programming, Version 4.11.1}},
  Year = {2021},
  URL = {https://www.gap-system.org},
}

@Book{Gir:CNA,
  Author = {Jean {Giraud}},
  Title = {{Cohomologie non ab\'elienne}},
  Series = {{Grundlehren der Mathematischen Wissenschaften}},
  ISSN = {0072-7830; 2196-9701/e},
  Volume = {179},
  Year = {1971},
  Publisher = {Springer, Berlin},
  Language = {French},
  MSC2010 = {18G50 18-02 18F20 14F05 55R40 14F20},
  Zbl = {0226.14011}
}

@Article{Hai:01,
  Author = {Mark {Haiman}},
  Title = {{Hilbert schemes, polygraphs and the Macdonald positivity conjecture}},
  FJournal = {{Journal of the American Mathematical Society}},
  Journal = {{J. Am. Math. Soc.}},
  ISSN = {0894-0347},
  Volume = {14},
  Number = {4},
  Pages = {941--1006},
  Year = {2001},
  Publisher = {American Mathematical Society (AMS), Providence, RI},
  Language = {English},
  DOI = {10.1090/S0894-0347-01-00373-3},
  URL = {https://doi.org/10.1090/S0894-0347-01-00373-3},
  MSC2010 = {14C05 05E05 14M05},
  Zbl = {1009.14001}
}

@Article{Hem:09,
  Author = {David J. {Hemmer}},
  Title = {{Cohomology and generic cohomology of Specht modules for the symmetric group.}},
  FJournal = {{Journal of Algebra}},
  Journal = {{J. Algebra}},
  ISSN = {0021-8693},
  Volume = {322},
  Number = {5},
  Pages = {1498--1515},
  Year = {2009},
  Publisher = {Elsevier (Academic Press), San Diego, CA},
  Language = {English},
  DOI = {10.1016/j.jalgebra.2009.05.041},
  MSC2010 = {20J06 20C30 20G05 20G10},
  Zbl = {1187.20062}
}

@Article{HLOY:03,
  Author = {Shinobu {Hosono} and Bong H. {Lian} and Keiji {Oguiso} and Shing-Tung {Yau}},
  Title = {{Kummer structures on a \(K3\) surface: An old question of T. Shioda}},
  FJournal = {{Duke Mathematical Journal}},
  Journal = {{Duke Math. J.}},
  ISSN = {0012-7094},
  Volume = {120},
  Number = {3},
  Pages = {635--647},
  Year = {2003},
  Publisher = {Duke University Press, Durham, NC; University of North Carolina, Chapel Hill, NC},
  Language = {English},
  DOI = {10.1215/S0012-7094-03-12036-0},
  URL = {https://doi.org/10.1215/S0012-7094-03-12036-0},
  MSC2010 = {14J28},
  Zbl = {1051.14046}
}

@Book{Huy:FM,
  Author = {Daniel {Huybrechts}},
  Title = {{Fourier-Mukai transforms in algebraic geometry}},
  Series = {{Oxford Mathematical Monographs}},
  ISBN = {0-19-929686-3/hbk},
  Pages = {viii + 307},
  Year = {2006},
  Publisher = {Clarendon Press, Oxford University Press, Oxford},
  Language = {English},
  MSC2010 = {14-02 14D20 14F05 18E30},
  Zbl = {1095.14002},
  DOI = {10.1093/acprof:oso/9780199296866.001.0001},
  URL = {https://doi.org/10.1093/acprof:oso/9780199296866.001.0001},
}

@Book{Huy:K3,
  Author = {Daniel {Huybrechts}},
  Title = {{Lectures on K3 surfaces}},
  Series = {{Cambridge Studies in Advanced Mathematics}},
  Journal = {{Camb. Stud. Adv. Math.}},
  Volume = {158},
  ISBN = {978-1-107-15304-2},
  Pages = {xi + 485},
  Year = {2016},
  Publisher = {Cambridge University Press, Cambridge},
  Language = {English},
  DOI = {10.1017/CBO9781316594193},
  URL = {https://doi.org/10.1017/CBO9781316594193},
  MSC2010 = {14J28 14-02 14C15 14D20 14F05 14F22 14J10 14J27 14J60 32J15},
  Zbl = {1360.14099}
}

@Article{IN:96,
  Author = {Ito, Yukari and Nakamura, Iku},
  Title = {{McKay} correspondence and {Hilbert} schemes},
  FJournal = {Proceedings of the Japan Academy. Series A},
  Journal = {Proc. Japan Acad., Ser. A},
  ISSN = {0386-2194},
  Volume = {72},
  Number = {7},
  Pages = {135--138},
  Year = {1996},
  Language = {English},
  DOI = {10.3792/pjaa.72.135},
  Keywords = {14C05,14M17,14E15},
  zbMATH = {993528},
  Zbl = {0881.14002}
}

@Book{Kar:87,
  Author = {Gregory {Karpilovsky}},
  Title = {{The Schur multiplier}},
  Series = {{London Mathematical Society Monographs. New Series}},
  Journal = {{Lond. Math. Soc. Monogr., New Ser.}},
  Volume = {2},
  ISBN = {0-19-853554-6},
  Year = {1987},
  Publisher = {Clarendon Press, Oxford},
  Language = {English},
  MSC2010 = {20-02 20J05 20C25 20D15 20J06 20C05},
  Zbl = {0619.20001}
}

@Book{Kar:93,
  Author = {Karpilovsky, Gregory},
  Title = {Group representations. {Volume} 2},
  FSeries = {North-Holland Mathematics Studies},
  Series = {North-Holland Math. Stud.},
  ISSN = {0304-0208},
  Volume = {177},
  ISBN = {0-444-88726-1},
  Year = {1993},
  Publisher = {North-Holland, Amsterdam},
  Language = {English},
  Keywords = {20Cxx,20C25,20-02,20C05,20C34,20D20,20J05},
  zbMATH = {205981},
  Zbl = {0778.20001}
}

@Article{KS:99,
  Author = {A. S. {Kleshchev} and J. {Sheth}},
  Title = {{On extensions of simple modules over symmetric and algebraic groups}},
  FJournal = {{Journal of Algebra}},
  Journal = {{J. Algebra}},
  ISSN = {0021-8693},
  Volume = {221},
  Number = {2},
  Pages = {705--722},
  Year = {1999},
  Publisher = {Elsevier (Academic Press), San Diego, CA},
  Language = {English},
  DOI = {10.1006/jabr.1999.8038},
  MSC2010 = {20C30 20G05 20G10 20J06},
  Zbl = {0981.20006}
}

@Article{LT:17,
  Author = {Ana Cristina {L\'opez Mart\'{i}n} and Carlos {Tejero Prieto}},
  Title = {{Derived equivalences of Abelian varieties and symplectic isomorphisms}},
  FJournal = {{Journal of Geometry and Physics}},
  Journal = {{J. Geom. Phys.}},
  ISSN = {0393-0440},
  Volume = {122},
  Pages = {92--102},
  Year = {2017},
  Publisher = {Elsevier (North-Holland), Amsterdam},
  Language = {English},
  MSC2010 = {14F05 14J10 18E30 14K10 18E25},
  Zbl = {1401.14096},
  DOI = {10.1016/j.geomphys.2017.01.010},
  URL = {https://doi.org/10.1016/j.geomphys.2017.01.010},
}

@Book{Mac:98,
  Author = {Macdonald, I. G.},
  Title = {Symmetric functions and {Hall} polynomials},
  Series = {Oxford Mathematical Monographs},
  Edition = {2nd},
  ISBN = {0-19-850450-0},
  Year = {1998},
  Publisher = {Clarendon Press, Oxford},
  Language = {English},
  Keywords = {05E05,05-02,05E10,05E35,20-02,05A17,20C25,20C30,05A15},
  zbMATH = {1181673},
  Zbl = {0899.05068}
}

@Article{MMY:20,
  Author = {Meachan, Ciaran and Mongardi, Giovanni and Yoshioka, K{\=o}ta},
  Title = {Derived equivalent {Hilbert} schemes of points on {{\(K3\)}} surfaces which are not birational},
  FJournal = {Mathematische Zeitschrift},
  Journal = {Math. Z.},
  ISSN = {0025-5874},
  Volume = {294},
  Number = {3-4},
  Pages = {871--880},
  Year = {2020},
  Language = {English},
  DOI = {10.1007/s00209-019-02281-1},
  Keywords = {14C05,14F08,14J28},
  zbMATH = {7179282},
  Zbl = {1469.14011}
}

@Article{Mor:21,
  Author = {Mori, Akira},
  Title = {Nef cone of a generalized {Kummer} 4-fold},
  FJournal = {Hokkaido Mathematical Journal},
  Journal = {Hokkaido Math. J.},
  ISSN = {0385-4035},
  Volume = {50},
  Number = {2},
  Pages = {151--163},
  Year = {2021},
  Language = {English},
  DOI = {10.14492/hokmj/2018-919},
  Keywords = {14J35},
  zbMATH = {7395070},
  Zbl = {1473.14077}
}

@Article{Muk:78,
  Author = {Mukai, Shigeru},
  Title = {Semi-homogeneous vector bundles on an abelian variety},
  FJournal = {Journal of Mathematics of Kyoto University},
  Journal = {J. Math. Kyoto Univ.},
  ISSN = {0023-608X},
  Volume = {18},
  Number = {2},
  Pages = {239--272},
  Year = {1978},
  Language = {English},
  DOI = {10.1215/kjm/1250522574},
  URL = {https://doi.org/10.1215/kjm/1250522574},
  Keywords = {14K05,14F05},
  Zbl = {0417.14029}
}

@Article{Muk:81,
  Author = {Shigeru {Mukai}},
  Title = {{Duality between \(D(X)\) and \(D(\hat X)\) with its application to Picard sheaves}},
  FJournal = {{Nagoya Mathematical Journal}},
  Journal = {{Nagoya Math. J.}},
  ISSN = {0027-7630},
  Volume = {81},
  Pages = {153--175},
  Year = {1981},
  Publisher = {Cambridge University Press, Cambridge},
  Language = {English},
  DOI = {10.1017/S002776300001922X},
  URL = {http://projecteuclid.org/euclid.nmj/1118786312},
  MSC2010 = {14K30 14C22 14K05},
  Zbl = {0417.14036}
}

@Online{Muk:98,
  Author = {Shigeru {Mukai}},
  Title = {{Abelian variety and spin representation.}},
  Year = {1998},
  URL = {https://www.kurims.kyoto-u.ac.jp/~mukai/paper/warwick13.pdf},
  Note = {Warwick},
}

@Article{Nak:60,
  Author = {Minoru {Nakaoka}},
  Title = {{Decomposition theorem for homology groups of symmetric groups}},
  FJournal = {{Annals of Mathematics. Second Series}},
  Journal = {{Ann. Math. (2)}},
  ISSN = {0003-486X; 1939-8980/e},
  Volume = {71},
  Pages = {16--42},
  Year = {1960},
  Publisher = {Princeton University, Mathematics Department, Princeton, NJ},
  Language = {English},
  Zbl = {0090.39002},
  DOI = {10.2307/1969878},
  URL = {https://doi.org/10.2307/1969878},
}

@Article{Nam:02,
  Author = {Yoshinori {Namikawa}},
  Title = {{Counter-example to global Torelli problem for irreducible symplectic manifolds}},
  FJournal = {{Mathematische Annalen}},
  Journal = {{Math. Ann.}},
  ISSN = {0025-5831},
  Volume = {324},
  Number = {4},
  Pages = {841--845},
  Year = {2002},
  Publisher = {Springer, Berlin/Heidelberg},
  Language = {English},
  DOI = {10.1007/s00208-002-0344-2},
  URL = {https://doi.org/10.1007/s00208-002-0344-2},
  MSC2010 = {53D35 57R17},
  Zbl = {1028.53081}
}

@article {Nam:02b,
  Author = {Yoshinori {Namikawa}},
  Title = {{Erratum: ``Counter-example to global Torelli problem for irreducible symplectic manifolds''}},
  FJournal = {{Mathematische Annalen}},
  Journal = {{Math. Ann.}},
  ISSN = {0025-5831},
  Volume = {324},
  Number = {4},
  Pages = {847},
  Year = {2002},
  Publisher = {Springer, Berlin/Heidelberg},
  Language = {English},
  DOI = {10.1007/s00208-002-0344-2},
  URL = {https://doi.org/10.1007/s00208-002-0344-2},
}

@Book{NSW:CNF,
  Author = {J\"urgen {Neukirch} and Alexander {Schmidt} and Kay {Wingberg}},
  Title = {{Cohomology of number fields}},
  Series = {{Grund\-leh\-ren der Mathematischen Wissenschaften}},
  ISSN = {0072-7830; 2196-9701/e},
  Volume = {323},
  Edition = {2nd ed.},
  ISBN = {978-3-540-37888-4/hbk},
  Pages = {xv + 825},
  Year = {2008},
  Publisher = {Springer, Berlin},
  Language = {English},
  MSC2010 = {11-02 11R34 11R23 11S25 18G10 18G20 20J05 11R37},
  Zbl = {1136.11001},
  DOI = {10.1007/978-3-540-37889-1},
  URL = {https://doi.org/10.1007/978-3-540-37889-1},
}

@Article{OGr:99,
  Author = {Kieran G. {O'Grady}},
  Title = {{Desingularized moduli spaces of sheaves on a \(K3\)}},
  FJournal = {{Journal f\"ur die Reine und Angewandte Mathematik}},
  Journal = {{J. Reine Angew. Math.}},
  ISSN = {0075-4102},
  Volume = {512},
  Pages = {49--117},
  Year = {1999},
  Publisher = {De Gruyter, Berlin},
  Language = {English},
  DOI = {10.1515/crll.1999.056},
  URL = {https://doi.org/10.1515/crll.1999.056},
  MSC2010 = {14J60 14J28 14E15 14J10},
  Zbl = {0928.14029}
}

@Article{OGr:03,
  Author = {Kieran G. {O'Grady}},
  Title = {{A new six-dimensional irreducible symplectic variety}},
  FJournal = {{Journal of Algebraic Geometry}},
  Journal = {{J. Algebr. Geom.}},
  ISSN = {1056-3911},
  Volume = {12},
  Number = {3},
  Pages = {435--505},
  Year = {2003},
  Publisher = {American Mathematical Society (AMS), Providence, RI; University Press, Chicago, IL},
  Language = {English},
  DOI = {10.1090/S1056-3911-03-00323-0},
  URL = {https://doi.org/10.1090/S1056-3911-03-00323-0},
  MSC2010 = {53D35 32Q15 14J60 32J18 53C55 53C26 14H40 32Q55},
  Zbl = {1068.53058}
}

@Article{Oka:21,
  Author = {Okawa, Shinnosuke},
  Title = {An example of birationally inequivalent projective symplectic varieties which are {D}-equivalent and {L}-equivalent},
  FJournal = {Mathematische Zeitschrift},
  Journal = {Math. Z.},
  ISSN = {0025-5874},
  Volume = {297},
  Number = {1-2},
  Pages = {459--464},
  Year = {2021},
  Language = {English},
  DOI = {10.1007/s00209-020-02519-3},
  Keywords = {14J28,14J42,18G80,18F30,14E05},
  zbMATH = {7303580},
  Zbl = {1472.14040}
}

@Article{Orl:97,
  Author = {D. O. {Orlov}},
  Title = {Equivalences of derived categories and {{\(K3\)}} surfaces},
  FJournal = {Journal of Mathematical Sciences (New York)},
  Journal = {J. Math. Sci., New York},
  ISSN = {1072-3374},
  Volume = {84},
  Number = {5},
  Pages = {1361--1381},
  Year = {1997},
  Language = {English},
  DOI = {10.1007/BF02399195},
  Keywords = {14J28,18G10,14F05},
  zbMATH = {1269461},
  Zbl = {0938.14019}
}

@Article{Orl:02,
  Author = {D. O. {Orlov}},
  Title = {{Derived categories of coherent sheaves on abelian varieties and equivalences between them}},
  FJournal = {{Izvestiya: Mathematics}},
  Journal = {{Izv. Math.}},
  ISSN = {1064-5632; 1468-4810/e},
  Volume = {66},
  Number = {3},
  Pages = {569--594},
  Year = {2002},
  Publisher = {IOP Publishing, Bristol; London Mathematical Society, London; Turpion, London; Russian Mathematical Society, Moscow},
  Language = {English},
  MSC2010 = {18E30 14K05},
  Zbl = {1031.18007},
  DOI = {10.1070/IM2002v066n03ABEH000389},
  URL = {https://doi.org/10.1070/IM2002v066n03ABEH000389},
}

@Online{Orl:02a,
  Author = {D. O. {Orlov}},
  Title = {{Derived categories of coherent sheaves on abelian varieties and equivalences between them}},
  Year = {2025},
  Version = {4 (Jul~2025)},
  Eprinttype = {arxiv},
  Eprint = {alg-geom/9712017},
}

@PhdThesis{Plo:Thesis,
  Author = {David {Ploog}},
  Title = {{Groups of autoequivalences of derived categories of smooth projective varieties}},
  School = {Freie Universität Berlin, Institut für Mathematik und Informatik},
  Pages = {71},
  Year = {2005},
  Language = {English},
  MSC2010 = {14J28 18E30 14J32 14L30 14F05},
  Zbl = {1090.14011},
  URL = {https://www.math.uni-bonn.de/people/huybrech/phd-ploog.ps},
}

@Article{Plo:07,
  Author = {David {Ploog}},
  Title = {{Equivariant autoequivalences for finite group actions}},
  FJournal = {{Advances in Mathematics}},
  Journal = {{Adv. Math.}},
  ISSN = {0001-8708},
  Volume = {216},
  Number = {1},
  Pages = {62--74},
  Year = {2007},
  Publisher = {Elsevier (Academic Press), San Diego, CA},
  Language = {English},
  MSC2010 = {14L30 18E30},
  Zbl = {1167.14031},
  DOI = {10.1016/j.aim.2007.05.002},
  URL = {https://doi.org/10.1016/j.aim.2007.05.002},
}

@Article{Pol:96,
  Author = {A. {Polishchuk}},
  Title = {{Symplectic biextensions and a generalization of the Fourier-Mukai transform}},
  FJournal = {{Mathematical Research Letters}},
  Journal = {{Math. Res. Lett.}},
  ISSN = {1073-2780; 1945-001X/e},
  Volume = {3},
  Number = {6},
  Pages = {813--828},
  Year = {1996},
  Publisher = {International Press of Boston, Somerville, MA},
  Language = {English},
  MSC2010 = {14K05 18E30},
  Zbl = {0886.14019},
  DOI = {10.4310/MRL.1996.v3.n6.a9},
  URL = {https://doi.org/10.4310/MRL.1996.v3.n6.a9},
}

@Online{Rei:97,
  Author = {Miles {Reid}},
  Title = {{McKay correspondence}},
  Year = {1997},
  Version = {3 (Feb~1997)},
  Eprinttype = {arxiv},
  Eprint = {alg-geom/9702016},
}

@Article{Sch:11,
  Author = {I. {Schur}},
  Title = {{\"Uber die Darstellung der symmetrischen und der alternierenden Gruppe durch gebrochene lineare Substitutionen.}},
  FJournal = {{Journal f\"ur die Reine und Angewandte Mathematik}},
  Journal = {{J. Reine Angew. Math.}},
  ISSN = {0075-4102},
  Volume = {139},
  Pages = {155--250},
  Year = {1911},
  Publisher = {De Gruyter, Berlin},
  Language = {German},
  DOI = {10.1515/crll.1911.139.155},
  MSC2010 = {20-XX},
  Zbl = {42.0154.02}
}

@Book{Ser:GC,
  Author = {Jean-Pierre {Serre}},
  Title = {{Galois cohomology}},
  Series = {{Springer Monographs in Mathematics}},
  ISSN = {1439-7382},
  Edition = {2nd printing},
  Note = {Translated from the French by Patrick Ion},
  ISBN = {3-540-42192-0/hbk},
  Pages = {x + 210},
  Year = {2002},
  Publisher = {Springer, Berlin},
  Language = {English},
  MSC2010 = {12G05 12-02 11S25 11-02},
  Zbl = {1004.12003}
}

@Article{Shc:04,
  Author = {V. V. {Shchigolev}},
  Title = {{On some extensions of completely splittable modules}},
  FJournal = {{Izvestiya: Mathematics}},
  Journal = {{Izv. Math.}},
  ISSN = {1064-5632},
  Volume = {68},
  Number = {4},
  Pages = {833--850},
  Year = {2004},
  Publisher = {IOP Publishing, Bristol; London Mathematical Society, London; Turpion, London; Russian Mathematical Society, Moscow},
  Language = {English},
  DOI = {10.1070/IM2004v068n04ABEH000499},
  URL = {https://doi.org/10.1070/IM2004v068n04ABEH000499},
  MSC2010 = {20C30},
  Zbl = {1067.20012}
}

@PhdThesis{Mag:Thesis,
  Author = {Pablo {Magni}},
  Title = {{Derived Equivalences of Generalized Kummer Varieties}},
  School = {Radboud University Nijmegen},
  Pages = {xviii+145},
  Year = {2023},
  Language = {English},
  MSC2010 = {14F08, 14J42, 14L30},
  URL = {https://hdl.handle.net/2066/295328},
}

\end{document}